\documentclass[11pt,twoside]{amsart}

\usepackage{supertabular}

% Standard Packages
\usepackage[all]{xy}
\usepackage{amsmath}  
\usepackage{mathtools}
\usepackage{amsfonts}
\usepackage{amssymb}
\usepackage{amsthm}
\usepackage{enumitem}
\usepackage{hyperref}
\usepackage{fullpage}
\usepackage{graphicx}
\usepackage{url}

% Switch enumeration labels, to avoid conflicts with equation labels.
\setlist[enumerate,1]{label=(\roman*)}
\setlist[enumerate,2]{label=(\alph*)}
\setlist[enumerate,3]{label=(\Roman*)}
\setlist[enumerate,4]{label=(\Alph*)}

% Make table of contents show sections, but not lower.
\setcounter{tocdepth}{1}

\theoremstyle{definition}
\newtheorem{defn}{Definition}[section]

\newtheorem{rmk}[defn]{Remark}

\theoremstyle{plain}
\newtheorem{thm}[defn]{Theorem}

\newtheorem{lem}[defn]{Lemma}
\newtheorem{prop}[defn]{Proposition}
\newtheorem{cor}[defn]{Corollary}

%%%%MathbbLetters%%%%

\def\C{\ensuremath{\mathbb{C}}}
\def\D{\ensuremath{\mathbb{D}}}

\def\P{\ensuremath{\mathbb{P}}}

\def\R{\ensuremath{\mathbb{R}}}
\def\Z{\ensuremath{\mathbb{Z}}}

%%%%MathcalLetters%%%%

\def\AA{\ensuremath{\mathcal A}}
\def\BB{\ensuremath{\mathcal B}}

\def\EE{\ensuremath{\mathcal E}}
\def\FF{\ensuremath{\mathcal F}}

\def\HH{\ensuremath{\mathcal H}}
\def\II{\ensuremath{\mathcal I}}

\def\KK{\ensuremath{\mathcal K}}

\def\NN{\ensuremath{\mathcal N}}
\def\OO{\ensuremath{\mathcal O}}

\def\TT{\ensuremath{\mathcal T}}

%%%%MathbfLetters%%%%

%%%%MathfrakLetters%%%%

%%%%MathSymbols%%%%

\def\Bl{\mathop{\mathrm{Bl}}\nolimits} % Blow Up

\def\ch{\mathop{\mathrm{ch}}\nolimits}
\def\CH{\mathop{\mathrm{CH}}\nolimits}

\def\Cl{\mathop{\mathrm{Cl}}\nolimits}
\def\Coh{\mathop{\mathrm{Coh}}\nolimits}

\def\Db{\mathop{\mathrm{D}^{\mathrm{b}}}\nolimits}
\def\Ku{\mathop{\mathrm{Ku}}\nolimits}

\def\deg{\mathop{\mathrm{deg}}\nolimits}

\def\dim{\mathop{\mathrm{dim}}\nolimits}

\def\ext{\mathop{\mathrm{ext}}\nolimits}
\def\Ext{\mathop{\mathrm{Ext}}\nolimits}
\def\lExt{\mathop{\mathcal Ext}\nolimits}

\def\gcd{\mathop{{\mathrm{gcd}}}\nolimits}

 % means functor not the scheme

\def\hom{\mathop{\mathrm{hom}}\nolimits}
\def\Hom{\mathop{\mathrm{Hom}}\nolimits}
\def\lHom{\mathop{\mathcal Hom}\nolimits}

\def\RHom{\mathop{\mathbf{R}\mathrm{Hom}}\nolimits}

\def\Ku{\mathop{\mathrm{Ku}}\nolimits}

\def\Pic{\mathop{\mathrm{Pic}}\nolimits}

 % means functor not the scheme

\def\Spec{\mathop{\mathrm{Spec}}}

\def\td{\mathop{\mathrm{td}}\nolimits}

%%%%Various%%%%

\def\into{\ensuremath{\hookrightarrow}}
\def\onto{\ensuremath{\twoheadrightarrow}}

\begin{document}

\title{The desingularization of the theta divisor of a cubic threefold as a moduli space}

\author{Arend Bayer}
\address{School of Mathematics and Maxwell Institute,
University of Edinburgh,
James Clerk Maxwell Building,
Peter Guthrie Tait Road, Edinburgh, EH9 3FD,
United Kingdom}
\email{arend.bayer@ed.ac.uk}

\author{Sjoerd Viktor Beentjes}
\address{School of Mathematics and Maxwell Institute,
University of Edinburgh,
James Clerk Maxwell Building,
Peter Guthrie Tait Road, Edinburgh, EH9 3FD,
United Kingdom}
\email{Sjoerd.Beentjes@ed.ac.uk}

\author{Soheyla Feyzbakhsh}
\address{Department of Mathematics, Imperial College, London SW7 2AZ, United Kingdom}
\email{s.feyzbakhsh@imperial.ac.uk}

\author{Georg Hein}
\address{Universit\"at Duisburg-Essen, Fakult\"at f\"ur Mathematik, Georg Hein, WSC-O-3.58, 45117 Essen, Germany}
\email{georg.hein@uni-due.de}

\author{Diletta Martinelli}
\address{Korteweg-de Vries, Institute for Mathematics, Universiteit van Amsterdam, Science Park 107, 1098XG Amsterdam, Netherlands. }
\email{d.martinelli@uva.nl}

\author{Fatemeh Rezaee}
\address{School of Mathematics and Maxwell Institute,
University of Edinburgh,
James Clerk Maxwell Building,
Peter Guthrie Tait Road, Edinburgh, EH9 3FD,
United Kingdom}
%\curraddr{Mathematical Sciences, Loughborough University, Schofield Building, Epinal Way, Loughborough, LE11 3TU,   United Kingdom}
\email{fatemeh.rezaee@ed.ac.uk}
%\email{f.rezaee@lboro.ac.uk}
%\email{F.Rezaee@sms.ed.ac.uk}

\author{Benjamin Schmidt}
\address{Gottfried Wilhelm Leibniz Universit\"at Hannover, Institut f\"ur Algebraische Geometrie, Welfengarten 1, 30167 Hannover, Germany}
\email{bschmidt@math.uni-hannover.de}

\keywords{Cubic Threefolds,  Derived Categories, Stability Conditions}

\subjclass[2010]{14D20 (Primary); 14F05, 14J30, 14J45}

\begin{abstract}
We show that the moduli space $\overline{M}_X(v)$ of Gieseker stable sheaves on a smooth cubic threefold $X$ with Chern character $v = (3,-H,-H^2/2,H^3/6)$ is smooth and of dimension four. Moreover, the Abel-Jacobi map to the intermediate Jacobian of $X$ maps it birationally onto the theta divisor $\Theta$, contracting only a copy of $X \subset \overline{M}_X(v)$ to the singular point $0 \in \Theta$. 

We use this result to give a new proof of a categorical version of the Torelli theorem for cubic threefolds, which says that $X$ can be recovered from its Kuznetsov component $\Ku(X) \subset \Db(X)$. Similarly, this leads to a new proof of the description of the singularity of the theta divisor, and thus of the classical Torelli theorem for cubic threefolds, i.e., that $X$ can be recovered from its intermediate Jacobian.
\end{abstract}

\maketitle

\tableofcontents

\section{Introduction} 

Moduli spaces of sheaves provide examples of algebraic varieties with an interesting and rich geometry and they have been widely studied in the past few decades. In particular, there are many strong results regarding moduli spaces on surfaces, while the situation on threefolds is  less understood. We refer to \cite{HL10:moduli_sheaves} for a more detailed account of the theory, which has  been revolutionized by the introduction of stability conditions on triangulated categories by Bridgeland \cite{Bri07:stability_conditions}. 

Perhaps the main player of the seminal paper by Clemens and Griffiths \cite{CG72:cubic_threefolds} on the geometry of cubic threefolds is the theta divisor $\Theta$ of its intermediate Jacobian $J(X)$. Various authors have studied parametrizations of the theta divisor by moduli spaces of sheaves, see \cite{AKP04:thetamodel, Bea02:vector_bundles_cubic_threefold, Iliev:conicbundlesections}.

In this paper, we find a new, and in a sense most efficient, parametrization of this type: a smooth four-dimensional moduli space of stable sheaves isomorphic to the desingularization of the theta divisor. 

Let $X \subset \P^4$ be a smooth cubic threefold over $\C$ and $H$ the hyperplane section. Let $\overline{M}_X(v)$ be the moduli space of Gieseker-semistable sheaves on $X$ with Chern character $v \coloneqq (3, -H, -\tfrac{1}{2} H^2, \tfrac{1}{6} H^3)$. %The open locus of vector bundles will be denoted by $M_X(v) \subset \overline{M}_X(v)$. 

\newtheorem*{thm:main}{Theorem \ref{thm:main}}
\begin{thm:main}
The moduli space $\overline{M}_X(v)$ is smooth and irreducible of dimension $4$. More precisely, it is the blow up of $\Theta$ in its unique singular point. The exceptional divisor is isomorphic to the cubic threefold $X$ itself, and parametrizes non-locally free sheaves in $\overline{M}_X(v)$. 
\end{thm:main}

\subsection*{Moduli space in the Kuznetsov component}
The original motivation for our analysis of the moduli space $\overline{M}_X(v)$ comes from the study of moduli spaces of stable objects in a full triangulated subcategory $\Ku(X) \subset \Db(X)$ called the \emph{Kuznetsov component}. It is defined through the semi-orthogonal decomposition
\[
\Db(X) = \langle \Ku(X), \OO_X, \OO_X(H) \rangle.
\]
See \cite{Kuz04:SOD} for details on the decomposition and on the Kuznetsov component. 

Stability conditions on $\Ku(X)$ have been constructed in \cite{BMMS12:cubics} and \cite{BLMS17:stability_kuznetsov}. These stability conditions are Serre-invariant, which roughly means that stability of an object is preserved by the action of the Serre functor of $\Ku(X)$ (see Section \ref{sec:kuznetsov} for the precise definition). This property allows us to study stability of objects irrespective of the specific construction of stability conditions.

%In general, the construction of stability conditions on a triangulated category is a challenging task. In our case, they have been constructed on $\Ku(X)$ in \cite{BMMS12:cubics}, see also \cite{BLMS17:stability_kuznetsov}. The corresponding moduli spaces have been constructed in \cite{BLMNPS19:stability_families}. In this paper, we are interested in Serre-invariant stability conditions on $\Ku(X)$ which roughly means stability of an object is preserved after the action of the Serre functor of $\Ku(X)$ (see Section \ref{sec:kuznetsov} for the precise definition).      

The class $v$ in Theorem \ref{thm:main} is chosen as the class of the projection $K_P$ of a skyscraper sheaf $\OO_P$ for a point $P \in X$, which is defined by the short exact sequence
\[ 0 \to K_P \to \OO^{\oplus 4} \to I_P(1) \to 0. \]
These are the non-locally free torsion-free slope-stable sheaves appearing in Theorem~\ref{thm:main}, and we show that they are also stable as objects of $\Ku(X)$ with respect to any Serre-invariant stability condition. Hence, the moduli space $M_\sigma(v)$ of $\sigma$-stable objects in $\Ku(X)$ of Chern character $v$ contains $X$, yet its expected dimension is four. This was our first clue that this moduli space is of interest. Indeed, our next result says that the moduli spaces $M_\sigma(v)$ and $\overline{M}_X(v)$ agree entirely.

\begin{thm}[Theorem~\ref{thm:equalmodspace} and Proposition~\ref{prop:unique}]
Let $\sigma$ be an arbitrary Serre-invariant stability condition on $\Ku(X)$. Then the moduli space $M_{\sigma}(v)$ is isomorphic to the moduli space $\overline{M}_X(v)$.
\end{thm}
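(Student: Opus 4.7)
The plan is to construct a set-theoretic bijection between $\overline{M}_X(v)$ and $M_\sigma(v)$ and then upgrade it to an isomorphism of schemes via deformation theory. Using the theory of Serre-invariant stability conditions developed in \cite{BLMS17:stability_kuznetsov} (and refined subsequently), any two Serre-invariant stability conditions on $\Ku(X)$ should lie in a single $\widetilde{\GL}^+(2,\R)$-orbit, so the moduli space $M_\sigma(v)$ does not depend on the choice of such $\sigma$; it therefore suffices to verify the isomorphism for one convenient stability condition, for instance the double-tilt construction of \cite{BLMS17:stability_kuznetsov}. (This is presumably the content of Proposition~\ref{prop:unique}.)

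For the forward inclusion, I would first check that every $E \in \overline{M}_X(v)$ lies in $\Ku(X)$, i.e.\ that $\RHom(\OO_X,E) = \RHom(\OO_X(H),E) = 0$. A Riemann--Roch computation gives $\chi(\OO_X,E) = \chi(\OO_X(H),E) = 0$, so only the individual $\Ext^i$-groups need to be ruled out: slope-stability of $E$ (whose slope is negative) handles $\Hom$, Serre duality with $-K_X = 2H$ handles $\Ext^3$, and standard slope arguments using torsion-freeness handle $\Ext^1$ and $\Ext^2$. I would then show each such $E$ is $\sigma$-stable for the chosen $\sigma$: the Bogomolov--Gieseker inequality in tilt-stability severely restricts the possible Chern characters of any destabilizing subobject in the tilt heart, and a case analysis excludes each possibility. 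The non-locally free $K_P$ in the exceptional locus require extra care because their defining sequence $0 \to K_P \to \OO^{\oplus 4} \to I_P(1) \to 0$ produces the most dangerous candidate subobjects in $\Ku(X)$.

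Conversely, given a $\sigma$-stable $F \in \Ku(X)$ with $\ch(F) = v$, I would analyze its cohomology sheaves $\HH^i(F)$ in the standard heart $\Coh(X)$. Tilt-stability combined with the Bogomolov--Gieseker inequality should force these to vanish in all but one degree, so $F$ is (a shift of) a coherent sheaf; membership in $\Ku(X)$ together with $\sigma$-stability then forces torsion-freeness and Gieseker-semistability, because any destabilizing subsheaf would project to a destabilizing subobject in $\Ku(X)$ via the vanishings established above. To finish, I would promote the bijection to a scheme isomorphism by producing a morphism $M_\sigma(v) \to \overline{M}_X(v)$ from the universal family, and invoking the smoothness of $\overline{M}_X(v)$ from Theorem~\ref{thm:main} together with a comparison of Kodaira--Spencer maps.

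The main obstacle in both directions is the analysis along the exceptional divisor of $K_P$'s: excluding the candidate destabilizing subobjects suggested by the defining sequence in the forward direction, and, on the reverse side, classifying the $\sigma$-stable objects whose standard cohomology could \emph{a priori} sit in two degrees and showing that they must in fact be one of the $K_P$. Both tasks require delicate tilt-stability estimates tailored to the Chern character $v$ and to the specific geometry of $X$.
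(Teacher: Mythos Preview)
Your outline diverges from the paper in two significant ways, and one of them is a genuine gap.

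\textbf{The gap in your treatment of Proposition~\ref{prop:unique}.} You assume that any two Serre-invariant stability conditions lie in a single $\widetilde{\GL}^+(2,\R)$-orbit, and call this ``presumably the content of Proposition~\ref{prop:unique}.'' It is not. Uniqueness of the orbit is a substantially harder statement, established only in later work (\cite{FP}, \cite{Zhang}); the paper explicitly avoids it. Instead, the paper proves directly that $\sigma_1$-stability and $\sigma_2$-stability coincide for objects of class $[I_\ell]+[S(I_\ell)]$, using the weak Mukai Lemma of \cite{PY20:Fano3index2}: if $E$ is $\sigma_1$-stable but $\sigma_2$-unstable with destabilising triangle $F_1 \to E \to F_2$, then $\ext^1(F_1,F_1)+\ext^1(F_2,F_2) \le \ext^1(E,E)=4$, while each term is at least $2$ by \cite[Lemma~5.10]{PY20:Fano3index2}. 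This pins down the classes $[F_i]$ and produces a phase contradiction via the inequality $\phi(F) < \phi(S(F)) < \phi(F)+2$. Your route would require reproving the orbit-uniqueness theorem, which is not a step you can skip.

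\textbf{A different (and easier) approach for the converse in Theorem~\ref{thm:equalmodspace}.} For the direction ``$\sigma(\alpha)$-stable $\Rightarrow$ Gieseker-stable sheaf'' you propose to analyze the cohomology sheaves of a rank-$3$ object directly. The paper instead uses a trick: it classifies $\sigma(\alpha)$-stable objects of the \emph{Serre-shifted} class $2[I_\ell]-[S(I_\ell)]$, which has Chern character $(0,H,-\tfrac12 H^2,-\tfrac16 H^3)$. These are torsion objects supported on hyperplane sections, and the paper has already classified them completely in Proposition~\ref{prop:torsion_classification}. Applying the Serre functor (an equivalence, hence an isomorphism of moduli) carries them back to exactly the $K_P$ and $E_D$. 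This sidesteps the delicate two-cohomology-degree analysis you flag as the main obstacle; the torsion classification is where the real work was already done. Your forward direction (Gieseker $\Rightarrow$ $\sigma$-stable) matches the paper's Lemma~\ref{lem:embedding-Gieseker}, which does use the absence of walls along $\beta=-1$ as you suggest.
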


%The choice of the class $v$ in Theorem \ref{thm:main} is motivated by the fact that the projection $K_P$ of a skyscraper sheaf $\OO_P$ for any point $P \in X$ to $\Ku(X)$ has this precise class. We show that the objects $K_P$ are all slope-stable sheaves, and they are also stable with respect to any Serre-invariant stability condition on $\Ku(X)$. %From this fact one might guess the moduli space to be $X$.
%The fact that $\overline{M}_X(v)$ has dimension four locally around $K_P$ due to $\Ext^1(K_P, K_P) = \C^4$ and $\Ext^2(K_P, K_P) = 0$ was our first clue that this moduli space might be of interest. Indeed, our next result says that stable objects in $\Ku(X)$ of class $[K_p]$ are precisely slope-stable sheaves of Chern character $v$:

To summarise, we project the structure sheaf of a point into the Kuznetsov component and take its moduli space. It obviously contains  $X$ but is bigger. It is the resolution of the theta divisor, with $X$ as the exceptional divisor. Thus, we recover $X$ from $\Ku(X)$ or from the intermediate Jacobian, i.e., we obtain new proofs of both the categorical and classical Torelli theorem for cubic threefolds:  

\begin{thm}[Corollary \ref{cor:classicalTorelli} and Theorem \ref{thm:categorical-Torelli}]
\label{thm:Torelli}
Let $X_1$ and $X_2$ be smooth cubic threefolds. The following are equivalent:
\begin{enumerate}
    \item\label{item:isom} $X_1$ and $X_2$ are isomorphic.
    \item\label{item:Kuequiv} $\Ku(X_1)$ and $\Ku(X_2)$ are equivalent as triangulated categories.
    \item \label{item:JXisom} $J(X_1)$ and $J(X_2)$ are isomorphic as principally polarised abelian varieties.
\end{enumerate}
\end{thm}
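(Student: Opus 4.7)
The implications \ref{item:isom} $\Rightarrow$ \ref{item:Kuequiv} and \ref{item:isom} $\Rightarrow$ \ref{item:JXisom} are immediate, so I focus on the two converses, which contain all the substance.

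For \ref{item:Kuequiv} $\Rightarrow$ \ref{item:isom}, given an equivalence $\Phi : \Ku(X_1) \xrightarrow{\sim} \Ku(X_2)$, the plan is to transport the moduli problem through $\Phi$ and then read off the isomorphism of cubic threefolds from intrinsic features of the moduli space. First, since $\Phi$ intertwines the (intrinsically defined) Serre functors, it carries Serre-invariant stability conditions to Serre-invariant ones, so a fixed Serre-invariant $\sigma_1$ on $\Ku(X_1)$ produces a Serre-invariant $\Phi_{*}\sigma_1$ on $\Ku(X_2)$. Next I would analyse the isometry induced by $\Phi$ on the numerical Grothendieck group $\mathcal{N}(\Ku(X_i))$, which is a rank-two lattice with its Euler pairing; the class $v$ is distinguished in it up to the finite lattice symmetry group, and for cubic threefolds each such symmetry is realised by an autoequivalence of $\Ku(X_i)$, so after composing $\Phi$ on the target with such an autoequivalence I may assume $\Phi(v) = v$. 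Then $\Phi$ induces an isomorphism $M_{\sigma_1}(v) \xrightarrow{\sim} M_{\Phi_{*}\sigma_1}(v)$, which via the identification $M_\sigma(v) \cong \overline{M}_X(v)$ gives an isomorphism $\overline{M}_{X_1}(v) \cong \overline{M}_{X_2}(v)$. Finally, I would recover $X_i$ from its moduli space intrinsically: by Theorem \ref{thm:main}, $X_i \subset \overline{M}_{X_i}(v)$ is the exceptional divisor of the Albanese map $\overline{M}_{X_i}(v) \to J(X_i)$, a datum depending only on the abstract variety. Restricting the above isomorphism yields $X_1 \cong X_2$.

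For \ref{item:JXisom} $\Rightarrow$ \ref{item:isom}, I would start from an isomorphism $f : J(X_1) \xrightarrow{\sim} J(X_2)$ of principally polarised abelian varieties. Then $f$ sends the (symmetric) theta divisor $\Theta_1$ to a translate of $\Theta_2$, and by Theorem \ref{thm:main} each $\Theta_i$ has a unique singular point, so after composing with a translation I obtain an isomorphism $\Theta_1 \xrightarrow{\sim} \Theta_2$ mapping singular point to singular point. Blowing up both sides gives an isomorphism $\overline{M}_{X_1}(v) \cong \overline{M}_{X_2}(v)$ sending exceptional divisor onto exceptional divisor, and a final appeal to Theorem \ref{thm:main} identifies these exceptional divisors with $X_1$ and $X_2$.

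The main obstacle will be the categorical step \ref{item:Kuequiv} $\Rightarrow$ \ref{item:isom}: controlling how an arbitrary equivalence $\Phi$ acts on both the numerical class $v$ and the family of Serre-invariant stability conditions requires concrete knowledge of the autoequivalence group of $\Ku(X)$ and of the isometry structure of $\mathcal{N}(\Ku(X))$, together with the theorem that every Serre-invariant stability condition produces the same moduli space $\overline{M}_X(v)$. The Hodge-theoretic step \ref{item:JXisom} $\Rightarrow$ \ref{item:isom} is comparatively direct once the uniqueness of the singular point of $\Theta$ is in hand.
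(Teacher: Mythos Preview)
Your proposal is correct and follows essentially the same route as the paper. For \ref{item:Kuequiv} $\Rightarrow$ \ref{item:isom}, the paper makes your lattice step concrete via Lemma~\ref{lem:clases-3}: the classes with $\chi([A],[A])=-3$ are exactly $\pm[K_P]$, $\pm[S(K_P)]$, $\pm[S^2(K_P)]$, so the needed autoequivalence is simply a power of the Serre functor composed with a shift (no further knowledge of $\Aut(\Ku(X))$ is required); the independence of the moduli space from the choice of Serre-invariant $\sigma$ is Proposition~\ref{prop:unique}, and the intrinsic recovery of $X_i$ is phrased in Lemma~\ref{lem:formal_neighborhood} as ``the unique divisor contracted by any morphism to a complex abelian variety'' (your Albanese formulation is equivalent). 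For \ref{item:JXisom} $\Rightarrow$ \ref{item:isom}, the paper's Corollary~\ref{cor:classicalTorelli} invokes the description of the formal neighbourhood of $0\in\Theta$ as the cone over $X$ rather than passing through the blow-up, but since the exceptional divisor of the blow-up is the projectivised tangent cone, the two arguments are the same.
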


\subsection*{Proof ideas}

The proof of Theorem \ref{thm:main} relies on two classical ingredients. Firstly, we use the fact that any irreducible theta divisor is normal due to \cite{EL97:singularities_theta_divisors}. Secondly, we use a characterization of the theta divisor of the intermediate Jacobian in terms of twisted cubics, see Proposition \ref{prop:abel-jacobi_twisted_cubic}. This was proved by Beauville in \cite{Bea02:vector_bundles_cubic_threefold}, but it can also be deduced from the description of $\Theta$ as differences of lines in \cite{CG72:cubic_threefolds}, see Remark \ref{rmk:abel-jacobi_twisted_cubic}.

The strategy to prove Theorem \ref{thm:main} is to vary the notion of stability and reach a detailed description of the objects that belong to the moduli space $\overline{M}_X(v)$ through wall-crossing. Since $X$ has Picard rank one, Gieseker stability cannot be varied. This is where the derived category comes into play in the form of tilt-stability introduced in \cite{Bri08:stability_k3} for K3 surfaces, and then further generalized to other surfaces and threefolds in \cite{AB13:k_trivial, BMT14:stability_threefolds}. In fact, we give a complete description of the wall and chamber structure (see Section \ref{sec:wallstructure}). %see Theorem \ref{thm:nested_wall_thm} and Lemma \ref{lem:random_bounds}. 
Once a set-theoretic description of $\overline{M}_X(v)$ has been reached, we use standard deformation theory arguments in Corollary \ref{cor:smooth_4dim} to deduce that it is smooth and of dimension four.

To prove Theorem \ref{thm:equalmodspace}, we first show the claim for the specific stability condition constructed in \cite{BLMS17:stability_kuznetsov} which are Serre-invariant by \cite{PY20:Fano3index2}. We then prove in a completely separate argument that our moduli space is independent of the choice of Serre-invariant stability conditions $\sigma$. The essential ingredient in this last argument is the weak Mukai Lemma from \cite{PY20:Fano3index2}.  

\subsection*{Related work}
In the recent paper \cite{APR19:kuznetsov_fano_torsion_moduli}, the authors studied moduli spaces of some torsion sheaves in the Kuznetsov components of Fano threefolds with Picard rank one and index two. In the case of cubic threefolds they study $M_{\sigma}([S^2(K_P)])$ ($S$ is the Serre functor on $\Ku(X)$), but do not obtain our detailed geometric description. A key difference is that in their case the moduli space in the Kuznetsov component is different from the moduli space of Gieseker-semistable sheaves.

Classical Torelli is the implication \ref{item:JXisom} $\Rightarrow$ \ref{item:isom} in Theorem \ref{thm:Torelli} which was first proved in \cite{CG72:cubic_threefolds}. The implication \ref{item:Kuequiv} $\Rightarrow$ \ref{item:JXisom} was first established in \cite[Theorem 1.1]{BMMS12:cubics}, where it was shown that the Fano variety of lines $F(X)$ can be recovered from $\Ku(X)$ as a moduli space of stable objects. Thus, one obtains the intermediate Jacobian $J(X)$ as the Albanese variety of $F(X)$. A more recent argument for \ref{item:Kuequiv} $\Rightarrow$ \ref{item:JXisom} can be deduced from Perry's categorical construction of intermediate Jacobians \cite[Section 5.3]{Perry:integralHodge}, when the equivalence is given by a Fourier-Mukai kernel on $X_1 \times X_2$. Instead, our paper gives a very direct geometric argument for \ref{item:Kuequiv} $\Rightarrow$ \ref{item:isom}, as well as a variant of the proof of classical Torelli via the description of the singularity of theta divisor implied by Theorem \ref{thm:main}.

Since this article originally appeared on the arXiv, \cite{FP} and \cite{Zhang} proved uniqueness of Serre-invariant stability conditions on $\Ku(X)$. Proposition \ref{prop:unique} in the last section could now be obtained as an immediate corollary.

%%%%%%%%%%%%%%%%%%%%%%%%%%%%%%%%%%%%%
%%%%%%%%%%%%%%%%%%%%%%%%%%%%%%%%%%%%%
%%%%%%%%%%%%%%%%%%%%%%%%%%%%%%%%%%%%%

\subsection*{Acknowledgements}
This work originated from a working group at the workshop ``Semiorthogonal decompositions, stability conditions and sheaves of categories" in Toulouse in May 2018, a meeting of the CIMI network ``Semiorthogonal decompositions and stability condition''. We are deeply grateful to the organizers of the workshop for the nice and productive atmosphere. In particular, we would like to thank Aaron Bertram, Naoki Koseki, Marin Petkovic, Maxim Smirnov, and Benjamin Sung for very useful discussions we had in Toulouse at the beginning of the project. We would also like to express our gratitude to Marcello Bernardara, Emanuele Macr{\`i}, Alex Perry, Laura Pertusi, and Richard Thomas for many useful comments on the subject of this paper. In addition, we are very grateful to the referee for many detailed comments, making the exposition more accurate and readable.

A.~B., S.~B., S.~F., D.~M. and F.~R. were in part supported by ERC starting grant no.~337039. A.~B. and F.~R. were in part supported by ERC consolidator grant no.~819864 WallCrossAG.  F.~R. was partially supported by EPSRC grant EP/T015896/1 and a Maxwell Institute Research Fellowship. S.~F. was partially supported by EPSRC postdoctoral fellowship EP/T018658/1. This material is also partially based upon work supported by the NSF under Grant No. DMS-1440140 while A.~B., D.~M. and F.~R. were in residence at the MSRI in Berkeley, California, during the spring 2019 semester.

%%%%%%%%%%%%%%%%%%%%%%%%%%%%%%%%%%%%%
%%%%%%%%%%%%%%%%%%%%%%%%%%%%%%%%%%%%%
%%%%%%%%%%%%%%%%%%%%%%%%%%%%%%%%%%%%%

\subsection*{Notation}

\begin{center}
   \begin{supertabular}{ r l }
     $X$ & smooth cubic threefold in $\P^4$ over $\C$ \\
     $H$ & the ample generator of $\Pic(X)$ \\
     $Y$ & a hyperplane section of $X$ \\
     $\Db(X)$ & bounded derived category of coherent sheaves on $X$ \\
     $\Ku(X)$ & the Kuznetsov component inside $\Db(X)$ \\
     $\CH^*(X)$ & the Chow ring of $X$ \\ 
     $\CH_{\text{n}}^*(X)$ & the numerical Chow ring of $X$, obtained as $\CH^*(X)$ modulo numerical equivalence \\
     $\HH^{i}(E)$ & the $i$-th cohomology sheaf of a complex $E \in \Db(X)$ \\
     $H^i(E)$ & the $i$-th sheaf cohomology group of a complex $E \in \Db(X)$ \\
     %$\D(\cdot)$ & the derived dual $\RlHom(\cdot, \OO_X)[1]$ \\
     $\ch(E)$ & total Chern character of an object $E \in \Db(X)$ up to numerical equivalence \\
     $c(E)$ & total Chern class of an object $E \in \Db(X)$ up to numerical equivalence \\
     $\widetilde{\ch}(E)$ & total Chern character of an object $E \in \Db(X)$ up to rational equivalence \\
     $\widetilde{c}(E)$ & total Chern class of an object $E \in \Db(X)$ up to rational equivalence \\
     $\ch_{\leq l}(E)$ & $(\ch_0(E), \ldots, \ch_l(E))$ \\
     $\widetilde{\ch}_{\leq l}(E)$ & $(\widetilde{\ch}_0(E), \ldots, \widetilde{\ch}_l(E))$ \\
     %$H \cdot \ch(E)$ & $\left(\frac{H^3 \cdot \ch_0(E)}{H^3}, \frac{H^{2} \cdot \ch_1(E)}{H^3}, \ldots, \frac{\ch_n(E)}{H^3}\right)$ \\
     %$H \cdot \ch_{\leq l}(E)$ & $\left(\frac{H^3 \cdot \ch_0(E)}{H^3}, \ldots, \frac{H^{3-l} \cdot \ch_l(E)}{H^3}\right)$
   \end{supertabular}
\end{center}

%%%%%%%%%%%%%%%%%%%%%%%%%%%%%%%%%%%%%
%%%%%%%%%%%%%%%%%%%%%%%%%%%%%%%%%%%%%
%%%%%%%%%%%%%%%%%%%%%%%%%%%%%%%%%%%%%

\section{Cubic threefolds and intermediate Jacobians} \label{sec:2}

Let $X \subset \P^4$ be a smooth cubic threefold. In their celebrated article \cite{CG72:cubic_threefolds}, Clemens and Griffiths introduced the \emph{intermediate Jacobian} of $X$. It is the complex torus defined as
\[
J(X) \coloneqq H^{2,1}(X)^{\vee}/H_3(X, \Z) = H^1(\Omega^2_X)^{\vee}/H_3(X, \Z).
\]
It turns out that $J(X)$ is a principally polarized abelian variety of dimension five.

Let $\{Z_b\}_{b \in \BB}$ be a family of $1$-cycles over a variety $\BB$. The choice of a base point $b_0 \in \BB$ leads to an Abel-Jacobi map $\Psi_{\BB}: \BB \to J(X)$ as follows. For any $b \in \BB$ the cycle $Z_b - Z_{b_0}$ has degree $0$, i.e., it is homologically trivial, and can be written as the boundary $\partial \Gamma$ for a $3$-chain $\Gamma$. The integral $\int_\Gamma$ is an element in $H^{1,2}(X)^{\vee}$ whose class in $J(X)$ is the image of the Abel-Jacobi map. By \cite[Theorem 2.20]{Gri68:periodsII} the map $\Psi_{\BB}$ is algebraic along the smooth locus of $\BB$.

If $Z_b = C$ is a smooth curve, then the induced morphism on tangent spaces has been described by Welters, see \cite[Section 2]{Wel81:abel_jacobi}. Recall that the tangent space of the Hilbert scheme at $C$ is naturally given by $H^0(\NN_{C/X})$ where $\NN_{C/X}$ is the normal bundle. The tangent space of $J(X)$ at any point is given by $H^{1,2}(X)^{\vee} = H^1(\Omega^2_X)^{\vee}$. By definition, the \emph{infinitesimal Abel-Jacobi map} $\psi_C \colon H^0(\NN_{C/X}) \to H^1(\Omega^2_X)^{\vee}$ is the map of tangent spaces induced by $\Psi_{\BB}$. We get a dual morphism $\psi_C^{\vee}\colon H^1(\Omega^2_X) \to H^0(\NN_{C/X})^{\vee}$.

% We have a perfect pairing $N_{C/X} \otimes N_{C/X} \to \bigwege^2 N_{C/X}$. Together with adjunction we get $N_{C/X} \otimes \OO_X(-2H) = N_{C/X}^{\vee}$. By Serre duality, we get a dual morphism $\psi_C^{\vee}: H^1(\Omega^2_X) \to H^0(N_{C/X})^{\vee} = H^1(N_{C/X}(-2H))$.

\begin{lem}
\label{lem:infinitesimal_abel_jacobi}
The following diagram is commutative and has exact rows and columns.

\centerline{
\xymatrix{
& 0 \ar[d] & \\
& H^0(\II_C(H)) \ar[d] & \\
& H^0(\OO_X(H)) \ar[r]^{\cong} \ar[d] & H^1(\Omega^2_X) \ar[d]^{\psi_C^{\vee}} \\
H^0(\NN_{C/\P^4}(-2H)) \ar[r] & H^0(\OO_C(H)) \ar[r] & H^0(\NN_{C/X})^{\vee}
}}
\end{lem}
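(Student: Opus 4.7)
The plan is to assemble the diagram from three standard exact sequences of sheaves and then invoke Welters' explicit description of the infinitesimal Abel-Jacobi map to verify the commutativity of the central square.

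First, the middle column is just the beginning of the long exact sequence in cohomology of the ideal sheaf sequence
\[ 0 \to \II_C(H) \to \OO_X(H) \to \OO_C(H) \to 0, \]
so its exactness is automatic. The top row is the isomorphism $H^0(\OO_X(H)) \cong H^1(\Omega_X^2)$, which is a standard consequence of Griffiths' residue description of the middle Hodge structure of a smooth cubic hypersurface: with $F$ the defining equation, the degree-one graded piece $R_1$ of the Jacobian ring equals $H^0(\OO_{\P^4}(H))$ (the linear part of the Jacobian ideal vanishes since $\partial F/\partial x_i$ are quadrics), and restriction to $X$ identifies this with both $H^0(\OO_X(H))$ and $H^{2,1}(X) = H^1(\Omega_X^2)$; both are five-dimensional.

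Next, I would produce the bottom row from the normal bundle sequence
\[ 0 \to \NN_{C/X} \to \NN_{C/\P^4} \to \NN_{X/\P^4}|_C \to 0, \]
using $\NN_{X/\P^4}|_C = \OO_C(3H)$ and twisting by $\OO(-2H)$. Its long exact sequence in cohomology provides
\[ H^0(\NN_{C/\P^4}(-2H)) \to H^0(\OO_C(H)) \xrightarrow{\delta} H^1(\NN_{C/X}(-2H)). \]
The target is identified with $H^0(\NN_{C/X})^{\vee}$ via Serre duality on $C$: adjunction gives $\omega_C = \omega_X|_C \otimes \det \NN_{C/X} = \OO_C(-2H) \otimes \det \NN_{C/X}$, and the rank-two identity $\NN_{C/X}^{\vee} \otimes \det \NN_{C/X} \cong \NN_{C/X}$ yields $\NN_{C/X}^{\vee} \otimes \omega_C \cong \NN_{C/X}(-2H)$, whence $H^1(\NN_{C/X}(-2H)) \cong H^0(\NN_{C/X})^{\vee}$. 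This produces the bottom row together with its exactness at the middle term.

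The substantive content of the lemma is then the commutativity of the central square: one must verify that restriction of a section of $\OO_X(H)$ to $C$, followed by the connecting map $\delta$ and the Serre duality identification above, recovers $\psi_C^{\vee}$ transported across the top isomorphism. This is precisely the classical description of the infinitesimal Abel-Jacobi map due to Welters \cite{Wel81:abel_jacobi}, who shows that $\psi_C$ factors as cup-product with the extension class of the normal bundle sequence followed by the Griffiths residue. I expect this last step to be the main obstacle; my plan is to cite Welters directly rather than re-derive the Hodge-theoretic calculation, and only verify that the identifications of $H^1(\Omega_X^2)$ and $H^0(\NN_{C/X})^{\vee}$ used here match his conventions.
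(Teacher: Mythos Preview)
Your approach is essentially the same as the paper's: both cite Welters \cite{Wel81:abel_jacobi} for the construction of the maps and the commutativity of the square, and both reduce the remaining content to establishing the isomorphism $H^0(\OO_X(H)) \cong H^1(\Omega_X^2)$. The only difference is how that isomorphism is obtained. You invoke Griffiths' Jacobian ring description, identifying $H^{2,1}(X)$ with the degree-one piece $R_1$. The paper instead realises the map as the connecting homomorphism of the sequence $0 \to \Omega_X^2 \to \Omega^3_{\P^4}|_X(3H) \to \OO_X(H) \to 0$ (obtained from the normal sequence of $X \subset \P^4$ after twisting by $K_X$), and checks directly that $H^i(T_{\P^4}|_X(-2H)) = 0$ for $i = 0,1$. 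These are two faces of the same computation: the Griffiths residue isomorphism is precisely this connecting map. The paper's version has the slight advantage that it produces the isomorphism as an explicit connecting homomorphism, which is the form in which Welters' lemma is stated, so no additional compatibility check is needed; your version is perhaps more conceptual but, as you correctly note, requires a moment's care to match conventions.
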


\begin{proof}
This is mostly \cite[Lemma 2.8]{Wel81:abel_jacobi} and the preceding construction of the morphisms. The map $H^0(\OO_X(H)) \to H^1(\Omega^2_X)$ is the connecting morphism in a long exact sequence
\[
H^0(\Omega^3_{\P^4} \otimes \OO_X(3H)) \to H^0(\OO_X(H)) \to H^1(\Omega^2_X) \to H^1(\Omega^3_{\P^4} \otimes \OO_X(3H)).
\]
The wedge product induces a perfect pairing $\Omega^3_{\P^4} \otimes \Omega_{\P^4} \to \OO_{\P^4}(-5)$. Therefore, $\Omega^3_{\P^4} = T_{\P^4}(-5)$. For $i = 0, 1$ we have
\[
H^i(T_{\P^4} \otimes \OO_X(-2H)) = 0. \qedhere
\]
\end{proof}

Recall that the Lefschetz Hyperplane Theorem says that the hyperplane section $H \in \Pic(X)$ generates the Picard group.
One can use twisted cubics to characterize the theta divisor of $J(X)$. A proof of the following result can be found in \cite[Proposition 5.2]{Bea02:vector_bundles_cubic_threefold}.
Let $\TT$ be the open locus of smooth twisted cubics in the Hilbert scheme of $X$, and let $\overline{\TT}$ be its closure.

\begin{prop}
\label{prop:abel-jacobi_twisted_cubic}
The Abel-Jacobi map $\varphi: \overline{\TT} \to J(X)$ with base point of class $H^2$ is algebraic. Its image is a theta divisor $\Theta \subset J(X)$ and its generic fiber is isomorphic to $\P^2$.
\end{prop}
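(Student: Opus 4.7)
The plan is a three-step program: (i) establish algebraicity and well-definedness, (ii) compute the generic fibre by residuation on a cubic surface, and (iii) identify the image with $\Theta$ via the classical description of the theta divisor.

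Every smooth twisted cubic $C \subset X$ has degree $3$, so its homology class equals $[H^2]$ and $C - H^2$ is homologically trivial, making $\varphi(C) \in J(X)$ well-defined. Algebraicity on the smooth locus $\TT$ is the result of Griffiths already cited in the opening of this section; extending algebraically to the closure $\overline{\TT}$ is standard, since the family of cycles parametrised by $\overline{\TT}$ is itself algebraic and $J(X)$ is proper (one may, if needed, pass to a resolution).

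For the generic fibre, the plan is residuation on a smooth cubic surface. A general $C \in \TT$ is nondegenerate and spans a unique $\P^3 \subset \P^4$, and for generic such $C$ the intersection $S := X \cap \P^3$ is a smooth cubic surface containing $C$. Using $K_S = -H|_S$ and adjunction one gets $C^2 = -2 - C\cdot K_S = 1$, and Riemann--Roch on $S$ then yields $h^0(\OO_S(C)) = 3$, so $|C|$ is a $\P^2$ on $S$. All members of $|C|$ are rationally equivalent on $S$ and hence on $X$, and therefore share the same Abel-Jacobi class, producing a $\P^2 \subset \varphi^{-1}(\varphi(C))$. A standard parameter count gives $\dim \overline{\TT} = 6$ (four parameters for a $\P^3 \subset \P^4$, two for the choice of linear system on $S$), so the image $D := \varphi(\overline{\TT})$ has dimension at most four. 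The infinitesimal analysis of Lemma \ref{lem:infinitesimal_abel_jacobi} pins down the rank precisely: for a generic $C$, the kernel of $H^0(\OO_X(H)) \to H^0(\OO_C(H))$ equals $H^0(\II_C(H)) = \C$ (the unique hyperplane containing $C$), so the codifferential $\psi_C^\vee$ has one-dimensional image. This confirms $\dim D = 4$ and that the generic fibre is exactly $\P^2$.

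It remains to identify $D$ with the theta divisor $\Theta$. Since $\Theta$ is, up to translation, the unique irreducible effective divisor on $J(X)$ realising the principal polarisation, it suffices to show $[D] = \theta \in H^2(J(X),\Z)$. The cleanest route is to match $\varphi$ with the classical description of $\Theta$ via differences of lines from \cite{CG72:cubic_threefolds} (cf. Remark \ref{rmk:abel-jacobi_twisted_cubic}): for a generic twisted cubic $C$, residuation on $S$ (or on a quadric hypersurface $Q$ with $X \cap Q = C \cup C'$ for a suitable residual curve) expresses $\varphi(C)$ as a difference of Abel-Jacobi classes of lines on $X$, thereby identifying $D$ with the image of the difference map from the Fano variety of lines, which is $\Theta$. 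This last identification step is the main obstacle: the algebraicity, dimensional, and infinitesimal computations are essentially formal, but the concrete geometric matching between residuation of twisted cubics and pairs of lines on $X$ is delicate, and is where Beauville's argument in \cite[Proposition 5.2]{Bea02:vector_bundles_cubic_threefold} focuses its effort.
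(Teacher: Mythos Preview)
The paper does not give its own proof of this proposition; it cites \cite[Proposition 5.2]{Bea02:vector_bundles_cubic_threefold} and then sketches an alternative argument in Remark~\ref{rmk:abel-jacobi_twisted_cubic}. Your outline is broadly in the spirit of that remark, with the added ingredient of the infinitesimal Abel--Jacobi analysis, so the comparison is mainly against the remark's sketch.

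There is one genuine slip in your step (ii). From the diagram in Lemma~\ref{lem:infinitesimal_abel_jacobi}, the kernel of $H^0(\OO_X(H)) \to H^0(\OO_C(H))$ being one-dimensional means the codifferential $\psi_C^\vee$ has \emph{four}-dimensional image, not one-dimensional: the restriction map has rank~$4$, and since $H^0(\NN_{C/\P^4}(-2H)) = 0$ the bottom horizontal map is injective, so $\psi_C^\vee$ has rank~$4$. (This is exactly Lemma~\ref{lem:abel_jacobi_infinitesimal}, which in the paper is stated and proved separately, after Proposition~\ref{prop:abel-jacobi_twisted_cubic}.) Your conclusion $\dim D = 4$ is correct, but the sentence as written says the opposite of what you need.

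On the comparison: your route to the generic fibre combines the linear system $|C|$ on the cubic surface with the infinitesimal rank-$4$ computation to pin down $\dim D = 4$ and hence the fibre dimension. Remark~\ref{rmk:abel-jacobi_twisted_cubic} instead factors $\varphi$ through the space $\FF$ of pairs $(D, Y)$ (divisor class of a twisted cubic, hyperplane section), noting $\TT \to \FF$ has generic fibre $\P^2$, and then shows $\FF \to \Theta$ is degree one by matching with the $6:1$ cover $F \times F \to \Theta$ of Clemens--Griffiths. The remark's approach is cleaner for step (iii), because it makes the ``residuation to a difference of lines'' concrete: on a smooth cubic surface there are exactly six ways to write $D - H^2$ as a difference of two lines, and this matches the degree of the Clemens--Griffiths map. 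Your step (iii) gestures at this but, as you acknowledge, does not carry it out; the remark shows what the missing combinatorics looks like.
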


\begin{rmk}
\label{rmk:abel-jacobi_twisted_cubic}
Proposition \ref{prop:abel-jacobi_twisted_cubic} can be deduced from the description of $\Theta$ as differences of lines as well. We give a rough sketch of the argument here.

Let $F$ be the Fano variety of lines on $X$. According to \cite{CG72:cubic_threefolds} the morphism $F \times F \to J(X)$ that maps $(L, L') \mapsto [L] - [L']$ is generically a $6$ to $1$ cover of $\Theta$.

Since a twisted cubic $C \subset X$ lies in a unique cubic surface $Y \subset X$, the morphism $\TT \to J(X)$ factors via the moduli space $\FF$ of pairs $(D, Y)$, where $Y$ is a cubic surface and $D$ is the divisor class of a twisted cubic. The generic fiber of the morphism $\TT \to \FF$ is given by $\P(H^0(\OO_Y(D)) = \P^2$. Indeed, $\OO_Y(D)$ is the pullback of $\OO_{\P^2}(1)$ if $Y$ is written as the blow up of six general points in $\P^2$.

If $D$ is the class of a twisted cubic on a smooth cubic surface, then $D - H^2$ can be written as the difference of two lines on a cubic surface. Therefore, the Abel-Jacobi morphism maps onto $\Theta$. Moreover, there are precisely six ways to write $D - H^2$ as the difference of two lines. Together with 
 the fact that $F \times F \to J(X)$ is generically a $6$ to $1$ cover of $\Theta$, we get that $\FF \to \Theta$ has degree $1$.
\end{rmk}

%A few further explanations are in order. Each twisted cubic $C \subset X \subset \P^4$ lies in a unique hyperplane $V \subset \P^4$ and therefore, it lies in a unique cubic surface $Y = X \cap V$. If $Y$ is smooth, then it is well-known that $H^0(\OO_Y(C)) = \C^3$ and it turns out that the projectivization $\P^2$ is precisely the fiber of the Abel-Jacobi morphism. However, we will have to worry about singular $Y$ as well.

\begin{lem}
\label{lem:normal_bundles}
Let $\P^1 \cong C \subset X \subset \P^4$ be a twisted cubic. Then $\NN_{C/\P^4} = \OO_{\P^1}(5)^{\oplus 2} \oplus \OO_{\P^1}(3)$, $h^0(\NN_{C/X}) = 6$, and $h^1(\NN_{C/X}) = 0$. In particular, the Hilbert scheme $\TT$ is smooth of dimension six.
\end{lem}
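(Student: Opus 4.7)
The lemma combines four assertions: computing $\NN_{C/\P^4}$, showing $h^0(\NN_{C/X})=6$ and $h^1(\NN_{C/X})=0$, and concluding smoothness of $\TT$ of dimension six. For $\NN_{C/\P^4}$: any twisted cubic $C\cong\P^1$ is non-degenerate in a unique $\P^3\subset\P^4$, in which it is a rational normal cubic, so classically $\NN_{C/\P^3}\cong\OO_{\P^1}(5)^{\oplus 2}$. The normal-bundle sequence for the flag $C\subset\P^3\subset\P^4$, together with $\NN_{\P^3/\P^4}|_C\cong\OO_{\P^1}(3)$, gives an extension $0\to\OO_{\P^1}(5)^{\oplus 2}\to\NN_{C/\P^4}\to\OO_{\P^1}(3)\to 0$ that splits since $\Ext^1(\OO_{\P^1}(3),\OO_{\P^1}(5))=H^1(\OO_{\P^1}(2))=0$. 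The sequence $0\to\NN_{C/X}\to\NN_{C/\P^4}\to\NN_{X/\P^4}|_C\to 0$ with $\NN_{X/\P^4}|_C\cong\OO_{\P^1}(9)$ then yields $\chi(\NN_{C/X})=16-10=6$ by Riemann--Roch on $\P^1$, so $h^0(\NN_{C/X})=6$ will follow once $H^1(\NN_{C/X})=0$ is proved.

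For the vanishing, the sequence $0\to T_C\to T_X|_C\to\NN_{C/X}\to 0$ together with $H^2(T_C)=0$ reduces the claim to $H^1(T_X|_C)=0$. Combining the hypersurface sequence $0\to T_X|_C\to T_{\P^4}|_C\to\OO_C(3H)\to 0$ with the Euler sequence on $\P^4$ restricted to $C$ (which gives $H^1(T_{\P^4}|_C)=0$ since $H^1(\OO_{\P^1}(3))=0$), we are reduced to proving surjectivity of the Jacobian multiplication
\[
H^0(\OO_C(H))^{\oplus 5}\to H^0(\OO_C(3H)),\qquad (g_i)\mapsto\sum_{i=0}^{4} g_i\cdot\partial_i f|_C,
\]
where $f$ is the defining cubic of $X$.

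This surjectivity is the main obstacle. The underlying sheaf map $\OO_C(H)^{\oplus 5}\to\OO_C(3H)$ is itself surjective since smoothness of $X$ forces the partials $\partial_i f$ to have no common zero on $X$; thus its kernel $K$ is a rank-$4$ vector bundle of degree $6$ on $\P^1$, and it suffices to show $H^1(K)=0$, i.e., each summand $\OO(k_i)$ in the splitting type of $K$ satisfies $k_i\geq-1$. The inclusion $K\hookrightarrow\OO_{\P^1}(3)^{\oplus 5}$ forces $k_i\leq 3$, and the nowhere-vanishing Euler section $(x_i)|_C\in H^0(K)$ (coming from $\sum_i x_i\partial_i f=3f$) ensures $\max k_i\geq 0$. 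Additional constraints come from observing that $H^0(K(-3))$ records linear dependencies among the restrictions $\partial_i f|_C\in H^0(\OO_{\P^1}(6))$, hence corresponds bijectively to the intersection $(\partial_0 f,\ldots,\partial_4 f)_2\cap H^0(\II_C(2H))$ inside $H^0(\OO_{\P^4}(2H))$. A dimension count using the Gorenstein structure of the Jacobian ring $J(f)=\C[x_0,\ldots,x_4]/(\partial_0 f,\ldots,\partial_4 f)$ (with Hilbert series $(1+t)^5$ by Macaulay's theorem) together with the Castelnuovo--Mumford regularity of the twisted cubic rules out every splitting of $K$ carrying a summand of degree $\leq -2$, establishing $H^1(K)=0$.

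With $H^1(\NN_{C/X})=0$ in hand, $h^0(\NN_{C/X})=6$ follows, and smoothness of $\TT$ of dimension six at each $[C]\in\TT$ is the standard deformation-theoretic consequence of the identification $T_{[C]}\TT=H^0(\NN_{C/X})$ together with vanishing of the obstruction space $H^1(\NN_{C/X})$.
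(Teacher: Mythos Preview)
Your computation of $\NN_{C/\P^4}$ matches the paper's exactly, and your reduction via the tangent sequence and Euler sequence to the vanishing $H^1(K)=0$ for the kernel $K$ of $\OO_{\P^1}(3)^{\oplus 5}\to\OO_{\P^1}(9)$ is correct. The problem is the final paragraph: you assert that ``a dimension count using the Gorenstein structure of the Jacobian ring \ldots\ together with the Castelnuovo--Mumford regularity of the twisted cubic rules out every splitting of $K$ carrying a summand of degree $\leq -2$,'' but you never perform this count, and the ingredients you name do not obviously combine to give it. Concretely, ruling out splitting types such as $(3,3,2,-2)$ or $(3,3,3,-3)$ for $K$ amounts to bounding the dimension of $\mathrm{span}(\partial_0 f,\ldots,\partial_4 f)\cap H^0(\II_{C/\P^4}(2))$ inside $H^0(\OO_{\P^4}(2))\cong\C^{15}$; since the two subspaces have dimensions $5$ and $8$, a naive count gives nothing, and neither the Hilbert series $(1+t)^5$ of the Jacobian ring nor the regularity of $C$ supplies the needed transversality for every pair $(X,C)$. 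As written, this is a genuine gap.

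The paper avoids the detour entirely and extracts the vanishing from the very sequence you already wrote down,
\[
0 \to \NN_{C/X} \to \NN_{C/\P^4} = \OO_{\P^1}(5)^{\oplus 2}\oplus\OO_{\P^1}(3) \to \OO_{\P^1}(9) \to 0.
\]
Since $\NN_{C/X}$ is a rank-two subbundle of $\OO_{\P^1}(5)^{\oplus 2}\oplus\OO_{\P^1}(3)$, every line summand of its splitting type has degree at most $5$; as the two degrees sum to $4$, each is at least $-1$, whence $H^1(\NN_{C/X})=0$ immediately. You had this sequence in hand for the Euler-characteristic computation; using it again for the splitting-type bound replaces your entire third paragraph.
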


\begin{proof}
We have a short exact sequence
\[
0 \to \NN_{C/\P^3} = \OO_{\P^1}(5)^{\oplus 2} \to \NN_{C/\P^4} \to \NN_{\P^3/\P^4} \otimes \OO_C = \OO_{\P^1}(3) \to 0.
\]
Since $\Ext^1(\OO_{\P^1}(3), \OO_{\P^1}(5)) = 0$, we get $\NN_{C/\P^4} = \OO_{\P^1}(5)^{\oplus 2} \oplus \OO_{\P^1}(3)$. Next, we have a short exact sequence
\[
0 \to \NN_{C/X} \to \NN_{C/\P^4} = \OO_{\P^1}(5)^{\oplus 2} \oplus \OO_{\P^1}(3) \to \NN_{X/\P^4} \otimes \OO_C = \OO_{\P^1}(9) \to 0. 
\]
Thus, $\NN_{C/X}$ has degree $4$ and can only be $\OO_{\P^1}(m) \oplus \OO_{\P^1}(4 - m)$ for some $-1 \leq m \leq 5$. The claim about the cohomology of $\NN_{C/X}$ holds for each of them.
\end{proof}

\begin{lem}
\label{lem:abel_jacobi_infinitesimal}
Along the locus of smooth curves $\TT \subset \overline{\TT}$,
the Abel-Jacobi morphism $\varphi$ has differential of rank four.
\end{lem}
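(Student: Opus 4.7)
The plan is to compute the rank of the dual infinitesimal Abel--Jacobi map $\psi_C^{\vee} \colon H^1(\Omega^2_X) \to H^0(\NN_{C/X})^{\vee}$ at a smooth twisted cubic $C$; since this map is the dual of $d\varphi$ at $C$ under the identifications recalled before Lemma~\ref{lem:infinitesimal_abel_jacobi}, the statement is equivalent to $\rk \psi_C^{\vee} = 4$. I would apply the commutative diagram of Lemma~\ref{lem:infinitesimal_abel_jacobi} to factor $\psi_C^{\vee}$ as
\[
H^1(\Omega^2_X) \xrightarrow{\cong} H^0(\OO_X(H)) \xrightarrow{r} H^0(\OO_C(H)) \xrightarrow{\iota} H^0(\NN_{C/X})^{\vee},
\]
where $r$ is the restriction map and $\iota$ is the right-hand map of the bottom row. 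It then suffices to prove that $r$ is surjective and $\iota$ injective, which will force the composition to have rank $\dim H^0(\OO_C(H)) = 4$.

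For the surjectivity of $r$, the exact column of the diagram identifies $\ker r$ with $H^0(\II_{C/X}(H))$. Since a smooth twisted cubic spans a unique $\P^3 \subset \P^4$, it is contained in a unique hyperplane, so $h^0(\II_{C/X}(H)) = 1$. Combined with $h^0(\OO_X(H)) = 5$ and $h^0(\OO_C(H)) = h^0(\OO_{\P^1}(3)) = 4$, this forces $r$ to be surjective.

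For the injectivity of $\iota$, exactness of the bottom row says $\ker \iota$ is the image of $H^0(\NN_{C/\P^4}(-2H)) \to H^0(\OO_C(H))$. By Lemma~\ref{lem:normal_bundles} and the fact that $H$ restricts to a divisor of degree $3$ on $C \cong \P^1$, one has
\[
\NN_{C/\P^4}(-2H) \cong \OO_{\P^1}(-1)^{\oplus 2} \oplus \OO_{\P^1}(-3),
\]
which has vanishing global sections. Hence $\iota$ is injective, and combining both bounds gives $\rk \psi_C^{\vee} = 4$.

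The step most likely to need care is the computation $h^0(\II_{C/X}(H)) = 1$, which rests on the classical fact that a smooth twisted cubic in $\P^4$ spans a unique $\P^3$ and no smaller linear subspace; everything else is a direct dimension count from the two previous lemmas. As a sanity check, this rank agrees with the count $\dim \overline{\TT} - \dim(\text{generic fiber of } \varphi) = 6 - 2 = 4$ coming from Lemma~\ref{lem:normal_bundles} and Proposition~\ref{prop:abel-jacobi_twisted_cubic}.
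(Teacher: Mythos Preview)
Your argument is correct and follows essentially the same route as the paper: both use the commutative diagram of Lemma~\ref{lem:infinitesimal_abel_jacobi} to reduce to showing that restriction $H^0(\OO_X(H)) \to H^0(\OO_C(H))$ is surjective and that $H^0(\NN_{C/\P^4}(-2H)) = 0$, the latter via Lemma~\ref{lem:normal_bundles}. Your only addition is the explicit justification $h^0(\II_{C/X}(H)) = 1$ for the surjectivity of restriction, which the paper asserts as clear from the dimension count.
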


\begin{proof}
Let $C \subset X$ be a smooth twisted cubic. Clearly, restriction maps $H^0(\OO_X(H)) \cong \C^5$ surjectively onto $H^0(\OO_C(H)) \cong \C^4$. By Lemma \ref{lem:normal_bundles}, we have $h^0(N_{C/\P^4}(-2H)) = h^0(\OO_{\P^1}(-1)^{\oplus 2} \oplus \OO_{\P^1}(-3)) = 0$. By Lemma \ref{lem:infinitesimal_abel_jacobi}, we get a commutative diagram

\centerline{
\xymatrix{
\C^5 \ar[r]^{\cong} \ar@{->>}[d] & H^1(\Omega^2_X) \ar[d]^{\psi_C^{\vee}} \\
\C^4 \ar@{^{(}->}[r] & H^0(\NN_{C/X})^{\vee}
}} 

Therefore, $\psi_C^{\vee}$ has rank four.
\end{proof}

The singularities of the theta divisor were computed in \cite[p. 348]{Mum74:prym_varieties}. Another proof was given in \cite[Main Theorem and Proposition 2]{Bea82:singularities_theta_divisor}. We will not need this full description and instead rely only on normality.

\begin{thm}[{\cite[Theorem 1]{EL97:singularities_theta_divisors}}]
\label{thm:theta_divisor_normal}
Any irreducible theta divisor of an abelian variety is normal.
\end{thm}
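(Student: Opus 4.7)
The plan is to follow the strategy of Ein and Lazarsfeld \cite{EL97:singularities_theta_divisors} and use multiplier ideals, Nadel vanishing, and the translation action of $A$ on itself. I would aim at the slightly stronger statement that the pair $(A, \Theta)$ is log canonical, from which normality of $\Theta$ follows via standard adjunction/resolution arguments combined with the fact that $\Theta$ is Cohen--Macaulay (as a reduced irreducible hypersurface in the smooth abelian variety $A$, hence automatically $S_2$, so normality reduces to a generic-fiber check that is controlled by the log canonical property of the ambient pair).

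Assume for contradiction that $(A, \Theta)$ is not log canonical. Choosing $c \in (0,1)$ slightly smaller than the log canonical threshold along the non-lc locus, the multiplier ideal $\mathcal{J} \coloneqq \mathcal{J}(A, c\,\Theta)$ is a proper ideal sheaf cutting out a non-empty subscheme $Z \subset A$. Because $(1-c)\Theta$ is ample, and tensoring by any numerically trivial $P \in \Pic^0(A)$ preserves ampleness, Nadel vanishing yields
\[
H^i\bigl(A,\, \mathcal{J} \otimes \mathcal{O}_A(\Theta) \otimes P\bigr) = 0 \qquad \text{for all } i > 0 \text{ and all } P \in \Pic^0(A),
\]
so the sheaf $\mathcal{F} \coloneqq \mathcal{J} \otimes \mathcal{O}_A(\Theta)$ satisfies the $IT_0$ property with respect to the family of numerically trivial twists.

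The key contradiction exploits that $\Theta$ defines a principal polarization, so $h^0(\mathcal{O}_A(\Theta) \otimes P) = 1$ for every $P \in \Pic^0(A)$, with the unique effective representative being a translate of $\Theta$. The short exact sequence $0 \to \mathcal{J} \to \mathcal{O}_A \to \mathcal{O}_Z \to 0$ twisted by $\mathcal{O}_A(\Theta) \otimes P$, combined with the Nadel vanishing above, forces
\[
h^0(A,\, \mathcal{F} \otimes P) \;=\; 1 - \chi\bigl(A,\, \mathcal{O}_Z \otimes \mathcal{O}_A(\Theta) \otimes P\bigr),
\]
with the right-hand side independent of $P$. Applying the same argument to every translate $\Theta_a$ of $\Theta$ (whose multiplier ideal is $t_a^\ast \mathcal{J}$, cosupported on $a + Z$) produces a base-point-freeness constraint on the universal family of theta translates on $A \times \Pic^0(A)$ which can only be satisfied if $Z$ is empty, a contradiction.

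The main obstacle is precisely this last geometric step: promoting the cohomological $IT_0$ property of $\mathcal{F}$ into an incompatibility with the principal polarization. This is the technical heart of \cite{EL97:singularities_theta_divisors}, and requires careful use of the Fourier--Mukai transform on the dual abelian variety $\widehat{A}$ together with generic vanishing in the spirit of Green--Lazarsfeld: the transform of an $IT_0$ sheaf is forced to be locally free of rank $\chi(\mathcal{F})$, and the principal polarization forces that rank to be $1$, leaving no room for the defect introduced by $\mathcal{J} \subsetneq \mathcal{O}_A$. All other steps --- the reduction to log canonicity, the computation of the lc threshold, and Nadel vanishing --- are essentially formal once the translation-equivariant framework is in place.
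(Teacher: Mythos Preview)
The paper does not prove this statement; it is quoted from \cite{EL97:singularities_theta_divisors} and used as a black box later (see the proof of Lemma~\ref{lem:M_embeds}). So there is no argument in the present paper to compare against---you are attempting to reconstruct Ein--Lazarsfeld's own proof.

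Your reconstruction has a real gap at the very first reduction. You claim that once $(A,\Theta)$ is log canonical, normality of $\Theta$ follows because $\Theta$ is already $S_2$ and ``$R_1$ is controlled by the log canonical property of the ambient pair''. This implication is false in general: a nodal curve $C$ in a smooth surface gives a log canonical pair, yet $C$ is not normal. Log canonicity of $(A,\Theta)$ only bounds how badly $\Theta$ sits inside $A$; it does not force $R_1$ on $\Theta$ itself. What \cite{EL97:singularities_theta_divisors} actually establishes is the sharper statement that the adjoint ideal of $\Theta$ is trivial (equivalently $\mathcal{J}(A,\Theta) = \mathcal{O}_A(-\Theta)$), which is exactly the condition that $\Theta$ have rational singularities and in particular be normal. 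The translation-plus-Nadel mechanism you sketch is the correct engine, but it has to be run at this finer level, not merely to bound the log canonical threshold from below by $1$.

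Separately, you are overstating the difficulty of the endgame. Once $\mathcal{F} = \mathcal{J} \otimes \mathcal{O}_A(\Theta)$ satisfies $IT_0$, the function $P \mapsto h^0(\mathcal{F} \otimes P)$ is constant on $\Pic^0(A)$. It equals $1$ at $P = \mathcal{O}_A$ (since $\mathcal{O}_A(-\Theta) \subset \mathcal{J}$), and it equals $0$ at any $P$ whose unique theta-translate avoids a point of $Z$; such $P$ exist because the translates of an ample divisor have no common point. That is already the contradiction---no Fourier--Mukai transform or generic vanishing theory beyond this elementary semicontinuity is required.
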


\begin{lem}
\label{lem:todd_of_cubic}
Up to numerical equivalence, the Todd class of $X$ is $\td(X) = (1, H, \tfrac{2}{3} H^2, \tfrac{1}{3} H^3)$. In particular, for any $E \in \Db(X)$
\[
\chi(E) = \ch_3(E) + H \cdot \ch_2(E) + \tfrac{2}{3} H^2 \cdot \ch_1(E) + \tfrac{1}{3} H^3 \cdot \ch_0(E).
\]
\end{lem}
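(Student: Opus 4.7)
The proof is a standard Hirzebruch--Riemann--Roch computation; there is no real obstacle, only some bookkeeping. I outline the steps below.

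First I would compute the Chern classes of $T_X$ from the two short exact sequences on $X$:
\[
0 \to T_X \to T_{\P^4}|_X \to \NN_{X/\P^4} = \OO_X(3H) \to 0
\]
together with the restriction of the Euler sequence on $\P^4$, which gives $c(T_{\P^4}) = (1+H)^5$. Since $H^4 = 0$ in the numerical Chow ring of $X$, restricting yields
\[
c(T_{\P^4}|_X) = 1 + 5H + 10H^2 + 10H^3.
\]
Dividing by $c(\OO_X(3H)) = 1 + 3H$ (using the geometric series $1/(1+3H) = 1 - 3H + 9H^2 - 27H^3$ modulo $H^4$) and multiplying out degree by degree then gives
\[
c_1(T_X) = 2H, \qquad c_2(T_X) = 4H^2, \qquad c_3(T_X) = -2H^3.
\]
As a quick sanity check, $c_3(T_X) = -2H^3 = -6$, matching the known Euler characteristic of a smooth cubic threefold. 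Note also that $c_1 = 2H$ is consistent with the adjunction formula $K_X = -2H$.

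Next I substitute these into the Todd class formula
\[
\td(X) = 1 + \tfrac{1}{2} c_1 + \tfrac{1}{12}\bigl(c_1^2 + c_2\bigr) + \tfrac{1}{24} c_1 c_2,
\]
getting degree-wise $1$, $H$, $\tfrac{1}{12}(4H^2+4H^2) = \tfrac{2}{3}H^2$, and $\tfrac{1}{24}(2H)(4H^2) = \tfrac{1}{3}H^3$, which proves the first claim.

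Finally, Hirzebruch--Riemann--Roch asserts $\chi(E) = \int_X \ch(E) \cdot \td(X)$, so the formula for $\chi(E)$ is obtained by reading off the degree-three component of the product, namely
\[
\ch_3(E) \cdot 1 + \ch_2(E) \cdot H + \ch_1(E) \cdot \tfrac{2}{3}H^2 + \ch_0(E) \cdot \tfrac{1}{3}H^3,
\]
as asserted. No step presents a genuine obstacle; the only care needed is to work consistently modulo $H^4 = 0$ on $X$ when truncating the inverse of $1 + 3H$.
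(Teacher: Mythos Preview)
Your proof is correct. Both approaches are standard, but they differ in spirit: you compute $c(T_X)$ explicitly from the normal bundle sequence and the Euler sequence, then plug into the universal Todd polynomial, whereas the paper works in the opposite direction. The paper observes that Kodaira vanishing gives $\chi(\OO_X)=1$ and $\chi(\OO_X(-H))=0$, then uses Hirzebruch--Riemann--Roch to solve for $\td_3(X)$ and $\td_2(X)$ (with $\td_1 = -K_X/2 = H$ being automatic). Your route is more constructive and yields the Chern classes of $T_X$ as a byproduct (including the sanity check $e(X)=-6$); the paper's route is slightly slicker in that it avoids computing $c(T_X)$ at all and would adapt more readily to varieties where the tangent bundle is less accessible. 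For a smooth hypersurface either is entirely natural.
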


\begin{proof}
By Kodaira vanishing $H^i(\OO_X) = 0$ for $i \neq 0$, and therefore, $\chi(\OO_X) = 1$. By the Hirzebruch-Riemann-Roch Theorem we get $\td_{3}(X) = \chi(\OO_X) = \tfrac{1}{3} H^3$. Similarly, Kodaira vanishing implies $H^i(\OO_X(-H)) = 0$ for $i \neq 0$. Again by Hirzebruch-Riemann-Roch
\[
0 = \chi(\OO_X(-H)) = \frac{-H^3}{6} + H \cdot \frac{H^2}{2} - \td_2(X) \cdot H + \frac{H^3}{3}.
\]
Since $X$ has Picard rank one, this is only possible if $\td_2(X) = \tfrac{2}{3} H^2$.
\end{proof}

\begin{lem}
\label{lem:chow_ring}
The numerical Chow ring $\CH_{\text{n}}^*(X)$ has a basis given by $1$, $H$, $H^2/3$, and $H^3/3$. In particular, if $E \in \Db(X)$, then $\ch_2(E) \in \tfrac{1}{6} H^2 \cdot \Z$, and $\ch_3(E) \in \tfrac{1}{6} H^3 \cdot \Z$.
\end{lem}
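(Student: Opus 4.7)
The plan is to compute the graded pieces of $\CH_{\text{n}}^*(X)$ one codimension at a time, leveraging $\Pic(X) = \Z \cdot H$ from the Lefschetz hyperplane theorem. The trivial pieces are $\CH_{\text{n}}^0(X) = \Z \cdot 1$ and $\CH_{\text{n}}^1(X) = \Pic(X) = \Z \cdot H$. For codimension three, numerical equivalence of $0$-cycles on a smooth projective variety is detected by the degree, so $\CH_{\text{n}}^3(X) \cong \Z$ via $\deg$, and since $H^3 = 3$ the class $H^3/3$ is the class of a point and hence a generator.

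The interesting case is codimension two. By the very definition of numerical equivalence, a class $\gamma \in \CH_{\text{n}}^2(X)$ vanishes if and only if $\gamma \cdot D = 0$ for every $D \in \Pic(X)$, and since $\Pic(X) = \Z \cdot H$ this reduces to the single condition $\gamma \cdot H = 0$. Hence intersection with $H$ gives an injection $\CH_{\text{n}}^2(X) \hookrightarrow \Z$. To see that the image is all of $\Z$, I would invoke the classical fact that $X$ contains lines; since any line $L \subset X$ satisfies $L \cdot H = 1$, the class $[L]$ maps to $1$, showing surjectivity and that $[L]$ generates $\CH_{\text{n}}^2(X)$. Combined with $H^2 \cdot H = 3$, this forces $H^2 = 3[L]$, so $H^2/3 = [L]$ is the generator.

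For the statement about the Chern character, one writes $c_1(E) = aH$, $c_2(E) = k \cdot H^2/3$, and $c_3(E) = m \cdot H^3/3$ for some integers $a, k, m$, and substitutes into Newton's identities $\ch_2 = \tfrac{1}{2}(c_1^2 - 2c_2)$ and $\ch_3 = \tfrac{1}{6}(c_1^3 - 3c_1 c_2 + 3 c_3)$. A direct computation gives $\ch_2(E) = \tfrac{3a^2 - 2k}{6} H^2$ and $\ch_3(E) = \tfrac{a^3 - ak + m}{6} H^3$, which manifestly lie in $\tfrac{1}{6}H^2 \cdot \Z$ and $\tfrac{1}{6}H^3 \cdot \Z$ respectively. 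There is no genuine obstacle; the only thing one has to be careful about is that Chern classes are well-defined integral classes for arbitrary objects of $\Db(X)$, which is standard via a finite locally free resolution and multiplicativity of the total Chern class.
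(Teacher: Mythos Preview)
Your proof is correct and follows essentially the same approach as the paper for the description of $\CH_{\text{n}}^*(X)$: both identify the codimension-two generator as the class of a line (the paper passes through a smooth cubic surface hyperplane section, you invoke lines on $X$ directly, but the content is identical), and both handle $\ch_2$ via $\ch_2 = \tfrac{1}{2}c_1^2 - c_2$.

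The one genuine difference is in the argument for $\ch_3$. The paper does not use the Newton identity; instead it appeals to the preceding lemma on the Todd class together with Hirzebruch--Riemann--Roch: from
\[
\chi(E) = \ch_3(E) + H \cdot \ch_2(E) + \tfrac{2}{3} H^2 \cdot \ch_1(E) + \tfrac{1}{3} H^3 \cdot \ch_0(E)
\]
and integrality of $\chi(E)$, together with the already-established constraint on $\ch_2(E)$, one reads off $\ch_3(E) \in \tfrac{1}{6} H^3 \cdot \Z$. Your direct computation via $\ch_3 = \tfrac{1}{6}(c_1^3 - 3c_1 c_2 + 3c_3)$ is more self-contained and avoids the dependence on the Todd class computation; the paper's route, on the other hand, illustrates a technique (extracting integrality constraints from Riemann--Roch) that recurs throughout the subject. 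Both are equally valid here.
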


\begin{proof}
Since $\Pic(X)$ is generated by $H$, the group $\CH_{\text{n}}^2(X)$ is generated by a rational multiple of $H^2$. A general hyperplane section of $X$ is a smooth cubic surface, which contains lines. The class of such a line is $H^2/3$. Since $H^3 = 3$, the class has to be indivisible. Since $H^3/3$ is the class of a point, the group $\CH_{\text{n}}^3(X)$ must be generated by it.

The claim about second Chern characters follows directly from $\ch_2(E) = \tfrac{1}{2} c_1^2(E) - c_2(E)$. The claim about $\ch_3(E)$ follows from Lemma \ref{lem:todd_of_cubic} and the fact that $\chi(E) \in \Z$.
\end{proof}

%%%%%%%%%%%%%%%%%%%%%%%%%%%%%%%%%%%%%
%%%%%%%%%%%%%%%%%%%%%%%%%%%%%%%%%%%%%
%%%%%%%%%%%%%%%%%%%%%%%%%%%%%%%%%%%%%

\section{Divisors on hyperplane sections} \label{sec:3}

We need to understand the singularities that can occur on hyperplane sections of $X$.

\begin{prop}
\label{prop:normal_hyperplane_section}
Any cubic hyperplane section $Y = V \cap X \subset \P^4$ is normal and integral.
\end{prop}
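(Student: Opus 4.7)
The plan is to handle integrality and normality separately.

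\emph{Integrality.} I would first note that $Y$ is a Cartier divisor in the smooth threefold $X$, hence a local complete intersection and therefore Cohen--Macaulay; in particular it satisfies Serre's condition $S_2$. As a cubic hypersurface in $V \cong \P^3$, $Y$ is connected and equidimensional of dimension $2$. If $Y$ were reducible or non-reduced, it would contain an irreducible surface of degree $\le 2$ in $\P^4$, necessarily a plane or a quadric surface, and this surface would sit inside $X$. But the Lefschetz hyperplane theorem (recalled in Section~\ref{sec:2}) gives $\Pic(X) = \Z \cdot H$, so every effective divisor $D \subset X$ has $\deg D = D \cdot H^2 \in 3\Z$; in particular no plane or quadric surface can lie in $X$. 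Hence $Y$ is reduced and irreducible, i.e., integral.

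\emph{Normality.} Since $Y$ is $S_2$, by Serre's criterion it suffices to show that $Y$ is regular in codimension one. A standard tangent-space computation shows that $p \in Y$ is singular on $Y$ if and only if the two $3$-dimensional linear subspaces $V$ and $T_pX$ of $\P^4$ coincide; in particular $\Sing(Y) = \gamma^{-1}([V])$, where $\gamma \colon X \to (\P^4)^*$, $p \mapsto T_pX$, is the Gauss map of $X$. Invoking Zak's theorem on tangencies for the smooth threefold $X \subset \P^4$ and the hyperplane $V$ (both of dimension $3$), the fiber $\gamma^{-1}([V])$ has dimension $\le 0$. Thus $Y$ is $R_1$, and hence normal.

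\emph{Main obstacle.} The only geometric input beyond formalities is the finiteness of the Gauss map fibers. If one wishes to avoid invoking Zak, the same conclusion can be reached by an explicit computation. Suppose some fiber of $\gamma$ contains a curve; then $Y$ is an irreducible non-normal cubic surface in $\P^3$, and by the classification of such surfaces (cones over singular plane cubics, or projections of the smooth cubic scroll), $\Sing(Y)$ contains a line $L \subset X$. Choosing coordinates so that $L = \{x_0 = x_1 = x_2 = 0\}$ and using $L \subset X$, the cubic defining $X$ takes the form $f = x_0 g_0 + x_1 g_1 + x_2 g_2$ for quadrics $g_i$. Along $L$ the Gauss map reads $p \mapsto [g_0(p) : g_1(p) : g_2(p) : 0 : 0]$, so constancy forces the restrictions $g_i|_L \in H^0(\OO_L(2))$ to be proportional to a common quadric $q(x_3, x_4)$. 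At any zero of $q$ on $L$, all five partials of $f$ vanish, producing a singular point of $X$; and if $q \equiv 0$, then $f$ lies in $(x_0, x_1, x_2)^2$, so that $X$ is singular along the entire line $L$. Either outcome contradicts the smoothness of $X$, completing the argument.
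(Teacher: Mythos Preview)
Your proof is correct, but both routes you offer for normality are more elaborate than the paper's. The paper argues directly: writing $V = \{x = 0\}$ for a linear coordinate $x$ and $F$ for the cubic defining $X$, a point $p \in Y$ is singular on $Y$ precisely when the four partials $\partial F/\partial x_i$ with $x_i \neq x$ vanish at $p$. If $\Sing(Y)$ contained a curve $C$, the remaining partial $\partial F/\partial x$ is a quadric and hence meets $C$; at that point all five partials of $F$ vanish and $X$ would be singular, a contradiction. This is exactly the endgame of your ``explicit'' alternative, but without the detour through the classification of non-normal cubic surfaces or the reduction to a line in $\Sing(Y)$.

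Your separate treatment of integrality via $\Pic(X) = \Z \cdot H$ is valid but unnecessary: once $Y$ is normal, integrality follows because $Y$ is connected (as any hypersurface in $\P^3$) and a connected normal scheme is integral. The Gauss-map/Zak argument is also correct and pleasantly conceptual---it explains \emph{why} the singular locus is finite---but it imports a nontrivial theorem for a statement that the paper settles in two lines with a partial derivative.
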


\begin{proof}
Since hypersurfaces satisfy condition S2, by Serre's condition \cite[\href{https://stacks.math.columbia.edu/tag/031S}{Section 031S}]{stacks-project}, it is enough to show that $Y$ has isolated singularities. Assume for contradiction that $Y$ contains a curve $C$ of singular points. Let $F$ and $x$ be the defining equation of $X$ and $V$, respectively. Then $\partial F/\partial x$ is a homogeneous degree 2 polynomial and hence vanishes somewhere along $C$. At such a point, all partial derivatives of $F$ vanish, hence it is a singular point of $X$, a contradiction.
\end{proof}

% \begin{proof}[Old proof]
% Since hypersurfaces satisfy condition S2, by Serre's condition \cite[\href{https://stacks.math.columbia.edu/tag/031S}{Section 031S}]{stacks-project}, it is enough to show that $Y$ has isolated singularities. Assume otherwise. The line $L$ through any two singular points of $Y$ intersects $Y$ with multiplicity at least 4, hence it must be contained in $Y$. If $Y$ was singular along $L$, then the two normal bundles $N_{L/X}$ and $N_{L/V}$ would agree as subbundles of $N_{L/\P^4}$, which is a contradiction as they have different degrees. If $Y$ had another curve of singularities, its secants would be a two-dimensional family of lines contained in $Y$. This contradicts the fact that the universal line over the Fano surface of lines of $X$ covers $X$.
% \end{proof}

In order to deal with singular hyperplane sections, we need to recall the relation between Weil divisors and rank one reflexive sheaves on integral normal varieties. This is very similar to the standard relation between line bundles and Cartier divisors. We refer to \cite[\href{https://stacks.math.columbia.edu/tag/0EBK}{Section 0EBK}]{stacks-project} or \cite{Sch07:generalized_divisors} for proofs of the following facts. They can also be found in \cite{Har94:generalized_divisors} in more generality.

Let $Y$ be a normal integral projective variety. By $\Cl(Y)$ we denote the \emph{group of Weil divisors modulo rational equivalence}. For two rank one reflexive sheaves $L_1, L_2 \in \Coh(Y)$ we can define a new rank one reflexive sheaf by $(L_1 \otimes L_2)^{\vee \vee}$. This defines a group law for rank one reflexive sheaves on $Y$, where inverses are given by $L \mapsto L^{\vee}$. For any effective prime divisor $D$ one can define a rank one reflexive sheaf $\OO_Y(D) \coloneqq \II_D^{\vee}$. This can be linearly extended to any divisor.

\begin{prop}
\begin{enumerate}
    \item The group of isomorphism classes of rank one reflexive sheaves is isomorphic to $\Cl(Y)$ under the homomorphism $D \mapsto \OO_Y(D)$.
    \item To every non-zero section $s \in H^0(L)$ of a rank one reflexive sheaf $L$, one can associate an effective divisor $D$ on $Y$.
    \item For any effective Weil-divisor $D$ on $Y$ there is a section $s \in H^0(\OO_Y(D))$ such that the associated divisor is given by $D$.
    \item Two sections $s_1, s_2 \in H^0(L)$ define the same divisor if they satisfy $s_1 = \lambda s_2$ for some $\lambda \in \C^*$.
\end{enumerate}
\end{prop}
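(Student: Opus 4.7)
The plan is to reduce everything to the smooth locus of $Y$, where the assertions become the classical correspondence between Cartier divisors and line bundles, and then to extend by normality. Let $j\colon U \into Y$ be the inclusion of the regular locus. Since $Y$ is normal, the complement $Y \setminus U$ has codimension at least two, so in particular $j_*$ preserves reflexivity of rank one sheaves and $j^*$ is fully faithful on the subcategory of rank one reflexive sheaves. Likewise, the restriction map $\Cl(Y) \to \Cl(U) = \Pic(U)$ is an isomorphism since Weil divisors supported in codimension at least two are zero.

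For (i), start with the classical isomorphism $\Pic(U) \cong \Cl(U)$ on the smooth variety $U$. Given a rank one reflexive $L$ on $Y$, its restriction $j^*L$ is a line bundle on $U$, and one recovers $L = j_*j^*L$ because $L$ is reflexive and $U$ is big. Conversely, any line bundle on $U$ pushes forward to a rank one reflexive sheaf on $Y$. This gives an equivalence on objects, and the reflexive tensor product $(L_1 \otimes L_2)^{\vee\vee}$ on $Y$ corresponds under $j^*$ to the usual tensor product on $U$, so the bijection is a group homomorphism. Matching $D \mapsto \OO_Y(D) = \II_D^{\vee}$ with $D\vert_U \mapsto \OO_U(D\vert_U)$ is then immediate from the definitions, since $\II_D$ is itself reflexive on $Y$ and agrees with the ideal sheaf of $D\vert_U$ on $U$.

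For (ii) and (iii), a non-zero section $s \in H^0(L)$ restricts to a non-zero section of the line bundle $j^*L$ on $U$, whose vanishing locus is an effective Cartier divisor $D_U \subset U$. Its closure in $Y$ is an effective Weil divisor $D$, which is the one associated to $s$. Conversely, given an effective Weil divisor $D$ on $Y$, the inclusion $\II_D \into \OO_Y$ dualizes to a morphism $\OO_Y \to \II_D^{\vee} = \OO_Y(D)$, and the image of the global section $1$ is a section whose associated divisor is $D$, as can be checked on $U$ where everything is classical.

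For (iv), suppose $s_1, s_2 \in H^0(L)$ produce the same divisor. Then $s_1/s_2$ is a rational function on $Y$ which, restricted to $U$, is regular and nowhere vanishing, hence a unit in $\OO_U^\times$. Since $Y$ is normal, regular functions on $U$ extend uniquely to regular functions on $Y$, and since $Y$ is integral and projective, we conclude $s_1/s_2 \in \OO_Y(Y)^\times = \C^*$. The main subtlety throughout is the careful bookkeeping that reflexivity and normality allow us to pass freely between $Y$ and its smooth locus $U$; once that principle is in hand, each part reduces to the classical Cartier-divisor statement on $U$ together with Hartogs-type extension.
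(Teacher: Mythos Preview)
The paper does not actually prove this proposition: it states it as background and refers the reader to the Stacks Project, Schwede's notes, and Hartshorne's paper on generalized divisors for proofs. Your sketch is correct and is precisely the standard argument one finds in those references---reduce to the regular locus $U$ via the equivalence $L \mapsto j^*L$, $M \mapsto j_*M$ between rank one reflexive sheaves on $Y$ and line bundles on $U$ (valid because $Y \setminus U$ has codimension $\ge 2$ on a normal variety), and then invoke the classical Cartier divisor/line bundle correspondence on the smooth $U$.

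One small remark on part (iv): as literally stated, the proposition only asserts the trivial direction (if $s_1 = \lambda s_2$ then the associated divisors coincide), whereas you prove the converse. Your argument for the converse is fine and is the more substantive direction, but strictly speaking the stated claim needs no work at all.
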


%%%%%%%%%%%%%%%%%%%%%%%%%%%%%%%%%%%%%
%%%%%%%%%%%%%%%%%%%%%%%%%%%%%%%%%%%%%
%%%%%%%%%%%%%%%%%%%%%%%%%%%%%%%%%%%%%

\section{Notions of stability}
\label{sec:stability}
In this section, we recall a number of notions of stability for sheaves. Let $X$ be a smooth projective threefold, and let $H$ be an ample divisor on $X$.

\begin{defn}[\cite{Mum63:quotients, Tak72:Stability1}]
\begin{enumerate}
    \item For any $E \in \Coh(X)$, the \emph{Mumford-Takemoto-slope} is defined as
    \[
    \mu(E) \coloneqq \begin{cases}
    \frac{H^2 \cdot \ch_1(E)}{H^3 \cdot \ch_0(E)} & \text{for $\ch_0(E) \neq 0$,} \\
    +\infty & \text{for $\ch_0(E) = 0$.}
    \end{cases}
    \]
    \item A sheaf $E \in \Coh(X)$ is \emph{slope-(semi)stable} if for any non-trivial proper subsheaf $F \into E$ the inequality $\mu(F) < (\leq) \mu(E/F)$ holds.
\end{enumerate}
\end{defn}

From the Definition it follows immediately that if $\Pic(X) = \Z \cdot H$ and $E$ is slope-semistable with $\gcd\left(\ch_0(E), \, H^2\ch_1(E)/H^3\right) =1$, then $E$ is slope-stable. 

While slope-stability suffices to construct moduli spaces of vector bundles on curves, a refinement is necessary in higher dimensions. 

\begin{defn}
We define a pre-order on the polynomial ring $\R[m]$ as follows.
\begin{enumerate}
    \item For all non-zero $f \in \R[m]$, we have $f \prec 0$.
    \item If $\deg(f) > \deg(g)$ for non-zero $f, g \in \R[m]$, then $f \prec g$.
    \item Let $\deg(f) = \deg(g)$ for non-zero $f, g \in \R[m]$ and let $a_f$ and $a_g$ be the leading coefficient of $f$ and $g$. Then $f \preceq g$ if and only if $f(m)/a_f \leq g(m)/a_g$ for all $m \gg 0$.
    \item If $f, g \in \R[m]$ with $f \preceq g$ and $g \preceq f$, we write $f \asymp g$.
\end{enumerate}
\end{defn}

For any $E \in \Coh(X)$, we denote its \emph{Hilbert polynomial} and the terms $\alpha_i(E)$ by $P(E, m) \coloneqq \chi(E(mH)) = \sum_{i = 0}^3 \alpha_i(E) m^i$. Moreover, let $P_2(E, m) = \sum_{i = 1}^3 \alpha_i(E) m^i$.

\begin{defn}
\begin{enumerate}
    \item The sheaf $E$ is Gieseker-(semi)stable if for all non-trivial proper subsheaves $F \subset E$, the inequality $P(F, m) \prec (\preceq) P(E, m)$ holds.
    \item The sheaf $E$ is $2$-Gieseker-(semi)stable if for all non-trivial proper subsheaves $F \subset E$, the inequality $P_2(F, m) \prec (\preceq) P_2(E/F, m)$ holds.
\end{enumerate}
\end{defn}

Note that for $2$-Gieseker-semistability we could have equivalently asked $P_2(F, m) \preceq P_2(E, m)$, but for $2$-Gieseker-stability, $P_2(F, m) \prec P_2(E, m)$ is a stronger condition that is almost never fulfilled for all such subsheaves. These notions imply each other as follows:

\centerline{
\xymatrix{
\text{slope-stable} \ar@{=>}[r] & \text{$2$-Gieseker-stable} \ar@{=>}[r] & \text{Gieseker-stable} \ar@{=>}[d] \\
\text{slope-semistable} & \text{$2$-Gieseker-semistable} \ar@{=>}[l] & \text{Gieseker-semistable} \ar@{=>}[l]
}}

The intermediate notion of $2$-Gieseker stability is not classical and will just appear in the technical parts of our arguments. 

Due to \cite{Gie77:vector_bundles, Mar77:stable_sheavesI, Mar78:stable_sheavesII, Sim94:moduli_representations} there exists a projective moduli space $\overline{M}_X(v)$ parametrising S-equivalence classes of Gieseker-semistable sheaves with Chern character $v$. Here two semistable sheaves are called \emph{S-equivalent} if they have the same stable factors up to order and isomorphism in their Jordan-H\"older filtrations:

\begin{prop}[{\cite[Proposition 1.5.2]{HL10:moduli_sheaves}}]
Any Gieseker-semistable sheaf $E \in \Coh(X)$ has a filtration 
\[
0 = E_0 \into E_1 \into \ldots \into E_n = E
\]
such that the factors $A_i \coloneqq E_i/E_{i-1}$ are Gieseker-stable with $P(A_i, m) \asymp P(E, m)$ for $i = 1, \ldots, n$. The sheaf
\[
\bigoplus_{i = 1}^n A_i
\]
is uniquely determined (up to isomorphism) by $E$.
\end{prop}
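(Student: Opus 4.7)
The plan is to establish existence and uniqueness of the filtration separately, following the standard template that works in any abelian category equipped with a well-behaved ``slope'' pre-order.

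For existence, I would argue by Noetherian induction on $E$. If $E$ is itself Gieseker-stable the filtration $0 \subset E$ works. Otherwise, let $\SS$ be the set of non-zero proper subsheaves $F \subset E$ with $P(F,m) \asymp P(E,m)$. The key technical point is that $\SS$ contains a minimal element $E_1$: by Gieseker-semistability of $E$, such $F$ satisfy $\mu(F) = \mu(E)$ and have bounded rank, and they form a bounded family (this is where the boundedness theorem of Maruyama/Simpson for semistable sheaves of fixed Chern character is used), so one can extract a subsheaf minimal in $\SS$. Such an $E_1$ is automatically Gieseker-stable: a proper subsheaf $F' \subset E_1$ inherits $P(F',m) \preceq P(E_1,m) \asymp P(E,m)$ from the semistability of $E$, and equality would contradict minimality in $\SS$. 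Moreover, $E/E_1$ is Gieseker-semistable with $P(E/E_1,m) \asymp P(E,m)$: indeed, any subsheaf $Q \subset E/E_1$ pulls back to $\widetilde{Q} \subset E$ fitting into $0 \to E_1 \to \widetilde{Q} \to Q \to 0$, and if $P(Q,m)$ strictly exceeded $P(E/E_1,m)$ in the pre-order, additivity of the Hilbert polynomial would force $P(\widetilde{Q},m) \succ P(E,m)$, contradicting the semistability of $E$. Apply the induction hypothesis to $E/E_1$ and lift its Jordan--H\"older filtration to $E$.

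For uniqueness, the crucial input is a Schur-type lemma: if $A$ and $A'$ are Gieseker-stable with $P(A,m) \asymp P(A',m)$, then any non-zero morphism $\phi \colon A \to A'$ is an isomorphism, since $\im \phi$ would otherwise violate stability of $A$ or $A'$ by having the ``wrong'' reduced Hilbert polynomial on one side. Given two such filtrations $0 = E_0 \subset \cdots \subset E_n = E$ and $0 = E'_0 \subset \cdots \subset E'_m = E$ with stable factors $A_i$ and $A'_j$, I would consider the composition $E'_1 \into E \onto E/E_{j-1} \onto A_j$ for the smallest $j$ at which it is non-zero. Stability of both $E'_1$ and $A_j$ together with the Schur lemma forces this composition to be an isomorphism, whence $A'_1 \cong A_j$; quotienting both filtrations by this common factor and inducting on $n + m$ yields uniqueness of $\bigoplus_i A_i$ up to isomorphism and reordering.

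The main obstacle, and the only step that is not essentially formal, is producing the minimal subsheaf $E_1$ in $\SS$ and verifying the semistability of $E/E_1$ cleanly; this uses boundedness in an essential way. A conceptually cleaner route, adopted by Huybrechts--Lehn, is to pass to the Serre quotient abelian category obtained from $\Coh(X)$ by localizing at the subcategory of sheaves with strictly smaller reduced Hilbert polynomial; in that quotient, Gieseker-semistable sheaves with reduced Hilbert polynomial equal to that of $E$ become objects of finite length whose simple objects are precisely the Gieseker-stable ones, and both existence and uniqueness reduce to the classical Jordan--H\"older theorem in a finite-length abelian category. I would likely present the argument in that axiomatic form to avoid re-deriving boundedness by hand.
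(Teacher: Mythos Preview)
The paper does not prove this proposition; it is stated with a citation to \cite[Proposition~1.5.2]{HL10:moduli_sheaves} and used as standard background, so there is no proof in the paper to compare your proposal against.

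Your outline is essentially correct and follows the standard argument. One point worth tightening: invoking the boundedness theorem of Maruyama/Simpson to extract a minimal element of $\SS$ is unnecessary overkill. If $F_2 \subsetneq F_1$ both lie in $\SS$, then since $P(F_i, m) = \alpha_d(F_i)\, p(m)$ for the common monic reduced polynomial $p$, the quotient $F_1/F_2$ has Hilbert polynomial $(\alpha_d(F_1) - \alpha_d(F_2))\, p(m)$, forcing $\alpha_d(F_2) < \alpha_d(F_1)$. As the leading coefficients lie in $\tfrac{1}{d!}\Z_{>0}$, any strictly descending chain in $\SS$ terminates. This elementary finite-length observation is precisely what makes your alternative route work, and it is the approach actually taken in the cited reference: Huybrechts--Lehn show directly that the full subcategory of semistable sheaves with fixed reduced Hilbert polynomial is an abelian subcategory of $\Coh(X)$ in which every object has finite length (not a Serre quotient, as you wrote), with simple objects the stable sheaves, whence classical Jordan--H\"older applies.
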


Moreover, any sheaf $E$ has a \emph{Harder-Narasimhan-filtration} into Gieseker-semistable factors.

\begin{prop}[{\cite[Theorem 1.3.4]{HL10:moduli_sheaves}}]
Let $E \in \Coh(X)$. There is a unique filtration
\[
0 = E_0 \into E_1 \into \ldots \into E_n = E
\]
such that the factors $A_i \coloneqq E_i/E_{i-1}$ are Gieseker-semistable with $P(A_1, m) \succ P(A_2, m) \succ \ldots \succ P(A_n, m)$.
\end{prop}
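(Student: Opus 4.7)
The plan is to mirror the classical Harder--Narasimhan construction adapted to the preorder $\prec$ on $\R[m]$: first construct a maximal destabilizing subsheaf $E_1\subset E$, then iterate on $E/E_1$, and derive uniqueness from a Hom-vanishing principle between Gieseker-semistable sheaves of strictly $\prec$-ordered reduced Hilbert polynomials.

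For existence, the crucial input is a boundedness statement: for any fixed polynomial $p_0\in\R[m]$, the family of subsheaves $F\subset E$ with $P(F,m)\succeq p_0$ in the preorder $\prec$ is bounded in $\Coh(X)$. This is Grothendieck's lemma and is the only genuinely nontrivial ingredient. It implies that the reduced Hilbert polynomials of nontrivial subsheaves of $E$ admit a $\prec$-maximum $p_{\max}$. Among subsheaves with $P(F,m)\asymp p_{\max}$, the sum of any two has Hilbert polynomial $P(F_1)+P(F_2)-P(F_1\cap F_2)$, which one checks remains $\asymp p_{\max}$ by the extremality of $p_{\max}$; combined with the Noetherian property of $\Coh(X)$ this produces a unique largest such subsheaf $E_1\subset E$. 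Gieseker-semistability of $E_1$ is then automatic, since any destabilizing subsheaf $F'\subset E_1$ would have $P(F',m)\succ p_{\max}$, contradicting maximality. Applying the same construction to $E/E_1$ yields $E_1\subset E_2\subset E$, and iteration produces an ascending chain that stabilizes by Noetherianity; the successive reduced Hilbert polynomials $P(A_i,m)$ are strictly decreasing in $\prec$ by construction.

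For uniqueness, the central observation is that $\Hom(A,B)=0$ whenever $A$ and $B$ are Gieseker-semistable with $P(A,m)\succ P(B,m)$ strictly (i.e.\ not $\asymp$-equivalent). Indeed, a nonzero $\varphi\colon A\to B$ yields $\im\varphi$, which is a quotient of $A$ and a subsheaf of $B$; semistability of $A$ applied to the quotient $A\onto\im\varphi$ and of $B$ applied to the inclusion $\im\varphi\into B$ give $P(A,m)\preceq P(\im\varphi,m)\preceq P(B,m)$, contradicting the strict inequality (and ruling out dimension jumps because semistable sheaves are pure). Given two HN filtrations $E_\bullet$ and $E'_\bullet$ with factors $A_i$ and $A'_j$, the composition $E_1\into E\onto E/E'_1$ vanishes by applying the Hom-vanishing lemma to $A_1$ against each successive quotient in the induced filtration on $E/E'_1$ (each of these has reduced Hilbert polynomial strictly smaller than $P(A_1,m)$). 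Hence $E_1\subseteq E'_1$; by symmetry $E_1=E'_1$, and induction on the length completes the argument.

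The main obstacle is the boundedness of subsheaves with $\succeq$-bounded reduced Hilbert polynomial: on a smooth projective threefold this requires Castelnuovo--Mumford regularity control of arbitrary coherent subsheaves and is the technical core packaged into the preparatory material of \cite[\S1.7]{HL10:moduli_sheaves}. Every remaining step is a formal consequence of the preorder $\prec$, the sum-of-subsheaves operation, and Noetherianity of $\Coh(X)$, so once boundedness is in hand the construction and its uniqueness are routine.
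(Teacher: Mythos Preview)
The paper does not give its own proof of this proposition; it is stated purely as a citation of \cite[Theorem 1.3.4]{HL10:moduli_sheaves}. Your sketch correctly reproduces the standard argument from that reference: Grothendieck's boundedness lemma produces a maximal destabilising subsheaf, iteration and Noetherianity build the filtration, and the Hom-vanishing between Gieseker-semistable sheaves with strictly $\prec$-ordered Hilbert polynomials yields uniqueness.
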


Based on Bridgeland stability on surfaces, the notion of tilt stability was introduced in \cite{BMT14:stability_threefolds}. It is not quite a Bridgeland stability condition, but it turns out to suffice for our purposes. The basic idea is to change the category in which subobjects are taken when defining stability. This is done via the theory of tilting introduced in \cite{HRS96:tilting}. As before, let $X$ be a smooth projective threefold with an ample divisor $H$.

\begin{defn}
For any $\beta \in \R$, we define two full additive subcategories of $\Coh(X)$:
\begin{align*}
    \FF_{\beta}(X) &\coloneqq \{ E \in \Coh(X) : \text{any slope-semistable factor $F$ of $E$ satisfies $\mu(F) \leq \beta$}\}, \\
    \TT_{\beta}(X) &\coloneqq \{ E \in \Coh(X) : \text{any slope-semistable factor $F$ of $E$ satisfies $\mu(F) > \beta$}\}.
\end{align*}
The category
\[
\Coh^{\beta}(X) \coloneqq \langle \TT_{\beta}(X), \FF_{\beta}(X)[1] \rangle
\]
is the full additive subcategory of those $E \in \Db(X)$ for which $\HH^0(E) \in \TT_{\beta}(X)$, $\HH^{-1}(E) \in \FF_{\beta}(X)$, and $\HH^i(E) = 0$ for all $i \neq -1, 0$.
\end{defn}
Note that $\Hom(T,F) = 0$ for all $T \in \TT_{\beta}(X)$ and $F \in \FF_{\beta}(X)$, by semistability.
It is well known that the category $\Coh^{\beta}(X)$ is abelian. A sequence of morphisms
\[
0 \to A \to B \to C \to 0
\]
in $\Coh^{\beta}(X)$ is a short exact sequence if and only if the induced sequence
\[
A \to B \to C \to A[1]
\]
is a distinguished triangle in $\Db(X)$.

To simplify notation, we define for any $E \in \Db(X)$ its \emph{twisted Chern character} as $\ch^{\beta}(E) \coloneqq \ch(E) \cdot e^{-\beta H}$. Note that when $\beta \in \Z$, this is nothing but $\ch(E \otimes \OO_X(-\beta H))$.

\begin{defn}
For $\alpha > 0$, $\beta \in \R$, and $E \in \Coh^{\beta}(X)$ we define a slope function
\[
\nu_{\alpha, \beta}(E) \coloneqq \frac{H \cdot \ch_2^{\beta}(E) - \frac{\alpha^2}{2} H^3 \cdot \ch_0^{\beta}(E)}{H^2 \cdot \ch_1^{\beta}(E)},
\]
where again division by zero needs to be interpreted as $+\infty$. Analogously to slope stability, an object $E \in \Coh^{\beta}(X)$ is called \emph{$\nu_{\alpha, \beta}$-(semi)stable} if for all non-trivial proper subobjects $F \into E$ in $\Coh^{\beta}(X)$ the inequality $\nu_{\alpha, \beta}(F) < (\leq) \nu_{\alpha, \beta}(E/F)$ holds.
\end{defn}

If it is clear from context, we will sometimes abuse notation and write \emph{tilt-(semi)stable} instead of $\nu_{\alpha, \beta}$-(semi)stable. Note that by definition, any $E \in \Coh^{\beta}(X)$ satisfies $H^2 \cdot \ch_1^{\beta}(E) \geq 0$. Therefore, this function plays the same role in $\Coh^{\beta}(X)$ as the rank does in $\Coh(X)$.

As previously, Harder-Narasimhan filtrations exist. However, note that a version of Jordan-H\"older filtrations exists, but the stable factors are not unique up to order.

The notion of $2$-Gieseker stability occurs as a limit of tilt stability as follows.

\begin{prop}[{\cite[Proposition 14.2]{Bri08:stability_k3}}]
\label{prop:large_volume_limit}
Let $E \in \Db(X)$ and $\beta < \mu(E)$. Then $E \in \Coh^{\beta}(X)$ and $E$ is $\nu_{\alpha, \beta}$-(semi)stable for $\alpha \gg 0$ if and only if $E \in \Coh(X)$ and $E$ is $2$-Gieseker-(semi)stable.
\end{prop}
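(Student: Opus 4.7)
The plan is to identify $2$-Gieseker (semi)stability as the $\alpha \to \infty$ limit of tilt stability, by expanding $\nu_{\alpha,\beta}$ asymptotically in $\alpha$ and matching the leading behaviour against the top three coefficients of the Hilbert polynomial.

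For the ``if'' direction, assume $E \in \Coh(X)$ is $2$-Gieseker-semistable with $\mu(E) > \beta$. Then $E$ is torsion-free and slope-semistable, so all its slope-HN-factors have slope $\mu(E) > \beta$, placing $E \in \TT_\beta \subset \Coh^\beta(X)$. Given a short exact sequence $0 \to F \to E \to Q \to 0$ in $\Coh^\beta(X)$, the long exact sequence of cohomology sheaves together with $\HH^{-1}(E) = 0$ forces $\HH^{-1}(F) = 0$, so $F$ is a sheaf, and the image $K := \im(F \to E) \subset E$ fits into an exact sequence $0 \to \HH^{-1}(Q) \to F \to K \to 0$ in $\Coh(X)$ with $\HH^{-1}(Q) \in \FF_\beta$. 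For $\alpha \gg 0$, the tilt slope takes the form
\[
\nu_{\alpha,\beta}(F) = -\tfrac{\alpha^2}{2}\cdot\tfrac{H^3\,\ch_0^\beta(F)}{H^2\,\ch_1^\beta(F)} + \tfrac{H\cdot\ch_2^\beta(F)}{H^2\,\ch_1^\beta(F)},
\]
and analogously for $E$. Using Hirzebruch--Riemann--Roch to relate $(H^3\ch_0, H^2\ch_1, H\ch_2)$ to the Hilbert coefficients $(\alpha_3, \alpha_2, \alpha_1)$, I would show that the lexicographic comparison ``leading $\alpha^2$-coefficient, then constant term'' of $\nu_{\alpha,\beta}(F) \leq \nu_{\alpha,\beta}(E)$ translates exactly to $P_2(K,m) \preceq P_2(E,m)$. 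The sub-factor $\HH^{-1}(Q)$ can only raise the tilt slope from $K$ to $F$, thanks to the separation $\mu(\HH^{-1}(Q)) \leq \beta < \mu(K)$, so $2$-Gieseker-semistability of $E$ against $K$ implies tilt-semistability against $F$.

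For the converse, suppose $E \in \Coh^\beta(X)$ is $\nu_{\alpha,\beta}$-semistable for $\alpha \gg 0$. I would first rule out $\HH^{-1}(E) \neq 0$: the canonical triangle yields a short exact sequence $0 \to \HH^{-1}(E)[1] \to E \to \HH^0(E) \to 0$ in $\Coh^\beta(X)$, and since $\HH^{-1}(E) \in \FF_\beta$ is torsion-free (torsion sheaves have slope $+\infty$, so they lie in $\TT_\beta$ rather than $\FF_\beta$), it has $\ch_0^\beta > 0$ when non-zero, forcing $\nu_{\alpha,\beta}(\HH^{-1}(E)[1]) \to +\infty$ as $\alpha \to \infty$ and contradicting tilt-semistability. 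Hence $E$ is a sheaf. Given any putative $2$-Gieseker-destabiliser $K \subset E$ with $P_2(K,m) \succ P_2(E,m)$, one has $K \in \TT_\beta \subset \Coh^\beta(X)$ (either because $K$ is torsion, or because $\mu(K) \geq \mu(E) > \beta$, both forced by $P_2(K) \succ P_2(E)$), and the translation of the previous paragraph applied in reverse yields $\nu_{\alpha,\beta}(K) > \nu_{\alpha,\beta}(E)$ for $\alpha \gg 0$, contradicting tilt-semistability of $E$.

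The main obstacle is uniformity of the threshold ``$\alpha \gg 0$'': a priori it could depend on the destabiliser. I would address this by invoking boundedness of the family of subsheaves of $E$ of bounded Mumford slope, so that only finitely many numerical types $(\ch_0, \ch_1, \ch_2)$ of potential destabilisers appear and a single $\alpha_0$ works uniformly. This is the standard boundedness input used by Bridgeland in the K3 setting and adapts directly here, since $2$-Gieseker-semistability only depends on the top three Hilbert coefficients.
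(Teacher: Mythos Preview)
The paper does not give its own proof of this proposition; it simply cites Bridgeland's argument for K3 surfaces and remarks that the same proof works in the threefold setting. Your sketch is precisely that argument: asymptotically expand $\nu_{\alpha,\beta}$ in $\alpha$, match the resulting lexicographic comparison against the truncated Hilbert polynomial via Hirzebruch--Riemann--Roch, and invoke boundedness of potential destabilisers for uniformity of the threshold in $\alpha$.

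Two small points to tighten. First, in the ``if'' direction you say that the contribution of $\HH^{-1}(Q)$ can only \emph{raise} the tilt slope from $K$ to $F$; in fact it lowers it (adding $\HH^{-1}(Q)$ increases $\ch_0$ and weakly decreases $H^2\cdot\ch_1^\beta$, making the dominant term $-\tfrac{\alpha^2}{2}\cdot\tfrac{H^3\ch_0}{H^2\ch_1^\beta}$ more negative), but this is the direction you actually need, so your conclusion survives the slip. Second, in the converse direction the claim $K \in \TT_\beta$ requires $\mu_{\min}(K) > \beta$, not merely $\mu(K) \ge \mu(E)$; this is not automatic for an arbitrary destabilising subsheaf, but it holds once you take $K$ to be the \emph{maximal} $2$-Gieseker-destabilising subsheaf, which is itself $2$-Gieseker-semistable and hence slope-semistable with $\mu_{\min}(K) = \mu(K) \ge \mu(E) > \beta$. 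You should also observe explicitly that any torsion subsheaf of $E$ would have $\nu_{\alpha,\beta}$ either constant in $\alpha$ or equal to $+\infty$, hence tilt-destabilises $E$ for $\alpha \gg 0$; this is what forces $E$ to be torsion-free.
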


The statement in \cite{Bri08:stability_k3} is for K3 surfaces, but the same proof works in our setting. If $\beta > \mu(E)$ the situation is slightly more complicated. The following Proposition is a combination of \cite[Lemma 2.7]{BMS16:abelian_threefolds} and \cite[Proposition 3.1]{LM16:examples_tilt}.

\begin{prop}
\label{prop:negative_rank_large_volume_limit}
Take a $\nu_{\alpha, \beta}$-semistable object $E \in \Coh^{\beta}(X)$. If $\beta \neq \mu(E)$, then $\HH^{-1}(E)$ is a reflexive sheaf, and if $\beta \geq \mu(E)$ and $\alpha \gg 0$, then $\HH^{-1}(E)$ is a torsion-free slope-semistable sheaf and $\HH^0(E)$ is supported in dimension less than or equal to one. %If $\beta > \mu(E)$, %and $\alpha > 0$, 
%then $\HH^{-1}(E)$ is reflexive for any $\alpha >0$.
\end{prop}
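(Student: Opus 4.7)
My plan is to argue by contradiction in each case: whenever one of the desired conclusions about $\HH^{-1}(E)$ or $\HH^0(E)$ fails, I will exhibit a subobject $F \hookrightarrow E$ in $\Coh^\beta(X)$ with $\nu_{\alpha,\beta}(F) > \nu_{\alpha,\beta}(E)$, contradicting the tilt-semistability of $E$. The candidates for $F$ will be built from the canonical truncation triangle $\HH^{-1}(E)[1] \to E \to \HH^0(E)$.

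\textbf{Reflexivity of $\HH^{-1}(E)$ when $\beta \neq \mu(E)$.} Set $G \coloneqq \HH^{-1}(E)$ and suppose the natural map $G \to G^{\vee\vee}$ has nonzero cokernel $Q$, necessarily supported in codimension $\geq 2$. Then $Q$ is torsion, so $Q \in \TT_\beta(X) \subset \Coh^\beta(X)$; and since double-dualization alters Chern characters only in codimension $\geq 2$, $G^{\vee\vee}$ shares the slope-HN filtration of $G$, so $G^{\vee\vee} \in \FF_\beta(X)$ and $G^{\vee\vee}[1] \in \Coh^\beta(X)$. Reading the distinguished triangle $G[1] \to G^{\vee\vee}[1] \to Q[1]$ in the $\Coh^\beta$-cohomology yields the short exact sequence
\[
0 \to Q \to G[1] \to G^{\vee\vee}[1] \to 0
\]
in $\Coh^\beta$, and composing with the canonical inclusion $G[1] \hookrightarrow E$ realizes $Q$ as a subobject of $E$ in $\Coh^\beta$. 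Since $Q$ is supported in codimension $\geq 2$, $H^2 \ch_1^\beta(Q) = 0$ and $\nu_{\alpha,\beta}(Q) = +\infty$; on the other hand, the hypothesis $\beta \neq \mu(E)$ translates into $H^2 \ch_1^\beta(E) > 0$, so $\nu_{\alpha,\beta}(E)$ is finite. The contradiction forces $Q = 0$.

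\textbf{Second assertion.} Assume $\beta \geq \mu(E)$ and $\alpha \gg 0$. Torsion-freeness of $\HH^{-1}(E)$ is immediate, since any torsion subsheaf would have $\mu = +\infty > \beta$, violating $\HH^{-1}(E) \in \FF_\beta$. If $\HH^{-1}(E)$ is not slope-semistable, I would let $G_0 \subsetneq \HH^{-1}(E)$ be its maximal destabilizing subsheaf (so $\mu(G_0) > \mu(\HH^{-1}(E))$ and $G_0 \in \FF_\beta$) and regard $G_0[1] \hookrightarrow \HH^{-1}(E)[1] \hookrightarrow E$ as a subobject of $E$ in $\Coh^\beta$. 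Using $\ch(G_0[1]) = -\ch(G_0)$ and the identity $H^2 \ch_1^\beta(F) = H^3 \ch_0(F)(\mu(F) - \beta)$, a direct computation yields the leading-order asymptotic
\[
\nu_{\alpha,\beta}(G_0[1]) \sim \frac{\alpha^2/2}{\beta - \mu(G_0)}, \qquad \nu_{\alpha,\beta}(E) \sim \frac{\alpha^2/2}{\beta - \mu(E)} \qquad (\alpha \to \infty),
\]
from which $\mu(G_0) > \mu(E)$ forces $\nu_{\alpha,\beta}(G_0[1]) > \nu_{\alpha,\beta}(E)$ for $\alpha \gg 0$, the desired contradiction. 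For the support claim, the same asymptotic analysis will be applied to the preimage $\widetilde T$ of a hypothetical subsheaf $T \subset \HH^0(E)$ of dimension $\geq 2$ under $E \twoheadrightarrow \HH^0(E)$, fitting into the extension $0 \to \HH^{-1}(E)[1] \to \widetilde T \to T \to 0$ in $\Coh^\beta$.

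\textbf{Main obstacle.} The two conclusions of the second assertion are entangled: the equality $\mu(\HH^{-1}(E)) = \mu(E)$ holds precisely when $\HH^0(E)$ is supported in dimension $\leq 1$, so the inequality $\mu(G_0) > \mu(E)$ invoked above already presupposes the support claim. My plan to resolve this is to run the two destabilizing-subobject constructions in parallel, so that any divisorial (or higher-dimensional) component of $\HH^0(E)$ would destabilize $E$ directly via $\widetilde T$, leaving the case $\mu(\HH^{-1}(E)) = \mu(E)$ in which the slope-semistability argument via $G_0[1]$ closes.
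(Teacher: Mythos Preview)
The paper does not prove this proposition; it simply cites \cite[Lemma~2.7]{BMS16:abelian_threefolds} and \cite[Proposition~3.1]{LM16:examples_tilt}. Your overall strategy---produce a destabilising sub or quotient from the truncation triangle and compare leading $\alpha^2$-coefficients---is exactly the standard one used in those references, and your reflexivity argument is correct. (One small caveat: your translation ``$\beta\neq\mu(E)\Longleftrightarrow H^2\ch_1^\beta(E)>0$'' fails when $\ch_0(E)=0$ and $H^2\ch_1(E)=0$, but in that degenerate case both $\HH^{-1}(E)[1]$ and $\HH^0(E)$ are forced to have $H^2\ch_1^\beta=0$ as well, whence $\HH^{-1}(E)=0$ is vacuously reflexive.)

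There is, however, a genuine gap in your treatment of the support claim. You propose to destabilise via the preimage $\widetilde T\subset E$ of a subsheaf $T\subset\HH^0(E)$ ``of dimension $\ge 2$'', but this does not work as stated: first, an arbitrary subsheaf $T$ of $\HH^0(E)\in\TT_\beta$ need not lie in $\TT_\beta$ (torsion classes are closed under quotients, not subobjects), so $\widetilde T$ need not lie in $\Coh^\beta(X)$; second, even when $T\in\TT_\beta$, the candidate $T=\HH^0(E)$ gives $\widetilde T=E$, while a proper $T$ may leave a quotient $\HH^0(E)/T$ of dimension $\le 1$ and hence infinite slope. The fix is simpler than your construction: use the truncation triangle itself. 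If $\HH^0(E)$ has support of dimension $\ge 2$ then $H^2\ch_1^\beta(\HH^0(E))>0$, and since $\ch_0(\HH^0(E))\ge 0$ one has $\nu_{\alpha,\beta}(\HH^0(E))$ bounded above (by a constant if $\ch_0=0$, tending to $-\infty$ if $\ch_0>0$); meanwhile $\ch_0(\HH^{-1}(E)[1])<0$ forces $\nu_{\alpha,\beta}(\HH^{-1}(E)[1])\to+\infty$. Thus $\HH^{-1}(E)[1]\hookrightarrow E\twoheadrightarrow\HH^0(E)$ already destabilises $E$ for $\alpha\gg 0$. Once the support claim is established this way, your entanglement is resolved exactly as you say: $\mu(\HH^{-1}(E))=\mu(E)$, and the $G_0[1]$ argument closes (noting separately that when $\mu(G_0)=\beta$ one has $\nu_{\alpha,\beta}(G_0[1])=+\infty$, which also destabilises).
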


Semistable sheaves satisfy the Bogomolov inequality (see \cite[Theorem 3.4.1]{HL10:moduli_sheaves}). A version for tilt stability was proved in \cite[Corollary 7.3.2]{BMT14:stability_threefolds}.

\begin{thm}[Bogomolov inequality]
Let $E \in \Coh^{\beta}(X)$ be $\nu_{\alpha, \beta}$-semistable. Then
\[
\Delta_H(E) \coloneqq (H^2 \cdot \ch_1(E))^2 - 2 (H^3 \cdot \ch_0(E))(H \cdot \ch_2(E)) \geq 0.
\]
\end{thm}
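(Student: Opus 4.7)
The strategy is to reduce the tilt-stability Bogomolov inequality to the classical one for slope-semistable sheaves by deforming the stability parameters $(\alpha,\beta)$ toward the large-volume limit. Observe first that $\Delta_H(E)$ depends only on $\ch_{\leq 2}(E)$, is invariant under tensoring with $\OO_X(nH)$ for $n\in\Z$, and is trivially non-negative when $\ch_0(E)=0$ since in that case it equals $(H^2\cdot\ch_1(E))^2$. So I may assume $\ch_0(E)\neq 0$, and after twisting by a suitable line bundle that $\beta<\mu(E)$, placing us in the regime where Propositions \ref{prop:large_volume_limit} and \ref{prop:negative_rank_large_volume_limit} apply.

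Next, I would follow $E$ as $\alpha$ is pushed to $+\infty$ with $\beta$ fixed. By Proposition \ref{prop:large_volume_limit}, in that limit tilt-semistability coincides with $2$-Gieseker semistability and hence implies slope semistability of the underlying sheaf, for which classical Bogomolov \cite[Theorem 3.4.1]{HL10:moduli_sheaves} gives $\Delta_H\geq 0$. If $E$ itself fails to remain tilt-semistable along the path, it must destabilise at walls, producing short exact sequences $0\to A\to E\to B\to 0$ in $\Coh^\beta(X)$ with $\nu_{\alpha,\beta}(A)=\nu_{\alpha,\beta}(B)=\nu_{\alpha,\beta}(E)$ at the wall. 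Inducting on a suitable combination of $\Delta_H$ and $H^2\cdot\ch_1^\beta$ (as in \cite[Section 7]{BMT14:stability_threefolds}), which is strictly smaller for the factors $A,B$ than for $E$ at any non-trivial wall, one reduces to the already-handled large-volume case. The two numerical inputs $\Delta_H(A),\Delta_H(B)\geq 0$ then combine via the identity
\[
\Delta_H(E)-\Delta_H(A)-\Delta_H(B)=2\bigl[(H^2\cdot\ch_1(A))(H^2\cdot\ch_1(B))-(H^3\cdot\ch_0(A))(H\cdot\ch_2(B))-(H^3\cdot\ch_0(B))(H\cdot\ch_2(A))\bigr],
\]
whose right-hand side is non-negative when $A$ and $B$ share the tilt-slope of $E$, a Hodge-index-type fact about the quadratic form $\Delta_H$ restricted to the equal-slope locus.

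The main obstacle is the local finiteness of walls in the $(\alpha,\beta)$-plane for fixed numerical class, which is a priori entangled with the Bogomolov inequality itself. The standard resolution, carried out in detail in \cite[Corollary 7.3.2]{BMT14:stability_threefolds}, is to bootstrap both statements simultaneously via a single discriminant induction, applying the classical sheaf-level Bogomolov to each slope-semistable Jordan--H\"older factor of $\mathcal H^{-1}(E)\in\FF_\beta$ and $\mathcal H^0(E)\in\TT_\beta$ to control destabilisers and close the argument.
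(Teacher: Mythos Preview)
The paper does not actually prove this theorem; it simply records it as a citation of \cite[Corollary 7.3.2]{BMT14:stability_threefolds}. Your proposal is a sketch of that cited argument, and your final paragraph correctly identifies both the circularity hazard (local finiteness of walls is itself tied up with the Bogomolov inequality) and its resolution via a simultaneous discriminant induction using the slope Harder--Narasimhan factors of $\HH^{-1}(E)$ and $\HH^0(E)$.

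There is, however, a genuine error in your first reduction step. Tensoring by $\OO_X(nH)$ shifts both $\mu(E)$ and the relevant $\beta$ by $n$: if $E\in\Coh^{\beta}(X)$ is $\nu_{\alpha,\beta}$-semistable, then $E(nH)\in\Coh^{\beta+n}(X)$ is $\nu_{\alpha,\beta+n}$-semistable, and $\mu(E(nH))-(\beta+n)=\mu(E)-\beta$. So twisting cannot be used to arrange $\beta<\mu(E)$, and your appeal to Proposition~\ref{prop:large_volume_limit} is unavailable in general (for instance whenever $\ch_0(E)<0$, or whenever $\beta\geq\mu(E)$). The middle paragraph's strategy of pushing $\alpha\to+\infty$ with $\beta$ fixed therefore does not reduce directly to the sheaf case. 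The actual argument in \cite{BMT14:stability_threefolds} bypasses this exactly as you describe in your last sentence, by applying the classical Bogomolov inequality to the slope-semistable factors of the cohomology sheaves and running the quadratic-form induction from there; that is the correct entry point, and the twisting manoeuvre should simply be dropped.
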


%In the case of a Fano threefold of Picard rank one, \cite{Li19:conjecture_fano_threefold} proved the following inequality involving the third Chern character. It was conjectured for any smooth projective threefold in \cite{BMT14:stability_threefolds}. The following version is as stated in \cite[Conjecture 4.1]{BMS16:abelian_threefolds}.

%\begin{thm}
%\label{thm:generalized_bg_inequality}
%Let $E \in \Coh^{\beta}(X)$ be $\nu_{\alpha, \beta}$-semistable. Then
%\[
%Q_{\alpha, \beta}(E) \coloneqq \alpha^2 \Delta_H(E) + 4 (H \cdot \ch_2^{\beta}(E) )^2 - 6 (H^2 \cdot \ch_1^{\beta}(E)) \ch_3^{\beta}(E) \geq 0
%\]
%\end{thm}

Most applications of tilt stability come from varying $(\alpha, \beta)$ and determining what that means for the stability of a given set of objects. We visualize the parameter space of tilt stability,  $(\alpha, \beta) \in \R^2$ with $\alpha > 0$, as the upper half-plane via $i\alpha + \beta$. For a given class $v \in K_0(X)$ it turns out that there is a locally finite wall and chamber structure such that stability only changes as we cross a wall. These walls are either semicircles with center on the $\beta$-axis or vertical lines, see Figure \ref{fig:walls} and Figure \ref{fig:walls_torsion}. In the following, we recall what this means formally.

For $v \in K_0(X)$ we write $\ch(v)$, $\mu(v)$, $\nu_{\alpha, \beta}(v)$, and $\Delta(v)$ to mean the appropriate versions where $E$ is replaced by $v$.

\begin{defn}
For $v, w \in K_0(X)$ we define
\[
W(v, w) \coloneqq \{ (\alpha, \beta) \in \R_{>0} \times \R \colon \nu_{\alpha, \beta}(v) = \nu_{\alpha, \beta}(w) \}.
\]
The set $W(v, w)$ is a \emph{numerical wall} if $W(v, w) \neq \emptyset$ and $W(v, w) \neq \R_{>0} \times \R$, i.e., if it is a proper non-trivial subset of the upper half-plane.
\end{defn}

Numerical walls in tilt stability have a rather simple structure as shown in \cite{Mac14:nested_wall_theorem}:

\begin{thm}[Nested wall theorem]
\label{thm:nested_wall_thm}
Let $v \in K_0(X)$ with $\Delta(v) \geq 0$.
\begin{enumerate}
    %\item If $E, F$ lie in $\Coh^{\beta_0}(X)$ for some $(\alpha_0, \beta_0) \in W(E, F)$, then $E, F \in \Coh^{\beta}(X)$ for every $(\alpha, \beta) \in W(E, F)$.
    %If a subset $S$ of a numerical wall $W$ is an actual wall, then $S = W$.
    \item A numerical wall for $v$ is either a semicircle centered along the $\beta$-axis or a vertical line parallel to the $\alpha$-axis in the upper half plane
    \item If $\ch_0(v) \neq 0$, then there is a unique numerical vertical wall for $v$ given by $\beta = \mu(v)$. The remaining numerical walls  for $v$ are split into two sets of nested semicircles whose apexes lie on the hyperbola $\nu_{\alpha, \beta}(v) = 0$. In particular, no two distinct walls intersect.
    \item If $\ch_0(v) = 0$ and $H^2 \cdot \ch_1(v) \neq 0$, then every numerical wall for $v$ is a semicircle whose apex lies on the ray $\beta = (H \cdot \ch_2(v))/(H^2 \cdot \ch_1(v))$.
\end{enumerate}
\end{thm}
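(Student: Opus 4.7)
Abbreviate $r_\bullet = \ch_0(\bullet)$, $c_\bullet = H^2 \cdot \ch_1(\bullet)$, $d_\bullet = H \cdot \ch_2(\bullet)$, and $h = H^3$. The plan is to reduce the wall equation to a single master equation, read off its geometry directly, and use the hypothesis $\Delta_H(v) \geq 0$ only in the final nesting step. First I would expand $\ch^\beta = \ch \cdot e^{-\beta H}$ to obtain $H \cdot \ch_2^\beta = d - \beta c + \tfrac{\beta^2}{2} h r$ and $H^2 \cdot \ch_1^\beta = c - \beta h r$, and then clear denominators in $\nu_{\alpha, \beta}(v) = \nu_{\alpha, \beta}(w)$. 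A direct expansion shows that the cubic $\beta^3$ terms and the mixed $\alpha^2 \beta$ terms coming from the cross products cancel exactly, leaving the master equation
\[
-\tfrac{h}{2}(r_v c_w - r_w c_v)(\alpha^2 + \beta^2) + h(d_w r_v - d_v r_w)\beta + (d_v c_w - d_w c_v) = 0.
\]
Part (i) is then immediate: if $r_v c_w - r_w c_v = 0$ the equation is linear in $\beta$ alone and cuts out a vertical line, whereas otherwise one solves for $\alpha^2 + \beta^2$ and puts it in the form $(\beta - a)^2 + \alpha^2 = R^2$ with $a, R^2 \in \R$, which is either empty or a semicircle centred on the $\beta$-axis.

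For the vertical wall in (ii) with $r_v \neq 0$, I would observe that $r_v c_w - r_w c_v = 0$ is exactly $\mu(w) = \mu(v)$, and substituting $c_w = r_w c_v / r_v$ collapses the residual linear equation to $\beta = c_v/(h r_v) = \mu(v)$, independently of the choice of $w$. For the semicircular walls in both (ii) and (iii), I would read off the centre $\beta_* = (d_w r_v - d_v r_w)/(r_v c_w - r_w c_v)$ and radius $\alpha_*$ from the master equation, and then substitute $(\beta, \alpha) = (\beta_*, \alpha_*)$ into the numerator $H \cdot \ch_2^\beta(v) - \tfrac{\alpha^2}{2} h r_v$ of $\nu_{\alpha, \beta}(v)$; using the master equation itself to eliminate $\alpha_*^2 - \beta_*^2$, the numerator collapses to $d_v - d_v = 0$, so the apex lies on $\nu_{\alpha, \beta}(v) = 0$. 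In case (iii), with $r_v = 0$, the formula for $\beta_*$ simplifies immediately to $d_v/c_v = H \cdot \ch_2(v)/(H^2 \cdot \ch_1(v))$ once one checks that $r_w \neq 0$ is necessary for a non-trivial wall.

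The nesting assertion in (ii) is where $\Delta_H(v) \geq 0$ enters. Completing the square rewrites the hyperbola $\nu_{\alpha, \beta}(v) = 0$ as
\[
\alpha^2 - (\beta - \mu(v))^2 = -\frac{\Delta_H(v)}{h^2 r_v^2} \leq 0,
\]
whose intersection with the upper half-plane splits into two branches confined to $\beta < \mu(v)$ and $\beta > \mu(v)$ respectively, degenerating to two rays through $(\mu(v), 0)$ when $\Delta_H(v) = 0$. Along the left branch one has $\alpha_* < \mu(v) - \beta_*$, which after differentiation gives $\partial_{\beta_*}(\beta_* + \alpha_*) < 0 < \partial_{\beta_*}(\beta_* - \alpha_*)$; hence as $\beta_*$ varies the two intercepts $\beta_* \pm \alpha_*$ of the corresponding semicircle with the $\beta$-axis move strictly monotonically in opposite directions, so distinct apexes on the same branch give semicircles whose diameters are strictly nested. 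The same argument applies on the right branch, ruling out crossings between distinct walls.

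The main obstacle, I expect, is purely organisational: the cancellation of the $\beta^3$ and $\alpha^2 \beta$ terms in Step~1 is the only genuinely structural input, and it is what forces walls to be circles centred on the $\beta$-axis or vertical lines rather than arbitrary plane curves. Once this cancellation is carried out carefully, parts (i)--(iii) and the nesting all reduce to the one-variable monotonicity analysis of the hyperbola branches, together with the apex computation described above.
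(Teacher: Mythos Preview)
The paper does not give its own proof of this theorem; it is quoted from \cite{Mac14:nested_wall_theorem} without argument. Your proposal is essentially the standard derivation and is correct in outline: clearing denominators in $\nu_{\alpha,\beta}(v)=\nu_{\alpha,\beta}(w)$ really does produce your master equation (the $\beta^3$ and $\alpha^2\beta$ cancellations go through exactly as you say), and the identification of the centre, the apex computation, and the hyperbola formula $\alpha^2-(\beta-\mu(v))^2=-\Delta_H(v)/(h^2r_v^2)$ are all right.

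Two small points deserve tightening. First, your strict inequality $\alpha_*<\mu(v)-\beta_*$ on the left branch is only non-strict when $\Delta_H(v)=0$; in that degenerate case every left-branch semicircle has right endpoint exactly $\mu(v)$, so $\partial_{\beta_*}(\beta_*+\alpha_*)=0$ rather than $<0$. The nesting conclusion still holds because such semicircles share only the boundary point $(\mu(v),0)$, which lies outside the open upper half-plane, but you should say this rather than rely on a strict monotonicity that fails. Second, you argue nesting only within a single branch; to conclude that no two distinct walls intersect you must also observe that left-branch semicircles lie in $\{\beta\le\mu(v)\}$, right-branch semicircles in $\{\beta\ge\mu(v)\}$, and the unique vertical wall sits at $\beta=\mu(v)$, so walls from different families are separated in the open half-plane. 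Both are easy additions, and with them your argument is complete.
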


The following is a well-known consequence of the fact that walls do not intersect.

\begin{cor}
Let
\[
0 \to F \to E \to G \to 0
\]
be a short exact sequence of $\nu_{\alpha, \beta}$-semistable objects in $\Coh^{\beta_0}(X)$ for some $(\alpha_0, \beta_0) \in W(F, E)$. Then this is a short exact sequence of $\nu_{\alpha, \beta}$-semistable object in $\Coh^{\beta}(X)$ for any $(\alpha, \beta) \in W(E, F)$.
\end{cor}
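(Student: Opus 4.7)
The approach is to exploit the nested wall theorem together with the observation that the hypothesis forces the relevant numerical walls to coincide. Since $[E] = [F] + [G]$ in the numerical Grothendieck group,
\[
H^2 \cdot \ch_1^{\beta}(E) \cdot \nu_{\alpha, \beta}(E) \;=\; H^2 \cdot \ch_1^{\beta}(F) \cdot \nu_{\alpha, \beta}(F) \;+\; H^2 \cdot \ch_1^{\beta}(G) \cdot \nu_{\alpha, \beta}(G)
\]
for every $(\alpha, \beta)$, so equality of any two of the tilt-slopes at a single point forces equality of all three (provided the denominators are nonzero, which holds along the wall in question). Set $W \coloneqq W(F, E) = W(E, G) = W(F, G)$; by construction $W$ is simultaneously a numerical wall for each of the classes $[F]$, $[E]$, and $[G]$.

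Fix an arbitrary $(\alpha_1, \beta_1) \in W$ and join it to $(\alpha_0, \beta_0)$ by a continuous path $\gamma$ in $W$. The goal is to show that $F$, $E$, $G$ all lie in $\Coh^{\beta}(X)$ and are $\nu_{\alpha, \beta}$-semistable for every $(\alpha, \beta) \in \gamma$. I would argue that the locus where this holds is both open and closed in $\gamma$, hence all of $\gamma$. Openness comes from the openness of tilt-semistability combined with the fact that, fixing an object, the condition of lying in the varying heart $\Coh^{\beta}(X)$ is open in $\beta$ away from the slopes of the semistable factors of its standard cohomology sheaves. Closedness follows from local finiteness of walls and the limit-of-semistables property. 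The exactness of the triangle $F \to E \to G \to F[1]$ in $\Coh^{\beta}(X)$ is automatic once all three vertices lie in this heart, since the triangle is distinguished in $\Db(X)$ independently of $(\alpha, \beta)$.

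The main substantive content is ruling out the appearance of a strict destabilizer along $\gamma$. Suppose $F$ acquired a subobject $F' \hookrightarrow F$ in $\Coh^{\beta'}(X)$ with $\nu_{\alpha', \beta'}(F') > \nu_{\alpha', \beta'}(F)$ at some $(\alpha', \beta') \in \gamma$. Then $W(F', F)$ is a numerical wall for the class $[F]$, and it cannot coincide with $W$ because the strict inequality forbids $\nu(F') \equiv \nu(F)$ on $W$. Using the semistability of $F$ at $(\alpha_0, \beta_0)$, one arranges that $\nu_{\alpha, \beta}(F') - \nu_{\alpha, \beta}(F)$ takes opposite signs at the two endpoints of $\gamma$, so the intermediate value theorem produces a point $(\alpha_*, \beta_*) \in W \cap W(F', F)$. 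This contradicts Theorem \ref{thm:nested_wall_thm}, since two distinct numerical walls for $[F]$ cannot intersect. The symmetric arguments applied to $G$ and to $E$ complete the proof of semistability.

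\textbf{Main obstacle.} The delicate ingredient is the intermediate-value step: the hypothetical destabilizer $F'$ is known only to be a subobject of $F$ inside the heart $\Coh^{\beta'}(X)$, while the semistability hypothesis at $(\alpha_0, \beta_0)$ concerns subobjects in the different heart $\Coh^{\beta_0}(X)$. The expected resolution is to combine the continuity of $\nu_{\alpha, \beta}$ in the parameters with openness of the condition ``$F' \hookrightarrow F$ in $\Coh^{\beta}(X)$'', so that the destabilizing inclusion persists in a neighbourhood and the numerical comparison can be transported along $\gamma$; this, together with the nested wall theorem, is what makes the whole argument work.
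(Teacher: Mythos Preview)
The paper gives no proof of this corollary; it is stated as ``a well-known consequence of the fact that walls do not intersect.'' Your approach is precisely the intended expansion of that one-line justification: identify $W$ as a common numerical wall for $[F]$, $[E]$, $[G]$, and use nestedness to argue that no actual wall for any of these classes can be crossed while moving along $W$.

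Two comments. First, the intermediate value step is slightly roundabout. Once you know $W(F',F)$ is a numerical wall for $[F]$ distinct from $W$, nestedness gives $W \cap W(F',F) = \emptyset$ directly, so $W$ lies entirely in one connected component of the complement of $W(F',F)$; hence the sign of $\nu(F') - \nu(F)$ is constant along $W$ without invoking IVT. This gives $\nu_{\alpha_0,\beta_0}(F') > \nu_{\alpha_0,\beta_0}(F)$ immediately. Of course, as you note, this numerical inequality alone does not contradict semistability at $(\alpha_0,\beta_0)$ unless $F'$ is a subobject in $\Coh^{\beta_0}(X)$, so you still need the second point.

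Second, your resolution of the obstacle is correct but should be localized. As written, you need the inclusion $F' \hookrightarrow F$ to persist from $\beta'$ all the way back to $\beta_0$, which openness alone does not give. The clean fix is to first pass to the boundary point $(\alpha'',\beta'')$ of the semistable locus along $\gamma$: by closedness $F$ is still semistable there, while arbitrarily close beyond there is a destabilizer $F'$. Openness of the inclusion in $\beta$ (which follows because the cone of $F' \to F$ has only finitely many slope-HN factors) then lets you compare slopes at $(\alpha'',\beta'')$ itself, producing a point of $W(F',F) \cap W$ and the desired contradiction. With this refinement your argument is complete and matches the paper's intent.
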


\begin{defn}
Let $v \in K_0(X)$. A numerical wall $W$ for $v$ is called an \emph{actual wall} for $v$ if there is a short exact sequence
\[
0 \to F \to E \to G \to 0
\]
of $\nu_{\alpha, \beta}$-semistable objects in $\Coh^{\beta}(X)$ for one $(\alpha, \beta) \in W(F, E)$ such that $W = W(F, E)$ and $\ch(E) = v$.
\end{defn}

Note that the above Corollary implies that this is a short exact sequence in $\Coh^{\beta}(X)$ for all $(\alpha, \beta) \in W(F, E)$. Determining walls is the key technique in this paper. It will allow us to classify sheaves with certain Chern characters in terms of short exact sequences (see Theorem \ref{thm:moduli_rank_three_set_theoretic}). Note that the condition $W(F, E) \neq \mathbb{R}_{>0} \times \mathbb{R}$ implies $\nu_{\alpha, \beta}(F) > \nu_{\alpha, \beta}(E)$ on one side of such a wall.  We say that that the short exact sequence
\[
0 \to F \to E \to G \to 0,
\]
or sometimes the wall $W(F, E)$, \emph{destabilizes} $E$.

\begin{figure}[htp]
\centering
\includegraphics[width=16cm]{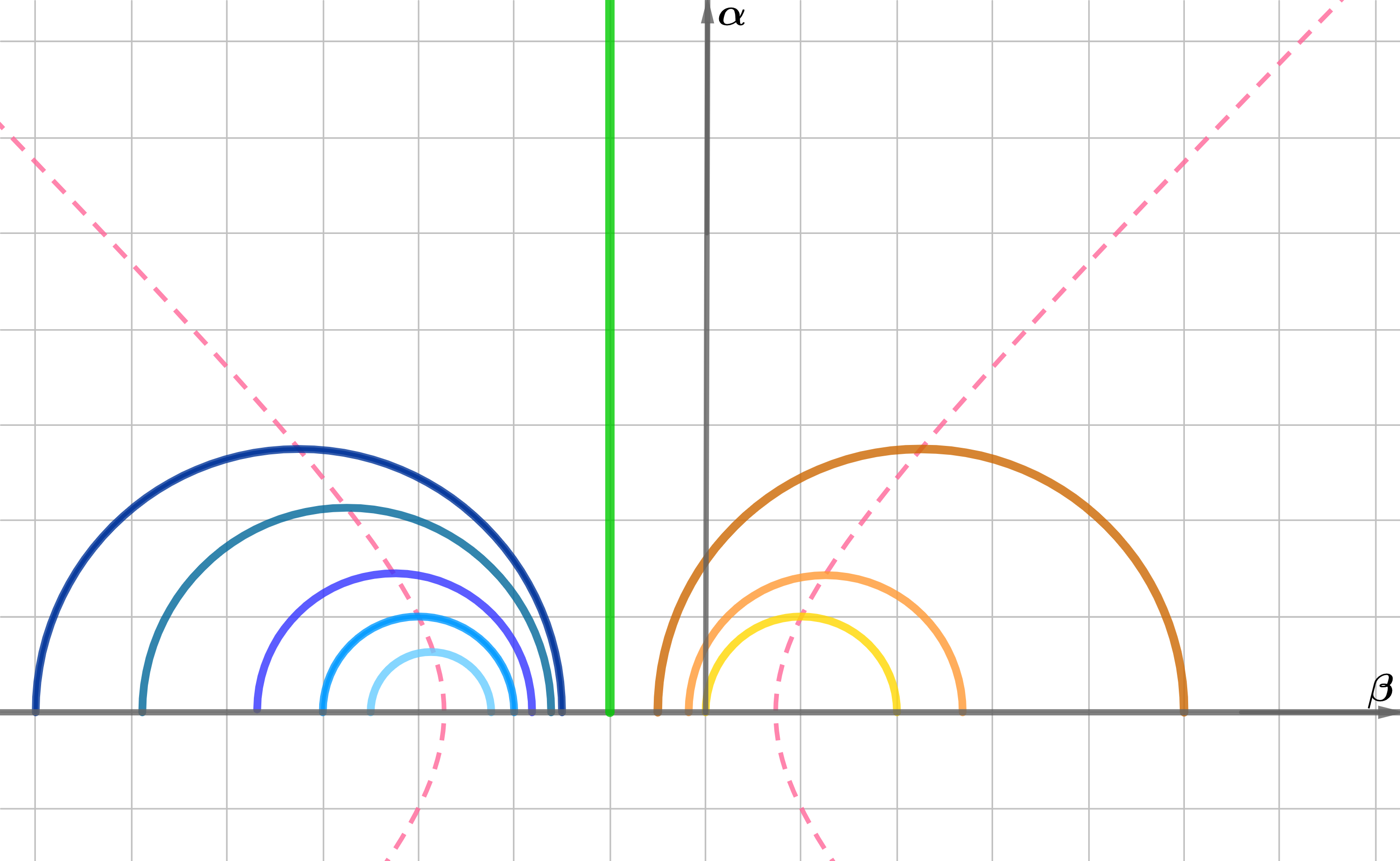}
\caption{Walls are nested semicircles or a unique vertical wall (Theorem \ref{thm:nested_wall_thm} (ii))}
\label{fig:walls}
\end{figure}

\begin{figure}[htp]
\centering
\includegraphics[width=16cm]{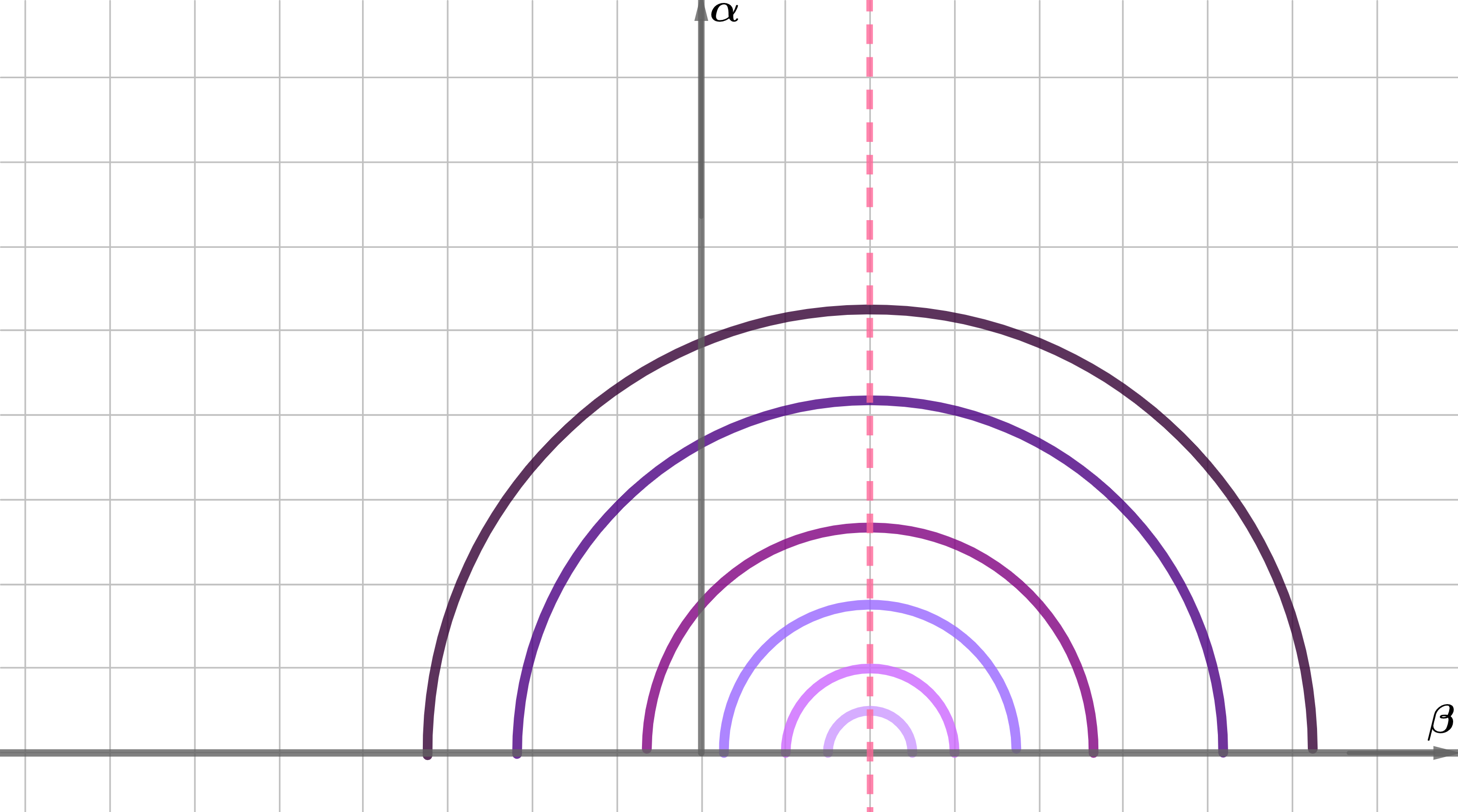}
\caption{Walls are nested semicircles (Theorem \ref{thm:nested_wall_thm} (iii))}
\label{fig:walls_torsion}
\end{figure}

\begin{prop}[{\cite[Appendix A]{BMS16:abelian_threefolds}}]
\label{prop:Deltabound}
If an actual wall is induced by a short exact sequence of tilt-semistable objects $0 \to F \to E \to G \to 0$, then
\[
\Delta_H(F) + \Delta_H(G) \leq \Delta_H(E),
\]
and equality can only occur if either $F$ or $G$ is a sheaf supported in dimension zero.
\end{prop}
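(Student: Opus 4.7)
The plan is to reinterpret the inequality via the symmetric bilinear form polarizing $\Delta_H$, and then to combine the wall equation with the Bogomolov inequalities for $F$ and $G$. First, a direct expansion gives the polarization identity
\[
\Delta_H(E) - \Delta_H(F) - \Delta_H(G) = 2 B_H(F,G),
\]
where $B_H(F,G) := (H^2\ch_1(F))(H^2\ch_1(G)) - (H^3\ch_0(F))(H\ch_2(G)) - (H^3\ch_0(G))(H\ch_2(F))$ is the polarization of $\Delta_H$. Hence the proposition reduces to showing $B_H(F,G) \geq 0$, with the equality clause tracked separately.

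Next, I would fix $(\alpha_0,\beta_0) \in W(F,E)$, let $\nu$ denote the common tilt-slope of $F$, $G$, and $E$ at this point, and set $\gamma = \beta_0+\nu$, $K = \alpha_0^2+\nu^2 > 0$. Substituting the wall equation $H\ch_2^{\beta_0}(v) = \nu \cdot H^2\ch_1^{\beta_0}(v) + \tfrac{\alpha_0^2}{2} H^3\ch_0(v)$ into the definition of $\Delta_H(v) = (H^2\ch_1(v))^2 - 2(H^3\ch_0(v))(H\ch_2(v))$ and completing the square produces the clean reformulation
\[
\Delta_H(v) = a_v^2 - K\, r_v^2, \qquad a_v := H^2\ch_1(v) - \gamma \cdot H^3\ch_0(v),\quad r_v := H^3\ch_0(v),
\]
valid for $v \in \{F, G, E\}$. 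Since $a_v$ and $r_v$ are additive in $v$, the inequality $B_H(F,G) \geq 0$ is equivalent to $a_F a_G \geq K\, r_F r_G$.

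The Bogomolov inequalities $\Delta_H(F), \Delta_H(G) \geq 0$ translate into $|a_v|\geq\sqrt{K}\,|r_v|$ for $v\in\{F,G\}$, yielding $|a_F a_G| \geq K|r_F r_G|$. To upgrade this to the signed inequality $a_F a_G \geq K\, r_F r_G$, one needs a sign-compatibility statement: the ratios $a_F/r_F$ and $a_G/r_G$ (when non-zero) should lie in the same connected component of $\R \setminus (-\sqrt{K},\sqrt{K})$. This is the main obstacle of the proof. It follows from $F, G$ lying in the common tilted heart $\Coh^{\beta_0}(X)$ and sharing the tilt-slope $\nu$, which geometrically forces their classes to sit on the same side of the wall's semicircle in the $(\alpha,\beta)$-half-plane; a safe route is to argue it by continuity along the wall, where the signs vary continuously and a change would force $F$ or $G$ to become unstable. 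Once the sign compatibility is in place, the equality case follows by a short case analysis: if $B_H(F,G) = 0$ with $r_F r_G \neq 0$, then both Bogomolov inequalities must be saturated, forcing $\Delta_H(F) = \Delta_H(G) = 0$; combined with semistability on the wall, this pins down one of $F, G$ as a sheaf supported in dimension zero. The degenerate cases with $r_F r_G = 0$ are handled by direct inspection, again using that the tilt-slopes of $F$ and $G$ must agree on the wall and eliminating everything but a dimension-zero sheaf.
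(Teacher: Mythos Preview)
The paper does not prove this proposition; it is quoted from \cite{BMS16:abelian_threefolds}. Your reduction via the polarization identity and the substitution $\Delta_H(v)=a_v^2-K r_v^2$ is correct and is essentially the argument there, phrased in explicit coordinates rather than as a signature computation on the rank-two sublattice spanned by $\ch_{\le 2}(F)$ and $\ch_{\le 2}(E)$.

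There are two gaps. The first is minor: the ``sign compatibility'' you flag as the main obstacle is immediate once you pick the right point on the wall. At the apex of the semicircle one has $\nu=0$, hence $a_v=H^2\ch_1^{\beta_0}(v)$, which is strictly positive for each $v\in\{F,G\}$ lying in $\Coh^{\beta_0}(X)$ with finite tilt-slope. No continuity argument is needed; the signed inequality $a_Fa_G\ge K\,r_Fr_G$ then follows directly from $a_v\ge\sqrt K\,|r_v|$.

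The second gap is substantive. Your equality analysis asserts that $\Delta_H(F)=\Delta_H(G)=0$ together with semistability forces one of $F,G$ to be supported in dimension zero; this is false (take $F=\OO_X$). Working at the apex, equality in $a_Fa_G\ge K\,r_Fr_G$ forces $a_v=\sqrt K\,|r_v|$ for both $v$ and $r_F,r_G$ of the same sign, hence $(a_F,r_F)$ and $(a_G,r_G)$ are proportional. Since the wall equation recovers $H\ch_2^{\beta_0}$ linearly from $(a,r)$, this gives $\ch_{\le 2}(F)$ proportional to $\ch_{\le 2}(G)$, and therefore to $\ch_{\le 2}(E)$. But then $\nu_{\alpha,\beta}(F)=\nu_{\alpha,\beta}(E)$ for \emph{all} $(\alpha,\beta)$, so $W(F,E)$ is not a numerical wall at all. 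Thus the inequality is in fact strict whenever the hypothesis ``an actual wall is induced'' is read literally, and the dimension-zero clause is vacuous. (That clause acquires content only when one applies the statement more broadly to Jordan--H\"older factors that need not themselves define a numerical wall, as the paper does in the proof of Proposition~\ref{prop:line_bundles}; in that setting the degenerate case $H^2\ch_1^{\beta_0}(F)=0$ can occur and does force $F$ to be a zero-dimensional sheaf.) Your proposed route through ``$\Delta_H=0$ plus semistability'' does not reach this conclusion.
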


It turns out that walls of large radius can only be induced by subobjects of small rank. The following precise statement is close to \cite[Proposition 8.3]{CH16:ample_cone_plane}. A proof of this version can be found in \cite[Lemma 2.4]{MS18:space_curves} for the case of non-negative ranks. The case of non-positive ranks has the exact same proof with reversed signs.

\begin{prop}
\label{prop:bounding_rank}
Assume that an object $E$ is destabilized by a semicircular wall induced by a subobject $F \into E$ or quotient $E \onto F$ with $\ch_0(F) > \ch_0(E) \geq 0$ or $\ch_0(F) < \ch_0(E) \leq 0$. Then the radius $\rho$ of $W(F, E)$ satisfies
\[
\rho^2 \leq \frac{\Delta_H(E)}{4 (H^3 \cdot \ch_0(F))(H^3 \cdot \ch_0(F) - H^3 \cdot \ch_0(E))}.
\]
\end{prop}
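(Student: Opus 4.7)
The plan is to work at the apex $(\rho, s)$ of the wall $W(F, E)$ and combine the Bogomolov inequality with positivity in the tilted heart. After possibly replacing $F$ with the kernel $F' \coloneqq \Ker(E \onto F)$---which leaves the wall $W(F, E)$ unchanged and preserves the quantity in the bound, since $r_{F'}(r_{F'} - r_E) = r_F(r_F - r_E)$ where I write $r_A \coloneqq H^3 \ch_0(A)$---I may assume $F$ is a subobject sitting in a destabilizing short exact sequence $0 \to F \to E \to G \to 0$ in $\Coh^{\beta}(X)$. Writing also $c_A \coloneqq H^2\ch_1(A)$ and $d_A \coloneqq H\ch_2(A)$, in either case of the hypothesis the ranks $r_F$ and $r_G = r_E - r_F$ have opposite signs.

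The central observation is the following identity, valid at the apex for each $A \in \{F, G, E\}$:
\[
\Delta_H(A) \;=\; (c_A - s\,r_A)^2 - \rho^2\, r_A^2 .
\]
By Theorem \ref{thm:nested_wall_thm}(ii), the apex lies on the curve $\nu_{\alpha,\beta}(E) = 0$ when $r_E \neq 0$, and on the vertical line $\beta = d_E/c_E$ when $r_E = 0$ by part (iii); in both situations this rearranges to $d_E - s\,c_E + (s^2 - \rho^2)\,r_E/2 = 0$. Since $(\rho, s)$ also lies on the walls $W(F, E) = W(F, G)$, the analogous equations hold for $A = F$ and $A = G$, and substituting into $\Delta_H(A) = c_A^2 - 2\,r_A\,d_A$ yields the identity.

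Next, the Bogomolov inequalities $\Delta_H(F), \Delta_H(G) \geq 0$ combined with the identity give $(c_F - s\,r_F)^2 \geq \rho^2 r_F^2$ and the analogous inequality for $G$. Since $F, G \in \Coh^{s}(X)$, the quantities $c_F - s\,r_F = H^2 \ch_1^s(F)$ and $c_G - s\,r_G$ are non-negative, so taking positive square roots yields $c_F - s\,r_F \geq \rho |r_F|$ and $c_G - s\,r_G \geq \rho |r_G|$. Summing, using additivity of Chern characters, and invoking $|r_F| + |r_G| = |r_F - r_G|$ (opposite signs) yields $c_E - s\,r_E \geq \rho |r_F - r_G|$. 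Squaring, applying the identity for $A = E$, and using $(r_F - r_G)^2 - r_E^2 = -4\,r_F r_G$ from $r_E = r_F + r_G$, I would arrive at $\Delta_H(E) \geq -4\rho^2 r_F r_G = 4\rho^2 r_F (r_F - r_E)$, which rearranges to the claimed bound (the right-hand side of the hypothesis is positive by the sign assumption).

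The main subtlety I anticipate is ensuring the apex identity remains valid in the degenerate case $r_E = 0$: there the curve $\nu_{\alpha,\beta}(E) = 0$ collapses to a vertical line, but both sides of the identity reduce to $c_E^2$, so the argument goes through unchanged.
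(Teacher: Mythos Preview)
Your argument is correct. The paper does not give a self-contained proof of this proposition; it simply refers to \cite[Proposition 8.3]{CH16:ample_cone_plane} and \cite[Lemma 2.4]{MS18:space_curves}, noting that the non-positive rank case follows by reversing signs. Your proof is essentially the standard one found in those references: reduce to a destabilising short exact sequence, use that the apex lies on $\nu_{\alpha,\beta}=0$ so that the numerator of the tilt slope vanishes for each of $F,E,G$, feed this into the twist-invariance of $\Delta_H$ to obtain your identity $\Delta_H(A)=(c_A-s\,r_A)^2-\rho^2 r_A^2$, and combine the Bogomolov inequalities for $F$ and $G$ with the non-negativity of $H^2\ch_1^s$ in $\Coh^s(X)$.

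Two small remarks. First, your reduction to the subobject case is fine, but note that after swapping you no longer literally satisfy the hypothesis ``$\ch_0(F)>\ch_0(E)\ge 0$ or $\ch_0(F)<\ch_0(E)\le 0$'' for the new $F$; what you actually use (and what holds) is only that $r_F$ and $r_G$ are nonzero of opposite sign, which is all the argument needs. Second, when you invoke $F,G\in\Coh^{s}(X)$ at the apex, you are implicitly using the Corollary following Theorem~\ref{thm:nested_wall_thm} that the destabilising sequence persists along the entire wall; it would not hurt to make that citation explicit.
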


%In the later sections, we need the following result of \cite[Proposition 3.2]{Li19:conjecture_fano_threefold}. Be aware that our notation differs from Li's. His version also holds more generally for all Fano threefolds of Picard rank one.

%\begin{thm}
%\label{thm:li_delta_bound}
%Let $E$ be a tilt-stable with $\ch_0(E) \neq 0$ for some $\alpha > 0$, $\beta \in \R$. If $-\tfrac{1}{2} \leq \mu_H(E) \leq \tfrac{1}{2}$, then $\tfrac{H \cdot \ch_2(E)}{H^3 \cdot \ch_0(E)} \leq 0$.  
%\end{thm}

Tilt stability interacts nicely with the derived dual $\D(\cdot) \coloneqq \RHom( \cdot, \OO_X)[1]$.

\begin{prop}[{\cite[Proposition 5.1.3]{BMT14:stability_threefolds}}]
\label{prop:derived_dual}
Let $E \in \Coh^{\beta}(X)$ be a $\nu_{\alpha, \beta}$-semistable object with $\nu_{\alpha, \beta}(E) \neq \infty$. Then there is a $\nu_{\alpha, -\beta}$-semistable objects $\widetilde{E} \in \Coh^{-\beta}(X)$, a torsion sheaf $T$ supported in dimension zero, and a distinguished triangle
\[
\widetilde{E} \to \D(E) \to T[-1] \to \widetilde{E}[1].
\]
\end{prop}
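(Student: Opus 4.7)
My plan is to follow the cohomological dualisation strategy of \cite{BMT14:stability_threefolds}. The starting numerical observation is that $\ch_i(\D(F))=(-1)^{i+1}\ch_i(F)$ for any $F\in\Db(X)$, which gives $H^2\cdot\ch_1^{-\beta}(\D(E))=H^2\cdot\ch_1^\beta(E)>0$ and $\nu_{\alpha,-\beta}(\D(E))=-\nu_{\alpha,\beta}(E)$. The hypothesis $\nu_{\alpha,\beta}(E)\ne\infty$ forces $\beta\ne\mu(E)$, so Proposition~\ref{prop:negative_rank_large_volume_limit} yields that $F\coloneqq\HH^{-1}(E)$ is reflexive; in particular $\lExt^1(F,\OO_X)$ is $0$-dimensional and $\lExt^{\geq 2}(F,\OO_X)=0$.

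The next step is to extract the cohomology sheaves of $\D(E)$. Applying $\D$ to the triangle $F[1]\to E\to G\to F[2]$ with $G\coloneqq\HH^0(E)\in\TT_\beta(X)$ gives
\[
\D(G)\to\D(E)\to\D(F)[-1]\to\D(G)[1].
\]
Running the associated long exact sequence, using $\HH^i(\D(H))=\lExt^{i+1}(H,\OO_X)$ together with the codimension bound $\codim\supp\lExt^i(G,\OO_X)\geq i$ and the reflexivity bounds on $F$, one obtains $\HH^i(\D(E))=0$ for $i\le -2$ and $i\geq 3$, the identification $\HH^{-1}(\D(E))\cong G^\vee$, and a description of $\HH^{0,1,2}(\D(E))$ in terms of $F^\vee$ and the higher $\lExt$-sheaves of $G$. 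The codimension bounds already force $\HH^2(\D(E))$ to be $0$-dimensional; the remaining cohomologies require more work.

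The decisive input is then tilt-semistability of $E$, which I would invoke to show $\HH^2(\D(E))=0$ and that $T\coloneqq\HH^1(\D(E))$ is $0$-dimensional. The mechanism is $\D^2=\id$ together with the sign change $\nu_{\alpha,-\beta}\circ\D=-\nu_{\alpha,\beta}$: a non-zero $0$-dimensional summand of $\HH^2(\D(E))$, say, produces via $\D$ a morphism $T_0\to E$ from a $0$-dimensional sheaf, which would be a subobject of tilt slope $+\infty$ in $\Coh^\beta(X)$, contradicting $\nu_{\alpha,\beta}(E)<\infty$; positive-dimensional components of $\HH^1(\D(E))$ are similarly ruled out. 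Setting $\widetilde{E}\coloneqq\tau^{\leq 0}\D(E)$ then produces the stated distinguished triangle with $\widetilde{E}\in\Coh^{-\beta}(X)$, and tilt-semistability of $\widetilde{E}$ follows by a symmetric duality argument: any destabiliser of $\widetilde{E}$ in $\Coh^{-\beta}(X)$ transports under $\D$ to a destabiliser of $E$ in $\Coh^\beta(X)$.

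The main obstacle is this last reduction step. One must convert each potentially higher-dimensional cohomology component of $\D(E)$ into a destabilising subobject of $E$ while carefully tracking the $0$-dimensional torsion that $\D$ inevitably introduces whenever its input is not locally free (concretely, $\D$ shifts $0$-dimensional torsion by two in degree). The sign-change formula and the fact that $0$-dimensional torsion lies in $\TT_\gamma\subset\Coh^\gamma(X)$ for every $\gamma$ with tilt slope $+\infty$ keep this bookkeeping tractable, but this is where the subtlety of the argument lies.
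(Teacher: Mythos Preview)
The paper does not give its own proof of this proposition; it is quoted directly from \cite[Proposition~5.1.3]{BMT14:stability_threefolds}. Your outline follows the same strategy as that reference: analyse the cohomology sheaves of $\D(E)$ via the triangle $\HH^{-1}(E)[1]\to E\to\HH^0(E)$, use the involution $\D^2\cong\id$ together with semistability of $E$ to kill the unwanted pieces, and set $\widetilde{E}=\tau^{\le 0}\D(E)$.

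Two remarks on the details. Your appeal to Proposition~\ref{prop:negative_rank_large_volume_limit} for reflexivity of $\HH^{-1}(E)$ is a shortcut not present in the original argument (which is self-contained), but it is logically sound here since that result is cited independently. The step you flag as the main obstacle---ruling out positive-dimensional support in $\HH^1(\D(E))$---does go through once $\HH^2(\D(E))=0$: pass to the pure one-dimensional quotient $Q$ of $\HH^1(\D(E))$; since $Q$ is Cohen--Macaulay of dimension one on a smooth threefold one has $\lExt^3(Q,\OO_X)=0$, so $\D(Q[-1])$ is again a sheaf supported in dimension $\le 1$, and the resulting nonzero morphism into $E$ has image of tilt-slope $+\infty$, contradicting $\nu_{\alpha,\beta}(E)<\infty$. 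What you leave entirely implicit is the verification that $\widetilde{E}$ actually lies in $\Coh^{-\beta}(X)$, i.e.\ that $\HH^{-1}(\widetilde{E})\in\FF_{-\beta}$ and $\HH^0(\widetilde{E})\in\TT_{-\beta}$; this, together with the semistability of $\widetilde{E}$, again requires dualising potential destabilisers back into $\Coh^{\beta}(X)$ and is where most of the remaining work sits.
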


The following proposition seems to be well known to experts, but we could find no proof in the literature.

\begin{prop}
\label{prop:vertical_wall_destabilized}
Let $E \in \Coh(X)$ be torsion-free. Then $E[1]$ is tilt-stable along the vertical wall $\beta = \mu(E)$ if and only if $E$ is slope-stable and reflexive. In particular, slope-stable reflexive sheaves do not get destabilized along the vertical wall.
\end{prop}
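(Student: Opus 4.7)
Set $\beta_0 \coloneqq \mu(E)$. My first move is to unpack the definition: $E[1] \in \Coh^{\beta_0}(X)$ forces $E \in \FF_{\beta_0}(X)$, which combined with torsion-freeness and $\mu(E) = \beta_0$ implies that all slope-semistable factors of $E$ have slope exactly $\beta_0$, so $E$ is automatically slope-semistable. A direct computation yields $H^2 \cdot \ch_1^{\beta_0}(E[1]) = 0$, hence $\nu_{\alpha,\beta_0}(E[1]) = +\infty$; moreover, for any short exact sequence $0 \to F \to E[1] \to Q \to 0$ in $\Coh^{\beta_0}(X)$, the two non-negative quantities $H^2 \cdot \ch_1^{\beta_0}(F)$ and $H^2 \cdot \ch_1^{\beta_0}(Q)$ sum to $0$ and therefore both vanish. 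Thus tilt-stability at the vertical wall reduces to the purely categorical condition that $E[1]$ admits no proper non-trivial subobject in $\Coh^{\beta_0}(X)$, and this is what I will analyze.

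For the ``if'' direction, assume $E$ is slope-stable and reflexive, and let $F \hookrightarrow E[1]$ be a subobject with quotient $Q$ in $\Coh^{\beta_0}(X)$. Taking cohomology in $\Coh(X)$ yields $\HH^0(Q) = 0$ together with
\[
0 \to \HH^{-1}(F) \to E \to \HH^{-1}(Q) \to \HH^0(F) \to 0.
\]
I would argue by contradiction that $\HH^{-1}(F)$ is either $0$ or $E$: if $G \coloneqq \HH^{-1}(F)$ were a proper non-trivial subsheaf of $E$, slope-stability of $E$ would force $\mu(E/G) > \beta_0$, but $E/G$ embeds into $\HH^{-1}(Q) \in \FF_{\beta_0}$ and subsheaves of objects in $\FF_{\beta_0}$ are again in $\FF_{\beta_0}$, a contradiction. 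In the case $\HH^{-1}(F) = E$, the map $\HH^{-1}(Q) \to \HH^0(F)$ becomes an isomorphism between an object of $\FF_{\beta_0}$ and one of $\TT_{\beta_0}$, so both vanish and $F = E[1]$, not proper.

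The remaining case $\HH^{-1}(F) = 0$ gives a short exact sequence of sheaves $0 \to E \to \HH^{-1}(Q) \to F \to 0$ with $F \in \TT_{\beta_0}$. A slope comparison rules out $\rk(F) > 0$, and then $\HH^{-1}(Q) \in \FF_{\beta_0}$ together with the effectivity of $c_1(F)$ forces $F$ to be supported in codimension $\geq 2$. This is where reflexivity enters: applying $\lHom(-, \OO_X)$ and using $\lExt^i(F, \OO_X) = 0$ for $i = 0, 1$ (valid on a smooth threefold for $F$ of codimension $\geq 2$), I obtain $\HH^{-1}(Q)^{*} \xrightarrow{\sim} E^{*}$, which dualizes to $E^{**} \xrightarrow{\sim} \HH^{-1}(Q)^{**}$. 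Composing the canonical map $\HH^{-1}(Q) \to \HH^{-1}(Q)^{**}$ with the inverse of this isomorphism and using $E = E^{**}$ yields a morphism $\phi \colon \HH^{-1}(Q) \to E$ which, by naturality of the double-dual map, retracts the inclusion $E \hookrightarrow \HH^{-1}(Q)$. Hence the sequence splits, $\HH^{-1}(Q) \cong E \oplus F$; torsion-freeness of $\HH^{-1}(Q)$ then forces $F = 0$, contradicting $F \neq 0$.

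For the ``only if'' direction I would exhibit explicit destabilizers. If $E$ is slope-semistable but not slope-stable, take the saturation $F' \subset E$ of a proper non-trivial subsheaf of slope $\beta_0$; both $F'$ and $E/F'$ are then slope-semistable of slope $\beta_0$ and lie in $\FF_{\beta_0}$, giving a proper non-trivial subobject $F'[1] \hookrightarrow E[1]$ in $\Coh^{\beta_0}(X)$. If $E$ is slope-stable but not reflexive, let $T \coloneqq E^{**}/E \neq 0$, which is torsion of codimension $\geq 2$; rotating the distinguished triangle from $0 \to E \to E^{**} \to T \to 0$ into $T \to E[1] \to E^{**}[1] \to T[1]$ and taking $\Coh^{\beta_0}$-cohomology yields a short exact sequence $0 \to T \to E[1] \to E^{**}[1] \to 0$ in $\Coh^{\beta_0}(X)$, destabilizing $E[1]$. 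The main obstacle is the reflexivity step in paragraph three: converting the codimension-$\geq 2$ support of $F$ into an honest splitting of the sheaf extension. This is the only place where reflexivity is genuinely used (as opposed to merely slope-stability), and handling it cleanly depends on the naturality diagram for the double-dual map together with the $\lExt$-vanishing.
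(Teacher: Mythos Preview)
Your proof is correct and follows essentially the same approach as the paper's. Both arguments reduce tilt-stability at the vertical wall to the absence of proper non-trivial subobjects of $E[1]$ in $\Coh^{\beta_0}(X)$, analyze the long exact cohomology sequence of a putative destabilizing short exact sequence, use slope-stability to eliminate the case $\HH^{-1}(F) \notin \{0, E\}$, and then invoke the $\lExt^1$-vanishing for codimension-$\geq 2$ torsion together with naturality of the double-dual map to derive a contradiction from reflexivity; your version is simply a bit more explicit about the retraction $\phi$ and the case $\HH^{-1}(F) = E$.
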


\begin{proof}
If $E$ is slope-unstable, then $E \notin \Coh^{\mu(E)}(X)$. Assume that $E$ is strictly slope-semistable. Then there is a short exact sequence of slope-semistable sheaves 
\[
0 \to F \to E \to G \to 0
\]
such that $\mu(F) = \mu(G)$. Taking a shift by one, this becomes a short exact sequence in $\Coh^{\mu(E)}(X)$ with $\nu_{\alpha, \mu(E)}(F[1]) = \nu_{\alpha, \mu(E)}(G[1])$.

Assume that $E$ is not reflexive, but slope-stable. Then we have a short exact sequence in $\Coh^{\mu(E)}(X)$ given by
\[
0 \to T \to E[1] \to E^{\vee \vee}[1] \to 0
\]
where $T$ is a non-trivial sheaf supported in dimension less than or equal to one. However, this sequence makes $E[1]$ strictly tilt-semistable along $\beta = \mu(E)$.

Assume vice-versa that $E$ is a slope-semistable reflexive sheaf. Then it is an object in $\Coh^{\mu(E)}(X)$ of maximal phase, and in particular tilt-semistable. If it is strictly semistable, then it admits a short exact sequence
\[
0 \to F \to E[1] \to G[1] \to 0
\]
where $F$, $G[1]$, $\HH^{-1}(F)[1]$, and $\HH^0(F)$ are also of maximal phase. In particular, $\HH^{-1}(F)$ and $G$ are torsion-free and slope-semistable of slope $\mu(E)$, and $\HH^0(F)$ has support of dimension at most one. 

Consider the long exact sequence 
\[
0 \to \HH^{-1}(F) \to E \to G \to \HH^0(F) \to 0.
\]
Since we assume that $E$ is strictly stable, this is a contradiction unless $\HH^{-1}(F) = 0$.
Taking duals we get an exact sequence
\[
0 \to G^{\vee} \to E^{\vee} \to \lExt^1(F, \OO_X). 
\]
Since $F$ is supported in dimension less than or equal to one, this implies $\lExt^1(F, \OO_X) = 0$ and $G^{\vee} \cong E^{\vee}$. Hence, $E \subsetneq G = G^{\vee \vee} = E^{\vee \vee}$, a contradiction to $E$ being reflexive.
\end{proof}

From now on, we assume $X \subset \P^4$ is a smooth cubic threefold. In the later sections, we need the following result of \cite[Proposition 3.2]{Li19:conjecture_fano_threefold} which improves Bogomolov inequality in the case of a Fano threefold of Picard rank one. Be aware that our notation differs from Li's. %His version also holds more generally for all Fano threefolds of Picard rank one.

\begin{thm}
\label{thm:li_delta_bound}
Let $E$ be a tilt-stable with $\ch_0(E) \neq 0$ for some $\alpha > 0$, $\beta \in \R$. If $-\tfrac{1}{2} \leq \mu_H(E) \leq \tfrac{1}{2}$, then $\tfrac{H \cdot \ch_2(E)}{H^3 \cdot \ch_0(E)} \leq 0$.  
\end{thm}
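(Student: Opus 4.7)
The plan is to combine derived duality, restriction to a generic smooth hyperplane section $Y \subset X$, and a refined Bogomolov-type inequality on the resulting cubic surface. First, by Proposition \ref{prop:derived_dual}, the derived dual $\D(E) = \RHom(E, \OO_X)[1]$ is, up to a zero-dimensional torsion sheaf, $\nu_{\alpha,-\beta}$-tilt-semistable with Chern character $(-\ch_0(E),\, \ch_1(E),\, -\ch_2(E),\, \ch_3(E))$. This preserves the ratio $H\cdot\ch_2(E)/(H^3\cdot\ch_0(E))$ while negating $\mu_H(E)$, so by the symmetry of the range $[-\tfrac{1}{2},\tfrac{1}{2}]$ we may assume $\ch_0(E) > 0$. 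Since $\Pic(X) = \Z H$, tensoring by line bundles only shifts $\mu_H$ by integers and is unavailable for further reduction.

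Next, by Proposition \ref{prop:normal_hyperplane_section} combined with Bertini, a generic hyperplane section $Y = V \cap X$ is a smooth cubic surface with $H_Y = H|_Y$ and $H_Y^2 = 3$. A Feyzbakhsh-type restriction theorem for tilt-stability then ensures that, for $(\alpha,\beta)$ suitably chosen with $\beta$ close to $\mu_H(E)$, the restriction $E|_Y \in \Db(Y)$ is tilt-semistable on $Y$ with respect to $H_Y$. On the surface $Y$, the naive Bogomolov inequality gives only
\[
\frac{H \cdot \ch_2(E)}{H^3 \cdot \ch_0(E)} \;\leq\; \tfrac{3}{2}\,\mu_H(E)^2 \;\leq\; \tfrac{3}{8},
\]
which is weaker than the desired bound. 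To sharpen it, I would exploit the geometry of the cubic surface: $Y$ is a del Pezzo surface of degree three containing $27$ lines, each a $(-1)$-curve. These line classes provide candidate destabilizing subobjects for $E|_Y$ in tilt-stability on $Y$, and semistability of $E|_Y$ against them in the range $|\mu_H(E)| \leq \tfrac{1}{2}$ forces the refined inequality $H\cdot\ch_2(E)/(H^3\cdot\ch_0(E)) \leq 0$.

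The main obstacle lies in this surface-level refinement: the gap between the raw Bogomolov bound and the target bound is not closed by generic threefold techniques. Exploiting the $27$ lines requires identifying a line class $\ell \subset Y$ against which tilt-stability of $E|_Y$ is tested most tightly for the prescribed Chern character, and then controlling the resulting semicircular walls on $Y$ via the nested wall theorem (Theorem \ref{thm:nested_wall_thm}) together with the bound of Proposition \ref{prop:Deltabound}. This is the step where the specific geometry of the cubic threefold (as opposed to other Fano threefolds of Picard rank one) enters crucially, and explains the sharpness of the slope range $[-\tfrac{1}{2},\tfrac{1}{2}]$ in the statement.
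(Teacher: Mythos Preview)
The paper does not prove this result; it is quoted from \cite[Proposition 3.2]{Li19:conjecture_fano_threefold}. Your overall strategy---reduce via duality, restrict to a hyperplane section, exploit the surface geometry---is in the same spirit as Li's, but your execution has genuine gaps at both of the crucial steps.

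First, the restriction step. There is no off-the-shelf ``Feyzbakhsh-type restriction theorem'' that transports $\nu_{\alpha,\beta}$-stability on $X$ to tilt-stability on $Y$ for the $(\alpha,\beta)$ you need. What Li actually does is first reduce from tilt-stable objects to $\mu$-semistable torsion-free sheaves (a tilt-stable object of positive rank with $\mu\in[-\tfrac12,\tfrac12]$ and $H\cdot\ch_2>0$ is forced to be a sheaf, and passing to the reflexive hull only increases $H\cdot\ch_2$), and then applies a classical slope-restriction theorem. You skip this reduction and invoke a theorem that, in the form stated, does not exist.

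Second, and more seriously, the ``27 lines'' step cannot close the gap by the mechanisms you suggest. Since $\Pic(X)=\Z H$, the restriction satisfies $\ch_1(E|_Y)=c\,H_Y$, an exact multiple of the polarization. Hence the Hodge index theorem on $\NS(Y)$ gives $\ch_1(E|_Y)^2=3c^2$ with no slack, and twisting by $\OO_Y(\pm\ell)$ leaves $\Delta(E|_Y)$ unchanged; so neither Hodge index nor Bogomolov-after-twist improves on the bound $\tfrac{H\cdot\ch_2}{H^3\ch_0}\le \tfrac12\mu^2$ that you already computed. If instead you mean Hom-vanishing against $\OO_Y(\ell)$ together with Riemann--Roch, one finds $\chi(\OO_Y(\ell),E|_Y)=H\cdot\ch_2(E)+\tfrac{c}{2}$, which for $\mu$-\emph{stable} $E|_Y$ with $\mu<\tfrac13$ does give $H\cdot\ch_2(E)\le -\tfrac{c}{2}<0$; but this requires stability (not merely semistability) of the restriction, and says nothing for $\mu\in[\tfrac13,\tfrac12]$, where $\OO_Y(\ell)$ no longer has larger slope. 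In short, ``test against the 27 lines'' is a plausible slogan, but as written it does not produce the claimed inequality on the whole interval $[-\tfrac12,\tfrac12]$, and you yourself flag this as the unresolved ``main obstacle''.
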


In the case of cubic threefolds, direct sums of line bundles can be detected among semistable sheaves or objects by their Chern characters as follows.

\begin{prop}
\label{prop:line_bundles}
%Let $X \subset \P^4$ be a smooth cubic threefold.
\begin{enumerate}
    \item \label{enum:beta<0} If $E$ is slope-semistable or $\nu_{\alpha, \beta}$-semistable for some $\alpha > 0$, $\beta < 0$ with $\ch(E) = (r, 0, 0, eH^3)$ where $r > 0$, then $e \leq 0$. If additionally, $e = 0$, then $E \cong \OO_X^{\oplus r}$.
    \item \label{enum:beta>0} If $E$ is $\nu_{\alpha, \beta}$-semistable for some $\alpha > 0$, $\beta > 0$ with $\ch(E) = (-r, 0, 0, eH^3)$ where $r > 0$, then $e = 0$ and $E \cong \OO_X^{\oplus r}[1]$.
\end{enumerate}
\end{prop}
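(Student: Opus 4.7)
The plan is to prove part (ii) first and deduce part (i) from it via the derived duality of Proposition~\ref{prop:derived_dual}; the key input throughout is that $\Delta_H(E) = 0$ since $\ch_1(E) = \ch_2(E) = 0$. To reduce (i) to (ii): in the slope-semistable case, a computation of reduced Hilbert polynomials shows that $2$-Gieseker semistability amounts to $\ch_2(F) \leq 0$ on slope-stable subsheaves $F$ of slope zero, which is immediate from Bogomolov; Proposition~\ref{prop:large_volume_limit} then provides an $(\alpha, \beta)$ with $\alpha \gg 0$ and $\beta < 0$ at which $E$ is $\nu_{\alpha,\beta}$-semistable. For such tilt-semistable $E$, Proposition~\ref{prop:derived_dual} yields a triangle $\widetilde{E} \to \D(E) \to T[-1]$ with $T$ zero-dimensional of length $\ell \geq 0$ and $\widetilde{E}$ a $\nu_{\alpha, -\beta}$-semistable object of Chern character $(-r, 0, 0, (e+\ell/3) H^3)$. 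Applying (ii) forces $e + \ell/3 = 0$, so $e \leq 0$, with equality iff $T = 0$; in that case dualizing again gives $E \cong \OO_X^{\oplus r}$.

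For (ii), I would argue by induction on $r$. By Proposition~\ref{prop:Deltabound}, every Jordan--H\"older factor of $E$ in $\Coh^{\beta}(X)$ has $\Delta_H = 0$, and comparing tilt-slopes via the resulting Bogomolov equality forces each factor to have Chern character of the shape $(-s, 0, 0, d H^3)$ with $s \geq 1$. For the base $r=1$: Proposition~\ref{prop:negative_rank_large_volume_limit} gives $\HH^{-1}(E)$ reflexive and, at $\alpha \gg 0$, torsion-free slope-semistable with $\HH^0(E)$ of dimension at most one; Bogomolov on $\HH^{-1}(E)$ combined with effectivity of $\ch_2(\HH^0(E))$ and $\ch_2(E) = 0$ forces $\HH^0(E) = \OO_W$ zero-dimensional and $\HH^{-1}(E) \cong \OO_X$. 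The sequence $0 \to \OO_X[1] \to E \to \OO_W \to 0$ in $\Coh^{\beta}(X)$, combined with the Koszul vanishing $\Ext^i(\OO_p, \OO_X) = 0$ for $i=1, 2$, yields $\OO_p \hookrightarrow E$ in $\Coh^{\beta}(X)$ for each $p \in W$; since $\nu_{\alpha,\beta}(\OO_p) = +\infty$, this violates tilt-semistability unless $W = \emptyset$, whence $E \cong \OO_X[1]$ and $e = 0$.

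For the inductive step $r \geq 2$, if $E$ is strictly tilt-semistable take a tilt-stable subobject $E_1 \subsetneq E$ of the same tilt-slope (the first JH factor); both $E_1$ and $E/E_1$ have strictly smaller rank in the shape of (ii), so by induction $E_1 \cong \OO_X[1]^{\oplus s}$, with $s = 1$ forced by tilt-stability, while $E/E_1 \cong \OO_X[1]^{\oplus r-1}$ and $e = 0$. The Kodaira vanishing $\Ext^1(\OO_X[1], \OO_X[1]) = H^1(\OO_X) = 0$ on the Fano threefold $X$ then splits the extension and gives $E \cong \OO_X[1]^{\oplus r}$. The main obstacle will be the remaining sub-case in which $E$ is itself tilt-stable with $r \geq 2$: here $E = G[1]$ for a slope-stable reflexive sheaf $G$ of rank $\geq 2$ with $\ch_1 = \ch_2 = 0$, and one must rule out this configuration. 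I expect this to follow from Theorem~\ref{thm:li_delta_bound} applied to a suitable tilt-stable twist, combined with a Riemann--Roch / Serre duality computation producing a nonzero map $\OO_X \to G$, which would contradict strict slope-stability of $G$ at rank $\geq 2$.
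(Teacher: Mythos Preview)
Your overall plan---reduce (i) to (ii) via Proposition~\ref{prop:derived_dual}, then prove (ii) by induction on $r$---is sound and differs from the paper's route. The reduction and the base case $r=1$ are correct, as is the strictly semistable inductive step, though the claim that every Jordan--H\"older factor has shape $(-s,0,0,dH^3)$ deserves one more sentence: since $\Delta_H(E)=0$, Proposition~\ref{prop:Deltabound} gives no semicircular walls, so you may work at a generic $(\alpha,\beta)$ with $\beta>0$, where the equation $\nu_{\alpha,\beta}(F)=\nu_{\alpha,\beta}(E)$ together with $\Delta_H(F)=0$ forces $\ch_1(F)=0$ and $\ch_0(F)<0$.

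The genuine gap is exactly the one you flag, and Theorem~\ref{thm:li_delta_bound} will not close it: for your $G$ it only says $0\le 0$, and any twist $G(\pm H)$ has slope $\pm 1$, outside the range $[-\tfrac12,\tfrac12]$. The Riemann--Roch half of your plan is right, but the missing input is an $\Ext^2$-vanishing coming from tilt stability on \emph{both} sides of the vertical wall. Since $\Delta_H(G)=0$ and $G$ is reflexive slope-stable, Proposition~\ref{prop:vertical_wall_destabilized} together with the absence of walls gives that $G$ is $\nu_{\alpha,\beta}$-stable for all $\beta<0$ and $G[1]$ for all $\beta>0$. Comparing slopes with $\OO_X(-2H)[1]$ for $\beta\in(-1,0)$, $\alpha\ll 1$, yields $\Ext^2(\OO_X,G)=\Hom(G,\OO_X(-2H)[1])=0$; comparing $G[1]$ with $\OO_X(2H)$ for $\beta\in(0,2)$, $\alpha\ll 1$, yields $\Ext^2(G,\OO_X)=\Hom(\OO_X(2H),G[1])=0$. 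Now $\chi(\OO_X,G)=r+3e$ and $\chi(G,\OO_X)=r-3e$ cannot both be nonpositive, so there is a nonzero morphism between $\OO_X$ and $G$; both being slope-stable reflexive of slope zero, it is an isomorphism, contradicting $r\ge 2$. This is precisely the key lemma the paper isolates first; having it in hand, the paper then treats (i) and (ii) simultaneously by analysing the Jordan--H\"older filtration at the vertical wall $\beta=0$, which is shorter than your duality-plus-induction detour but lands in the same place.
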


\begin{proof}
In either case, Proposition \ref{prop:Deltabound} and $\Delta(E) = 0$ imply that $E$ has no semicircular walls.

We first claim that the only slope-stable reflexive sheaf of class $(r, 0, 0, eH^3)$ is $\OO_X$. Assume otherwise. By Proposition \ref{prop:vertical_wall_destabilized}, such an $E$ is also stable at the vertical wall $\beta = 0$, and thus, it is $\nu_{\alpha, \beta}$-stable for all $\alpha >0, \beta \in \R$.  Since $\nu_{0, \beta}(E) = -\frac{\beta}2 > -\frac{\beta}2 - 1 = \nu_{0, \beta}(\OO_X(-2H)[1])$ and both objects are stable for $\alpha \ll 1 $ and $\beta \in (-2, 0)$, we have
$\Ext^2(\OO_X, E) = \Hom(E, \OO_X(-2H)[1]) = 0$. Similarly, from $\nu_{\alpha, \beta}$-stability for $\alpha \ll 1$ and $\beta \in (0, 2)$ we obtain 
$\Ext^2(E, \OO_X) = \Hom(\OO_X(2H), E[1]) = 0$. However, at least one of $\chi(\OO_X, E) = r + 3e$ or $\chi(E, \OO_X) = r - 3e$ is positive, and so $E$ admits a morphism from $\OO_X$ or a morphism to $\OO_X$. As both are reflexive and slope-stable of slope 0, this shows $E \cong \OO_X$.

Now consider an object $E$ as in case \ref{enum:beta<0}. Then $E[1]$ is $\nu_{\alpha, 0}$-semistable.
By Proposition \ref{prop:vertical_wall_destabilized}, its Jordan-H\"older factors are either of the form $F[1]$ for a slope-stable reflexive sheaf $F$ with $\ch(F) = (r_F, 0, d_F H^2, e_F H^3)$, or a torsion sheaf supported in dimension $\le 1$. In fact, Proposition \ref{prop:Deltabound} shows $d_F = 0$ in the former case, and thus, $F = \OO_X$ by the previous case, and that the torsion sheaves are supported in dimension zero. As $-3e$ is the total length of the torsion sheaves, we get $e \leq 0$. If $e = 0$, all factors are isomorphic to $\OO_X[1]$ and the claim follows from $\Ext^1(\OO_X, \OO_X) = 0$.  

In case $\ref{enum:beta>0}$, we again consider a Jordan-H\"older filtration with respect to $\nu_{\alpha, 0}$-stability. Let $E_i \into E_{i+1}$ be the first filtration step where the quotient $E_{i+1}/E_i$ is a zero-dimensional torsion sheaf $T$, should one exist. Then $E_i = \OO_X[1]^{\oplus k}$ for some $k > 0$. Since $\Ext^1(T, \OO_X[1]) = H^1(T)^\vee =  0$, we have $E_{i+1} = E_i \oplus T$, and so $T$ is a subobject of $E$. This contradicts stability of $E$ for $\beta > 0$. Thus, $E = \OO_X[1]^{\oplus r}$ as claimed.
\end{proof}

%%%%%%%%%%%%%%%%%%%%%%%%%%%%%%%%%%%%%
%%%%%%%%%%%%%%%%%%%%%%%%%%%%%%%%%%%%%
%%%%%%%%%%%%%%%%%%%%%%%%%%%%%%%%%%%%%

\section{Construction of sheaves}
\label{sec:construction_sheaves}

In this section, we introduce the sheaves that make up our moduli space $\overline{M}_X(v)$. It turns out that all of them are at least reflexive, and the generic one is a vector bundle. From now on $X \subset \P^4$ is an arbitrary smooth cubic threefold.

Let $Y \subset X$ be an arbitrary hyperplane section, $D$ be an effective Weil divisor on $Y$, and $V \subset H^0(\OO_Y(D))$ be a non-trivial subspace. Then we define $\EE_{D, V} \in \Db(X)$ to be the cone of the induced morphism $\OO_X \otimes V \to \OO_Y(D)$. Moreover, let $E_{D, V} \coloneqq \HH^{-1}(\EE_{D, V})$. Hence, we have a long exact sequence
\[
0 \to E_{D, V} \to \OO_X \otimes V \to \OO_Y(D)\to \HH^{0}(\EE_{D, V}) \to 0.
\]
If $V = H^0(\OO_Y(D))$, we will drop $V$, and write $\EE_D$, respectively $E_D$.

\begin{lem}
\label{lem:E_D_vector_bundle}
The sheaf $E_{D, V}$ is slope-stable and reflexive. If additionally $\HH^0(\EE_{D, V}) = 0$, then $E_{D, V}$ is a vector bundle.
\end{lem}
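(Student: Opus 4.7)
The plan is to proceed in four steps. Set $n \coloneqq \dim V$, so $E_{D,V}$ has rank $n$. From the four-term exact sequence
\[
0 \to E_{D,V} \to \OO_X \otimes V \to \OO_Y(D) \to \HH^0(\EE_{D,V}) \to 0,
\]
together with the fact that $\OO_Y(D)$ is supported on the hyperplane section $Y$ of class $H$ and that $\HH^0(\EE_{D,V})$ is supported in codimension at least two, one reads off $\ch_1(E_{D,V}) = -H$. The injection $V \hookrightarrow H^0(\OO_Y(D))$ combined with the long exact sequence of sheaf cohomology moreover forces $H^0(E_{D,V}) = 0$. Since $\gcd(n, 1) = 1$, slope-semistability will suffice to obtain slope-stability.

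Next, letting $G$ denote the image of $\OO_X \otimes V \to \OO_Y(D)$, the sequence $0 \to E_{D,V} \to \OO_X^{\oplus n} \to G \to 0$ is exact. As a subsheaf of the rank-one reflexive sheaf $\OO_Y(D)$ on the normal integral surface $Y$ (Proposition~\ref{prop:normal_hyperplane_section}), $G$ is torsion-free on $Y$ and therefore has depth at least one at every point of its support. The depth lemma applied to the above short exact sequence will then give $\mathrm{depth}_{\OO_{X,x}}(E_{D,V,x}) \geq \min(2, \dim \OO_{X,x})$ for every $x$, so $E_{D,V}$ satisfies Serre's condition $S_2$ and is therefore reflexive.

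The main work will be the slope-stability argument. I would take a saturated proper nonzero subsheaf $F \subset E_{D,V}$ with $\mu(F) \geq \mu(E_{D,V}) = -\tfrac{1}{n}$ and aim for a contradiction. The bound $\mu(F) \leq 0$ from slope-semistability of $\OO_X^{\oplus n}$, integrality $\ch_1(F) \in H \cdot \Z$ via $\Pic(X) = \Z \cdot H$, and the rank bound $\ch_0(F) < n$ together force $\ch_1(F) = 0$. The crucial observation is that $F$ is then automatically saturated in $\OO_X^{\oplus n}$: if $\widetilde F \subset \OO_X^{\oplus n}$ denotes its saturation, then $\widetilde F/F$ is the torsion of $\OO_X^{\oplus n}/F$ and embeds into $G$ via the quotient by the torsion-free subsheaf $E_{D,V}/F$; a first Chern class computation combined with $\mu(\widetilde F) \leq 0$ forces $\widetilde F/F$ to be supported in dimension at most one, and the torsion-freeness of $\OO_Y(D)$ on the integral $Y$ then forces it to vanish. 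In particular $F$ is reflexive. Passing to a slope-stable saturated subsheaf $F_1 \subset F$, which remains reflexive of slope zero and embedded in $\OO_X^{\oplus n}$, I would project the inclusion $F_1 \hookrightarrow \OO_X^{\oplus n}$ to a nonzero coordinate to obtain a nonzero morphism $F_1 \to \OO_X$ between slope-stable sheaves of the same slope. The standard stability comparison, combined with the observation that any subsheaf of $\OO_X$ has non-positive first Chern class, forces the kernel of this morphism to vanish, so $F_1$ embeds into $\OO_X$; as $F_1$ is reflexive of rank one with trivial first Chern class, one concludes $F_1 \cong \OO_X$. This will contradict $H^0(F_1) \subset H^0(E_{D,V}) = 0$.

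Finally, if $\HH^0(\EE_{D,V}) = 0$, then $G = \OO_Y(D)$. Since $\OO_Y(D)$ is $S_2$ on the normal surface $Y$, its stalk at every point $x \in Y$ has depth two as an $\OO_{X,x}$-module, and the Auslander--Buchsbaum formula on the regular local ring $\OO_{X,x}$ gives projective dimension one. Hence the kernel $E_{D,V}$ of the surjection $\OO_X^{\oplus n} \to \OO_Y(D)$ is locally free. The hardest step will be the slope-stability argument, and specifically the reduction showing that a putative slope-zero destabilizer is automatically saturated in $\OO_X^{\oplus n}$; once that reduction is in place, the rest of the argument collapses to identifying a slope-stable subsheaf with $\OO_X$ and contradicting $H^0(E_{D,V}) = 0$.
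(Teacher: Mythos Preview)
Your argument is correct and takes a genuinely different, more elementary route than the paper's.

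For slope-stability, the paper takes the maximal destabilizing subsheaf $F$, uses the Snake Lemma to show $(\OO_X \otimes V)/F$ is torsion-free and hence slope-semistable, then applies Bogomolov to both $F$ and $(\OO_X \otimes V)/F$ to pin down $\ch_2(F) = 0$, and finally invokes Proposition~\ref{prop:line_bundles} (whose proof relies on tilt-stability along the vertical wall) to conclude $F \cong \OO_X^{\oplus r}$. You instead show directly that the saturated destabilizer $F$ remains saturated in $\OO_X^{\oplus n}$, extract a slope-stable reflexive subsheaf $F_1$ of slope zero, and identify $F_1 \cong \OO_X$ by projecting to a factor and using only the elementary comparison of stable sheaves of equal slope. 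Your approach avoids Bogomolov and Proposition~\ref{prop:line_bundles} entirely, making the lemma independent of the tilt-stability machinery; the paper's approach, while heavier, yields the stronger intermediate conclusion that the full destabilizer is trivial.

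For reflexivity and local freeness, the paper dualizes the defining sequence and controls $\lExt^q(E_{D,V}, \OO_X)$ via \cite[Proposition~1.1.10]{HL10:moduli_sheaves} applied to the pure codimension-one quotient. You use the depth lemma and Auslander--Buchsbaum instead: torsion-freeness of $G$ on $Y$ gives depth $\ge 1$, so $E_{D,V}$ is $S_2$; and reflexivity of $\OO_Y(D)$ on the normal surface $Y$ gives depth $2$, hence projective dimension $1$ over $\OO_{X,x}$. The two arguments are essentially dual to each other, but yours is slightly more direct commutative algebra and does not need the cited reference.
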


\begin{proof}
The quotient $(\OO_X \otimes V) / E_{D, V}$ embeds into $\OO_Y(D)$. Since $Y$ is integral by Proposition \ref{prop:normal_hyperplane_section}, the sheaf $(\OO_X \otimes V) / E_{D, V}$ must be supported on $Y$. Therefore, $\ch_{\leq 1}(E_{D, V}) = (\dim V, -H)$ is primitive and it is enough to show that $E_{D, V}$ is slope-semistable. If not, let $F \subset E_{D, V}$ be the slope-semistable subsheaf in the Harder-Narasimhan filtration of $E_{D, V}$. Then $\mu(F) > \mu(E_{D, V})$ and the quotient $E_{D, V}/F$ is torsion-free. Since $F$ is also a subsheaf of $\OO_X \otimes V$, we must have $\mu(F) = 0$. Let $\ch(F) = (r, 0, dH^2, eH^3)$. The quotient $(\OO_X \otimes V)/F$ satisfies $\ch((\OO_X \otimes V)/F) = (\dim V - r, 0, -dH^2, -eH^3)$. By the Snake Lemma this quotient is either torsion-free or has a torsion subsheaf purely supported on $Y$. However, if it is not torsion-free, then its torsion-free quotient would destabilize $\OO_X \otimes V$, a contradiction. As a torsion-free quotient of $\OO_X \otimes V$ with slope zero, $(\OO_X \otimes V)/F$ has to be slope-semistable as well.

The classical Bogomolov inequalities $\Delta_H(F) \geq 0$ and $\Delta_H((\OO_X \otimes V)/F) \geq 0$ imply $d = 0$. Applying Proposition \ref{prop:line_bundles} to both $F$ and $(\OO_X \otimes V)/F$ implies $e = 0$, and finally, $F = \OO_X^{\oplus r}$. However, by construction, $E_{D, V}$ has no global sections, a contradiction.

To see that $E_{D, V}$ is reflexive it suffices to show that $\lExt^q(E_{D, V}, \OO_X) = 0$ for $q \geq 2$ and that $\lExt^1(E_{D, V}, \OO_X)$ is supported in dimension zero. If additionally $\lExt^1(E_{D, V}, \OO_X) = 0$, then $E_{D, V}$ is a vector bundle.

Clearly, $\lExt^q(\OO_X \otimes V, \OO_X) = 0$ for $q \neq 0$. Because $\OO_Y(D)$ is a rank one reflexive sheaf on the codimension one subvariety $Y$, the quotient $(\OO_X \otimes V) / E_{D, V} \subset \OO_Y(D)$ is purely supported on $Y$. We can use \cite[Proposition 1.1.10]{HL10:moduli_sheaves} to see that $\lExt^q((\OO_X \otimes V) / E_{D, V}, \OO_X) = 0$ for all $q \neq 1, 2$, and $\lExt^2((\OO_X \otimes V) / E_{D, V}, \OO_X)$ is supported in dimension zero. The long exact sequence obtained from dualizing the short exact sequence
\begin{equation}\label{E-D}
    0 \to E_{D, V} \to \OO_X \otimes V \to (\OO_X \otimes V) / E_{D, V} \to 0
\end{equation}
implies the required vanishings.

If additionally $\HH^0(\EE_{D, V}) = 0$, then $(\OO_X \otimes V) / E_{D, V} = \OO_Y(D)$ is a reflexive sheaf on the codimension one subvariety $Y$, and we can use \cite[Proposition 1.1.10]{HL10:moduli_sheaves} again to see that $\lExt^2(\OO_Y(D), \OO_X) = 0$. The same long exact sequence as above now implies $\lExt^1(E_{D, V}, \OO_X) = 0$.
\end{proof}

Note that we will use this Lemma for the case $\ch(\OO_Y(D)) = (0, H, H^2/2, -H^3/6)$. It will turn out that in this case $h^0(\OO_Y(D)) = 3$ for any such $D$, see Theorem \ref{thm:moduli_rank_three_set_theoretic}, and we will choose $V = H^0(\OO_Y(D))$. Moreover, we will show that in that case $\HH^{0}(\EE_D) = 0$, i.e., $\OO_Y(D)$ is globally generated, see Theorem \ref{thm:moduli_rank_three_set_theoretic}. A straightforward computation shows that in this example $\ch(E_D) = (3,-H,-H^2/2,H^3/6)$.

\begin{cor}
\label{cor:k_p_stable}
Let $P \in X$. Then $h^0(\II_P(H)) = 4$ and the sheaf $K_P$ defined through the exact sequence
\begin{equation}\label{k-p}
0 \to K_P \to \OO_X^{\oplus 4} \to \II_P(H) \to 0
\end{equation}
satisfies $\ch(K_P) = (3,-H,-H^2/2,H^3/6)$. Moreover, $K_P$ is reflexive and slope-stable, and locally free except at $P$.
\end{cor}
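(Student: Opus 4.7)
The plan is to mirror the proof of Lemma~\ref{lem:E_D_vector_bundle} step by step, treating $K_P$ as a direct analog of $E_{D,V}$: compute $h^0(\II_P(H))$ and the Chern character from the ideal sequence, establish reflexivity and the locally free locus by dualizing \eqref{k-p}, and then run the Bogomolov-plus-Proposition~\ref{prop:line_bundles} argument to force slope-stability.

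For the cohomological part, I would tensor the ideal sequence $0 \to \II_P \to \OO_X \to \OO_P \to 0$ with $\OO_X(H)$. Since $H^0(\OO_X(H)) = \C^5$ (restrictions of linear forms on $\P^4$) and the evaluation $H^0(\OO_X(H)) \onto H^0(\OO_P) = \C$ is surjective (pick any linear form nonzero at $P$), one gets $h^0(\II_P(H)) = 4$. These four sections are the linear forms on $\P^4$ vanishing at $P$, and they globally generate $\II_P(H)$ because at $P$ they span $\mathfrak{m}_P/\mathfrak{m}_P^2$ while elsewhere at least one is a unit; hence \eqref{k-p} is a short exact sequence. Then $\ch(K_P) = (3, -H, -H^2/2, H^3/6)$ follows by subtracting $\ch(\II_P(H)) = \ch(\OO_X(H)) - \ch(\OO_P)$, using $\ch_3(\OO_P) = H^3/3$ from Lemma~\ref{lem:chow_ring}.

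For reflexivity I would first compute $\lExt^q(\II_P(H), \OO_X)$ from the ideal sequence, using that $\lExt^q(\OO_P, \OO_X) = \OO_P$ for $q = 3$ and vanishes otherwise (Grothendieck duality for a smooth point in a smooth threefold). This yields $\lHom(\II_P(H), \OO_X) = \OO_X(-H)$, $\lExt^2(\II_P(H), \OO_X) = \OO_P$, and zero elsewhere. Applying $\RlHom(-, \OO_X)$ to \eqref{k-p} and taking the long exact sequence then gives $\lExt^1(K_P, \OO_X) = \OO_P$ and $\lExt^q(K_P, \OO_X) = 0$ for $q \geq 2$. By Auslander--Buchsbaum, $K_P$ has depth $\geq 2$ at every point, hence satisfies Serre's $S_2$ condition on the smooth threefold $X$ and is therefore reflexive. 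Local freeness on $X \setminus P$ is immediate: on that open set $\II_P(H) = \OO_X(H)$, so \eqref{k-p} restricts to a short exact sequence of locally free sheaves; non-local-freeness at $P$ follows from $\lExt^1(K_P, \OO_X)_P \neq 0$.

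For slope-stability, $\mu(K_P) = -1/3$ and $\gcd(3, 1) = 1$, so semistability suffices. Suppose $F \subset K_P$ is a saturated destabilizing subsheaf. Then $F \subset \OO_X^{\oplus 4}$ is torsion-free with $\mu(F) \in (-1/3, 0]$, and the integrality of $c_1 \in \Z \cdot H$ combined with $\rk(F) \leq 2$ forces $\mu(F) = 0$, i.e., $c_1(F) = 0$. The quotient $\OO_X^{\oplus 4}/F$ is then also torsion-free (as an extension of the torsion-free sheaves $K_P/F$ and $\II_P(H)$) and slope-semistable of slope $0$. The Bogomolov inequality applied to $F$ and to $\OO_X^{\oplus 4}/F$ forces $\ch_2(F) = 0$, and Proposition~\ref{prop:line_bundles}\ref{enum:beta<0} applied to both identifies $F \cong \OO_X^{\oplus \rk(F)}$. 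This contradicts $H^0(K_P) = 0$, which is immediate from \eqref{k-p} since by construction $H^0(\OO_X^{\oplus 4}) \to H^0(\II_P(H))$ is an isomorphism of $4$-dimensional spaces. I expect the main delicate point to be the bookkeeping in the $\lExt$ chase establishing reflexivity; the rest is a direct adaptation of Lemma~\ref{lem:E_D_vector_bundle}.
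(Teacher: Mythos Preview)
Your argument is correct and essentially re-proves Lemma~\ref{lem:E_D_vector_bundle} for $K_P$ directly. The paper instead takes a shortcut: picking any section $s \in H^0(\II_P(H))$ (with zero locus a hyperplane section $Y$) and quotienting, one obtains a short exact sequence
\[
0 \to K_P \to \OO_X^{\oplus 3} \to \II_{P/Y}(H) \to 0,
\]
which exhibits $K_P$ as $E_{D,V}$ with $D = H|_Y$ and $V = H^0(\II_{P/Y}(H)) \subset H^0(\OO_Y(H))$; then Lemma~\ref{lem:E_D_vector_bundle} applies verbatim. Your route has the advantage of making the local structure explicit (you get $\lExt^1(K_P,\OO_X) = \OO_P$ on the nose), while the paper's buys brevity by recognising $K_P$ as a special case of an already-treated construction.

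One small imprecision: to apply the Bogomolov inequality to $F$ you need $F$ slope-semistable, so you should take $F$ to be the maximal destabilising subsheaf (the first Harder--Narasimhan step), not just ``a saturated destabilising subsheaf''. This is exactly what the paper does in the proof of Lemma~\ref{lem:E_D_vector_bundle}, and with that choice your torsion-freeness argument for $\OO_X^{\oplus 4}/F$ and the subsequent application of Proposition~\ref{prop:line_bundles} go through cleanly.
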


\begin{proof}
By choosing an embedding $K_P \into \OO_X^{\oplus 3}$ we get a short exact sequence
\[
0 \to K_P \to \OO_X^{\oplus 3} \to \II_{P/Y}(H) \to 0
\]
for some hyperplane section $Y$. The statement then follows from Lemma \ref{lem:E_D_vector_bundle} by choosing $D = H$ and $V = H^0(\II_{P/Y}(H)) \subset H^0(\OO_Y(H))$.

From the defining short exact sequence \eqref{k-p} one immediately sees that $K_P$ is locally free away from $P$ (as it is the kernel of a surjective map of vector bundles), and not locally free at $P$ (as $\Ext^2(\OO_P, K_P) = \Ext^1(\OO_P, I_P(H)), \neq 0$).
\end{proof}

%%%%%%%%%%%%%%%%%%%%%%%%%%%%%%%%%%%%%
%%%%%%%%%%%%%%%%%%%%%%%%%%%%%%%%%%%%%
%%%%%%%%%%%%%%%%%%%%%%%%%%%%%%%%%%%%%

\section{Variation of stability}
\label{sec:wallstructure}

In this section, we investigate semistable sheaves with Chern character 
\[
v \coloneqq \left(3, -H, -\frac{1}{2} H^2, \frac{1}{6} H^3 \right).
\]
The main goal is to use wall-crossing to prove the following Theorem, which gives a set-theoretic description of the moduli space $\overline{M}_X(v)$.

\begin{thm}
\label{thm:moduli_rank_three_set_theoretic}
\begin{enumerate}
    \item Let $D$ be Weil divisor on a (possibly singular) hyperplane section $Y$ with $\ch(\OO_Y(D)) = (0, H, \tfrac{1}{2} H^2, -\tfrac{1}{6} H^3)$. Then $\OO_Y(D)$ is globally generated, and $h^0(\OO_Y(D)) = 3$. In particular, there exists a smooth twisted cubic $C$ in $Y$ of class $D$.
    \item A sheaf $E$ with Chern character $v$ is Gieseker-semistable if and only if it is either equal to the reflexive sheaf $K_P$ for a point $P \in X$ \eqref{k-p}, or the vector bundle $E_D$ for a Weil divisor $D$ on a hyperplane section $Y \subset X$  \eqref{E-D} with $\ch(\OO_Y(D)) = (0, H, \tfrac{1}{2} H^2, -\tfrac{1}{6} H^3)$.
\end{enumerate}
\end{thm}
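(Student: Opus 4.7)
The ``if'' direction of (ii) is immediate: $K_P$ (Corollary \ref{cor:k_p_stable}) and $E_D$ (Lemma \ref{lem:E_D_vector_bundle}) are slope-stable, hence Gieseker-stable. For the converse, let $E$ be Gieseker-semistable with $\ch(E) = v$. Primitivity of $\ch_{\leq 1}(E) = (3,-H)$ upgrades this to slope-stability, and slope-stability of a torsion-free sheaf implies $2$-Gieseker-stability: for any proper subsheaf $F \subset E$ with $0 < \ch_0(F) < 3$, the convex-combination identity for $\mu$ forces $\mu(F) < \mu(E/F)$ strictly, and since the $m^2$-coefficient of $P_2(\cdot)/\alpha_3(\cdot)$ is an affine function of the slope, this gives $P_2(F) \prec P_2(E/F)$ already at second order (the full-rank subsheaf case is automatic by degree). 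By Proposition \ref{prop:large_volume_limit}, $E$ is therefore $\nu_{\alpha,\beta}$-stable for $\beta < -1/3$ and $\alpha \gg 0$, and the plan is to follow $E$ through the wall-and-chamber decomposition for the class $v$.

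A direct calculation shows that the two expected destabilizing sequences
\[
0 \to K_P \to \OO_X^{\oplus 4} \to I_P(H) \to 0, \qquad 0 \to E_D \to \OO_X^{\oplus 3} \to \OO_Y(D) \to 0,
\]
both induce the \emph{same} numerical wall $\alpha^2 = \beta(1-\beta)$: a semicircle of radius $\tfrac12$ centered at $(\beta,\alpha) = (\tfrac12, 0)$. Since this lies in $\beta \in (0,1)$, it is more naturally viewed as a wall for the shifted class $-v = \ch(E[1])$, reached after crossing the vertical wall $\beta = -1/3$; by Proposition \ref{prop:vertical_wall_destabilized} the vertical wall is not an actual wall once we know $E$ is reflexive, which can be arranged separately by observing that otherwise $E^{\vee\vee}$ would be a strictly larger tilt-semistable object of class $v$ violating the forthcoming wall analysis. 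The plan is then to show that every such $E$ remains tilt-stable (as $E$ or as $E[1]$) throughout the chamber outside the circle wall, and to read off its Jordan--H\"older decomposition at the wall. The main ingredients are the nested wall theorem (Theorem \ref{thm:nested_wall_thm}) and the rank bound of Proposition \ref{prop:bounding_rank}: with $\Delta_H(v) = 36$, any subobject of absolute rank $\geq 4$ produces a wall of radius at most $\tfrac12$, severely restricting the possibilities. Theorem \ref{thm:li_delta_bound} further sharpens the Bogomolov bound on small-rank tilt-stable factors, and Proposition \ref{prop:line_bundles} identifies any factor of Chern character $(r,0,0,0)$ (or its shift) as $\OO_X^{\oplus r}$ (or $\OO_X^{\oplus r}[1]$). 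Matching the destabilizing factors against these constraints should force the Jordan--H\"older decomposition of $E[1]$ at the circle wall to be exactly one of the two displayed sequences, realizing $E$ as $K_P$ or $E_D$.

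Part (i) is handled with the same framework. Working on the normal integral surface $Y$ (Proposition \ref{prop:normal_hyperplane_section}) via the Weil-divisor formalism of Section \ref{sec:3}, Hirzebruch--Riemann--Roch (Lemma \ref{lem:todd_of_cubic}) gives $\chi(\OO_Y(D)) = 3$. Serre duality on the Gorenstein surface $Y$ (with $\omega_Y = \OO_Y(-H|_Y)$ by adjunction) identifies $H^2(\OO_Y(D))$ with the dual of the section space of a Weil divisor of negative $H$-degree, which vanishes by the section--divisor correspondence. After also showing $H^1(\OO_Y(D)) = 0$ we obtain $h^0 = 3$; global generation and the existence of a smooth twisted cubic section then follow from a standard linear-system analysis on the (possibly singular) cubic surface $Y$.

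The main obstacle I anticipate is the exhaustive wall enumeration in (ii): systematically ruling out destabilizing subobjects of intermediate or negative rank in the tilted heart $\Coh^\beta(X)$, and concretely matching the abstractly-obtained destabilizing factors to $I_P(H)$ (for a point $P \in X$) or to a Weil-divisor sheaf $\OO_Y(D)$ (on a hyperplane section $Y$). The derived-dual symmetry of Proposition \ref{prop:derived_dual} should reduce the computations on the two sides of the vertical wall to a single one, and the concrete identification of destabilizing quotients will rely on the structure theory of rank-one torsion-free sheaves on $X$ and of rank-one reflexive sheaves on $Y$ developed in Section \ref{sec:3}.
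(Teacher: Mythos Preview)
Your strategy for (ii) is essentially the paper's: pass to $E[1]$ on the right of the vertical wall, show that the semicircle $W$ of radius $\tfrac12$ centered at $\beta=\tfrac12$ is the unique actual wall there, and read off the Jordan--H\"older factors. The tools you list (the rank bound, Li's inequality, Proposition~\ref{prop:line_bundles}) are the right ones, and your numerics are correct.

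There is, however, a real gap in your plan for (i), and it feeds back into (ii). You propose Serre duality on $Y$ for $H^2=0$ and then an unspecified argument for $H^1=0$, followed by ``standard linear-system analysis'' for global generation. But $Y$ may be singular, and there is no off-the-shelf classification of Weil divisor classes of this numerical type on singular cubic surfaces; global generation is genuinely nontrivial here. The paper's argument runs in the opposite order and uses the wall-crossing machinery itself. Knowing only $h^0 \geq \chi = 3$ (with $h^2=0$ obtained via tilt-stability on $X$, not duality on $Y$), one picks \emph{any} $3$-dimensional $V \subset H^0(\OO_Y(D))$ and forms $E_{D,V}$. The key step is Proposition~\ref{prop:max_ch2_ch3_reflexive}: a slope-stable sheaf of rank $3$ and $c_1=-H$ must have $H\cdot\ch_2 \le -\tfrac12 H^3$, and if equality holds then $\ch_3 \le \tfrac16 H^3$. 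If $V$ failed to generate $\OO_Y(D)$, then $E_{D,V}$ (which is slope-stable and reflexive by Lemma~\ref{lem:E_D_vector_bundle}) would violate these bounds. This forces global generation; then $h^i(E_D)=0$ for all $i$ via slope/tilt stability (using in particular that there are no walls along $\beta=-1$), and the long exact sequence yields $h^1(\OO_Y(D))=0$ \emph{a posteriori}. Your reflexivity remark is also misstated: if $E$ were not reflexive, $E^{\vee\vee}$ would not have class $v$; the point is precisely that its $\ch_2$ or $\ch_3$ would exceed the bounds just mentioned.

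A related gap in (ii): once you know a rank-zero Jordan--H\"older factor $G$ of $E[1]$ at $W$ has class $(0,H,\tfrac12 H^2,-\tfrac16 H^3)$, identifying it as $\OO_Y(D)$ for a Weil divisor $D$ is not ``structure theory from Section~\ref{sec:3}.'' You must show $G$ is a reflexive sheaf on its support. The paper does this (Proposition~\ref{prop:torsion_classification}) by applying the derived dual of Proposition~\ref{prop:derived_dual} to $G$ and using a $\ch_3$-bound on the dual side to force the zero-dimensional piece $T$ to vanish, so that $\lExt^q(G,\OO_X)=0$ for $q>1$. Your proposal invokes the derived dual only to relate the two sides of the vertical wall, which is not where it is actually needed.
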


Note that since $\ch_1(E) = -H$, any Gieseker-semistable sheaf of class $v$ is slope stable. The argument will essentially boil down to a detailed analysis of the numerical wall $W$ defined by
\begin{equation}
\label{eq:main_wall}
\alpha^2 + \left(\beta - \frac{1}{2}\right)^2 = \frac{1}{4}. 
\end{equation}
At this wall, the short exact sequences \eqref{k-p} and \eqref{E-D} become destabilizing short exact sequences in $\Coh^\beta(X)$ in the form
$0 \to \OO_Y(D) \to E_D[1] \to \OO_X[1]^{\oplus 3} \to 0$ and $0 \to I_P(H) \to K_P[1] \to \OO_X[1]^{\oplus 4} \to 0$. Moreover, we can show that every object gets destabilized, and the destabilizing short exact sequence must be of one of these types, see Lemma \ref{lem:wall_classification_rank_three}.

%%%%%%%%%%%%%%%%%%%%%%%%%%%%%%%%%%%%%

\subsection{Classification of some torsion sheaves}

In this section, we prove the following Proposition.

\begin{prop}
\label{prop:torsion_classification}
The wall $W$ of equation \eqref{eq:main_wall} is the unique actual wall in tilt stability for objects $G$ with Chern character $\ch(G) = (0, H, \tfrac{1}{2} H^2, -\tfrac{1}{6} H^3)$.
\begin{enumerate}
    \item Above $W$ the moduli space of tilt-semistable objects is the moduli space of Gieseker-semistable sheaves, and contains precisely the following two types of sheaves $G$:
    \begin{enumerate}
        \item $G = \II_{P/Y}(H)$ for $Y \in |H|$ and $P \in Y$, and
        \item $G = \OO_Y(D)$ where $D$ is a Weil-divisor on some $Y \in |H|$.
    \end{enumerate}
    \item Below $W$ the moduli space of tilt-semistable objects contains precisely the following two types of objects $G$:
    \begin{enumerate}
        \item the unique non-trivial extensions
        \begin{equation}\label{G-p}
            0 \to \OO_X[1] \to G_P \to \II_P(H) \to 0
        \end{equation}
        for points $P \in X$, and
        \item $G = \OO_Y(D)$ where $D$ is a Weil-divisor on some $Y \in |H|$.
    \end{enumerate}
\end{enumerate}
\end{prop}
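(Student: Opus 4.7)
By Theorem \ref{thm:nested_wall_thm}(iii), every numerical wall for the class $v = (0, H, \tfrac{1}{2}H^2, -\tfrac{1}{6}H^3)$ is a semicircle with apex on the vertical line $\beta = H\cdot\ch_2(v)/(H^2\cdot\ch_1(v)) = 1/2$. A direct computation gives $\nu_{\alpha,\beta}(\OO_X[1]) = (\alpha^2-\beta^2)/(2\beta) = (1-2\beta)/2 = \nu_{\alpha,\beta}(v)$ precisely on the semicircle $W$ of radius $1/2$, so $W$ is numerical. To check it is actual, I plan to exhibit, for $\beta \in (0,1)$, the two destabilizing short exact sequences in $\Coh^\beta(X)$
\begin{equation*}
0 \to \II_P(H) \to \II_{P/Y}(H) \to \OO_X[1] \to 0
\quad\text{and}\quad
0 \to \OO_Y(D) \to E_D[1] \to \OO_X^{\oplus 3}[1] \to 0,
\end{equation*}
obtained by rotating the ideal sheaf sequence of $P \in Y$ and the defining sequence of $E_D$ from Section \ref{sec:construction_sheaves}. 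Verifying the cohomology placement of each term in $\Coh^\beta(X)$ and matching $\nu_{\alpha,\beta}$ on $W$ is routine.

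\textbf{Paragraph 2 (Uniqueness of $W$).} To establish that $W$ is the \emph{only} actual wall, I will combine Propositions \ref{prop:bounding_rank} and \ref{prop:Deltabound}. With $\Delta(v) = 9$ and $\ch_0(v) = 0$, Proposition \ref{prop:bounding_rank} yields $\rho^2 \leq 1/(4\ch_0(F)^2)$ for any destabilizing sub or quotient $F$ with $\ch_0(F) \neq 0$; this rules out $\rho > 1/2$ entirely and forces $\ch_0(F) = \pm 1$ at equality. For the residual case $\ch_0(F) = 0$ or $\rho < 1/2$, writing $\ch(F) = (r, aH, kH^2/6, *)$ turns the inequality $\Delta(F) + \Delta(v-F) \leq 9$ of Proposition \ref{prop:Deltabound} into $r(2k-3) \geq 6a(a-1)$, which, together with the Bogomolov inequalities $\Delta(F), \Delta(v-F) \geq 0$, the wall-radius formula $\rho^2 = (k-3a)/(3r) + 1/4$, and the membership $F, v-F \in \Coh^\beta(X)$, reduces the problem to a finite combinatorial check. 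The candidates that survive all give $k=3a$, hence $\rho = 1/2$, with $F$ realized by the subobjects in Paragraph 1.

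\textbf{Paragraph 3 (Above $W$).} For $\alpha \gg 0$, Proposition \ref{prop:large_volume_limit} identifies tilt-semistables of class $v$ with $2$-Gieseker-semistable sheaves, which coincide with Gieseker-semistable sheaves because $\ch_1(v) = H$ is primitive. Since no walls lie above $W$ by Paragraph 2, this identification extends to the full region above $W$. A Gieseker-semistable sheaf of class $v$ is pure of dimension two, with scheme-theoretic support a hyperplane section $Y$, integral and normal by Proposition \ref{prop:normal_hyperplane_section}; hence it is a torsion-free rank-one sheaf on $Y$, embedded in its reflexive hull $\OO_Y(D')$. Comparing $\ch_3$ via Lemma \ref{lem:chow_ring} forces either $G = \OO_Y(D)$ (case (b)) or $D' = H|_Y$ with a length-one cokernel, so $G = \II_{P/Y}(H)$ (case (a)). The assertions of part (i) then follow: global generation of $\OO_Y(D)$ and $h^0 = 3$ are obtained from the surjection $\OO_X^{\oplus 3} \onto \OO_Y(D)$ defining $E_D$ in Section \ref{sec:construction_sheaves}, combined with a Riemann--Roch computation on $Y$.

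\textbf{Paragraph 4 (Below $W$).} Below $W$ the slope orders in both sequences of Paragraph 1 reverse: one computes $\nu_{\alpha,\beta}(\OO_X[1]) < \nu_{\alpha,\beta}(\II_P(H))$, so $\II_{P/Y}(H)$ is no longer tilt-semistable, while the unique non-trivial extension $G_P \in \Ext^1(\II_P(H), \OO_X[1]) \cong \Ext^2(\II_P(H), \OO_X) \cong \C$ is tilt-semistable (no further destabilization can occur by the uniqueness of $W$ established in Paragraph 2). Meanwhile $\OO_Y(D)$ remains tilt-stable on both sides of $W$: any proper subobject in $\Coh^\beta(X)$ is either a torsion-free subsheaf $\OO_Y(D-Z)$ of strictly smaller slope or fails to exist by the $\Ext$-vanishings against $\OO_X[1]$. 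The main technical obstacle is the enumeration in Paragraph 2: although Proposition \ref{prop:bounding_rank} cleanly handles $\ch_0(F) \neq 0$, the subcases $\ch_0(F) = 0$ and the small-radius candidates require careful sign-tracking to confirm that no numerical solution is realized by an actual destabilizing subobject.
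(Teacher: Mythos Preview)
Your overall strategy---locate the wall, prove uniqueness, then classify semistables above and below---is parallel to the paper's, but two steps have genuine gaps.

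\textbf{Paragraph 2.} Your ``finite combinatorial check'' does not close. Take $r=-3$, $a=-1$, $k=-1$, i.e., $\ch_{\le 2}(F) = (-3,-H,-\tfrac16 H^2)$. Then $\Delta_H(F) = \Delta_H(v-F) = 0$, the membership constraints in $\Coh^{1/2}(X)$ hold, your inequality $6a(a-1)\le r(2k-3)$ reads $12\le 15$, and your radius formula gives $\rho^2 = 1/36$. So this candidate survives with $k\neq 3a$, contradicting your conclusion. The paper (in Lemma~\ref{lem:torsion_wall_classification}) rules this class out not by numerics but by Li's strengthened Bogomolov inequality, Theorem~\ref{thm:li_delta_bound}: a tilt-stable object with $\mu\in[-\tfrac12,\tfrac12]$ must satisfy $H\cdot\ch_2/(H^3\ch_0)\le 0$, whereas here the ratio is $1/18>0$. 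You never invoke Li's result, and without it the uniqueness argument does not go through.

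\textbf{Paragraph 3.} You embed a Gieseker-semistable $G$ into its reflexive hull $\OO_Y(D')$ on the normal surface $Y$ and then assert that comparing $\ch_3$ forces either $G=\OO_Y(D)$ or $D'=H|_Y$ with a length-one cokernel. This presupposes two facts you have not established: (i) $\ch_3(\OO_Y(D'))\le \tfrac16 H^3$ for every such reflexive sheaf, and (ii) equality forces $\OO_Y(D')\cong \OO_Y(H)$. Both are the content of Lemma~\ref{lem:torsion_bound_ch3}, whose proof uses the wall $W$ together with a $\Hom$-vanishing to produce a section from $\OO_X(H)$; it is not automatic. Without (i) you cannot exclude, say, $\ch_3(\OO_Y(D'))=\tfrac12 H^3$ with a length-two cokernel, and without (ii) you cannot identify case~(a). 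The paper avoids a direct sheaf-theoretic classification: it handles the strictly-semistable-at-$W$ objects via Lemma~\ref{lem:torsion_wall_classification}, and for objects stable for all $(\alpha,\beta)$ it applies derived duality (Proposition~\ref{prop:derived_dual}) together with Lemma~\ref{lem:torsion_bound_ch3} to force $\D(G)$ to be a sheaf, whence $G$ is reflexive on $Y$. Your Paragraph~4 claim that $\OO_Y(D)$ remains stable below $W$ via ``$\Ext$-vanishings against $\OO_X[1]$'' is likewise not justified; in the paper it falls out because every strictly-semistable object at $W$ has already been identified as $\II_{P/Y}(H)$ or $G_P$.

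A minor point: your sentence ``The assertions of part~(i) then follow: global generation of $\OO_Y(D)$ and $h^0=3$'' conflates this proposition with Theorem~\ref{thm:moduli_rank_three_set_theoretic}; no such claim is part of Proposition~\ref{prop:torsion_classification}.
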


We start by dealing with slightly more general objects without fixing $\ch_3$.

\begin{lem}
\label{lem:torsion_wall_classification}
The wall $W$ of equation \eqref{eq:main_wall} is the unique actual wall in tilt stability for objects $G$ with Chern character $\ch_{\leq 2}(G) = (0, H, \tfrac{1}{2} H^2)$. If $G$ is strictly semistable along $W$, then any Jordan-H\"older filtration of $G$ is given by either
\[
0 \to \II_Z(H) \to G \to \OO_X[1] \to 0,
\]
or
\[
0 \to \OO_X[1] \to G \to \II_Z(H) \to 0,
\]
where $Z \subset X$ is a zero-dimensional subscheme of length $H^3/6 - \ch_3(G)$.
\end{lem}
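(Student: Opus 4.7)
The plan is as follows. Since $\ch_0(G) = 0$ and $H \ch_2(G)/(H^2 \ch_1(G)) = 1/2$, Theorem \ref{thm:nested_wall_thm}(iii) ensures that every numerical wall for $G$ is a semicircle centered at $(0, 1/2)$ in the upper half plane, hence determined by its radius $\rho > 0$; the wall $W$ of equation \eqref{eq:main_wall} is the one with $\rho = 1/2$. A direct computation gives $\Delta_H(G) = (H^3)^2 = 9$.

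For any actual wall at a radius $\rho$ arising from a destabilizing sequence $0 \to F \to G \to Q \to 0$ in $\Coh^\beta(X)$ with $F$ and $Q$ both tilt-semistable, write $\ch_{\leq 2}(F) = (r, cH, dH^2)$ with $r, c \in \Z$ and $d \in \tfrac{1}{6}\Z$ (Lemma \ref{lem:chow_ring}). The wall equation $\nu_{\alpha,\beta}(F) = \nu_{\alpha,\beta}(G) = \tfrac{1}{2} - \beta$ simplifies algebraically to $2d - c = r(\rho^2 - \tfrac{1}{4})$. Substituting this into the Bogomolov inequalities $\Delta_H(F), \Delta_H(Q) \geq 0$ together with $\Delta_H(F) + \Delta_H(Q) \leq 9$ from Proposition \ref{prop:Deltabound} yields the key constraint
\[
(2c - 1 - r)^2 \leq 4 r^2 \rho^2 + 1.
\]

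The core step is to show $\rho = 1/2$. The case $r = 0$ forces $F$ or $Q$ to be supported in dimension at most one, so has tilt-slope $+\infty$ and induces no actual wall. For $r \neq 0$, Proposition \ref{prop:bounding_rank} gives $\rho \leq 1/(2|r|) \leq 1/2$. For $|r| = 1$, the integrality of $d$ forces $\rho^2 \in \tfrac{1}{4} + \tfrac{1}{3}\Z$, whose only positive value at most $1/4$ is $\rho = 1/2$. For $|r| = 2$, the sole integer candidate $\rho^2 = 1/12$ violates Bogomolov on either $F$ or $Q$; for $|r| \geq 4$, a similar integrality check admits no $\rho$ in the allowed range. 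The main obstacle is $|r| = 3$, where the unique integrality candidate is $\rho^2 = 1/36$ with slope $c/r = 2/3$ (or $1/3$ by duality) and $\Delta_H(F) = \Delta_H(Q) = 0$. I would rule this case out by applying Theorem \ref{thm:li_delta_bound} to the twist $F(-H)$, whose slope $-1/3$ lies in $[-1/2, 1/2]$ and whose computed $\ch_2(F(-H)) = H^2/6 > 0$ contradicts Li's non-positivity bound on the $\ch_2$ of its tilt-stable Jordan-Hölder factors (which all inherit $\Delta_H = 0$ and slope $-1/3$ from $F(-H)$).

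Finally, at $\rho = 1/2$ the constraint leaves only $(r, c, d) \in \{(1, 1, 1/2), (-1, 0, 0)\}$. The subobject of class $(1, H, H^2/2)$ is a rank-one torsion-free sheaf with $\ch_1 = H$, hence $\II_Z(H)$ for some $Z \subset X$; the $\ch_2$-constraint forces $Z$ to be zero-dimensional. The subobject of class $(-1, 0, 0)$ is identified with $\OO_X[1]$ by Proposition \ref{prop:line_bundles}\ref{enum:beta>0} applied on the $\beta > 0$ side of $W$. Conservation of $\ch_3$ then fixes the length of $Z$ as $H^3/6 - \ch_3(G)$ in the normalization of Lemma \ref{lem:chow_ring}. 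The two orderings in the Jordan-Hölder filtration arise according to whether $\II_Z(H)$ or $\OO_X[1]$ is the subobject.
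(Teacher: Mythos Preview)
Your overall strategy is sound and reaches the right conclusion, but it differs substantially from the paper's route and contains one genuine error.

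\textbf{Comparison with the paper.} The paper does not parametrize walls by their radius $\rho$. Instead it works at the apex $\beta = \tfrac12$, where $H^2\ch_1^{1/2}(G) = H^3$. This forces $0 < H^2\ch_1^{1/2}(A) < H^3$ for any destabilizing subobject $A$, i.e.\ $c - r/2 \in (0,1)$, which immediately gives $r$ odd and $c = (r+1)/2$. From there a single Bogomolov estimate plus the positivity of $\alpha^2$ pins down $r \in \{-1,-3\}$, and Li's theorem eliminates $r = -3$. Your approach via Proposition~\ref{prop:bounding_rank} and integrality of $d$ recovers the same two cases, but at the cost of a case-by-case check over $|r|$ (and your treatment of $|r|\ge 4$ is only sketched). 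The apex argument buys a one-line elimination of all even ranks; your approach buys nothing extra.

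\textbf{The genuine gap.} Your handling of $|r|=3$ is flawed. You claim that the tilt-stable Jordan--H\"older factors of $F(-H)$ ``inherit $\Delta_H = 0$ and slope $-1/3$''. The $\Delta_H = 0$ part is fine, but the Mumford slope is \emph{not} inherited: a tilt-stable object with $\Delta_H = 0$ and the same $\nu_{\alpha,\beta}$ as $F(-H)$ can have Mumford slope either $-1/3$ or $-2/3$ (solve $(m-\beta)^2 - 2\nu(m-\beta) - \alpha^2 = 0$ at the apex $\beta = -\tfrac12$, $\alpha = \tfrac16$, $\nu = 0$). The factor with $\mu = -2/3$ lies outside $[-\tfrac12,\tfrac12]$, so Theorem~\ref{thm:li_delta_bound} does not apply to it as stated. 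The fix is simpler than your attempted reduction: since you may refine any destabilizing sequence until the subobject (or quotient) is tilt-\emph{stable}, you can assume this from the outset and apply Li's theorem directly to the rank $\pm 3$ factor with $\mu = 1/3$, exactly as the paper does.

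\textbf{Minor point.} In your final paragraph you assert that the object of class $(1,H,\tfrac12 H^2)$ ``is a rank-one torsion-free sheaf''. This needs the observation (made in the paper) that $\Delta_H = 0$ forbids semicircular walls, so by Proposition~\ref{prop:large_volume_limit} the object is a $2$-Gieseker-stable sheaf; only then does the identification with $\II_Z(H)$ follow.
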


\begin{proof}
All walls for $(0, H, \tfrac{1}{2} H^2)$ intersect the vertical ray $\beta = \tfrac{1}{2}$. If $G$ is strictly-semistable along some numerical wall intersecting $\beta = \tfrac{1}{2}$, then there is a short exact sequence in $\Coh^{1/2}(X)$ of tilt-semistable objects
\[
0 \to A \to G \to B \to 0
\]
with equal tilt-slope. Let $\ch_{\leq 2}(A) = (r, cH, dH^2)$. By definition of $\Coh^{1/2}(X)$ and the fact that neither $A$ nor $B$ can have infinite tilt-slope we get
\[
0 < H^2 \cdot \ch^{1/2}_1(A) = H^3 (c - \frac{r}{2}) < H^2 \cdot \ch^{1/2}_1(G) = H^3.
\]
Therefore, $c = \tfrac{r}{2} + \tfrac{1}{2}$, and in particular, $r$ is odd. We will deal with the case $r < 0$. If $r > 0$, then $B$ has negative rank and one simply has to exchange the roles of $A$ and $B$ in the following argument.

For $(\alpha, \tfrac{1}{2}) \in W(A, G)$ we have 
\[
-\alpha^2r + 2d - \frac{r}{4} - \frac{1}{2} = \nu_{\alpha, 1/2}(A) = \nu_{\alpha, 1/2}(G) = 0.
\]
Since $\alpha^2 > 0$, this implies $d < \tfrac{r}{8} + \tfrac{1}{4}$. The fact 
\[
0 \leq \frac{\Delta_H(A)}{(H^3)^2} = -2dr + \frac{r^2}{4} + \frac{r}{2} + \frac{1}{4}
\]
implies $d \geq \tfrac{r}{8} + \tfrac{1}{8r} + \tfrac{1}{4}$. Since $d \in \tfrac{1}{6} \Z$, these restrictions on $d$ are only possible for $r \in \{-1, -3\}$. 

If $r = -3$, then $\ch_{\leq 2}(A) = (-3, -H, -\tfrac{1}{6} H^2)$. This case is immediately ruled out by Theorem \ref{thm:li_delta_bound}. If $r = -1$, then $\ch_{\leq 2}(A) = (-1, 0, 0)$, and by Proposition \ref{prop:line_bundles}, we know $A = \OO_X[1]$. Then $\ch(B) = (1, H, \tfrac{1}{2} H^2, \ch_3(G))$. By Proposition \ref{prop:Deltabound}, we know that there is no semicircular wall for $B$, and by Proposition \ref{prop:large_volume_limit}, the object $B$ has to be a $2$-Gieseker-stable sheaf. Since $\ch(B(-H)) = (1, 0, 0, \ch_3(G) - \frac{1}{6} H^3)$, the remaining statement follows by applying Proposition \ref{prop:line_bundles} to $B(-H)$.
\end{proof}

The next step is to gain further control over the third Chern character.

\begin{lem}
\label{lem:torsion_bound_ch3}
Let $G$ be a $\nu_{\alpha, \beta}$-semistable object with $\ch_{\leq 2}(G) = (0, H, \tfrac{1}{2} H^2)$. Then $\ch_3(G) \leq \tfrac{1}{6} H^3$. If $\ch_3(G) = \tfrac{1}{6} H^3$ and $(\alpha, \beta)$ is above $W$, then $G \cong \OO_Y(H)$ for some $Y \in |H|$.
\end{lem}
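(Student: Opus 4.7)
The plan is to exploit that $W$ is the unique actual wall in tilt stability for objects with $\ch_{\leq 2} = (0, H, \tfrac{1}{2} H^2)$, as established in Lemma \ref{lem:torsion_wall_classification}. Since semistability is closed and $G$ can be deformed to $W$ without crossing any other wall, $G$ is tilt-semistable at every $(\alpha', \beta') \in W$. If $G$ is strictly semistable at some point of $W$, then Lemma \ref{lem:torsion_wall_classification} gives a Jordan--H\"older filtration of $G$ with factors $\OO_X[1]$ and $\II_Z(H)$ for a zero-dimensional $Z \subset X$ of length $\ell \geq 0$. Summing Chern characters directly yields $\ch_3(G) = \tfrac{1}{6} H^3 - \tfrac{\ell}{3} H^3 \leq \tfrac{1}{6} H^3$, with equality precisely when $\ell = 0$.

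In the remaining case, $G$ is tilt-stable at every point of $W$. Fix $(\alpha', \beta') \in W$ with $0 < \beta' < 1$. A direct computation of tilt-slopes on $W$ yields $\nu_{\alpha', \beta'}(\OO_X[1]) = \nu_{\alpha', \beta'}(G) = \tfrac{1}{2} - \beta'$, while $\nu_{\alpha', \beta'}(\OO_X(2H)) - \nu_{\alpha', \beta'}(G) = \tfrac{1}{2-\beta'} > 0$. Line bundles satisfy $\Delta_H = 0$ and hence have no actual walls, so $\OO_X(2H)$ and $\OO_X[1]$ are tilt-stable. Standard tilt-stability arguments then give $\Hom(G, \OO_X[1]) = 0$ (equal tilt-slopes, both tilt-stable, non-isomorphic), $\Hom(\OO_X(2H), G) = 0$ (strict slope inequality), and $\Hom(G, \OO_X) = 0$ (torsion target). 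Serre duality turns the second vanishing into $\Ext^3(G, \OO_X) = H^0(G(-2H))^\vee = 0$. Riemann--Roch combined with Lemma \ref{lem:todd_of_cubic} gives $\chi(G, \OO_X) = -\ch_3(G) - \tfrac{1}{6} H^3$, so the three vanishings yield
\[
0 \leq \ext^2(G, \OO_X) = -\ch_3(G) - \tfrac{1}{6} H^3,
\]
forcing $\ch_3(G) \leq -\tfrac{1}{6} H^3 < \tfrac{1}{6} H^3$.

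For the equality statement with $(\alpha, \beta)$ above $W$, the second case is excluded, so we are in the first case with $\ell = 0$, and the Jordan--H\"older factors are $\OO_X[1]$ and $\OO_X(H)$. Kodaira vanishing computes $\Ext^1(\OO_X(H), \OO_X[1]) = H^2(\OO_X(-H)) = 0$, whereas $\Ext^1(\OO_X[1], \OO_X(H)) = H^0(\OO_X(H))$. The split object $\OO_X(H) \oplus \OO_X[1]$ is not tilt-semistable above $W$, since its summands acquire strictly different tilt-slopes there. Hence $G$ is a non-trivial extension $0 \to \OO_X(H) \to G \to \OO_X[1] \to 0$ classified by a non-zero section $s \in H^0(\OO_X(H))$, and rotating the defining triangle identifies $G$ with the cone of $s \colon \OO_X \to \OO_X(H)$, namely $G \cong \OO_Y(H)$ for $Y = V(s) \in |H|$.

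The main obstacle is the tilt-stable case on $W$: collecting the right Hom-vanishings and turning them via Serre duality into an Euler-characteristic inequality. The Jordan--H\"older case is a direct Chern-character computation, and once the JH factors in the equality case are identified as $\OO_X[1]$ and $\OO_X(H)$, the extraction $G \cong \OO_Y(H)$ is a standard extension-class analysis.
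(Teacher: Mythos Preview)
Your proof is correct and follows essentially the same strategy as the paper: reduce to the unique wall $W$ via Lemma~\ref{lem:torsion_wall_classification}, then use an Euler characteristic argument. The paper argues the contrapositive slightly more directly---it assumes $\ch_3(G) \geq \tfrac16 H^3$ and computes $\chi(\OO_X(H), G) > 0$ to produce a nonzero morphism $\OO_X(H) \to G$, forcing $G$ to be strictly semistable on $W$ with the desired JH factors---whereas you instead handle the tilt-stable case on $W$ by computing $\chi(G, \OO_X)$ and extracting the stronger bound $\ch_3(G) \leq -\tfrac16 H^3$. Both routes work; the paper's is marginally shorter since it avoids the case split. One small wording slip: in the parenthetical ``torsion target'' you mean that $G$ is torsion (and $\OO_X$ torsion-free), not that $\OO_X$ is torsion; alternatively, $\Hom(G, \OO_X) = \Hom(G, \OO_X[1][-1]) = 0$ simply because $G$ and $\OO_X[1]$ lie in the same heart $\Coh^{\beta'}(X)$.
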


\begin{proof}
We may assume $\ch_3(G) \geq \tfrac{1}{6} H^3$. By Lemma \ref{lem:torsion_wall_classification}, the only possible wall is given by $W$. Therefore, $G$ has to be tilt-semistable along $W$. Since $W$ lies below the numerical wall $W(G, \OO_X(-H)[1])$, we get $\ext^2(\OO_X(H), G) = \hom(G, \OO_X(-H)[1]) = 0$. Thus, $\hom(\OO_X(H), G) \geq \chi(\OO_X(H), G) = \ch_3(G) + \frac{1}{6} H^3 > 0$. Therefore, $W$ is a wall for $G$ and by Lemma \ref{lem:torsion_wall_classification}, the destabilising sequence is  
\[
0 \to \OO_X(H) \to G \to \OO_X[1] \to 0.
\]
This implies $G = \OO_Y(H)$ for some $Y \in |H|$ and $\ch_3(G) = \frac{1}{6} H^3$.
\end{proof}

\begin{proof}[Proof of Proposition \ref{prop:torsion_classification}]
Assume that $G$ is strictly tilt-semistable along $W$. Then Lemma \ref{lem:torsion_wall_classification} splits our problem into two cases.

Firstly, assume that $G$ fits into a non-splitting short exact sequence
\[
0 \to \II_P(H) \to G \to \OO_X[1] \to 0
\]
for a point $P \in X$. Then clearly $G = \II_{P/Y}(H)$ for some $Y \in |H|$. This object is tilt-stable above $W$, and tilt-unstable below $W$ by precisely this sequence.

Secondly, assume that $G$ fits into a non-splitting short exact sequence
\begin{equation}
\label{eq:object_below_wall}
0 \to \OO_X[1] \to G \to \II_P(H) \to 0
\end{equation}
for some $P \in X$. By Serre duality, $\Ext^1(\II_P(H), \OO_X[1]) = h^1(\II_P(-H)) = 1$ and hence, there is a unique $G$ for each $P \in X$. Clearly, this object is tilt-unstable above $W$. Assume it is also tilt-unstable below $W$. Then there is a short exact sequence $0 \to A \to G \to B \to 0$ destabilizing $G$ below the wall. However, $G$ is strictly-semistable at $W$, and by Lemma \ref{lem:torsion_wall_classification}, this implies $B = \OO_X[1]$. However, that means the short exact sequence \eqref{eq:object_below_wall} splits, a contradiction.

Lastly, assume that $G$ is $\nu_{\alpha, \beta}$-stable for all $(\alpha, \beta)$. By Proposition \ref{prop:derived_dual}, $\D(G)$ lies in a distinguished triangle 
\begin{equation}
\label{eq:triangle}
    \tilde{G} \to \D(G) \to T[-1] \to \tilde{G}[1]
\end{equation}
where $T$ is a torsion sheaf supported in dimension zero and $\tilde{G} \in \text{Coh}^{-\beta}(X)$ is $\nu_{\alpha, -\beta}$-semistable. If $\ch_3(T) = t$, then $\ch(\tilde{G}) = (0, H, -\tfrac{1}{2} H^2 , -\tfrac{1}{6} H^3 + t)$. Thus, $\tilde{G}$ is a pure sheaf supported on a hyperplane section $Y \in |H|$. We can compute 
\[
\ch(\tilde{G} \otimes \OO_X(H)) = \left(0, H, \frac{1}{2} H^2 , -\frac{1}{6} H^3 + t\right). 
\]
Thus, Lemma \ref{lem:torsion_bound_ch3} gives $t = 0$ or $t = 1$, and if $t = 1$, then $\tilde{G} \otimes \OO_X(H) \cong \OO_Y(H)$, i.e., $\tilde{G} \cong \OO_Y(H)$. Hence there is a non-trivial morphism $\OO_X \rightarrow \tilde{G}$. Since $\hom(\OO_X, T[-i]) = 0$ for $i>0$, The triangle \eqref{eq:triangle} shows that there is a non-trivial morphism $\OO_X \rightarrow \D(G)$. Dualizing this morphism leads to a non-trivial morphism $G \to \OO_X[1]$. However, this is in contradiction to the assumption that $G$ is stable along $W$. 

If $t = 0$, then $\D(G) = \tilde{G}$ is a sheaf, so $\lExt^q(G, \OO_X) = 0$ for $q > 1$. Thus, \cite[Proposition 1.1.10]{HL10:moduli_sheaves} implies
that $G$ is reflexive and supported on a hyperplane section $Y \in |H|$. This means $G = \OO_Y(D)$ for some Weil divisor $D$ on $Y$.
\end{proof}

%%%%%%%%%%%%%%%%%%%%%%%%%%%%%%%%%%%%%

\subsection{Set-theoretic description of the moduli space}

We now prepare the proof of Theorem \ref{thm:moduli_rank_three_set_theoretic}.

\begin{lem}
\label{lem:no_wall_at_minus_one}
There are no walls along $\beta = -1$ for tilt-semistable objects $E$ with Chern character $\ch_{\leq 2}(E) = (3, -H, -\tfrac{1}{2} H^2)$.
\end{lem}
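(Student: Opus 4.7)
The plan is to enumerate potential destabilizing short exact sequences along $\beta = -1$ using the Bogomolov inequality, then rule out the surviving numerical cases with Li's refined bound (Theorem~\ref{thm:li_delta_bound}), possibly after twisting by $\OO_X(H)$.

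I would begin by computing the twisted Chern character $\ch^{-1}(E) = (3, 2H, 0, *)$, which yields the tidy tilt slope $\nu_{\alpha, -1}(E) = -\tfrac{3\alpha^2}{4}$. Suppose an actual wall occurs at $(\alpha_0, -1)$, witnessed by a short exact sequence $0 \to F \to E \to G \to 0$ with $F, G$ tilt-semistable in $\Coh^{-1}(X)$. Both the subobject and the quotient must have non-zero, finite tilt denominator, and since $\Pic(X) = \Z \cdot H$ forces $H^2 \ch_1^{-1}(F) \in 3\Z$, this pins down $\ch_1^{-1}(F) = H$. Writing $r = \ch_0(F)$ and $m = H \cdot \ch_2^{-1}(F)$, equality of slopes at $(\alpha_0, -1)$ gives $m = \tfrac{3\alpha_0^2(2r-3)}{4}$. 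Combining the Bogomolov inequalities $\Delta_H(F) = 9 - 6rm \geq 0$ and $\Delta_H(G) = 9 + 6m(3-r) \geq 0$ with the half-integrality $m \in \tfrac12 \Z$ (coming from Lemma~\ref{lem:chow_ring}) reduces the possibilities to two underlying walls, after accounting for the symmetry $(r, m) \leftrightarrow (3-r, -m)$: Case (A) with $\ch(F) = (2, -H, \tfrac16 H^2, \ldots)$ and $\ch(G) = (1, 0, -\tfrac23 H^2, \ldots)$, and Case (B) with $\ch(F) = (3, -2H, \tfrac23 H^2, \ldots)$ and $\ch(G) = (0, H, -\tfrac76 H^2, \ldots)$.

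A useful observation then simplifies the stability analysis: any tilt-stable Jordan--H\"older factor of $F$ must have $H^2 \ch_1^{-1} > 0$ (otherwise its tilt slope would be infinite), and the minimal positive value attainable is $3$. Since $H^2 \ch_1^{-1}(F) = 3$, this forces $F$ itself to be tilt-stable in both cases, so Li's bound applies to $F$ directly. In Case (A), $\mu(F) = -\tfrac12 \in [-\tfrac12, \tfrac12]$ and $H \ch_2(F)/(H^3 \ch_0(F)) = \tfrac{1}{12} > 0$, contradicting Li's bound. In Case (B), $\mu(F) = -\tfrac23$ lies outside the range, but $F(H)$ is also tilt-stable with $\mu(F(H)) = \tfrac13 \in [-\tfrac12, \tfrac12]$ and $H \ch_2(F(H))/(H^3 \ch_0(F(H))) = \tfrac{1}{18} > 0$, again a contradiction.

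The main technical obstacle I anticipate is the numerical enumeration itself: tracking the half-integer lattice for $\ch_2$ and verifying that all ranks outside $\{0, 1, 2, 3\}$ (including negative ones corresponding to shifted destabilizers of the form $G[1]$) are eliminated by the Bogomolov inequalities for $F$ and $G$. Once those two cases survive, the Jordan--H\"older observation bypasses any semistable-but-not-stable subtlety, and Li's bound closes the argument cleanly.
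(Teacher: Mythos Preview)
Your approach is essentially the paper's: restrict $\ch_1^{-1}$ of a destabilizer to $H$, use Bogomolov plus the wall equation to force $y=\tfrac16$ and $r\in\{2,3\}$, then kill both cases with Li's bound. Your write-up is in fact more careful than the paper's in two places. First, you observe that $H^2\ch_1^{-1}(F)=3$ is minimal, so $F$ is tilt-\emph{stable} rather than merely semistable, which is what Theorem~\ref{thm:li_delta_bound} actually requires; the paper applies Li without making this explicit. Second, in the $r=3$ case one has $\mu(F)=-\tfrac23\notin[-\tfrac12,\tfrac12]$, so Li's bound as stated does not apply directly; your twist $F\mapsto F(H)$ fixes this, whereas the paper simply asserts the case is ``directly ruled out.'' The enumeration itself is slightly cleaner in the paper (only $\Delta_H(A)\ge 0$ plus $y>0$ from the wall equation, rather than both Bogomolov inequalities), but the conclusions are identical.
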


\begin{proof}
Assume there is such a wall induced by a short exact sequence
\[
0 \to A \to E \to B \to 0
\]
with $\ch_{\leq 2}^{-1}(A) = (r, xH, yH^2)$. Then $0 < H \cdot \ch_1^{-1}(A) = xH^3 < H \cdot \ch_1^{-1}(E) = 2H^3$ implies $x = 1$. By exchanging the roles of $A$ and $B$ if necessary, we may assume $r \geq 2$.

Using $\Delta_H(A) \geq 0$ we get $y \leq \tfrac{1}{2r}$. A straightforward computation shows that there exists $\alpha > 0$ with 
$\nu_{\alpha, -1}(A) = \nu_{\alpha, -1}(E)$ if and only if $y > 0$. Since $y \in \tfrac{1}{6} \Z$, this is only possible if $y = \tfrac{1}{6}$ and $r \in \{2, 3\}$. Both cases $\ch_{\leq 2}^{-1}(A) = (3, H, \tfrac{1}{6} H^2)$ and $\ch_{\leq 2}^{-1}(A) = (2, H, \tfrac{1}{6} H^2)$ are directly ruled out by Theorem \ref{thm:li_delta_bound}.
\end{proof}

\begin{prop}
\label{prop:max_ch2_ch3_reflexive}
Take a slope-stable sheaf $E$ of Chern character $(3, -H, \ch_2, \ch_3)$. Then $H \cdot \ch_2 \leq -\tfrac{1}{2} H^3$, and if $\ch_2 \cdot H = -\frac{1}{2} H^3$, then $\ch_3 \leq \tfrac{1}{6} H^3$. In particular, this implies that any slope-stable sheaf of Chern character $v$ is a reflexive sheaf.
\end{prop}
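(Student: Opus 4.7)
The plan is to establish the two numerical inequalities via tilt-stability wall-crossing, and then deduce reflexivity for class $v$ as a consequence.

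\emph{First, the bound $H \cdot \ch_2(E) \leq -H^3/2$.} Write $\ch(E) = (3, -H, cH^2, dH^3)$. Since $H^2 \cdot \ch_1(E)/H^3 = -1$ is coprime to $\ch_0(E) = 3$, slope-stability upgrades automatically to $2$-Gieseker-stability, and Proposition~\ref{prop:large_volume_limit} then gives $\nu_{\alpha,\beta}$-stability of $E$ for $\alpha \gg 0$ and $\beta < \mu(E) = -\tfrac{1}{3}$. Since $\mu(E) = -\tfrac{1}{3} \in [-\tfrac{1}{2}, \tfrac{1}{2}]$, Theorem~\ref{thm:li_delta_bound} yields $H \cdot \ch_2(E) \leq 0$. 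By Lemma~\ref{lem:chow_ring} we have $H \cdot \ch_2 \in \tfrac{H^3}{6}\Z$, so only the three candidate values $H \cdot \ch_2 \in \{0, -\tfrac{H^3}{6}, -\tfrac{H^3}{3}\}$ remain to be ruled out. Each case is eliminated by a direct tilt wall-crossing analysis in the spirit of Lemma~\ref{lem:no_wall_at_minus_one}: for each candidate value of $c$, compute the discriminant $\Delta_H(E)$, enumerate the possible numerical walls via Theorem~\ref{thm:nested_wall_thm}, and bound the rank and discriminant of any potential destabilizer via Propositions~\ref{prop:Deltabound} and~\ref{prop:bounding_rank}. A careful case-by-case analysis then produces either a destabilizing subsheaf of $E$ contradicting slope-stability, or a numerical impossibility for the hypothetical wall.

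\emph{Second, the bound $\ch_3(E) \leq H^3/6$ when $H \cdot \ch_2(E) = -H^3/2$.} Here $\ch_{\leq 2}(E) = (3, -H, -\tfrac{H^2}{2})$, so Lemma~\ref{lem:no_wall_at_minus_one} directly shows $E$ has no tilt walls along $\beta = -1$, hence $E$ is $\nu_{\alpha, -1}$-stable for all $\alpha > 0$. By Lemma~\ref{lem:todd_of_cubic}, $\chi(E) = 3d - \tfrac{1}{2}$, so integrality combined with Lemma~\ref{lem:chow_ring} restricts $d$ to the coset $\tfrac{1}{6} + \tfrac{1}{3}\Z$, meaning $d > \tfrac{1}{6}$ forces $d \geq \tfrac{1}{2}$. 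Consider $E[1] \in \Coh^\beta(X)$ for $\beta > -\tfrac{1}{3}$ and analyze its tilt-stability along the wall $W$ of equation~\eqref{eq:main_wall}: using Proposition~\ref{prop:torsion_classification} and Lemma~\ref{lem:torsion_bound_ch3} to classify the possible semistable sub- and quotient-objects of $E[1]$ of the relevant class, together with the constraint $\ch_3(E) \geq \tfrac{H^3}{2}$, derive a numerical or existence contradiction.

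\emph{Third, reflexivity for class $v$.} Let $E$ be slope-stable of class $v$. The natural inclusion $E \hookrightarrow E^{\vee\vee}$ has cokernel $Q$ supported in codimension $\geq 2$, and $E^{\vee\vee}$ is slope-stable with the same $\ch_{\leq 1}$ as $E$. The first step applied to $E^{\vee\vee}$ gives $H \cdot \ch_2(E^{\vee\vee}) \leq -\tfrac{H^3}{2}$; since $H \cdot \ch_2(Q) \geq 0$ (with equality if and only if $Q$ is $0$-dimensional) and $\ch_2(E^{\vee\vee}) = \ch_2(E) + \ch_2(Q)$, equality is forced and $Q$ is $0$-dimensional. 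The second step applied to $E^{\vee\vee}$ then yields $\ch_3(E^{\vee\vee}) \leq \tfrac{H^3}{6}$, so $\ch_3(Q) = \ch_3(E^{\vee\vee}) - \ch_3(E) \leq 0$; but $\ch_3(Q) = \operatorname{length}(Q) \geq 0$, forcing $Q = 0$, i.e., $E = E^{\vee\vee}$ is reflexive. The main obstacle is the case-by-case tilt wall analysis in the first step, which requires a bespoke enumeration of destabilizers for each of the three residual values of $c$; the second step is also delicate but is guided by the classification results of Section~\ref{sec:wallstructure}.
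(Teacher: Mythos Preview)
Your reflexivity argument (step 3) is correct and matches the paper. The other two steps have problems.

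\textbf{Step 1 has a genuine gap.} First a minor point: two of your three ``residual'' values are phantoms. Since $c_2(E) = \tfrac12 c_1(E)^2 - \ch_2(E) = \tfrac12 H^2 - \ch_2(E)$ must lie in $\CH^2_{\text{n}}(X) = \Z \cdot \tfrac13 H^2$, one has $\ch_2(E) \in (\tfrac12 + \tfrac13 \Z)H^2$, so the values $0$ and $-\tfrac13 H^3$ never occur; after Li's bound rules out $\tfrac16 H^3$, only $H\cdot\ch_2 = -\tfrac16 H^3$ remains. The real gap is that your proposed mechanism---enumerate walls and find a destabilizer or a numerical obstruction---cannot eliminate this case, because there are \emph{no} walls. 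Along $\beta = -\tfrac12$ one has $H^2\cdot\ch_1^{-1/2}(E) = \tfrac12 H^3$, so any proper subobject $F$ would need $0 < H^2\cdot\ch_1^{-1/2}(F) < \tfrac12 H^3$, which is impossible in $\tfrac12 H^3\Z$; similarly for $E[1]$ along $\beta = 0$. Hence (after passing to the reflexive hull) $E$ is tilt-stable everywhere, and wall-crossing alone produces no contradiction. The paper's key idea is an Euler characteristic argument: from this global stability one compares phases of $E$ and $E(-2H)[1]$ along $W(E,E(-2H)[1])$ to obtain $\Ext^2(E,E) = \Hom(E,E(-2H)[1]) = 0$, while stability gives $\hom(E,E)=1$ and $\ext^3(E,E)=0$; but a direct Riemann--Roch computation yields $\chi(E,E)=3$, contradicting $\chi(E,E) \le 1$.

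\textbf{Step 2 is more convoluted than necessary and risks circularity.} Once Lemma~\ref{lem:no_wall_at_minus_one} gives $\nu_{\alpha,-1}$-stability of $E$ for all $\alpha>0$, a direct slope comparison shows $\nu_{0,-1}(\OO_X(-2H)[1]) < \nu_{0,-1}(E)$, hence $h^2(E)=\Hom(E,\OO_X(-2H)[1])=0$; together with $h^0(E)=0$ from $\mu(E)<0$, this yields $\chi(E) = -h^1(E)-h^3(E) \le 0$, i.e., $\ch_3(E)\le\tfrac16 H^3$. There is no need to pass to $E[1]$ or invoke the wall $W$; doing so is dangerous because the full analysis of destabilizing sequences at $W$ (as in Lemma~\ref{lem:wall_classification_rank_three}) already uses the proposition you are trying to prove.
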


\begin{proof}
Since $E$ is slope stable, the classical Bogomolov inequality gives 
\[
    \Delta_H(E) = (H^3)^2 -2(3H^3)\left( H \cdot \ch_2(E)\right) \geq 0 
\]
which implies $H \cdot \ch_2(E) \leq \tfrac{1}{6} H^3$. The case $H \cdot \ch_2(E) = \tfrac{1}{6} H^3$ is immediately
ruled out by Theorem \ref{thm:li_delta_bound}. %, because $E$ is $\nu_{\alpha, \beta}$-stable for $\beta < -\frac{1}{3}$ and $\alpha \gg 0$, and 
% \begin{equation*}
%     \beta_{+}(E) = \beta_{+}(E) = -\frac{1}{3} \in (-1, 0). 
% \end{equation*}
Since $c_2(E) = \tfrac{1}{2} H^2 -\ch_2(E)$ has to be an integral class, we are left to rule out $H \cdot \ch_2(E) = -\frac{1}{6} H^3$. Assume $H \cdot \ch_2(E) = -\tfrac{1}{6} H^3$. We may assume that $E$ is a reflexive sheaf. If not, we replace it by the double dual $E^{\vee\vee}$ which satisfies $H \cdot \ch_2(E) \leq H \cdot \ch_2(E^{\vee\vee})$. By the first part of the argument $H \cdot \ch_2(E^{\vee\vee}) = -\tfrac{1}{6} H^3$ holds as well.

We first show that $\ext^2(E, E) = 0$. Since $H^3 \cdot \ch^{-1/2}_1(E) = \tfrac{1}{2} H$, any destabilizing subobject $F \subset E$ along $\beta = -\tfrac{1}{2}$ must satisfy $H^3 \cdot \ch^{-1/2}_1(F) = \tfrac{1}{2} H$ or $H^3 \cdot \ch^{-1/2}_1(F) = 0$. Thus, either $F$ or the quotient $E/F$ have infinite tilt-slope, a contradiction. This means $E$ is $\nu_{\alpha, -1/2}$-stable for all $\alpha > 0$.

By Proposition \ref{prop:vertical_wall_destabilized}, the object $E[1]$ is tilt-stable for $\beta = 0$ and $\alpha \gg 0$. Since $H^3 \cdot \ch_1(E[1]) = H^3$, the same type of argument as above shows that there cannot be any wall along $\beta = 0$. Hence, $E(-2H)[1]$ is $\nu_{\alpha, \beta}$-stable for $\beta = -2$ and any $\alpha > 0$.

A straightforward computation shows that $W(E, E(-2H)[1])$ intersects both the vertical lines $\beta = -2$ and $\beta = -\tfrac{1}{2}$. Therefore, $E$ and $E(-2H)[1]$ are tilt-stable for any $(\alpha, \beta) \in W(E, E(-2H)[1])$ and have the same phase, and thus, $\ext^2(E, E) = \hom(E, E(-2H)[1]) = 0$. Since $E$ is stable, we know $\hom(E, E) = 1$ and hence, $3 = \chi(E, E) = 1 - \ext^1(E, E) - \ext^3(E, E) \leq 1$, a contradiction.

Now assume $H \cdot \ch_2 = -\frac{1}{2}H^3$. We know $E \in \Coh^{\beta}(X)$ is $\nu_{\alpha, \beta}$-stable for $\alpha \gg 0$ and $\beta < -\tfrac{1}{3}$. By Lemma \ref{lem:no_wall_at_minus_one}, we have that $E$ is $\nu_{\alpha, -1}$-stable for any $\alpha > 0$. One can easily compute
\[
\nu_{0, -1}(\OO_X(-2H)[1]) < \nu_{0, -1}(E)
\]
which implies $h^2(E) = \hom(E, \OO_X(-2H)[1]) = 0$. Moreover, since $\mu(E) = -\tfrac{1}{3} < \mu(\OO_X)$, we get $\hom(\OO_X, E) = 0$. Therefore, $\chi(E) = \ch_3(E) - \frac{1}{6} H^3 \leq 0$ as claimed.

Lastly, assume that a slope-stable sheaf $E$ of Chern character $v$ is not reflexive. We have a short exact sequence
\[
0 \to E \to E^{\vee \vee} \to T \to 0.
\]
Since $E^{\vee \vee}$ is also slope-stable, and both $H \cdot \ch_2(E)$ and $H \cdot \ch_3(E)$ are maximal, one gets $\ch(E) = \ch(E^{\vee \vee})$. This is only possible if $T = 0$.
\end{proof}

To prove Theorem \ref{thm:moduli_rank_three_set_theoretic}, we start in the large volume limit.

\begin{lem}
\label{lem:large_volume_limit_v}
Take $\beta > -\frac{1}{3}$. An object $\tilde{E} \in \Coh^{\beta}(X)$ of Chern character $-v$ is $\nu_{\alpha, \beta}$-semistable for $\alpha \gg 0$ if and only if $\tilde{E} \cong E[1]$ for a slope-stable reflexive sheaf $E$.
\end{lem}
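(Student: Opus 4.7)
The plan is to prove the two implications separately. The forward direction will be a direct application of Proposition \ref{prop:negative_rank_large_volume_limit} combined with the Chern character bounds of Proposition \ref{prop:max_ch2_ch3_reflexive}. The backward direction I would prove by passing to the dual: $E^\vee$ will be $2$-Gieseker-stable by primitivity of $\ch_{\leq 1}(E^\vee)$, hence tilt-semistable in the large-volume limit by Proposition \ref{prop:large_volume_limit}, and the derived dual of Proposition \ref{prop:derived_dual} will transport this tilt-semistability back to $E[1]$.

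For the forward implication, I would assume $\tilde E \in \Coh^\beta(X)$ is $\nu_{\alpha,\beta}$-semistable for $\alpha \gg 0$ with $\ch(\tilde E) = -v$. Since $\beta > -\tfrac{1}{3} = \mu(\tilde E)$, Proposition \ref{prop:negative_rank_large_volume_limit} applies: $E := \HH^{-1}(\tilde E)$ is torsion-free and slope-semistable, while $T := \HH^0(\tilde E)$ is supported in dimension at most one. The triangle $E[1] \to \tilde E \to T$ gives $\ch(E) = v + \ch(T)$, and since $T$ has vanishing $\ch_0$ and $\ch_1$, we read off $\ch_{\leq 1}(E) = (3, -H)$. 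This pair is primitive, so $E$ is in fact slope-stable. Proposition \ref{prop:max_ch2_ch3_reflexive} then forces $H \cdot \ch_2(E) \leq -\tfrac{1}{2} H^3$; combined with effectiveness of the $1$-cycle $\ch_2(T)$ (so $H \cdot \ch_2(T) \geq 0$), we must have $\ch_2(T) = 0$, and hence $T$ is zero-dimensional. The sharpened inequality $\ch_3(E) \leq \tfrac{1}{6} H^3$ from the same proposition then combines with $\ch_3(T) \geq 0$ (length of $T$) to yield $T = 0$. Hence $\tilde E = E[1]$ with $E$ slope-stable, and the final assertion of Proposition \ref{prop:max_ch2_ch3_reflexive} ensures $E$ is reflexive.

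For the backward implication, let $E$ be slope-stable reflexive with $\ch(E) = v$. The dual $E^\vee$ is again reflexive and slope-stable, with $\ch_{\leq 1}(E^\vee) = (3, H)$ primitive, hence $2$-Gieseker-stable. Proposition \ref{prop:large_volume_limit} gives that $E^\vee \in \Coh^{\beta'}(X)$ is tilt-semistable for every $\beta' < \tfrac{1}{3}$ and $\alpha \gg 0$. Applying Proposition \ref{prop:derived_dual} to $E^\vee$ produces a tilt-semistable $\widetilde{E^\vee} \in \Coh^{-\beta'}(X)$ sitting in a triangle $\widetilde{E^\vee} \to \D(E^\vee) \to T'[-1] \to \widetilde{E^\vee}[1]$ with $T'$ zero-dimensional. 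Reflexivity of $E$ on the smooth threefold $X$ forces $\D(E^\vee) = \RHom(E^\vee, \OO_X)[1]$ to have cohomology $E^{\vee\vee} = E$ in degree $-1$ and $\lExt^1(E^\vee, \OO_X)$ (zero-dimensional) in degree $0$. A long exact cohomology sequence of the triangle then identifies $\widetilde{E^\vee}$ with $E[1]$ up to zero-dimensional torsion absorbed between $\lExt^1(E^\vee, \OO_X)$ and $T'$, which yields tilt-semistability of $E[1]$ at $\beta = -\beta' > -\tfrac{1}{3}$ and $\alpha \gg 0$, as required.

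The hard part I expect is controlling the zero-dimensional pieces above when $E$ fails to be locally free (the case $E \cong K_P$ of Corollary \ref{cor:k_p_stable}), where both $\lExt^1(E^\vee, \OO_X)$ and $T'$ may be non-trivial and one must argue that $\widetilde{E^\vee} \cong E[1]$ on the nose. As a fallback I would combine Proposition \ref{prop:vertical_wall_destabilized}, which says $E[1]$ is tilt-stable along the vertical wall $\beta = -\tfrac{1}{3}$ for every $\alpha > 0$, with the Nested Wall Theorem: no actual wall for $E[1]$ can cross this vertical wall, so the semicircular walls in $\beta > -\tfrac{1}{3}$ form a bounded nested family. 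An openness-of-stability argument then propagates tilt-stability from a neighborhood of the vertical wall into the outer chamber above all semicircular walls at large $\alpha$, using that semistability is a closed condition, to conclude tilt-semistability of $E[1]$ for $\alpha \gg 0$.
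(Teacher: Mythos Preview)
Your forward direction is correct and matches the paper's argument essentially verbatim: apply Proposition~\ref{prop:negative_rank_large_volume_limit} to split $\tilde E$ into $\HH^{-1}$ and $\HH^0$, then use Proposition~\ref{prop:max_ch2_ch3_reflexive} to kill $\HH^0$. (Note that Proposition~\ref{prop:negative_rank_large_volume_limit} already gives reflexivity of $\HH^{-1}(\tilde E)$, so you need not invoke the last clause of Proposition~\ref{prop:max_ch2_ch3_reflexive} for that.)

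For the backward direction, your dualization route is an unnecessary detour, and the gap you flag is real: when $E$ is not locally free (e.g.\ $E = K_P$), the object $\widetilde{E^\vee}$ produced by Proposition~\ref{prop:derived_dual} need not equal $E[1]$, only an extension of a zero-dimensional sheaf by $E[1]$, so you cannot read off tilt-semistability of $E[1]$ directly. The paper's argument is precisely your fallback, but stated in one line rather than via an openness-of-stability argument: since $E$ is slope-stable reflexive, $E$ is $\nu_{\alpha,\beta}$-stable for $\beta < -\tfrac13$ and $\alpha \gg 0$ by Proposition~\ref{prop:large_volume_limit}, and Proposition~\ref{prop:vertical_wall_destabilized} then asserts that $E[1]$ is tilt-stable along the vertical wall $\beta = -\tfrac13$ and is not destabilized there, hence remains $\nu_{\alpha,\beta}$-stable for $\beta > -\tfrac13$ and $\alpha \gg 0$. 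Drop the dualization and lead with this; no further wall-crossing bookkeeping is needed.
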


\begin{proof}
Take a $\nu_{\alpha, \beta}$-semistable object $\tilde{E}$ of class $-v$. Proposition \ref{prop:negative_rank_large_volume_limit} implies that $\HH^{-1}(\tilde{E})$ is a slope-stable reflexive sheaf and $\HH^0(\tilde{E})$ is a torsion sheaf supported in dimension $\leq 1$. Therefore,
\[
\ch(\HH^{-1}(\tilde{E})) = \left(3, -H, - \frac{1}{2} H^2 + \ch_2(\HH^0(\tilde{E})), \frac{1}{6} H^3 + \ch_3(\HH^0(\tilde{E}))\right).
\]
By Proposition \ref{prop:max_ch2_ch3_reflexive}, this is only possible if $\ch_2(\HH^0(\tilde{E})) = \ch_3(\HH^0(\tilde{E})) = 0$, i.e., $\HH^0(\tilde{E})$ = 0.

Conversely, any slope-stable reflexive sheaf $E$ of class $v$ is $\nu_{\alpha , \beta}$-stable for $\alpha \gg 0$ and $\beta < \mu(E) = -\frac{1}{3}$. Proposition \ref{prop:vertical_wall_destabilized} implies that $E[1]$ is $\nu_{\alpha , \beta}$-stable for $\alpha \gg 0$ and $\beta > \mu(E) = -\frac{1}{3}$.
\end{proof}

Next, we move down from the large volume limit and investigate walls for objects of class $-v$. Note that all walls to the right of the vertical wall must intersect $\beta = -\tfrac{1}{3}$.

\begin{lem}
\label{lem:wall_classification_rank_three}
The wall $W$ of equation \eqref{eq:main_wall} is the unique actual wall for objects with Chern character $-v$ to the right of the vertical wall. There are no tilt-semistable objects below $W$. Any tilt-semistable $\tilde{E}$ with Chern character $-v$ fits into one of the following two cases:
\begin{enumerate}
    \item $\tilde{E}$ fits into a short exact sequence
    \[
    0 \to \OO_Y(D) \to \tilde{E} \to \OO_X^{\oplus 3}[1] \to 0
    \]
    where $D$ is a Weil divisor on hyperplane section $Y \in |H|$;
    \item $\tilde{E}$ fits into a short exact sequence
    \[
    0 \to \II_P(H) \to \tilde{E} \to \OO_X^{\oplus 4}[1] \to 0
    \]
    where $P \in X$.
\end{enumerate}
\end{lem}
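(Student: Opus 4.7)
The plan is to identify $W$ as the unique actual wall for class $-v$ to the right of the vertical wall $\beta = -1/3$, classify the destabilising short exact sequences at $W$, and rule out tilt-semistable objects of class $-v$ below $W$.

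Suppose $\widetilde{E}$ is strictly tilt-semistable at some $(\alpha_0, \beta_0)$ on such a wall $W'$, with destabilising sequence $0 \to A \to \widetilde E \to B \to 0$ in $\Coh^{\beta_0}(X)$. Working at $\beta_0 = 0$, the positivity of $H^2 \cdot \ch_1^0$ in $\Coh^0(X)$ and the integrality of $\ch_1$ force $\ch_1(A) \in \{0, H\}$; after swapping $A \leftrightarrow B$ we may assume $\ch_1(A) = H$, and write $\ch_{\leq 2}(A) = (r_A, H, d_A H^2)$ with $d_A \in \tfrac{1}{6}\Z$. The slope equation $\nu_{\alpha_0, 0}(A) = \nu_{\alpha_0, 0}(-v)$ yields $d_A = \tfrac{1}{2} + \tfrac{\alpha_0^2}{2}(r_A + 3)$. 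Combining Bogomolov $\Delta_H(A) = 9 - 18 r_A d_A \geq 0$, the inequality $\Delta_H(A) + \Delta_H(B) \leq \Delta_H(-v) = 18$ of Proposition~\ref{prop:Deltabound} together with $\Delta_H(B) \geq 0$, and Theorem~\ref{thm:li_delta_bound} applied after twisting $A$ by $H$ whenever $|\mu_H(A)| > 1/2$, rules out all $(r_A, d_A, \alpha_0)$ except $(0, 1/2, 0)$ and $(1, 1/2, 0)$, both corresponding to the wall $W$.

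For $r_A = 1$, Bogomolov equality $\Delta_H(A) = 0$ and tilt-stability of $A$ yield a torsion-free rank-one slope-stable sheaf; applying Proposition~\ref{prop:line_bundles}(ii) to $B$ of class $(-4, 0, 0, \ast)$ gives $B \cong \OO_X^{\oplus 4}[1]$, which fixes $\ch_3(A) = -\tfrac{1}{6}H^3$ and hence $A \cong \II_P(H)$, case (ii). For $r_A = 0$, Proposition~\ref{prop:line_bundles}(ii) forces $B \cong \OO_X^{\oplus 3}[1]$, and Proposition~\ref{prop:torsion_classification} classifies $A$ as either $\OO_Y(D)$ (case (i)) or $\II_{P/Y}(H)$; the latter can be refined through the short exact sequence $0 \to \II_P(H) \to \II_{P/Y}(H) \to \OO_X[1] \to 0$ in $\Coh^\beta(X)$ (valid for $\beta \geq 0$), re-expressing $\widetilde E$ as a member of case (ii) already counted.

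Both sequences remain short exact in $\Coh^\beta(X)$ for $\beta \in [0, 1)$, since $\OO_X[1] \in \Coh^\beta$ for $\beta \geq 0$ and $\II_P(H), \OO_Y(D) \in \Coh^\beta$ for $\beta < 1$. On $W$ all terms share a common tilt slope, and a direct computation shows $\nu_{\alpha, \beta}(\II_P(H)), \nu_{\alpha, \beta}(\OO_Y(D)) > \nu_{\alpha, \beta}(-v)$ strictly inside $W$, destabilising $\widetilde E$ below $W$. Any hypothetical tilt-semistable $\widetilde E$ of class $-v$ strictly inside $W$ would, by the absence of further walls, extend to a tilt-semistable object above $W$ of the form $E[1]$ for a slope-stable reflexive sheaf $E$ of class $v$ by Lemma~\ref{lem:large_volume_limit_v}; but then the corresponding sequence of type (i) or (ii) from Section~\ref{sec:construction_sheaves} destabilises $E[1]$ below $W$, a contradiction. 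The main obstacle is the thorough case-analysis bounding $(r_A, d_A)$: the interaction between Bogomolov (for both $A$ and $B$), Li's inequality, and $\Coh^\beta$-positivity must be handled carefully, especially for $|r_A| \geq 2$ and for walls smaller than $W$ that do not touch the line $\beta = 0$, which require analysis at other $\beta$ values.
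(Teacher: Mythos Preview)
Your approach has a genuine gap. The circle $W$ meets the line $\beta = 0$ only at the boundary point $\alpha = 0$, so the analysis at $\beta_0 = 0$ treats only walls strictly \emph{larger} than $W$. Indeed, for a destabilising sequence at $(\alpha_0, 0)$ with $\alpha_0 > 0$, both $A$ and $B$ must have finite tilt-slope, forcing $0 < H^2\ch_1(A) < H^2\ch_1(\tilde E) = H^3$, which is impossible by integrality. Your claimed solutions $(r_A, d_A, \alpha_0) = (0, 1/2, 0)$ and $(1, 1/2, 0)$ sit at $\alpha_0 = 0$ and therefore do not correspond to any point in the upper half-plane; the correct conclusion from the $\beta = 0$ analysis is only that there is no wall strictly above $W$. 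To classify destabilising sequences \emph{on} $W$ by your method you would need to work at some $\beta_0 \in (0,1)$, where $H^2\ch_1^{\beta_0}(\tilde E) = (1+3\beta_0)H^3 > H^3$, which allows several values of $\ch_1^{\beta_0}(A)$ and makes the case analysis longer. (Incidentally, $\Delta_H(-v) = 36$, not $18$.)

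The emptiness-below-$W$ argument is then circular: you invoke ``the absence of further walls'' to extend a hypothetical semistable object across $W$, but your $\beta = 0$ analysis says nothing about walls \emph{inside} $W$, which is precisely what you must exclude. The paper sidesteps both problems with a single observation: for any tilt-semistable $\tilde E$ of class $-v$ one has $\hom(\OO_X(2H), \tilde E) = 0$ (as the numerical wall $W(\OO_X(2H), -v)$ is larger than $W$), hence $\hom(\tilde E, \OO_X[1]) \geq -\chi(\tilde E, \OO_X) = 3$. This immediately gives the destabilising quotient $\tilde E \twoheadrightarrow \OO_X^{\oplus r}[1]$ with $r \geq 3$, and Proposition~\ref{prop:bounding_rank} forces $r \leq 4$; the kernel is then identified directly. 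The same Hom also kills semistability strictly below $W$, since there $\nu(\OO_X[1]) < \nu(-v)$ and $\OO_X[1]$ is stable. So the paper never needs to enumerate possible $\ch_{\leq 2}(A)$ at all.
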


\begin{proof}
Let $\tilde{E}$ be a tilt-semistable object with Chern character $-v$. Let $W'$ be a wall strictly above $W$ induced by a short exact sequence $0 \to F \to \tilde{E} \to G \to 0$. Then the wall $W'$ contains points $(\alpha, 0)$ with $\alpha > 0$. In particular, $0 < H \cdot \ch_1(F) < H \cdot \ch_1(\tilde{E}) = H^3$, a contradiction.

Since the wall $W(\OO_X(2H), \tilde{E})$ is larger than $W$, we get $\hom(\tilde{E}, \OO_X[3]) = \hom(\OO_X(2H), \tilde{E}) = 0$ and
\[
\hom(\tilde{E}, \OO_X[1]) = \hom(\tilde{E} , \OO_X) + \ext^2(\tilde{E}, \OO_X) - \chi(\tilde{E} , \OO_X) \geq -\chi(\tilde{E} , \OO_X) = 3.
\]
Clearly, any morphism $\tilde{E} \to \OO_X[1]$ destabilizes $\tilde{E}$ below $W$.

Let $r \coloneqq \hom(\tilde{E}, \OO_X[1]) \geq 3$. We get a short exact sequence of tilt-semistable objects along $W$ given by
\[
0 \to G \to \tilde{E} \to \OO_X^{\oplus r}[1] \to 0.
\]
If $r \geq 4$, then Proposition \ref{prop:bounding_rank} says
\[
\frac{1}{4} \leq \frac{1}{r(r-3)}, 
\]
i.e., $r \leq 4$. For $r = 4$, we get $\ch(G(-H)) = (1, 0, 0, -\tfrac{1}{3} H^3)$ and so $G = \II_P(H)$ for some $P \in X$.

If $r = 3$, then $\ch(G) = (0, H, \tfrac{1}{2} H^2, -\tfrac{1}{6} H^3)$. Assume $G$ is not of the form $\OO_Y(D)$ for some Weil-divisor $D$ on a hyperplane section $Y \in |H|$. Then Proposition \ref{prop:torsion_classification} implies that $G$ has to be strictly-semistable along our wall $W$. Since $\tilde{E}$ is tilt-semistable above the wall, we know $\Hom(\OO_X[1], E) = 0$. Therefore, Lemma \ref{lem:torsion_wall_classification} shows that there is a short exact sequence
\[
0 \to \II_P(H) \to G \to \OO_X[1] \to 0
\]
for a point $P \in X$. But then there is an inclusion $\II_P(H) \into \tilde{E}$ and we are in the second case.
\end{proof}

\begin{proof}[Proof of Theorem \ref{thm:moduli_rank_three_set_theoretic}]
Let $D$ be a Weil-divisor on a hyperplane section $Y \in |H|$ with $\ch(\OO_Y(D)) = (0, H, \tfrac{1}{2}H^2, -\tfrac{1}{6} H^3)$. By Proposition \ref{prop:torsion_classification}, the sheaf $\OO_Y(D)$ is tilt-stable for all $\alpha > 0$, $\beta \in \R$. A straightforward computation shows that the numerical wall $W(\OO_Y(D), \OO_X(-2H)[1])$ is non-empty, and therefore, $h^2(\OO_Y(D)) = \hom(\OO_Y(D), \OO_X(-2H)[1]) = 0$. We conclude 
\[
h^0(\OO_Y(D)) = \chi(\OO_Y(D)) + h^1(\OO_Y(D)) + h^3(\OO_Y(D)) \geq \chi(\OO_Y(D)) = 3.
\]
We pick a three-dimensional subspace $V \subset h^0(\OO_Y(D))$ to get an object $\EE_{D, V} \in \Db(X)$ as in Section \ref{sec:construction_sheaves}. By Lemma \ref{lem:E_D_vector_bundle}, the sheaf $E_{D, V} = \HH^{-1}(\EE_{D, V})$ is slope-stable and reflexive. If $\HH^0(\EE_{D, V}) \neq 0$, then $E_{D, V}$ has a Chern character in contradiction to Proposition \ref{prop:max_ch2_ch3_reflexive}. This shows that $\OO_Y(D)$ is globally generated.

Since $E_{D, V}$ is slope-stable, we know $h^0(E_{D, V}) = 0$ and $h^3(E_{D, V}) = \hom(E_{D, V}, \OO_X(-2H)) = 0$. Moreover, as in the proof of Proposition \ref{prop:max_ch2_ch3_reflexive} we get $h^2(E_{D, V}) = 0$. This implies $h^1(E_{D, V}) = - \chi(E_{D, V}) = 0$. The long exact sequence obtained from taking sheaf cohomology of
\[
0 \to E_{D, V} \to \OO_X \otimes V \to \OO_Y(D) \to 0
\]
implies $H^i(\OO_Y(D)) = 0$ for $i > 0$ and $h^0(\OO_Y(D)) = 3$. Therefore, $V = H^0(\OO_Y(D))$ and for each $D$ there is a unique slope-stable sheaf $E_D = E_{D, V}$. 

Let $U \subset Y$ be the smooth locus of $Y$. By Lemma \ref{prop:normal_hyperplane_section}, we know that $Y$ is normal, and therefore, $Y \backslash U$ has dimension zero. In particular, a general section of $\OO_Y(D)$ leads to a curve completely contained in $U$. Since we work in characteristic $0$, we can use a version of Bertini's theorem \cite[Corollary III.10.9, Remark III.10.9.1, Remark III.10.9.2]{Har77:algebraic_geometry} on the open subset $U$ to see that a general section cuts out a smooth curve $C$. By adjunction, 
\begin{align*}
\ch(K_C)  &= \ch(\OO_Y(-H + D)_{|D}) = \ch(\OO_Y(-H + D)) - \ch(\OO_Y(-H)) \\
&= \Bigl(0, H, -\frac 12 H^2, - \frac 16 H^3\Bigr) - \Bigl(0, H, -\frac 32 H^2, \frac 76 H^3\Bigr) = \Bigl(0, 0, H^2, -\frac 43 H^3\Bigr),
\end{align*}
which shows that $C$ is of degree 3 with $\chi(K_C) = -1$, i.e., a twisted cubic. This completes the proof of part (i).

For part (ii), we already showed in Corollary \ref{cor:k_p_stable} that $K_P$ is slope-stable for any $P \in X$. Vice-versa, if $E$ is slope-stable, we can immediately conclude by Lemma \ref{lem:wall_classification_rank_three}.
\end{proof}

As a consequence we can already infer that our moduli space $\overline{M}_X(v)$ is smooth.

\begin{cor}
\label{cor:smooth_4dim}
Every Gieseker-semistable sheaf $E$ with $\ch(E) = (3, -H, -H^2/2, H^3/6)$ satisfies
\[
\Ext^i(E,E) =
\begin{cases}
\C &\text{if } i = 0 \\
\C^4 &\text{if } i =1 \\
0 &\text{otherwise.}
\end{cases}
\]
In particular, the moduli space $\overline{M_{X}(v)}$ is smooth and $4$-dimensional.
\end{cor}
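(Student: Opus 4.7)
The plan is to reduce the claim to the vanishing of $\Ext^i(E, E)$ for $i \geq 2$ and then invoke standard deformation theory, with $\ext^1(E, E) = 4$ following from Hirzebruch--Riemann--Roch.

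By Theorem~\ref{thm:moduli_rank_three_set_theoretic}, any Gieseker-semistable $E$ of class $v$ is of the form $K_P$ or $E_D$, and since $\gcd(\ch_0(E), H^2 \cdot \ch_1(E)/H^3) = \gcd(3, 1) = 1$, Gieseker-semistability forces slope-stability; in particular $\Hom(E, E) = \C$. Because $K_X = \OO_X(-2H)$, Serre duality identifies $\Ext^3(E, E) = \Hom(E, E(-2H))^\vee$. As both $E$ and $E(-2H)$ are slope-stable of rank three with $\mu(E) = -\tfrac{1}{3} > -\tfrac{7}{3} = \mu(E(-2H))$, no non-zero map between them exists, so $\Ext^3(E, E) = 0$.

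The key step, and main obstacle, is showing $\Ext^2(E, E) = 0$, which by Serre duality equals $\Hom(E, E(-2H)[1])^\vee$. Mimicking the argument in the proof of Proposition~\ref{prop:max_ch2_ch3_reflexive}, I would exhibit a point $(\alpha_0, \beta_0)$ at which both $E$ and $E(-2H)[1]$ lie in the heart $\Coh^{\beta_0}(X)$ and are $\nu_{\alpha_0, \beta_0}$-stable with equal tilt-slope; since these two objects have distinct Chern characters, they cannot be isomorphic and the Hom space vanishes. A direct computation shows that $(\alpha_0, \beta_0) = (\tfrac{2}{3}, -1)$ lies on the numerical wall $W(E, E(-2H)[1])$, with both objects in $\Coh^{-1}(X)$ since $-\tfrac{7}{3} < -1 < -\tfrac{1}{3}$. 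Stability of $E$ at $(\tfrac{2}{3}, -1)$ follows from Proposition~\ref{prop:large_volume_limit} combined with Lemma~\ref{lem:no_wall_at_minus_one}. For $E(-2H)[1]$, tensoring with $\OO_X(2H)$ shifts $\beta$ by $2$ and preserves tilt-stability, so stability at $(\tfrac{2}{3}, -1)$ is equivalent to $\nu_{\tfrac{2}{3}, 1}$-stability of $E[1]$; the point $(\tfrac{2}{3}, 1)$ lies strictly above the wall $W$ of equation~\eqref{eq:main_wall}, which by Lemma~\ref{lem:wall_classification_rank_three} is the unique wall for class $-v$ to the right of the vertical wall, so $E[1]$ is stable there by Lemma~\ref{lem:large_volume_limit_v}.

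A direct Hirzebruch--Riemann--Roch computation using Lemma~\ref{lem:todd_of_cubic} yields $\chi(E, E) = -3$; combined with $\hom(E, E) = 1$ and the vanishing of $\Ext^{\geq 2}(E, E)$, this gives $\ext^1(E, E) = 1 - \chi(E, E) = 4$. Standard deformation theory for Gieseker-stable sheaves then identifies $\Ext^1(E, E)$ with the tangent space to $\overline{M}_X(v)$ at $[E]$ and $\Ext^2(E, E)$ with an obstruction space, so smoothness of $\overline{M}_X(v)$ of dimension $4$ follows at every closed point.
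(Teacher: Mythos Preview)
Your proof is correct and follows essentially the same approach as the paper: both arguments establish $\Ext^2(E,E) = \Hom(E, E(-2H)[1]) = 0$ by showing $E$ and $E(-2H)[1]$ are simultaneously tilt-stable along $\beta = -1$, invoking Lemma~\ref{lem:no_wall_at_minus_one} for $E$ and (via the tensor trick) Lemma~\ref{lem:wall_classification_rank_three} for $E(-2H)[1]$. The only cosmetic difference is that you locate the point $(\tfrac{2}{3}, -1)$ on the numerical wall $W(E, E(-2H)[1])$ and argue via non-isomorphism of stable objects of equal phase, whereas the paper simply observes the strict slope inequality $\nu_{0,-1}(E) = 0 > -\tfrac{1}{2} = \nu_{0,-1}(E(-2H)[1])$ at small $\alpha$; either route gives the vanishing.
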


\begin{proof}
Since $(3, -H)$ is primitive, we know that $E$ is slope-stable. Therefore, $\hom(E, E) = 1$. Moreover, we must have $\Ext^3(E, E) = \Hom(E, E(-2H))^{\vee} = 0$. By Lemma \ref{lem:no_wall_at_minus_one}, the sheaf $E$ is $\nu_{\alpha, -1}$-stable for any $\alpha > 0$. Proposition \ref{prop:max_ch2_ch3_reflexive} shows that $E(-2H)$ is reflexive, so its shift $E(-2H)[1]$ lies in the heart $\Coh^{\beta =-1}(X)$ and it is $\nu_{\alpha, -1}$-stable for any $\alpha > 0$ by Lemma \ref{lem:wall_classification_rank_three}.  
%\todo{Arend: Hmm this might be an issue when $E$ is not a bundle! Benjamin: I think we showed that there is no vertical wall for the objects $E$, even the non-locally-free ones. The reason is that by Proposition 6.6. they are all reflexive. Arend: Ok, we should say so explicitly.}
Since $\nu_{0, -1}(E) = 0 > -\tfrac{1}{2} =  \nu_{0, -1}(E(-2H)[1])$, we get $\Ext^2(E, E) = \Hom(E, E(-2H)[1]) = 0$. We can conclude $\ext^1(E, E) = \hom(E, E) - \chi(E, E) = 4$.
\end{proof}

%%%%%%%%%%%%%%%%%%%%%%%%%%%%%%%%%%%%%
%%%%%%%%%%%%%%%%%%%%%%%%%%%%%%%%%%%%%
%%%%%%%%%%%%%%%%%%%%%%%%%%%%%%%%%%%%%

\section{Proof of the main theorem}
\label{sec:proof}

Recall that $\overline{M}_X(v)$ is the moduli space of Gieseker-semistable sheaves with Chern character
\[
v \coloneqq \left(3, -H, -\frac{1}{2}H^2, \frac{1}{6}H^3 \right)
\]
and $M_X(v) \subset \overline{M}_X(v)$ is the open locus of Gieseker-semistable vector bundles. The aim of this section is to prove the following Theorem.

\begin{thm}
\label{thm:main}
    The moduli space $\overline{M}_X(v)$ is smooth and irreducible of dimension $4$. Moreover, there is an Abel-Jacobi morphism $\Psi \colon \overline{M}_X(v) \to J(X)$ sending $E \mapsto \widetilde{c}_2(E) - H^2$ whose image is a theta divisor $\Theta$ in the intermediate Jacobian $J(X)$. The theta divisor has a unique singular point, and $\overline{M}_X(v)$ is the blow up of $\Theta$ in this point. The exceptional divisor is isomorphic to the cubic threefold $X$ itself. 
\end{thm}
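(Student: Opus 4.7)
Smoothness and dimension $4$ of $\overline{M}_X(v)$ are already established in Corollary~\ref{cor:smooth_4dim}, so I focus on constructing and analyzing the Abel-Jacobi morphism $\Psi\colon \overline{M}_X(v) \to J(X)$, $E \mapsto \widetilde{c}_2(E) - H^2$, which is algebraic via a pseudo-universal family. For the vector bundle $E_D$, the defining sequence $0 \to E_D \to \OO_X^{\oplus 3} \to \OO_Y(D) \to 0$ together with the existence of a smooth twisted cubic $C$ of class $D$ (Theorem~\ref{thm:moduli_rank_three_set_theoretic}(i)) yields $\widetilde{c}_2(E_D) = [C]$ in $\CH^2(X)$, so $\Psi(E_D) = [C] - H^2$ agrees with the Abel-Jacobi image of $C$ from Proposition~\ref{prop:abel-jacobi_twisted_cubic}. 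For $K_P$, the construction of Corollary~\ref{cor:k_p_stable} extends to a family on $X \times X$, so $P \mapsto K_P$ is algebraic, and the composition $X \to \overline{M}_X(v) \xrightarrow{\Psi} J(X)$ must be constant because $h^1(\OO_X) = 0$ forces the Albanese of $X$ to be trivial; call this value $0 \in J(X)$.

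Second, I would show $\Psi$ is birational onto $\Theta$ and pin down its fibers. The bijection $M_X(v) \cong \FF$ (Theorem~\ref{thm:moduli_rank_three_set_theoretic}(ii) combined with the parameter space $\FF$ of Remark~\ref{rmk:abel-jacobi_twisted_cubic}) identifies $\Psi|_{M_X(v)}$ with the degree-$1$ map $\FF \to \Theta$ of that remark. The $\P^2$-fibers of $\TT \to M_X(v)$ parametrize sections of $\OO_Y(D)$, so the rank-$4$ infinitesimal Abel-Jacobi map of Lemma~\ref{lem:abel_jacobi_infinitesimal} descends to a rank-$4$ differential for $\Psi|_{M_X(v)}$ at every point. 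Hence $\Psi|_{M_X(v)}$ is \'etale, and being birational to the normal variety $\Theta$ (Theorem~\ref{thm:theta_divisor_normal}), it is an open immersion. Properness of $\Psi$ together with $\Psi(X) = \{0\}$ and the non-projectivity of $M_X(v)$ force $\Psi^{-1}(0) = X$ set-theoretically and an isomorphism $\overline{M}_X(v)\setminus X \xrightarrow{\sim} \Theta \setminus \{0\}$. In particular $\Theta$ is smooth outside $0$.

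Third, to identify $\overline{M}_X(v)$ with $\Bl_0\Theta$, I would verify the universal property of the blow-up: the ideal $\Psi^{-1}(\mathfrak{m}_0)\cdot \OO_{\overline{M}_X(v)}$ is invertible. It has radical $\II_X$, and in the smooth ambient $\overline{M}_X(v)$ invertibility reduces to showing that some local section has nonzero linear part transverse to $X$ at every $K_P$, i.e.\ that $d\Psi_{K_P}\colon \Ext^1(K_P,K_P) \to T_0 J(X)$ has rank exactly $1$ (its kernel automatically contains the $3$-dimensional $T_{K_P}X$ since $\Psi$ is constant on $X$). \emph{This rank computation is the main obstacle.} I would attack it by producing a first-order deformation of $K_P$ to a locally free sheaf $E_D$ and computing the associated variation of the Abel-Jacobi class via Welters's formula (Lemma~\ref{lem:infinitesimal_abel_jacobi}) applied to a degenerate twisted cubic adapted to $K_P$; non-vanishing of this image yields the required rank. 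Granted invertibility, the induced morphism $\overline{M}_X(v) \to \Bl_0\Theta$ is proper, birational, and an isomorphism off the exceptional divisors, hence an isomorphism by Zariski's Main Theorem. Uniqueness of the singular point of $\Theta$ and the identification of the exceptional divisor with $X$ follow immediately.
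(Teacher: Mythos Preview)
Your outline up through the open immersion $M_X(v)\hookrightarrow\Theta\setminus\{0\}$ is essentially the paper's argument (Proposition~\ref{prop:restriction_TT} and Lemma~\ref{lem:M_embeds}): factor $\varphi|_{\TT}$ through $M_X(v)$, use Lemma~\ref{lem:abel_jacobi_infinitesimal} to get rank~$4$ of $d\Psi$ on $M_X(v)$, and apply Zariski's Main Theorem with the normality of $\Theta$. Your properness argument that $0\notin\Psi(M_X(v))$ is a valid alternative to the paper's one-line appeal to the singularity of $\Theta$ at $0$.

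The genuine divergence is in the final identification $\overline{M}_X(v)\cong\Bl_0\Theta$. You aim for the universal property of the blow-up, which you correctly reduce to showing $\mathrm{rk}\, d\Psi_{K_P}=1$. But your proposed attack---deforming $K_P$ to some $E_D$ and invoking Welters's formula for a ``degenerate twisted cubic adapted to $K_P$''---does not go through with the tools at hand: Lemma~\ref{lem:infinitesimal_abel_jacobi} is stated only for smooth curves, there is no canonical twisted cubic attached to $K_P$ (the sheaf $K_P$ arises from the ideal of a point, not from a curve class), and controlling the infinitesimal Abel--Jacobi map along a degeneration of curves is exactly the kind of delicate limit argument the paper is set up to avoid.

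The paper takes a different and more robust route (Lemmas~\ref{lem:locus_is_X} and~\ref{lem:formal_neighborhood}). It first computes the normal bundle $\NN_{X/\overline{M}_X(v)}\cong\OO_X(-H)$ directly, via a four-term exact sequence in relative $\lExt$-sheaves coming from the universal family $0\to\KK\to p^*\Omega_{\P^4}|_X(H)\to\II_\Delta(0,H)\to 0$ on $X\times X$; the cokernel of the differential of $i\colon X\hookrightarrow\overline{M}_X(v)$ is identified with $\NN_{X/\P^4}^\vee(2H)=\OO_X(-H)$. Since this normal bundle is anti-ample, Artin's contractibility criterion produces a contraction $\Psi'\colon\overline{M}_X(v)\to N$ whose formal fibre at the image of $X$ is the completed affine cone over $X$. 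One then checks $\Psi$ factors through $\Psi'$, and the induced bijection $N\to\Theta$ is an isomorphism by normality of $\Theta$. The blow-up statement follows because the blow-up of the cone over $X$ at its vertex is the total space of $\OO_X(-H)$, matching the formal neighbourhood of $X$ in $\overline{M}_X(v)$. This replaces your rank-$1$ computation of $d\Psi$ by a normal-bundle computation that is a straightforward (if slightly involved) cohomological calculation, and offloads the contraction argument to Artin's theorem rather than the universal property of the blow-up.
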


We have already shown that $\overline{M_X(v)}$ is smooth of dimension $4$ in Corollary \ref{cor:smooth_4dim}. By Proposition \ref{prop:abel-jacobi_twisted_cubic}, the image of $\varphi \colon \overline{\TT} \to J(X)$ is $\Theta \subset J(X)$, where $\TT$ is the open locus of smooth twisted cubics in the Hilbert scheme of $X$, and $\overline{\TT}$ is its closure. By Theorem \ref{thm:theta_divisor_normal}, we know that $\Theta$ is normal.

\begin{prop}
\label{prop:restriction_TT}
There is a %smooth 
surjective map $\varphi' \colon \TT \to M_X(v)$ that sends a twisted cubic $C$ to the vector bundle $E_C$. The map $\varphi|_{\TT} \colon \TT \to J(X)$ factors through $\varphi'$:  
\[
\xymatrix{
	& \TT \ar[dr]^-{\varphi|_{\TT}} \ar[dl]_-{\varphi'} \\
	M_X(v) \ar[rr]^-{\Psi|_{M_X(v)}}&&J(X) }
\]
Therefore, the image of $\Psi \colon \overline{M}_X(v) \to J(X)$ is $\Theta \subset J(X)$. 
\end{prop}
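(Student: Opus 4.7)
The plan is to first construct $\varphi'$ as a morphism, then verify the factorisation via a Chern-class computation, and finally deduce the image using density of $M_X(v)$ in $\overline{M}_X(v)$.

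For the construction, I will use Theorem \ref{thm:moduli_rank_three_set_theoretic} to turn the set-theoretic assignment $C \mapsto E_C$ into a morphism. Each smooth twisted cubic $C \in \TT$ spans a unique hyperplane in $\P^4$, yielding a hyperplane section $Y_C \subset X$ on which $C$ is an effective Weil divisor $D = [C]$. A short Chern-character computation using the short exact sequence $0 \to \OO_{Y_C} \to \OO_{Y_C}(C) \to \OO_C(C) \to 0$ (when $Y_C$ is smooth along $C$, extended to all of $\TT$ by flatness) will show $\ch(\OO_{Y_C}(C)) = (0, H, \tfrac{1}{2}H^2, -\tfrac{1}{6}H^3)$. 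Theorem \ref{thm:moduli_rank_three_set_theoretic}(i) then guarantees that $\OO_{Y_C}(C)$ is globally generated with $h^0 = 3$, and the kernel $E_C$ of $\OO_X^{\oplus 3} \onto \OO_{Y_C}(C)$ is a slope-stable vector bundle in $M_X(v)$ by Lemma \ref{lem:E_D_vector_bundle}. To globalise, the universal twisted cubic $\mathcal{C} \subset \TT \times X$ together with the algebraic morphism $\TT \to |H|$ sending $C$ to its spanning hyperplane produces a flat family $\mathcal{Y} \supset \mathcal{C}$ over $\TT$; the pushforward $\pi_* \OO_{\mathcal{Y}}(\mathcal{C})$ (with $\pi \colon \TT \times X \to \TT$) is locally free of rank $3$, and its evaluation kernel is a flat family of sheaves in $M_X(v)$ defining $\varphi'$. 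Surjectivity is then immediate from Theorem \ref{thm:moduli_rank_three_set_theoretic}: any $E \in M_X(v)$ is of the form $E_D$ (since the $K_P$ are not locally free by Corollary \ref{cor:k_p_stable}), and Theorem \ref{thm:moduli_rank_three_set_theoretic}(i) furnishes a smooth twisted cubic $C$ in $|D|$ with $\varphi'(C) = E$.

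The factorisation $\Psi \circ \varphi' = \varphi|_{\TT}$ will reduce to the Chern-class identity $\widetilde{c}_2(E_C) = [C]$ in $\CH^2(X)$. From $0 \to E_C \to \OO_X^{\oplus 3} \to \OO_{Y_C}(C) \to 0$, I get $\widetilde{\ch}_2(E_C) = -\widetilde{\ch}_2(\OO_{Y_C}(C))$. Splitting $\OO_{Y_C}(C)$ via the sequence $0 \to \OO_{Y_C} \to \OO_{Y_C}(C) \to \OO_C(C) \to 0$ and using $\widetilde{\ch}_2(\OO_{Y_C}) = -\tfrac{1}{2}H^2$ (from $0 \to \OO_X(-H) \to \OO_X \to \OO_{Y_C} \to 0$) together with $\widetilde{\ch}_2(\OO_C(C)) = [C]$ (twisting a codimension-$2$ sheaf by a line bundle leaves $\widetilde{\ch}_2$ unchanged) will yield $\widetilde{\ch}_2(\OO_{Y_C}(C)) = [C] - \tfrac{1}{2}H^2$. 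Substituting gives $\widetilde{c}_2(E_C) = \tfrac{1}{2}\widetilde{c}_1(E_C)^2 - \widetilde{\ch}_2(E_C) = [C]$, so $\Psi(\varphi'(C)) = [C] - H^2 = \varphi(C)$. This Chern-class bookkeeping is the main technical step; the only subtlety is extending the identity across possibly singular hyperplane sections, which will follow from the generic smooth locus by the irreducibility of $\TT$ and the algebraicity of both sides.

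For the image of $\Psi$, since $\overline{M}_X(v)$ is projective, $\Psi(\overline{M}_X(v))$ is closed in $J(X)$. By Corollary \ref{cor:smooth_4dim}, $\overline{M}_X(v)$ is smooth of pure dimension $4$, while $\overline{M}_X(v) \setminus M_X(v)$ parametrises only the sheaves $K_P$ for $P \in X$ (Theorem \ref{thm:moduli_rank_three_set_theoretic}(ii)) and so has dimension at most $3$; no irreducible component of $\overline{M}_X(v)$ can be contained in this complement, so $M_X(v)$ is Zariski-dense in $\overline{M}_X(v)$. Combining this with the factorisation and the surjectivity of $\varphi'$ onto $M_X(v)$ yields
\[
\Psi(\overline{M}_X(v)) = \overline{\Psi(M_X(v))} = \overline{\varphi(\TT)} = \varphi(\overline{\TT}) = \Theta,
\]
the last equality being Proposition \ref{prop:abel-jacobi_twisted_cubic}.
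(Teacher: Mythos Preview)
Your proposal is correct and follows essentially the same approach as the paper: the key step in both is the Chern-class computation $\widetilde{c}_2(E_C) = [C]$ via the short exact sequence $0 \to \OO_Y \to \OO_Y(C) \to T \to 0$, and surjectivity is deduced from Theorem~\ref{thm:moduli_rank_three_set_theoretic}. You supply more detail than the paper does on globalising $\varphi'$ to a family and on deducing that the image equals $\Theta$ via density of $M_X(v)$, but these are elaborations of the same argument rather than a different route.
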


\begin{proof}
Let $C$ be a twisted cubic in $X$, then it lies in a unique hyperplane section $Y$. There is a short exact sequence    
\[
0 \to \OO_Y \to \OO_Y(C) \to T \to 0,
\]
where $T$ is a sheaf supported on $C$ with rank one. Therefore, $\widetilde{\ch}_{\leq 2}(\OO_Y(C)) = (0, H, C - H^2/2)$ and we get $\widetilde{\ch}_{\leq 2}(E_C) = (3, -H, H^2/2 - C)$. It follows that $\tilde{c}_2(E_C) = C$. Thus, the composition $\Psi|_{M_X(v)} \circ \varphi' \colon \TT \to M_X(v) \to J(X)$ is the Abel-Jacobi map $\varphi \colon \overline{\TT} \to J(X)$ restricted to $\TT$. %We can extend the continuous map $\varphi'$ to $\overline{\varphi'} \colon \overline{\TT} \to \overline{M}$. Then the fiber of $E_C$ under this map is $\P^2 = \P(H^0(\OO_Y(C)))$ which is constant in families because $H^i(\OO_Y(C)) = 0$ for $i>0$. Thus $\overline{\varphi'}$ is smooth. Since open immersions are smooth, we get smoothness of $\varphi ' \colon \TT \to M$. 
Surjectivity of $\varphi'$ is a direct consequence of Theorem \ref{thm:moduli_rank_three_set_theoretic}.
\end{proof}

\begin{lem}
\label{lem:locus_is_X}
The morphism $i \colon X \to \overline{M}_X(v)$ that maps $P \mapsto K_P$ is an embedding with normal bundle $\OO_X(-H)$.
\end{lem}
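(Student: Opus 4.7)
The plan is to argue in three steps: set-theoretic injectivity of $i$, injectivity of its differential $di$, and identification of the normal bundle as $\OO_X(-H)$ via a Chern-class computation.

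First, $i$ is set-theoretically injective because, by Corollary~\ref{cor:k_p_stable}, $K_P$ fails to be locally free precisely at $P$; the non-locally-free locus of $K_P$ therefore recovers the point $P$. For the differential, I would use the universal family on $X\times X$. Let $\pi_1,\pi_2\colon X\times X\to X$ be the projections and $\Delta\subset X\times X$ the diagonal, and define $\mathcal{V}:=\pi_{2*}(\pi_1^*\OO_X(H)\otimes\II_\Delta)$. The evaluation $H^0(\OO_X(H))\otimes\OO_X\twoheadrightarrow\OO_X(H)$ identifies $\mathcal{V}$ with $\Omega_{\P^4}(1)|_X$, so $\mathcal{V}$ is a rank-$4$ bundle with fibre $H^0(\II_P(H))$ at $P$ and $c_1(\mathcal{V})=-H$. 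The universal family $\mathcal{K}$ with $\mathcal{K}|_{X\times\{P\}}=K_P$ fits into
\[
0 \to \mathcal{K} \to \pi_2^*\mathcal{V} \to \pi_1^*\OO_X(H)\otimes\II_\Delta \to 0.
\]

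The core calculation is to apply $R\pi_{2*}\RlHom$ to this triangle. One checks that the two right-hand terms have pushforward $\mathcal{V}$ connected by the identity, so $R\pi_{2*}\mathcal{K}=0$ and hence $R\pi_{2*}\RlHom(\mathcal{K},\mathcal{K})\cong R\pi_{2*}\RlHom(\pi_1^*\OO_X(H)\otimes\II_\Delta,\mathcal{K})[1]$. Applying $R\pi_{2*}\RlHom(\pi_1^*\OO_X(H)\otimes\II_\Delta,-)$ to the defining triangle, Grothendieck duality $\RlHom(\OO_\Delta,\OO_{X\times X})=\OO_\Delta(2H)[-3]$ combined with the vanishing of the cohomology of $\OO_X(-H)$ yields $R\pi_{2*}\RlHom(\pi_1^*\OO_X(H)\otimes\II_\Delta,\pi_2^*\mathcal{V})=\mathcal{V}\otimes\OO_X(H)[-2]$; an HKR-type identification gives $R\pi_{2*}\RlHom(\II_\Delta,\II_\Delta)$ with cohomology sheaves $\OO_X,T_X,\wedge^2 T_X$ in degrees $0,1,2$. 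Combined with $\lExt^2_{\pi_2}(\mathcal{K},\mathcal{K})=0$ from Corollary~\ref{cor:smooth_4dim}, the resulting long exact sequence collapses to
\[
0 \to T_X \to i^*T_{\overline{M}_X(v)} \to \mathcal{V}\otimes\OO_X(H) \to \wedge^2 T_X \to 0,
\]
in which the first map is the Kodaira--Spencer map, i.e.\ $di$. In particular $i$ is a closed immersion.

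The normal bundle $N$ is then the line bundle $\ker(\mathcal{V}\otimes\OO_X(H)\to\wedge^2 T_X)$, and
\[
c_1(N) = c_1(\mathcal{V}\otimes\OO_X(H)) - c_1(\wedge^2 T_X) = (-H+4H) - 2\cdot c_1(T_X) = 3H-4H = -H,
\]
using $c_1(T_X)=2H$ (from $K_X=\OO_X(-2H)$) and $c_1(\wedge^2 E)=2c_1(E)$ for a rank-$3$ bundle $E$. Since $\Pic(X)=\Z\cdot H$, this forces $N\cong\OO_X(-H)$. The main technical hurdle will be pinning down the HKR-type cohomology sheaves of $R\pi_{2*}\RlHom(\II_\Delta,\II_\Delta)$ — in particular controlling the connecting maps in local duality so that the last map of the four-term sequence is surjective with a line-bundle kernel — but once the derived long exact sequence is in place the Chern-class computation yields the result at once.
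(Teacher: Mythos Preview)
Your proposal is correct and follows essentially the same strategy as the paper: both use the universal family $\mathcal{K}$ on $X\times X$ sitting in $0\to\mathcal{K}\to\pi_2^*\Omega_{\P^4}(1)|_X\to\II_\Delta(H,0)\to 0$, exploit $R\pi_{2*}\mathcal{K}=0$ to reduce $\lExt^1_{\pi_2}(\mathcal{K},\mathcal{K})$ to $\lExt^2_{\pi_2}(\II_\Delta(H,0),\mathcal{K})$, and then extract the four-term exact sequence whose third and fourth terms are $\Omega_{\P^4}|_X(2H)=\mathcal{V}(H)$ and $\Omega_X(2H)=\wedge^2 T_X$. The only minor difference is the last step: the paper observes that the map between these two terms is the natural restriction from the conormal sequence, so the cokernel of $d_i$ is immediately $\NN_{X/\P^4}^\vee(2H)=\OO_X(-H)$, whereas you compute $c_1$ and invoke $\Pic(X)=\Z\cdot H$; both work, and your worry about surjectivity is already handled by the $\lExt^2_{\pi_2}(\mathcal{K},\mathcal{K})=0$ you cite from Corollary~\ref{cor:smooth_4dim}.
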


\begin{proof}
We interpret $X$ as the moduli spaces of twisted ideal sheaves $\II_P(H)$ for all $P \in X$. By definition of $K_P$, we have a canonical short exact sequence
\begin{equation}
    \label{eq:K_p}
    0 \to K_P \to \OO_X^{\oplus 4} \to \II_P(H) \to 0.
\end{equation}
The appropriate version in families, considered below, induces the morphism $i$. It is injective, as $P$ is the unique point where $K_P$ is not locally free by Corollary~\ref{cor:k_p_stable}.

Applying $\Hom(\cdot, K_P)$ to \eqref{eq:K_p}, we get an isomorphism $\Ext^1(K_P, K_P) \cong \Ext^2(\II_P(H), K_P)$. Next, we apply the functor $\Hom(\II_P(H), \cdot)$ to $\eqref{eq:K_p}$ to show that the induced morphism on tangent spaces $\Ext^1(\II_P(H), \II_P(H)) \into \Ext^2(\II_P(H), K_P) = \Ext^1(K_P, K_P)$ is an embedding.
Since both $X$ and $\overline{M}_X(v)$ are smooth, the morphism is an embedding. 

To determine the normal bundle, we need a relative version of the previous arguments to determine the cokernel of this embedding as a line bundle on $X$. The universal family inducing $i$ is given by the
sheaf $\KK$ on $X \times X$ fitting into the short exact sequence
\[
0 \to \KK \to  p^*\Omega_{\P^4}|_X(H)  \to \II_\Delta(0, H) \to 0,
\]
where $p \colon X \times X \to X$ is the projection to the first factor. The pull-back of the tangent bundle via $i$ is
$i^* T_{\overline{M}_X(v)} = \HH^1(p_* \lHom(\KK, \KK))$. Since $p_* \lHom(p^*\Omega_{\P^4}|_X(H), \KK) = 0$, we have an isomorphism
\[ \HH^1(p_* \lHom(\KK, \KK)) = \HH^2(p_* \lHom(\II_\Delta(0, H), \KK). \]
The differential $d_i$ of $i$ fits into the four-term long exact sequence
\[
0 \to T_X = \HH^1(p_* \lHom(\II_\Delta(0, H), \II_\Delta(0, H))
\xrightarrow{d_i} \HH^2(p_* \lHom(\II_\Delta(0, H), \KK)) \to 
\]
\[ \to
\HH^2(p_* \lHom(\II_\Delta(0, H), p^*\Omega_{\P^4}|_X(H)))
\to \HH^2(p_* \Hom(\II_\Delta(0, H), \II_\Delta(0, H)) \to 0.
\]
Using Grothendieck duality and projection formula, the third term becomes
\[
\Omega_{\P^4}|_X(H) \otimes \HH^1(p_* \II_\Delta(0, -H) )^\vee = 
\Omega_{\P^4}|_X(H) \otimes \HH^0(p_* \OO_\Delta(0, -H) )^\vee =
\Omega_{\P^4}|_X(2H).
\]
A similar computation using the short exact sequence $I_\Delta \into \OO_X \boxtimes \OO_X \onto \OO_\Delta$ gives
\[
\HH^2(p_* \Hom(\II_\Delta, \II_\Delta) = 
\Omega_X(2H)
\]
for the 
fourth term. Thus, the cokernel of $d_i$ is isomorphic to
$\NN_{X/\P^4}^\vee(2H) = \OO_X(-H)$ as claimed.
\end{proof}

\begin{lem}
\label{lem:M_embeds}
The morphism $\Psi$ induces an isomorphism $M_X(v) \to \Theta \setminus \{0\}$. Moreover, $\Psi$ contracts the irreducible divisor $\overline{M}_X(v) \setminus M_X(v)$ to the zero point. In particular, $\Theta$ is smooth away from $0$. 
\end{lem}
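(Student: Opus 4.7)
The plan is to exhibit $\Psi \colon \overline{M}_X(v) \to \Theta$ as a proper birational morphism that contracts the divisor $X$ to $0$ and is unramified on $M_X(v)$, and then to invoke Zariski's Main Theorem together with normality of $\Theta$ to promote this to an isomorphism.

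First I would compute $\Psi$ on the exceptional locus. From the defining sequence $0 \to K_P \to \OO_X^{\oplus 4} \to \II_P(H) \to 0$, a direct Chern class computation yields $\widetilde{\ch}(K_P) = (3,-H,-\tfrac{1}{2}H^2,-\tfrac{1}{6}H^3+[P])$ and hence $\widetilde{c}_2(K_P) = H^2$, so $\Psi(K_P) = 0$ and thus $\Psi(X) = \{0\}$. The map $\Psi$ is automatically proper (source projective) and surjects onto $\Theta$ by Proposition \ref{prop:restriction_TT}. Generic injectivity of $\Psi$ follows from comparing fibers in the factorization $\varphi|_{\TT} = \Psi \circ \varphi'$: the surjection $\varphi'$ has fibers open in $\P(H^0(\OO_Y(D))) \cong \P^2$, while $\varphi|_{\TT}$ has generic fiber $\P^2$ by Proposition \ref{prop:abel-jacobi_twisted_cubic}; a two-point fiber of $\Psi$ over a generic $\theta$ would produce a disconnected generic fiber of $\varphi|_{\TT}$, which is impossible.

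The key technical input is that $d\Psi_E$ is injective at every $E \in M_X(v)$. At any smooth twisted cubic $C$ with $\varphi'(C) = E$, Lemma \ref{lem:abel_jacobi_infinitesimal} gives $\rk(d\varphi_C) = 4$. Since $\ker(d\varphi'_C)$ is the tangent space to the $\P^2$-fiber (dimension two), $d\varphi'_C$ surjects onto the four-dimensional $T_E M_X(v)$, and the factorization $d\varphi_C = d\Psi_E \circ d\varphi'_C$ then forces $d\Psi_E$ to be injective.

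To finish, I would apply Zariski's Main Theorem: since $\Psi$ is proper and birational and $\Theta$ is normal (Theorem \ref{thm:theta_divisor_normal}), all fibers of $\Psi$ are connected. For $\theta \ne 0$, the fiber lies entirely in $M_X(v)$ (since $\Psi(X) = \{0\}$), is unramified by injectivity of $d\Psi$, hence is a single reduced point. For $\theta = 0$, if a locally free $E_D$ lay in $\Psi^{-1}(0)$, the immersion property would render it isolated in the fiber, contradicting connectedness with the irreducible threefold $X \subset \Psi^{-1}(0)$; hence $\Psi^{-1}(0) = X$ set-theoretically. Bijectivity, properness, unramifiedness, and normality of $\Theta \setminus \{0\}$ upgrade $\Psi|_{M_X(v)}$ to an isomorphism by a second application of ZMT, and smoothness of $M_X(v)$ yields smoothness of $\Theta \setminus \{0\}$. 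The main obstacle I anticipate is the set-theoretic identification $\Psi^{-1}(0) = X$, which crucially combines the local immersion property with ZMT-connectedness rather than any direct sheaf-theoretic argument over the singular point of $\Theta$.
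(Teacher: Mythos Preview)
Your argument is correct and follows the paper's overall strategy: establish unramifiedness of $\Psi|_{M_X(v)}$ via Lemma~\ref{lem:abel_jacobi_infinitesimal} and the factorization through $\varphi'$, then combine with normality of $\Theta$ and Zariski's Main Theorem. The one substantive difference is in excluding $0$ from $\Psi(M_X(v))$. The paper first shows that $\Psi|_{M_X(v)}$ is an open embedding (quasi-finite, birational, normal target), and then invokes the externally known fact that $\Theta$ is singular at $0$ to conclude that the smooth image misses $0$. You instead apply ZMT to the full proper birational map $\Psi \colon \overline{M}_X(v) \to \Theta$ to obtain connected fibers, and argue that any $E_D \in M_X(v) \cap \Psi^{-1}(0)$ would be an isolated point of the fiber by unramifiedness, contradicting connectedness with the three-dimensional component $X$. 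Your version is more self-contained, as it does not rely on prior knowledge of $\mathrm{Sing}(\Theta)$; the paper's version is a line shorter once that fact is granted.

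One minor logical slip to clean up: you assert that $\ker(d\varphi'_C)$ equals the two-dimensional tangent space to the $\P^2$-fiber and then deduce surjectivity of $d\varphi'_C$. But the equality $T_C(\text{fiber}) = \ker(d\varphi'_C)$ already presupposes that $\varphi'$ is smooth at $C$, i.e., presupposes the surjectivity you want. The direct fix (which is what the paper does) bypasses $d\varphi'_C$ entirely: from $4 = \rk(d\varphi_C) = \rk(d\Psi_E \circ d\varphi'_C) \leq \rk(d\Psi_E)$ and $\dim T_E M_X(v) = 4$ one gets injectivity of $d\Psi_E$ immediately.
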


\begin{proof} By Lemma~\ref{lem:E_D_vector_bundle} and Corollary~\ref{cor:k_p_stable}, the locus $\overline{M}_X(v) \setminus M_X(v)$ coincides with vector bundles $E_C$ associated to a twisted cubic $C$.
By Lemma \ref{lem:abel_jacobi_infinitesimal}, the map $\varphi|_{\TT}$ has full rank four on tangent spaces. Thus, the commutative diagram in Proposition \ref{prop:restriction_TT} implies that $\Psi|_{M_X(v)}$ has full rank four on tangent spaces. Since $M_X(v)$ is smooth of dimension four, $\Psi|_{M_X(v)}$ must be injective on tangent spaces. In particular, the morphism $\Psi|_{M_X(v)}$ must have finite fibers. Since $\phi|_{\TT}$ has generically connected fibers by Proposition \ref{prop:abel-jacobi_twisted_cubic}, the same holds for $\Psi|_{M_X(v)}$.
Since $\Theta$ is normal, Zariski's Main Theorem implies that $\Psi|_{M_X(v)}$ is an open embedding. Since $\Theta$ is singular at the origin, we must have $\Psi(M_X(v)) \subset \Theta - \{0\}$.

By definition, $\tilde{c}_2(K_P) = H^2$ and we get $\Psi(K_P) = 0$. Thus 
$\Psi^{-1}(0) = \overline{M}_X(v) \setminus M_X(v)$ and the image of $M_X(v)$ is indeed $\Theta \setminus \{0\}$ by Proposition \ref{prop:abel-jacobi_twisted_cubic}.  
%Since $\Theta$ is normal, the fiber $\Psi^{-1}(0)$ is connected. It also contains $\overline{M}_X(v) \setminus M_X(v)$, and is finite away from this irreducible divisor. This is only possible if $\Psi^{-1}(0) = \overline{M}_X(v) \setminus M_X(v)$. Therefore, the image of $M_X(v)$ is indeed $\Theta \setminus \{0\}$.
\end{proof}

We can finish the proof of Theorem \ref{thm:main} with the following Lemma.

%\todo{AB: the next lemma is an alternative version of all the lemmas following it, hopefully with a complete proof}
\begin{lem}
\label{lem:formal_neighborhood}
The formal neighborhood of $0 \in \Theta$ is isomorphic to the vertex of the affine cone over 
$X \subset \P^4$. Moreover, we have an isomorphism $\overline{M}_X(v) = \Bl_0(\Theta)$. Thus, $X$ is the union of all rational curves on $\overline{M}_X(v)$, and the unique divisor contracted by any morphism to a complex abelian variety.
\end{lem}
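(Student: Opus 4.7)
The plan is to compute the completed local ring $\widehat{\OO_{\Theta,0}}$ via the formal function theorem together with Kodaira vanishing, identify it with the coordinate ring of the affine cone $C(X)$, and then deduce the blow-up structure and the remaining geometric consequences.

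\textbf{Germ of $(\Theta, 0)$.} By Lemma \ref{lem:locus_is_X}, $X \subset \overline{M}_X(v)$ is a smooth divisor with ideal sheaf $I_X$ satisfying $I_X^k/I_X^{k+1} \cong \OO_X(kH)$ for all $k \geq 0$. Normality of $\Theta$ (Theorem \ref{thm:theta_divisor_normal}) gives $\Psi_* \OO = \OO_\Theta$, and the formal function theorem yields
\[
\widehat{\OO_{\Theta,0}} \;=\; \varprojlim_n H^0\bigl(\overline{M}_X(v), \OO/I_X^n\bigr).
\]
The short exact sequences $0 \to \OO_X(kH) \to \OO/I_X^{k+1} \to \OO/I_X^k \to 0$ yield surjections on global sections whenever $H^1(X, \OO_X(kH)) = 0$. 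These vanishings all hold: for $k = 0$ since $X$ is Fano, and for $k \geq 1$ by Kodaira vanishing, as $\OO_X((k+2)H) = \OO_X(kH) \otimes K_X^{-1}$ is ample. Hence the inverse limit equals the completion at the augmentation ideal of the graded ring $\bigoplus_{k \geq 0} H^0(X, \OO_X(kH))$, which is the coordinate ring of the affine cone $C(X) \subset \A^5$ over $X \hookrightarrow \P^4$. This identifies the formal germ of $(\Theta, 0)$ with the vertex of $C(X)$.

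\textbf{Blow-up identification.} Since $X \subset \overline{M}_X(v)$ is a Cartier divisor (a smooth divisor in a smooth variety), the pullback ideal $\Psi^{-1}(\mathfrak{m}_0) \cdot \OO_{\overline{M}_X(v)}$ is locally principal, so by the universal property of the blow-up $\Psi$ factors as $\overline{M}_X(v) \xrightarrow{\tilde{\Psi}} \Bl_0 \Theta \to \Theta$. The morphism $\tilde{\Psi}$ is proper and birational between normal varieties, and restricts to an isomorphism between the exceptional divisor $X \subset \overline{M}_X(v)$ and the exceptional fiber $\P(C_0 \Theta) \cong X \subset \P^4$ of $\Bl_0 \Theta$, the latter matched via the formal germ identification above. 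By Zariski's Main Theorem, $\tilde{\Psi}$ is an isomorphism, giving $\overline{M}_X(v) \cong \Bl_0 \Theta$.

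\textbf{Rational curves and unique contracted divisor.} Any morphism $\P^1 \to \overline{M}_X(v)$ becomes constant after composing with $\Psi \colon \overline{M}_X(v) \to J(X)$, since abelian varieties contain no rational curves; its image therefore lies in a fiber of $\Psi$, which by Lemma \ref{lem:M_embeds} must be $X$. Conversely, a smooth cubic threefold is covered by lines, so $X$ equals the union of all rational curves in $\overline{M}_X(v)$. The same rigidity shows that any morphism $f \colon \overline{M}_X(v) \to A$ to a complex abelian variety contracts the rationally connected variety $X$. For uniqueness, if $D \neq X$ is any other divisor then $D \setminus X$ is dense in $D$, so $\Psi(D) \subset \Theta$ has dimension $3$ by injectivity of $\Psi$ on $\overline{M}_X(v) \setminus X$, showing that $\Psi$ does not contract $D$.

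\textbf{Main difficulty.} The principal subtlety lies in verifying local principality of $\Psi^{-1}(\mathfrak{m}_0) \cdot \OO$ and that $\tilde{\Psi}$ restricts to an isomorphism on the exceptional divisors; both points require combining the formal germ computation with the local structure of $\Psi$ near $X$, but rest on standard ingredients (formal functions, Kodaira vanishing, Zariski's Main Theorem).
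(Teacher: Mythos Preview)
Your overall strategy---formal functions to compute $\widehat{\OO_{\Theta,0}}$, then the universal property of the blow-up, then Zariski's Main Theorem---is a valid alternative to the paper's route via Artin's contractibility criterion. The paper instead contracts $X \subset \overline{M}_X(v)$ directly to an algebraic space $N$ using the anti-ampleness of the normal bundle, reads off the formal neighborhood from Artin's construction, and then identifies $N$ with $\Theta$. Your approach avoids the algebraic-space machinery at the cost of having to analyse the blow-up more carefully.

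However, there is a genuine gap at the step you flag yourself. The sentence ``Since $X \subset \overline{M}_X(v)$ is a Cartier divisor, the pullback ideal $\Psi^{-1}(\mathfrak m_0)\cdot\OO$ is locally principal'' is not a valid inference: an ideal whose radical is principal need not be principal (e.g.\ $(x^2,xy)$ in $\A^2$). What you actually need is $\Psi^{-1}(\mathfrak m_0)\cdot\OO = I_X$. This does follow from your formal-function computation, but requires an argument: the isomorphism $\mathfrak m_0/\mathfrak m_0^2 \cong H^0(\OO_X(H))$ shows that generators of $\mathfrak m_0$ pull back to sections of $I_X$ whose images in $I_X/I_X^2 = \OO_X(H)$ span $H^0(\OO_X(H))$; since $\OO_X(H)$ is globally generated, Nakayama gives $\Psi^{-1}(\mathfrak m_0)\cdot\OO = I_X$. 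You acknowledge the difficulty but do not supply this step. The same argument, incidentally, is what makes $\tilde\Psi$ an isomorphism on exceptional divisors (it induces the identity on $H^0(\OO_X(H))$), and you should also note that $\Bl_0\Theta$ is normal---indeed smooth, since $X\subset\P^4$ is projectively normal and hence the blow-up of the cone at its vertex is $\Tot(\OO_X(-H))$---before invoking Zariski's Main Theorem.

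Finally, your argument for ``unique divisor contracted by any morphism to an abelian variety'' only treats $\Psi$, not an arbitrary $f$; the paper is equally brief here, and for the Torelli application the characterisation of $X$ as the union of all rational curves is what is actually used.
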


\begin{proof}
The first two claims are scheme-theoretic enhancements of the set-theoretic statements in the previous Lemma, that hold for any contraction of a divisor with ample conormal bundle to a point. We will only sketch the arguments.

Since the normal bundle of $X \subset \overline{M}_X(v)$ is anti-ample, by Artin's contractibility criterion \cite[Corollary 6.12]{Art70:contractibilityII} there is a contraction $\Psi' \colon \overline{M}_X(v) \to N$ to an algebraic space $N$ of finite type over $\C$ that is an isomorphism away from $X$, and contracts $X$ to a point $0 \in N$. Moreover, by Artin's construction in \cite[Theorem 6.2]{Art70:contractibilityII}, the formal neighborhood of $0 \in N$ is given by the affinization of the formal neighborhood of $X \subset \overline{M}_X(v)$. More precisely, if $\II$ is the ideal of $X$, then it is given by
\[ 
\Spec \varprojlim_n H^0(X, \OO_{\overline{M}_X(v)}/\II^{n+1}) = \Spec
\varprojlim_n \bigoplus_{0 \le k \le n} H^0(X, \OO_X(k)),
\]
i.e., the completion of the vertex of the affine cone over $X$. Since the image of every infinitesimal neighborhood of $X$ under $\Psi$ is affine, it factors via its affinization. Taking the limit, we see that $\Psi$ factors via $\Psi'$ both in the formal neighborhood of $X$, and in its complement. Hence (e.g. by \cite[Theorem 3.1]{Art70:contractibilityII}) we get an induced morphism $j \colon N \to \Theta$ factoring
$\Psi$. As $j$ is bijective on points and with normal target, it is an isomorphism.

For the last claim, note that $X$ is uniruled, hence the union $U$ of all rational curves in $\overline{M}_X(v)$ contains $X$. If there was any other rational curve $C$ not contained in $X$, then $\Psi \colon C \to \Theta$ is a non-constant map from a rational to an abelian variety, a contradiction.
\end{proof}

\begin{cor} \label{cor:classicalTorelli}
 If $X_1$ and $X_2$ are smooth projective threefolds with $J(X_1) = J(X_2)$ as principally polarised abelian varieties, then $X_1 = X_2$.
 \end{cor}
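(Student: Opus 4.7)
The plan is to reduce the statement to the local geometry of $\Theta$ at its singular point, and then apply Lemma \ref{lem:formal_neighborhood} to extract $X_i \subset \P^4$ from this local data. First I would fix an isomorphism $\Phi \colon J(X_1) \xrightarrow{\sim} J(X_2)$ of principally polarized abelian varieties. Since the principal polarization determines the theta divisor up to translation, $\Phi$ carries $\Theta_{X_1}$ to a translate of $\Theta_{X_2}$. By Lemma \ref{lem:M_embeds} each $\Theta_{X_i}$ has a unique singular point, so (after composing with a translation) the induced isomorphism $\overline{\Phi} \colon \Theta_{X_1} \xrightarrow{\sim} \Theta_{X_2}$ must send $0 \in \Theta_{X_1}$ to $0 \in \Theta_{X_2}$.

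Next, I would pass to the formal neighborhoods of these singular points. The map $\overline{\Phi}$ induces an isomorphism of complete local rings $\widehat{\mathcal{O}}_{\Theta_{X_2},0} \xrightarrow{\sim} \widehat{\mathcal{O}}_{\Theta_{X_1},0}$. By Lemma \ref{lem:formal_neighborhood}, for each $i$ this complete local ring is the completion at the vertex of the affine cone over $X_i \subset \P^4$, i.e.\ the completion of the graded ring $\bigoplus_{k \geq 0} H^0(X_i, \mathcal{O}_{X_i}(kH))$ at its irrelevant ideal.

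Finally, I would recover $X_i$ intrinsically from this local data. Taking the associated graded ring of $\widehat{\mathcal{O}}_{\Theta_{X_i},0}$ with respect to its maximal ideal, one obtains the homogeneous coordinate ring $\bigoplus_{k \geq 0} H^0(X_i, \mathcal{O}_{X_i}(kH))$, and applying $\Proj$ returns the projectively embedded cubic threefold $X_i \subset \P^4$. Since $\overline{\Phi}$ induces an isomorphism of these associated graded rings, it follows that $X_1 \cong X_2$ as projective varieties.

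The main obstacle is the first paragraph: one must be careful that an isomorphism of PPAVs genuinely identifies the theta divisors (not merely their translation classes) after a single affine adjustment, and that this adjustment can be arranged to match the unique singular points. Once this is in place, the rest is a direct application of Lemma \ref{lem:formal_neighborhood} together with the standard fact that the vertex of the affine cone over a projectively normal variety determines the variety via $\Proj$ of the associated graded ring.
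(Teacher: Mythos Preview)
Your proposal is correct and follows essentially the same approach as the paper, which simply states that the result is an immediate consequence of the description of the singularity of $\Theta$ in Lemma~\ref{lem:formal_neighborhood}. You have spelled out in detail the steps the paper leaves implicit (matching the theta divisors up to translation, identifying the unique singular points, and recovering $X_i$ as $\Proj$ of the tangent cone), but the underlying idea is identical.
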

\begin{proof}
 As in the classical argument, this is an immediate consequence of the description of the singularity of the theta divisor in Lemma \ref{lem:formal_neighborhood}.
\end{proof}

\section{Kuznetsov component}
\label{sec:kuznetsov}

The bounded derived category of a cubic threefold $X$ admits a semi-orthogonal decomposition 
\begin{equation*}
\Db(X) = \langle \Ku(X), \OO_X, \OO_X(1) \rangle
\end{equation*}
whose non-trivial part $\Ku(X)$ is called the Kuznetsov component. The goal of this section is to give a new proof of the following Theorem.
%from \cite{BMMS12:cubics}. The key difference is that our proof does not rely on the classical Torelli theorem for cubic threefolds. 

\begin{thm}
\label{thm:categorical-Torelli}
Let $X_1$ and $X_2$ be smooth cubic threefolds. Then $\Ku(X_1)$ and $\Ku(X_2)$ are equivalent as triangulated categories if and only if $X_1$ and $X_2$ are isomorphic.
%The following are equivalent:
%\begin{enumerate}
%    \item  $X_1$ and $X_2$ are isomorphic.
%    \item  $\Ku(X_1)$ and $\Ku(X_2)$ are equivalent as triangulated categories.
%    \item  $J(X_1)$ and $J(X_2)$ are isomorphic as principally polarised abelian varieties.
%\end{enumerate}
\end{thm}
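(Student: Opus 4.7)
The implication $X_1 \cong X_2 \Rightarrow \Ku(X_1) \simeq \Ku(X_2)$ is immediate from the naturality of the semi-orthogonal decomposition, so the content is in the converse. Given an exact equivalence $\Phi \colon \Ku(X_1) \to \Ku(X_2)$, the plan is to produce an isomorphism of the four-dimensional moduli spaces $\overline{M}_{X_i}(v)$ and then recover $X_1 \cong X_2$ from the intrinsic description of $X_i \subset \overline{M}_{X_i}(v)$ given by Lemma \ref{lem:formal_neighborhood}.

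First I would observe that any exact equivalence of triangulated categories commutes with the Serre functor up to natural isomorphism, and therefore takes Serre-invariant stability conditions on $\Ku(X_1)$ to Serre-invariant stability conditions on $\Ku(X_2)$. Fixing any Serre-invariant $\sigma_1$ and setting $\sigma_2 := \Phi_* \sigma_1$, the equivalence sends $\sigma_1$-stable objects to $\sigma_2$-stable objects and hence induces an isomorphism of moduli spaces
\[
\Phi \colon M_{\sigma_1}(v_1) \xrightarrow{\sim} M_{\sigma_2}(\Phi_* v_1),
\]
where $v_i \in \NN(\Ku(X_i))$ denotes the class with formula $(3, -H_i, -\tfrac{1}{2} H_i^2, \tfrac{1}{6} H_i^3)$.

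Next I would identify $\Phi_* v_1$ with $v_2$ inside $\NN(\Ku(X_2))$, possibly after composing $\Phi$ with a standard autoequivalence of $\Ku(X_2)$ (a power of the Serre functor $S_2$ or a shift $[1]$). The map $\Phi_*$ is an isometry of the rank-two lattices $\NN(\Ku(X_i))$ for the Euler pairing, so only finitely many primitive classes can arise as $\Phi_* v_1$; among those with the numerical invariants of $v_2$, the action of $S_2$ and $[1]$ on $\NN(\Ku(X_2))$ should be rich enough to arrange $\Phi_* v_1 = v_2$. Theorem \ref{thm:equalmodspace} applied on both sides then yields a chain of isomorphisms of varieties
\[
\overline{M}_{X_1}(v) \;\cong\; M_{\sigma_1}(v_1) \;\xrightarrow{\sim}\; M_{\sigma_2}(v_2) \;\cong\; \overline{M}_{X_2}(v).
\]

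Finally, by Lemma \ref{lem:formal_neighborhood} the copy of $X_i \subset \overline{M}_{X_i}(v)$ is characterised as the union of all rational curves in $\overline{M}_{X_i}(v)$, an intrinsic scheme-theoretic invariant preserved by any isomorphism. The above composition therefore restricts to an isomorphism $X_1 \xrightarrow{\sim} X_2$. The main obstacle I anticipate is the class-matching step: one must verify that the group generated by $S_2$ and $[1]$ acts transitively enough on the set of classes in $\NN(\Ku(X_2))$ sharing the numerical profile of $v_2$, which in turn requires a concrete analysis of the action of the Serre functor on the rank-two lattice $\NN(\Ku(X_2))$.
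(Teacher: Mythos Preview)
Your outline is essentially the paper's own argument. The class-matching obstacle you flag is handled by Lemma~\ref{lem:clases-3}: since $\chi(v,v)=-3$ and $\Phi_*$ is an isometry, the image $\Phi_* v_1$ must be one of $\pm[K_P]$, $\pm[S(K_P)]$, $\pm[S^2(K_P)]$, and these form a single orbit under $\langle S,[1]\rangle$. One caution: Theorem~\ref{thm:equalmodspace} as stated is only for the specific stability conditions $\sigma(\alpha)$, so to conclude $M_{\sigma_2}(v_2)\cong\overline{M}_{X_2}(v)$ for the transported Serre-invariant $\sigma_2$ you must also invoke Proposition~\ref{prop:unique}, which shows the moduli space is independent of the choice of Serre-invariant stability condition.
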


%As in the classical case, the equivalence of \ref{item:isom} and \ref{item:JXisom} is an immediate consequence of the description of the singularity of the theta divisor in Lemma \ref{lem:formal_neighborhood}.

%We have already shown the equivalence of \ref{item:isom} and \ref{item:JXisom} in Corollary \ref{cor:classicalTorelli}.
Let $S$ be the Serre functor of $\Ku(X)$. By \cite[Lemma 4.1 and Lemma 4.2]{Kuz04:SOD}, for any object $F \in \Ku(X)$, we have 
\begin{equation} \label{eq:SerrefunctorKuX}
S(F) = L_{\OO_X}(F \otimes \OO_X(H))[1]
\end{equation}
where $L_{\OO_X}$ is the left mutation functor with respect to $\OO_X$.  
%Lemma 4.1 in  implies that $S^{-1}[1] = \left(O|_{\Ku(X)}\right)^2$ where \footnote{\color{red} We should probably say somewhere what $L_{\OO_X}$ is. Also we can probably state it then all in terms of $L_{\OO_X}$ and skip the notation $O$.}
%\begin{align*}
%    O \colon \Db(X) &\rightarrow \Db(X) \\
%    F & \mapsto L_{\OO_X}(F \otimes \OO_X(H))[-1]
%\end{align*}
%and Lemma 4.2 in \cite{Kuz04:SOD} gives $\left(O|_{\Ku(X)}\right)^3 \cong [-1]$. Thus 
%\[
%	S[-3] = S^{-2}[2] = O|_{\Ku(X)}^4 = O|_{\Ku(X)}[-1]
%\]	
%which implies that $S = O|_{\Ku(X)}[2]$.\footnote{\color{red} I get the impression this last equality is the only one being used. Maybe we should just cite Lemma 4.1 and 4.2 in Kuznetsov and immediately state this? I think readers can figure this out.} 
%\begin{equation*}
%S = O|_{\Ku(X)}[2] = L_{\OO_X} \circ (... \otimes \OO_X(1))[1]
%\end{equation*}
By \cite[Proposition 2.7]{BMMS12:cubics}, the numerical Grothendieck group $\NN(\Ku(X))$ is a two-dimensional lattice 
\[
    \NN(\Ku(X)) \cong \mathbb{Z}^2 \cong \Z[\II_{\ell}] \oplus \Z[S(\II_{\ell})]
\]
where $\II_{\ell}$ is the ideal sheaf of a line $\ell$ in $X$. With respect to this basis, the Euler characteristic $\chi(-, -)$ on $\mathcal{N}(\Ku(X))$ has the form
\[
  \begin{bmatrix}
    -1 & -1 \\
    0 & -1
  \end{bmatrix}. 
\]
For any line $\ell$ in $X$, we know $\ch(\II_{\ell}) = (1, 0, -\frac{1}{3}H^2, 0)$. The Chern character of our second basis vector of $\ch(\Ku(X))$, and the action of the Serre functor $S$ on our chosen basis are given as follows.

\begin{lem}
\label{lem:chern-characters}
%For any line $\ell$ in $X$, 
We have 
$\ch(S(I_{\ell})) = (2, -H, -\tfrac{1}{6} H^2, \tfrac{1}{6} H^3)$ and $\ch(S^2(I_{\ell})) = (1, -H, \tfrac{1}{6} H^2, \tfrac{1}{6} H^3)$. Thus, the class $[S^2(\II_{\ell})]$ in $\NN(\Ku(X))$ is equal to $[S(\II_{\ell})] - [\II_{\ell}]$.
\end{lem}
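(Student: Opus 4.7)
The plan is to compute both Chern characters directly from the explicit formula \eqref{eq:SerrefunctorKuX} for the Serre functor, using the defining triangle for the left mutation $L_{\OO_X}$. Recall that for any $G \in \Db(X)$ the left mutation $L_{\OO_X}(G)$ fits into a distinguished triangle
\[
\RHom(\OO_X, G) \otimes \OO_X \to G \to L_{\OO_X}(G),
\]
so in the Grothendieck group $\ch(L_{\OO_X}(G)) = \ch(G) - \chi(\OO_X, G) \cdot \ch(\OO_X)$. Hence I only need to compute, for $F$ equal to $\II_\ell$ and to $S(\II_\ell)$, the Chern character of $F(H) = F \otimes \OO_X(H)$, evaluate $\chi(\OO_X, F(H)) = \chi(F(H))$ via the Hirzebruch--Riemann--Roch formula from Lemma \ref{lem:todd_of_cubic}, and then shift by $[1]$.

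First step: start from $\ch(\II_\ell) = (1, 0, -\tfrac{1}{3}H^2, 0)$ and multiply by $\ch(\OO_X(H)) = e^H$ to get $\ch(\II_\ell(H)) = (1, H, \tfrac{1}{6}H^2, -\tfrac{1}{6}H^3)$. A short computation via HRR (alternatively, from the sequence $0 \to \II_\ell(H) \to \OO_X(H) \to \OO_\ell(H) \to 0$, using $h^0(\OO_X(H))=5$, $h^0(\OO_\ell(H))=2$, and Kodaira vanishing) yields $\chi(\OO_X, \II_\ell(H)) = 3$. Subtracting $3\cdot \ch(\OO_X)$ and shifting by $[1]$ gives $\ch(S(\II_\ell)) = (2, -H, -\tfrac{1}{6}H^2, \tfrac{1}{6}H^3)$, as claimed.

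Second step: apply the same recipe to $F = S(\II_\ell)$. Multiplying $\ch(S(\II_\ell))$ by $e^H$ gives $\ch(S(\II_\ell)(H)) = (2, H, -\tfrac{1}{6}H^2, -\tfrac{1}{6}H^3)$. Plugging this into the HRR formula of Lemma \ref{lem:todd_of_cubic} (using $H^3 = 3$) produces
\[
\chi(\OO_X, S(\II_\ell)(H)) = -\tfrac{1}{2} - \tfrac{1}{2} + 2 + 2 = 3.
\]
Subtracting $3 \cdot \ch(\OO_X)$ and shifting by $[1]$ gives $\ch(S^2(\II_\ell)) = (1, -H, \tfrac{1}{6}H^2, \tfrac{1}{6}H^3)$.

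Third step: compare the resulting Chern characters. A direct subtraction shows
\[
[S(\II_\ell)] - [\II_\ell] \;=\; (2,-H,-\tfrac{1}{6}H^2,\tfrac{1}{6}H^3) - (1,0,-\tfrac{1}{3}H^2,0) \;=\; (1,-H,\tfrac{1}{6}H^2,\tfrac{1}{6}H^3) \;=\; [S^2(\II_\ell)]
\]
in $\NN(\Ku(X))$, which is the final claim. There is no real obstacle here; the only care needed is book-keeping with the normalisation of degree-3 classes (since $H^3 = 3$, so e.g. the class $\tfrac{1}{6}H^3$ integrates to $\tfrac{1}{2}$) when applying HRR.
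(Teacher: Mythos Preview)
Your proof is correct. The first step, computing $\ch(S(\II_\ell))$ from the mutation triangle and the formula $[S(E)] = -[E(H)] + \chi(E(H))[\OO_X]$, is exactly what the paper does. The difference lies in the second step: you iterate the same direct computation to obtain $\ch(S^2(\II_\ell))$ and then verify the identity $[S^2(\II_\ell)] = [S(\II_\ell)] - [\II_\ell]$ by subtraction, whereas the paper argues the other way around. It uses that $S$ preserves the Euler pairing, so $\chi(\II_\ell, S^2(\II_\ell)) = \chi(S(\II_\ell), \II_\ell) = 0$ and $\chi(S(\II_\ell), S^2(\II_\ell)) = \chi(\II_\ell, S(\II_\ell)) = -1$; since the Euler form on $\NN(\Ku(X))$ is non-degenerate, this pins down $[S^2(\II_\ell)] = [S(\II_\ell)] - [\II_\ell]$, and the Chern character is then read off from that relation. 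Your approach is a touch more computational but entirely self-contained; the paper's route highlights the Serre-invariance of $\chi$ and avoids a second HRR calculation.
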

\begin{proof}
By \eqref{eq:SerrefunctorKuX} we have $[S(E)] = -[E(H)] + \chi(E(H))[\OO_X]$ for $E \in \Ku(X)$.  Hence $\ch(I_\ell(H)) = (1, H, \frac 16 H^2, -\frac 16 H^3)$ and $\chi(I_\ell(H)) = 3$ imply the formula for $\ch(S(I_\ell))$. The formula for $\ch(S^2(I_\ell))$ follows from the last claim, which in turn follows from the Euler characteristic form above with 
\begin{align*}
\chi(I_\ell, S^2(I_\ell)) &= \chi(S^2(I_\ell), S(I_\ell)) =  \chi(S(I_\ell), I_\ell) = 0  = \chi([I_\ell], [S(\II_{\ell})] - [\II_{\ell}]) \quad \text{and} \\
\chi(S(I_\ell), S^2(I_\ell)) &= \chi(I_\ell, S(I_\ell)) = -1 = \chi([S(I_\ell)], [S(\II_{\ell})] - [\II_{\ell}]). \qedhere
\end{align*}
\end{proof}

%\todo{Arend: how about the above proof which seems a bit more straightforward?}
%\begin{proof}
%Let $\ch(S(\II_{\ell})) = (a_0, a_1H, a_2H^2, a_3H^3)$. We know $\chi(\OO_X, S(\II_{\ell})) = \chi(\OO_X(1), S(\II_{\ell})) = 0$, $ \chi(\II_{\ell}, S(\II_{\ell})) =-1$, and $ \chi(S(\II_{\ell}), I_{\ell}) = 0$. Therefore,
%\begin{align*}
%    & a_0 + 2a_1 + 3a_2 + 3a_3 = 0, \\
%    & \frac{1}{2}a_1 + 3a_3 = 0, \\
%    & a_1 + 3 a_2 + 3a_3 = -1, \\
%    & -a_1 + 3a_2 - 3a_3 = 0,
%\end{align*}
%which implies the first claim. Similarly, we establish $\chi(\OO_X, S^2(\II_{\ell})) = \chi(\OO_X(1), S^2(\II_{\ell})) = 0$, $ \chi(\II_{\ell}, S^2(\II_{\ell})) = \chi(S(\II_{\ell}), \II_{\ell}) = 0$, and $\chi(S^2(\II_{\ell}), \II_{\ell}) = -\chi(S^{-1}(\II_{\ell}), \II_{\ell}) = -\chi(\II_{\ell}, S(\II_{\ell})) = 1$. Thus, the same argument as above implies the second claim. 
%\end{proof}
For a point $P \in X$, the sheaf $K_P$ which is defined through the sequence \eqref{k-p}, lies in the Kuznetsov component $\Ku(X)$.  
\begin{lem}
\label{lem:clases-3}
Let $[A]$ be a class in $\NN(\Ku(X))$ such that $\chi([A], [A]) = -3$. Then, up to a sign, $[A]$ is either $[K_P] = [\II_{\ell}] +[S(\II_{\ell})]$, or $[S(K_P)] = -[\II_{\ell}] +2[S(\II_{\ell})]$, or $[S^2(K_p)] = -2[\II_{\ell}] +[S(\II_{\ell})]$.
\end{lem}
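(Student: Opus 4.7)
The plan is purely lattice-theoretic: write $[A] = a[\II_\ell] + b[S(\II_\ell)]$ with $a, b \in \Z$, translate the hypothesis into a Diophantine equation, solve it, and identify the six integer solutions with the six claimed classes.

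Using the Euler form given as the matrix $\bigl(\begin{smallmatrix} -1 & -1 \\ 0 & -1 \end{smallmatrix}\bigr)$ in the basis $\{[\II_\ell], [S(\II_\ell)]\}$, a direct computation yields
\[
\chi([A],[A]) \;=\; -\bigl(a^2 + ab + b^2\bigr).
\]
So the assumption $\chi([A],[A]) = -3$ becomes $a^2 + ab + b^2 = 3$. Multiplying by $4$ and completing the square rewrites this as $(2a+b)^2 + 3b^2 = 12$, which forces $b^2 \le 4$. A case analysis over $b \in \{-2,-1,0,1,2\}$ (the case $b=0$ giving no integer solution) produces exactly six solutions, which pair up as $\pm(1,1)$, $\pm(-2,1)$, $\pm(-1,2)$.

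It remains to match these pairs with $\pm[K_P]$, $\pm[S(K_P)]$, $\pm[S^2(K_P)]$. For $[K_P]$, I would use the defining sequence \eqref{k-p} to get $\ch(K_P) = (3,-H,-H^2/2,H^3/6)$ and compare with $\ch(\II_\ell) = (1,0,-H^2/3,0)$ and $\ch(S(\II_\ell)) = (2,-H,-H^2/6,H^3/6)$ from Lemma~\ref{lem:chern-characters}; solving the linear system identifies $[K_P]$ with $(1,1) = [\II_\ell]+[S(\II_\ell)]$. For the other two, rather than redoing Chern character computations, it is faster to observe that in the chosen basis the Serre functor acts by
\[
S\bigl(a[\II_\ell] + b[S(\II_\ell)]\bigr) \;=\; a[S(\II_\ell)] + b[S^2(\II_\ell)] \;=\; (-b,\, a+b),
\]
where the last equality uses $[S^2(\II_\ell)] = [S(\II_\ell)] - [\II_\ell]$ from Lemma~\ref{lem:chern-characters}. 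Applying this once gives $[S(K_P)] = (-1,2)$, and once more gives $[S^2(K_P)] = (-2,1)$, matching the two remaining pairs.

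There is no serious obstacle here; the only small subtlety is to make sure the Serre functor computation on the lattice is carried out in the correct basis, which is why I would state it explicitly as a $2\times 2$ linear map before applying it iteratively to $[K_P]$. The rest is the short Diophantine classification above.
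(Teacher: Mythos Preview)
Your proposal is correct and is precisely the natural lattice-theoretic argument; the paper in fact states this lemma without proof, treating it as a routine computation, and your write-up fills in exactly the expected details.
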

Let $\sigma^0_{\alpha, -1/2} = \big(\Coh^0_{\alpha, -1/2}(X), Z^0_{\alpha, -1/2}\big)$ be the weak stability condition on $\Db(X)$ constructed in \cite[Proposition 2.14]{BLMS17:stability_kuznetsov}. Here $\Coh^0_{\alpha, -1/2}(X)$ is the usual double tilt and 
\begin{equation}
\label{tilt-stability_function}
    Z^0_{\alpha, -\frac{1}{2}}(E) = H^2 \cdot \ch_1^{-\frac{1}{2}}(E) + i \left( H \cdot \ch_2^{-\frac{1}{2}}(E) - \frac{\alpha^2}{2} H^3 \cdot \ch_0(E)\right). 
\end{equation}
As proven in \cite[Theorem 6.8]{BLMS17:stability_kuznetsov}, for $0 < \alpha \ll 1$, it induces the stability condition $\sigma(\alpha) = \left(\AA(\alpha), Z(\alpha) \right)$ on $\Ku(X)$ where  
\[
\AA(\alpha) \coloneqq \Coh^0_{\alpha, -\frac{1}{2}}(X) \cap \Ku(X) \qquad \text{and} \qquad Z(\alpha) \coloneqq Z^0_{\alpha, -\frac{1}{2}}|_{\Ku(X)}. 
\]

\begin{lem}
\label{lem:embedding-Gieseker}
There is an embedding $M_X(v) \into M_{\sigma(\alpha)}\big([\II_{\ell}] +[S(\II_{\ell})]\big)$ from the moduli space $M_{X}(v)$ for $v = \ch(\II_{\ell}) + \ch(S(\II_{\ell})) = (3, -H, -H^2/2, H^3/6)$ to $M_{\sigma(\alpha)}\big([\II_{\ell}] +[S(\II_{\ell})]\big)$ which parameterises $\sigma(\alpha)$-semistable objects in $\Ku(X)$ of class  $[\II_{\ell}] +[S(\II_{\ell})] \in \mathcal{N}(\Ku(X))$.
\end{lem}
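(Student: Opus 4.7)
The plan is to show that any Gieseker-semistable sheaf $E$ of class $v$ lies in $\Ku(X)$, becomes an object of the heart $\AA(\alpha)$ after a shift, and is $\sigma(\alpha)$-stable, so that $E \mapsto E[1]$ defines the claimed embedding. By Theorem~\ref{thm:moduli_rank_three_set_theoretic}, any $E \in M_X(v)$ is isomorphic to either $K_P$ for some $P \in X$ or $E_D$ for a Weil divisor $D$ of class $(0, H, \tfrac{1}{2} H^2, -\tfrac{1}{6} H^3)$ on a hyperplane section $Y$. To verify $E \in \Ku(X)$, I would apply $\RHom(\OO_X, -)$ and $\RHom(\OO_X(H), -)$ to the defining short exact sequences \eqref{k-p} and \eqref{E-D}. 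For $K_P$ the evaluation map $\OO_X^{\oplus 4} \to \II_P(H)$ induces an isomorphism on $R\Gamma$ by construction, so $R\Gamma(K_P) = 0$; twisting by $\OO_X(-H)$, Kodaira vanishing gives $R\Gamma(\OO_X(-H)) = 0$, and $R\Gamma(\II_P) = 0$ follows from $\OO_X \onto \OO_P$ being an isomorphism on $H^0$, yielding $\RHom(\OO_X(H), K_P) = 0$. For $E_D$ the argument is parallel, using $h^0(\OO_Y(D)) = 3$ with higher cohomology vanishing (already established inside the proof of Theorem~\ref{thm:moduli_rank_three_set_theoretic}) together with $R\Gamma(\OO_Y(D-H)) = 0$, which follows from Serre duality on the Gorenstein surface $Y$ (using $\omega_Y \cong \OO_Y(-H)$ by adjunction) and the vanishing $R\Gamma(\OO_Y(-D)) = 0$ obtained from the structure sequence of a smooth twisted cubic representing $D$.

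Next, I would check that $E[1] \in \AA(\alpha)$ for $0 < \alpha \ll 1$. Since $E$ is slope-stable with $\mu(E) = -\tfrac{1}{3} > -\tfrac{1}{2}$, we have $E \in \Coh^{-1/2}(X)$, and a direct computation with $\ch^{-1/2}(E)$ yields $\nu_{\alpha, -1/2}(E) = -\tfrac{5}{4} - 3\alpha^2 < 0$. The core technical step is to establish $\nu_{\alpha, -1/2}$-tilt-stability of $E$ for $0 < \alpha \ll 1$. In the large-volume limit $\alpha \gg 0$, this follows from slope-stability (hence $2$-Gieseker-stability) of $E$ and Proposition~\ref{prop:large_volume_limit}. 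As $\alpha$ decreases along $\beta = -\tfrac{1}{2}$, potential walls are constrained by the nested-wall Theorem~\ref{thm:nested_wall_thm}: the unique actual wall $W$ from Lemma~\ref{lem:wall_classification_rank_three} sits entirely in the strip $0 < \beta < 1$ and so does not meet $\beta = -\tfrac{1}{2}$, while possible walls on the left of the vertical wall $\beta = -\tfrac{1}{3}$ are excluded by a parallel analysis combining $\Delta_H(v)/(H^3)^2 = 4$ with Propositions~\ref{prop:Deltabound} and~\ref{prop:bounding_rank} and Theorem~\ref{thm:li_delta_bound} applied to hypothetical destabilisers. Tilt-stability places $E$ in the subcategory of $\Coh^{-1/2}(X)$ (objects with non-positive tilt slope) used to build $\Coh^0_{\alpha, -1/2}(X)$, so $E[1] \in \Coh^0_{\alpha, -1/2}(X) \cap \Ku(X) = \AA(\alpha)$.

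Finally, tilt-stability of $E$ in $\Coh^{-1/2}(X)$ upgrades to $\sigma(\alpha)$-stability of $E[1]$ in $\AA(\alpha)$: any $\AA(\alpha)$-subobject $F \into E[1]$ is a $\Coh^0$-subobject, and the long exact sequence with respect to the $t$-structure exhibiting $\Coh^0_{\alpha, -1/2}(X)$ as a tilt of $\Coh^{-1/2}(X)$ reduces a destabilising $F$ to a tilt-destabilising subobject or quotient of $E$, contradicting the previous step. The assignment $E \mapsto E[1]$ applied to the universal family on $X \times M_X(v)$ then defines a morphism to $M_{\sigma(\alpha)}([\II_\ell] + [S(\II_\ell)])$, where the class of $E[1]$ matches $[\II_\ell] + [S(\II_\ell)]$ up to the standard sign convention identifying $w$ with $-w$ in the notation for Bridgeland moduli. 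Injectivity on closed points is immediate from $\sigma(\alpha)$-stability, and injectivity on tangent spaces follows from the identification $\Ext^1_{\AA(\alpha)}(E[1], E[1]) = \Ext^1_{\Db(X)}(E, E)$, which also computes the tangent space of $M_X(v)$ at $[E]$. I expect the principal obstacle to be the wall analysis for class $v$ on the left of the vertical wall $\beta = -\tfrac{1}{3}$: while morally symmetric to Lemma~\ref{lem:wall_classification_rank_three}, it is not directly covered there and requires an independent argument, potentially via derived duality (Proposition~\ref{prop:derived_dual}) to transport walls across $\beta \leftrightarrow -\beta$.
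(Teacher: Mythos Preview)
Your overall strategy is correct and matches the paper's, but you have overlooked the lemma that resolves what you call the ``principal obstacle''. Lemma~\ref{lem:no_wall_at_minus_one} already proves there are no walls for $\ch_{\le 2}(E) = (3, -H, -\tfrac12 H^2)$ along $\beta = -1$. Since the left branch of the hyperbola $\nu_{\alpha,\beta}(v) = 0$ meets the $\beta$-axis exactly at $\beta = -1$, every semicircular wall to the left of the vertical wall $\beta = -\tfrac13$ has its apex at some $\beta < -1$ and right endpoint in $(-1,-\tfrac13)$, hence straddles $\beta = -1$. So Lemma~\ref{lem:no_wall_at_minus_one} rules out all walls on the left at once, and $E$ is $\nu_{\alpha,-1/2}$-stable for every $\alpha > 0$. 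The paper's proof simply cites this; neither a fresh destabiliser analysis nor the derived-duality transport via Proposition~\ref{prop:derived_dual} that you propose is needed.

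Your remaining steps are more elaborate than the paper's but essentially fine. For membership in $\Ku(X)$ the paper just invokes Theorem~\ref{thm:moduli_rank_three_set_theoretic}~(ii); your direct $\RHom$ verification works, though for $E_D$ on a possibly singular $Y$ it is safer to argue $H^0(\OO_Y(D-H)) = 0$ from $(D-H)\cdot H|_Y = 0$ together with $D \not\sim H$, and $H^2(\OO_Y(D-H))^\vee = \Hom(\OO_Y(D), \OO_Y) = H^0(\OO_Y(-D)) = 0$, then conclude $H^1 = 0$ from $\chi = 0$ --- your route through a smooth twisted-cubic representative of $D$ and $R\Gamma(\OO_Y(-D)) = R\Gamma(\II_C)$ is delicate when $D$ is not Cartier at the singular points. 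For the passage from tilt-stability to $\sigma(\alpha)$-stability, the paper's argument is shorter than your long-exact-sequence reduction: since $\sigma^0_{\alpha,-1/2}$ is a rotation of tilt, tilt-stability of $E$ yields $\sigma^0$-stability of (the appropriate shift of) $E$; upon restricting the heart to $\AA(\alpha) = \Coh^0_{\alpha,-1/2}(X) \cap \Ku(X)$ the only possible new destabilisers are objects with $Z^0_{\alpha,-1/2} = 0$, and these are zero-dimensional torsion sheaves, hence not in $\Ku(X)$.
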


\begin{proof}
According to Lemma \ref{lem:no_wall_at_minus_one} there is no wall for objects of Chern character $v$ to the left of the vertical wall. Thus, $E$ is $\nu_{\alpha, -1/2}$-stable for any $\alpha >0$. Since $\sigma^0_{\alpha, -1/2}$ is just a rotation of $\nu_{\alpha, -1/2}$, we obtain that $E$ is $\sigma^0_{\alpha, -1/2}$-stable. By Theorem \ref{thm:moduli_rank_three_set_theoretic}, part (ii), the sheaf $E \in \Ku(X)$ lies in the Kuznetsov component. Thus, $E$ is $\sigma(\alpha)$-stable. Note that the object $E$ could be destabilised by objects with $Z^0_{\alpha, -\frac{1}{2}} = 0$ after rotation. But we know these are all sheaves supported in dimension zero and would not be in $\Ku(X)$ and therefore, $E$ is stable after restriction to $\Ku(X)$.   
\end{proof}

\cite[Corollary 5.6]{PY20:Fano3index2} implies that the stability condition $\sigma(\alpha)$ is $S$-invariant, i.e., $S \cdot \sigma(\alpha) = \sigma(\alpha) \cdot \tilde{g}$ for $\tilde{g} \in \widetilde{\text{GL}}^+(2, \mathbb{R})$.   
Thus, there is an isomorphism 
\begin{align}
  	S \colon  M_{\sigma(\alpha)}\big( 2[I_{\ell}] -[S(\II_{\ell})] \big)& \rightarrow M_{\sigma(\alpha)}\big( [\II_{\ell}] +[S(\II_{\ell})] \big)\nonumber \\
  	 E& \mapsto S(E) \label{Serre functor-moduli}
\end{align}
%Since our classes are primitive, there are no strictly $\sigma$-semistable objects. Thus $M_{\sigma}(2[I_{\ell}] -[S(I_{\ell})])$ and $M_{\sigma}([I_{\ell}] +[S(I_{\ell})])$ are proper algebraic spaces over $\mathbb{C}$. 

The following Proposition is a slight strengthening of \cite[Theorem 1.2]{APR19:kuznetsov_fano_torsion_moduli} which describes all elements of the moduli space. The idea of the proof is the same as \cite[Lemma 2.2]{APR19:kuznetsov_fano_torsion_moduli}.

\begin{prop}
Any $\sigma(\alpha)$-semistable object in $\Ku(X)$ of class $2[\II_{\ell}] - [S(\II_{\ell})]$ is of the form $G[2k]$ for $k \in \Z$ where $G$ is either equal to $G_P(-H)$ described in \eqref{G-p} for a point $P \in X$, or $\OO_Y(D - H)$ where $D$ is a Weil-divisor on some $Y \in |H|$.
\end{prop}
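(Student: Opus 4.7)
The plan is to follow the strategy of \cite[Lemma 2.2]{APR19:kuznetsov_fano_torsion_moduli}: translate $\sigma(\alpha)$-semistability on $\Ku(X)$ into tilt-semistability in the ambient category $\Db(X)$, and then apply the wall-crossing classification already obtained in Proposition~\ref{prop:torsion_classification}.

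First I would verify that both proposed families of objects lie in $\Ku(X)$ and carry the claimed numerical class. A direct computation gives
\[
\ch(G_P(-H)) = \ch(\OO_Y(D-H)) = \left(0, H, -\tfrac{1}{2}H^2, -\tfrac{1}{6}H^3 \right),
\]
which by Lemma~\ref{lem:chern-characters} equals $2\,\ch(\II_\ell) - \ch(S(\II_\ell))$. The required $\Hom$-vanishings against $\OO_X$ and $\OO_X(H)$ follow from Kodaira vanishing applied to $\OO_X(-H)$ and $\OO_X(-2H)$, together with standard cohomology computations using the defining short exact sequence \eqref{G-p} and the restriction sequence $0 \to \OO_X(-H) \to \OO_X \to \OO_Y \to 0$.

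For the converse, let $E \in \Ku(X)$ be $\sigma(\alpha)$-semistable with class $2[\II_\ell] - [S(\II_\ell)]$, so that $\ch(E) = (0, H, -\tfrac{1}{2}H^2, -\tfrac{1}{6}H^3)$. After shifting by an appropriate $[2k]$, I may assume $E$ lies in the heart $\AA(\alpha) = \Coh^0_{\alpha,-1/2}(X) \cap \Ku(X)$. The key technical step is to promote $\sigma(\alpha)$-semistability to $\sigma^0_{\alpha,-1/2}$-semistability in $\Db(X)$: any destabilizing subobject $F \hookrightarrow E$ in $\Coh^0_{\alpha,-1/2}(X)$ that does not already lie in $\Ku(X)$ must be built from shifts of $\OO_X$ and $\OO_X(H)$, and such configurations can be excluded by combining the Bogomolov inequality, Theorem~\ref{thm:li_delta_bound}, and Chern character bookkeeping analogous to Lemma~\ref{lem:torsion_wall_classification}.

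Once $E$ is known to be $\sigma^0_{\alpha,-1/2}$-semistable, unwinding the double-tilt construction places $E$ (after one further parity-preserving shift if necessary) in $\Coh^{-1/2}(X)$, where it is $\nu_{\alpha,-1/2}$-semistable. Tensoring with $\OO_X(H)$ sends this to an object $E(H) \in \Coh^{1/2}(X)$ that is $\nu_{\alpha,1/2}$-semistable of Chern character $(0, H, \tfrac{1}{2}H^2, -\tfrac{1}{6}H^3)$---precisely the class analysed in Proposition~\ref{prop:torsion_classification}. For small $\alpha > 0$ the point $(\alpha, 1/2)$ lies strictly inside the semicircle $W$ of \eqref{eq:main_wall}, i.e.\ below the wall. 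Proposition~\ref{prop:torsion_classification}(ii) then forces $E(H) \cong G_P$ for some $P \in X$ or $E(H) \cong \OO_Y(D)$ for some Weil divisor $D$ on a hyperplane section $Y$, whence $E \cong G_P(-H)$ or $E \cong \OO_Y(D-H)$.

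The main obstacle I anticipate is the promotion from $\sigma(\alpha)$- to $\sigma^0_{\alpha,-1/2}$-semistability: ruling out destabilizing sub- and quotient objects that sit outside $\Ku(X)$ is the delicate part of the APR-style argument, and requires the weak Mukai lemma from \cite{PY20:Fano3index2} together with the specific Chern character of $E$ to control extensions with the exceptional objects $\OO_X$ and $\OO_X(H)$.
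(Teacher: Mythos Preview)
Your overall strategy---reduce to tilt-semistability and invoke Proposition~\ref{prop:torsion_classification}(ii)---matches the paper's, and your final paragraph (tensoring by $\OO_X(H)$ to land at $(\alpha,\tfrac12)$ below the wall $W$) is correct and in fact more explicit than the paper, which leaves that twist implicit. The gap is in your ``key technical step'', where you both misidentify the difficulty and propose tools that do not address it.

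The promotion from $\sigma(\alpha)$-semistability in $\Ku(X)$ to $\sigma^0_{\alpha,-1/2}$-semistability in $\Db(X)$ is in fact trivial, for a reason you overlook: for $E$ of Chern character $(0,H,-\tfrac12 H^2,-\tfrac16 H^3)$ one has $\ch_0(E)=0$ and $H\cdot\ch_2^{-1/2}(E)=0$, so the imaginary part of $Z^0_{\alpha,-1/2}(E)$ vanishes. Thus $E$ sits at the \emph{maximal phase} of the heart $\Coh^0_{\alpha,-1/2}(X)$, and every object of maximal phase is automatically semistable. No Bogomolov inequality, no Theorem~\ref{thm:li_delta_bound}, and certainly no weak Mukai lemma enter here. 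Your assertion that a destabilizing subobject outside $\Ku(X)$ ``must be built from shifts of $\OO_X$ and $\OO_X(H)$'' is also not correct: the semiorthogonal decomposition gives every object such a filtration, but a subobject in the tilted heart need not lie in the span of the exceptional collection.

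Conversely, you gloss over a step that does require an argument. Passing from $\sigma^0_{\alpha,-1/2}$-semistability to $\nu_{\alpha,-1/2}$-semistability is not just ``unwinding the double-tilt construction'': one must rule out subobjects $Q\hookrightarrow E$ in $\Coh^0_{\alpha,-1/2}(X)$ with $Z^0_{\alpha,-1/2}(Q)=0$, i.e.\ zero-dimensional torsion sheaves. The paper dispatches this using only that $E\in\Ku(X)$: such a $Q$ satisfies $\Hom(\OO_X,Q)\neq 0$, and since $\OO_X$ lies in the heart one has $\Hom(\OO_X,(E/Q)[-1])=0$, forcing $\Hom(\OO_X,E)\neq 0$, a contradiction. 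Once this is done, Proposition~\ref{prop:torsion_classification}(ii) applies exactly as you say.
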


\begin{proof}
Lemma \ref{lem:chern-characters} implies $\ch(G) = (0, H, -\tfrac{1}{2} H^2 , -\tfrac{1}{6} H^3)$. Since $G$ is $\sigma(\alpha)$-semistable, its shift $G[2k]$ lies in the heart $\AA(\alpha)$ for some $k \in \mathbb{Z}$. We know its image under the stability function $Z(\alpha)$ is equal to $-H^3$, so it has maximum phase in the heart $\AA(\alpha)$ which immediately implies $G[2k]$ is $\sigma^0_{\alpha, -1/2}$-semistable. We claim that $G[2k]$ has no subobject $Q \in \Coh^0_{\alpha, -1/2}$ with $Z^0_{\alpha, -1/2}(Q) = 0$, so it is $\nu_{\alpha, -1/2}$-semistable. Assume for a contradiction that there is such a subobject $Q$. By the definition of $\Coh^0_{\alpha, -1/2}(X)$, it is a sheaf supported in dimension zero. Thus, $\hom(\OO_X , Q) \neq 0$. Since $\OO_X \in \Coh^0_{\alpha, -1/2}(X)$, we have $\hom(\OO_X, (G[2k]/Q)[-1]) = 0$. Therefore, $\hom(\OO_X, G[2k]) \neq 0$ which is not possible because $G[2k] \in \Ku(X)$. Finally, since $G[2k]$ is $\nu_{\alpha, -1/2}$-semistable for $0 < \alpha \ll 1$, the claim follows by Proposition \ref{prop:torsion_classification}, (ii).  
\end{proof}

\begin{rmk}
Since the class $2[\II_{\ell}] -[S(\II_{\ell})]$ is primitive in $\NN(\Ku(X))$, any $\sigma(\alpha)$-semistable object of this class is $\sigma(\alpha)$-stable if we choose $\alpha$ sufficiently small.
\end{rmk}

%By [Theorem 1.2, APR19], there exists a subvariety $\mathcal{X} \subset M_{\sigma}\big(2[I_{\ell}] -[S(I_{\ell})]\big)$ isomorphic to $X$ parameterizing objects $E_p$ fitting in a triangle 
%\begin{equation*}
%    \OO_X(-1)[1] \rightarrow E_p \rightarrow I_p
%\end{equation*}
%for a point $p \in X$. The complement $M_{\sigma}\big(2[I_{\ell}] -[S(I_{\ell})]\big)\setminus \mathcal{X}$ has dimension 4 and its general object is a sheaf $i_*(\OO_Y(D'))$ for a Weil divisor $D'$ on a hyperplane section $Y \in |H|$. 
We now describe the image of the semistable objects $G \in M_{\sigma(\alpha)}\big(2[\II_{\ell}] -[S(\II_{\ell})]\big)$ under the Serre functor $S$. If $G = G_P(-H)$, then by \eqref{G-p}, we know there is a distinguished triangle
\[
    \OO_X[1] \to G_P \to \II_P(H) \to \OO_X[2]
\]
which gives $L_{\OO_X}(G_P) = L_{\OO_X}(\II_P(H)) = K_P[1]$, so 
\begin{equation}
\label{serre-1}
    S(G_P) = K_P[2]. 
\end{equation}
If $G = \OO_Y(D -H)$, then $G(H) = \OO_Y(D)$ is of class $\left(0, H, \frac{1}{2}H^2, -\tfrac{1}{6} H^3 \right)$, and lies in a distinguished triangle 
\[
    \OO_X^{\oplus 3} \to \OO_Y(D) \to E_D[1] \to \OO_X^{\oplus 3}[1]. 
\]
Thus,
\begin{equation}
\label{serre-2}
    S(G) = L_{\OO_X}(\OO_Y(D))[1] = L_{\OO_X}\big(E_D[1]\big)[1] = E_D[2]. 
\end{equation} 
 Combining \eqref{serre-1} and \eqref{serre-2} with Lemma \ref{lem:embedding-Gieseker} implies the next result. 
%Therefore, they all lie in $M_H(v)$. This shows that $M_H(v)$ is a proper dense subset of $M_{\sigma}(v)$, so they are isomorphic. 

\begin{thm}
\label{thm:equalmodspace}
The moduli space $M_{\sigma(\alpha)}\big([\II_{\ell}] +[S(\II_{\ell})] \big)$ is isomorphic to the moduli space $\overline{M}_X(v)$ parametrising Gieseker-stable sheaves of class $v$.
\end{thm}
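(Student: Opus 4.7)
The plan is to combine the embedding of Lemma \ref{lem:embedding-Gieseker} with the Serre-functor isomorphism \eqref{Serre functor-moduli} and the classification of $M_{\sigma(\alpha)}\big(2[\II_{\ell}]-[S(\II_{\ell})]\big)$ in the preceding Proposition to produce a bijective morphism of moduli spaces; smoothness will then promote it to an isomorphism.

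First, I would extend Lemma \ref{lem:embedding-Gieseker} to all of $\overline{M}_X(v)$. The only properties of a sheaf $E \in M_X(v)$ used in that proof are the absence of tilt walls (Lemma \ref{lem:no_wall_at_minus_one}), which depends only on $\ch(E) = v$, together with the fact that $E \in \Ku(X)$. By Theorem \ref{thm:moduli_rank_three_set_theoretic}(ii) any Gieseker-semistable sheaf of class $v$ is either $K_P$ or $E_D$; I would verify directly that both lie in $\Ku(X)$ by applying $\RHom(\OO_X,-)$ and $\RHom(\OO_X(H),-)$ to the defining sequences \eqref{k-p} and \eqref{E-D}, using Kodaira vanishing together with the cohomology computations made in the proof of Theorem \ref{thm:moduli_rank_three_set_theoretic}. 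The identical stability argument then yields an embedding $\iota \colon \overline{M}_X(v) \hookrightarrow M_{\sigma(\alpha)}\big([\II_{\ell}]+[S(\II_{\ell})]\big)$ that is injective on closed points, since distinct $K_P$ and $E_D$ remain pairwise non-isomorphic in $\Db(X)$.

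To see $\iota$ is surjective on closed points, I would invoke the Serre-functor isomorphism \eqref{Serre functor-moduli}, the preceding Proposition, and equations \eqref{serre-1}--\eqref{serre-2}. Every $\sigma(\alpha)$-stable object of class $2[\II_{\ell}]-[S(\II_{\ell})]$ is, up to an even shift placing it into the heart $\AA(\alpha)$, of the form $G_P(-H)$ or $\OO_Y(D-H)$; its image under $S$ is $K_P[2]$ or $E_D[2]$, and after the matching shift on the $[\II_{\ell}]+[S(\II_{\ell})]$-side the representative becomes the Gieseker-stable sheaf $K_P$ or $E_D$. Hence every point of $M_{\sigma(\alpha)}\big([\II_{\ell}]+[S(\II_{\ell})]\big)$ lies in the image of $\iota$.

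Finally, both moduli spaces carry the tangent space $\Ext^1(E,E)$ at any stable object $E$, which is the same whether computed in $\Ku(X)$ or in $\Db(X)$ since $\Ku(X) \subset \Db(X)$ is a full triangulated subcategory. The source is smooth of dimension four by Corollary \ref{cor:smooth_4dim}, so $\iota$ will be an \'etale bijection between smooth irreducible schemes of the same dimension, hence an isomorphism. The main obstacle is the bookkeeping in the surjectivity step: one has to carefully reconcile the various shifts so that the $S$-image of the preceding classification genuinely lands on the Gieseker-stable sheaves $K_P$ and $E_D$ representing the points of the target moduli space, which is precisely what equations \eqref{serre-1} and \eqref{serre-2} were computed for in advance.
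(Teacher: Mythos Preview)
Your proposal is correct and follows essentially the same route as the paper. The paper's proof is a one-liner (``Combining \eqref{serre-1} and \eqref{serre-2} with Lemma \ref{lem:embedding-Gieseker} implies the next result''), and you have simply unpacked that sentence: the embedding of Lemma \ref{lem:embedding-Gieseker} (whose proof already applies to all of $\overline{M}_X(v)$, not just the vector-bundle locus $M_X(v)$ named in its statement) gives one direction, while the Serre-functor isomorphism together with the classification in the preceding Proposition and the computations \eqref{serre-1}--\eqref{serre-2} give the inverse on points. Your added remark that matching tangent spaces and smoothness promote the bijection to an isomorphism is a detail the paper leaves implicit.
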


The next step is to show that we can replace $\sigma(\alpha)$ by any $S$-invariant stability condition on $\Ku(X)$.

%\begin{defn}
%A stability condition $\sigma$ on $\Ku(X)$ is $S$-invariant if $S \cdot \sigma = \sigma \cdot \tilde{g}$ for $\tilde{g} \in \tilde{\text{GL}}^+(2, \mathbb{R})$. 
%\end{defn}
\begin{lem}\cite[Lemma 5.8 and 5.10]{PY20:Fano3index2}\label{lem:PY-1}
Let $\sigma$ be an $S$-invariant stability condition on $\Ku(X)$ and $F \in \Ku(X)$ be $\sigma$-semistable of phase $\phi(F)$. Then 
\begin{enumerate}
    \item $\phi(F) < \phi(S(F)) < \phi(F)+2$. 
    \item $\dim \Ext^1(F, F) \geq 2$. 
\end{enumerate}
\end{lem}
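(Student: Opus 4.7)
My strategy rests on two inputs: the fractional Calabi-Yau relation $S^3 \cong [5]$ on $\Ku(X)$ (due to Kuznetsov), and the negative-definiteness of the Euler form on $\NN(\Ku(X))$ visible in the $2 \times 2$ matrix displayed earlier in this section. The first will pin down how the Serre functor shifts phases of $\sigma$-semistable objects and give (i); combined with Serre duality it will force $\Ext^{\ge 2}(F,F) = 0$, after which (ii) will fall out of an Euler characteristic computation.

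\textbf{Proof of (i).} The $S$-invariance of $\sigma$ means there exists $\tilde{g} = (T, f) \in \widetilde{\GL}^+(2,\R)$ with $S \cdot \sigma = \sigma \cdot \tilde{g}$, and any $\sigma$-semistable $F$ maps to a $\sigma$-semistable $S(F)$ with $\phi(S(F)) = f(\phi(F))$. Iterating and using $S^3 \cong [5]$, one obtains $f^3(\phi) = \phi + 5$ identically on $\R$. Since $f$ is an orientation-preserving homeomorphism of $\R$ commuting with translation by $1$, suppose for contradiction $f(\phi_0) \ge \phi_0 + 2$ at some $\phi_0$; then by monotonicity and periodicity $f^2(\phi_0) \ge f(\phi_0) + 2 \ge \phi_0 + 4$ and $f^3(\phi_0) \ge \phi_0 + 6$, contradicting $f^3(\phi_0) = \phi_0 + 5$. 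A symmetric argument rules out $f(\phi_0) \le \phi_0$. Hence $\phi(F) < \phi(S(F)) < \phi(F) + 2$.

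\textbf{Proof of (ii).} Serre duality in $\Ku(X)$ yields $\Ext^i(F,F)^\vee \cong \Hom(F, S(F)[-i])$. By (i), $\phi(S(F)[-i]) = \phi(S(F)) - i < \phi(F)$ whenever $i \ge 2$, and both $F$ and $S(F)[-i]$ are $\sigma$-semistable (the latter by $S$-invariance and because the shift preserves semistability). The standard vanishing for semistable objects of decreasing phase gives $\Hom(F, S(F)[-i]) = 0$, hence $\Ext^{\ge 2}(F,F) = 0$ and
\[
\chi(F,F) = \hom(F,F) - \ext^1(F,F).
\]
Expanding $[F] = a[\II_\ell] + b[S(\II_\ell)]$ in the Euler matrix displayed above gives $\chi([F],[F]) = -(a^2 + ab + b^2) \le -1$ for every non-zero lattice vector $(a,b)$. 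Combined with $\hom(F,F) \ge 1$, this yields $\ext^1(F,F) \ge \hom(F,F) + 1 \ge 2$.

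\textbf{Main obstacle.} The only genuinely non-trivial ingredient is Kuznetsov's relation $S^3 \cong [5]$ on $\Ku(X)$; granted this, both claims are formal. The delicate point in (i) is that $f$ need not be an affine translation, but its translation number $5/3 \in (0,2)$ together with monotonicity and the periodicity $f(\phi+1) = f(\phi)+1$ suffices to sandwich $f(\phi)$ strictly between $\phi$ and $\phi+2$. For (ii), the crucial observation is that the negative-definiteness of the Euler form on the rank-two lattice $\NN(\Ku(X))$ converts the vanishing $\Ext^{\ge 2}(F,F) = 0$ into the non-trivial lower bound $\ext^1(F,F) \ge 2$.
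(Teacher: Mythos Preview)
Your proof is correct. The paper itself does not supply a proof of this lemma---it is quoted verbatim from \cite[Lemmas 5.8 and 5.10]{PY20:Fano3index2}---and your argument is essentially the one given there: the fractional Calabi--Yau relation $S^3 \cong [5]$ forces the $\widetilde{\GL}^+(2,\R)$-element to have translation function satisfying $f^3 = (+5)$, which yields the strict phase bounds in (i); then Serre duality together with (i) kills $\Ext^{\ge 2}(F,F)$, and the negative-definiteness of the Euler form on the rank-two lattice $\NN(\Ku(X))$ gives (ii). One small point worth making explicit: your conclusion $\chi(F,F)\le -1$ uses $[F]\neq 0$ in $\NN(\Ku(X))$, which holds because the stability conditions under consideration are numerical (so $Z(F)\neq 0$ forces a nonzero numerical class).
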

For cubic threefolds, we also have a weak version of Mukai Lemma for K3 surfaces. 
\begin{lem}(Weak Mukai Lemma)\ \cite[Lemma 5.11]{PY20:Fano3index2}\label{lem:PY-2}
Let $\sigma$ be an $S$-invariant stability condition. Let $A \to E \to B$ be a triangle in $\Ku(X)$ such that $\hom(A, B) = 0$ and the $\sigma$-semistable factors of $A$ have phase greater than or equal to the phase of the $\sigma$-semistable factors of $B$. Then
\[
\dim_{\mathbb{C}} \Ext^1(A, A) + \dim_{\mathbb{C}} \Ext^1(B,B) \leq \dim_{\mathbb{C}} \Ext^1(E, E).
\]
\end{lem}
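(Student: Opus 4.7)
The plan is to combine the long exact Ext sequences arising from the triangle $A \to E \to B$ with additional vanishings deduced from the phase condition and Lemma~\ref{lem:PY-1}(i), in the spirit of Mukai's original lemma for K3 surfaces. The point is that the hypothesis $\hom(A,B)=0$ together with the phase inequality forces enough of the Ext groups between $A$ and $B$ to vanish that one can bound $\ext^1(E,E)$ from below by $\ext^1(A,A)+\ext^1(B,B)$ via diagram chases.

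First, the hypothesis $\hom(A,B) = 0$ combined with the phase condition already yields stronger vanishings. Semistability and the phase inequality $\phi(X) \geq \phi(Y)$ for semistable factors $X$ of $A$ and $Y$ of $B$ immediately give $\Hom(A, B[k]) = 0$ for all $k < 0$. Moreover, Serre duality identifies $\Ext^i(A, B)$ with $\Hom(B, S(A)[-i])^\vee$, and the fractional Calabi--Yau bound $\phi(S(F)) < \phi(F) + 2$ of Lemma~\ref{lem:PY-1}(i) (applied to semistable factors of $A$) will be used to control the higher Ext terms. Applying $\Hom(A, -)$ and $\Hom(-, B)$ to the triangle then yields the identifications $\hom(A, E) = \hom(A, A)$ and $\hom(E, B) = \hom(B, B)$, as well as the injections $\Ext^1(A, A) \hookrightarrow \Ext^1(A, E)$ and $\Ext^1(B, B) \hookrightarrow \Ext^1(E, B)$.

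The heart of the argument is the simultaneous use of the two long exact sequences
\begin{align*}
\Ext^1(E, A) \to \Ext^1(E, E) &\to \Ext^1(E, B) \xrightarrow{\partial} \Ext^2(E, A), \\
\Ext^1(B, E) \to \Ext^1(E, E) &\to \Ext^1(A, E) \xrightarrow{\partial'} \Ext^2(B, E),
\end{align*}
obtained from $\Hom(E, -)$ and $\Hom(-, E)$ respectively. Using the first, I would show that the subspace $\Ext^1(B, B) \subset \Ext^1(E, B)$ lies in $\ker \partial$ and hence lifts to $\Ext^1(E, E)$, contributing at least $\ext^1(B, B)$ to its dimension. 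An analogous analysis of the second sequence lifts $\Ext^1(A, A) \subset \Ext^1(A, E)$ into $\Ext^1(E, E)$ to get a further contribution of dimension $\geq \ext^1(A,A)$. A diagram chase comparing these two long exact sequences, using that one lift lies in the image of $\Ext^1(B,E) \to \Ext^1(E,E)$ while the other maps nontrivially to $\Ext^1(A,E)$, shows the two contributions are independent, yielding $\ext^1(A,A) + \ext^1(B,B) \le \ext^1(E,E)$.

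The main obstacle is the verification of the two lifting statements, equivalently the vanishings of $\partial$ and $\partial'$ on the relevant subspaces. Unwinding the connecting maps, the obstruction to lifting $g \in \Ext^1(B,B)$ is the class $\delta[1] \circ g \circ \pi \in \Ext^2(E,A)$, which is required to lie in the image of $\Ext^1(A,A) \to \Ext^2(B,A) \to \Ext^2(E,A)$ given by $f \mapsto f[1]\circ\delta$; and symmetrically for the obstruction to lifting $f \in \Ext^1(A,A)$. Producing such compatible pairs $(f,g)$, or equivalently showing the obstructions vanish after passing to $\Ext^2(E,A)$ and $\Ext^2(B,E)$, is the delicate step where the Serre-invariance of $\sigma$ and the bound $\phi(S(F)) < \phi(F)+2$ enter: they provide the vanishing of suitable $\Hom$-groups between semistable factors that kills the relevant obstruction classes. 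This is where the argument genuinely exploits that we are in a fractional Calabi--Yau category rather than a general triangulated one.
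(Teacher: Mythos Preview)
The paper does not prove this lemma at all; it is quoted verbatim from \cite[Lemma 5.11]{PY20:Fano3index2} and used as a black box. So there is no ``paper's proof'' to compare against, and your proposal should be judged on its own merits.

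Your overall strategy is correct and is exactly the standard Mukai-lemma argument adapted to a fractional Calabi--Yau category. However, you are making the central step look harder than it is. The entire argument hinges on a single vanishing:
\[
\Ext^2(B,A) \cong \Hom\bigl(A,\,S(B)[-2]\bigr)^\vee = 0,
\]
which follows immediately from Serre duality together with Lemma~\ref{lem:PY-1}(i): for each $\sigma$-semistable factor $B_j$ of $B$ and $A_i$ of $A$ one has $\phi(S(B_j)[-2]) < \phi(B_j) \le \phi(A_i)$, hence $\Hom(A_i, S(B_j)[-2]) = 0$. Once you have $\Hom(A,B) = 0$ and $\Ext^2(B,A) = 0$, both of your obstruction classes die outright, since each one factors through an element of $\Ext^2(B,A)$; there is no need to ``produce compatible pairs $(f,g)$'' or to show the obstructions lie in some image.

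The independence step is also cleaner than your sketch suggests. Write $\beta \colon \Ext^1(E,E) \to \Ext^1(E,B)$ and $\alpha \colon \Ext^1(E,A) \to \Ext^1(E,E)$ for the maps from $\Hom(E,-)$. Exactness gives $\ext^1(E,E) = \dim\im\alpha + \dim\im\beta$. You have already shown $\Ext^1(B,B) \hookrightarrow \Ext^1(E,B)$ lands in $\im\beta$, so $\dim\im\beta \ge \ext^1(B,B)$. For the other term, the commutative square
\[
\begin{array}{ccc}
\Ext^1(E,A) & \xrightarrow{\ \alpha\ } & \Ext^1(E,E) \\
\downarrow & & \downarrow \\
\Ext^1(A,A) & \hookrightarrow & \Ext^1(A,E)
\end{array}
\]
has surjective left vertical (from $\Ext^2(B,A)=0$) and injective bottom (from $\Hom(A,B)=0$), so $\dim\im\alpha \ge \ext^1(A,A)$. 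This gives the inequality directly, with no further diagram chase needed.
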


\begin{prop}
\label{prop:unique}
Let $\sigma_1$ and $\sigma_2$ be two $S$-invariant stability conditions on $\Ku(X)$. An object $E \in \Ku(X)$ of class $[\II_{\ell}]+[S(\II_{\ell})]$ is $\sigma_1$-stable if and only if it is $\sigma_2$-stale.  
\end{prop}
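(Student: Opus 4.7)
The plan is to show that $\sigma_1$-stability implies $\sigma_2$-stability; the converse then follows by symmetry. Denote by $\phi_i$ the phase function of $\sigma_i$. The key numerical input is a computation of $\Ext^*(E, E)$ for any $\sigma_1$-stable $E$ of class $v_0 \coloneqq [\II_\ell] + [S(\II_\ell)]$. Stability gives $\hom(E, E) = 1$. Combining the Serre duality isomorphisms $\Ext^i(E, E) \cong \Hom(E, S(E)[-i])^\vee$ with the phase inequality $\phi_1(E) < \phi_1(S(E)) < \phi_1(E) + 2$ from Lemma \ref{lem:PY-1}(i) shows $\phi_1(S(E)[-i]) < \phi_1(E)$ for $i = 2, 3$. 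Since both $E$ and $S(E)$ are $\sigma_1$-semistable, this forces $\ext^2(E, E) = \ext^3(E, E) = 0$. Using $\chi(v_0, v_0) = -3$, which follows from the Euler form recorded before Lemma \ref{lem:chern-characters}, we infer $\ext^1(E, E) = 4$.

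Now suppose for contradiction that $E$ is $\sigma_2$-unstable, with Harder--Narasimhan factors $F_1, \dots, F_n$ in order of strictly decreasing $\sigma_2$-phase. An iterated application of the weak Mukai lemma (Lemma \ref{lem:PY-2}) combined with Lemma \ref{lem:PY-1}(ii) yields
\[
4 = \ext^1(E, E) \;\geq\; \sum_{i=1}^{n} \ext^1(F_i, F_i) \;\geq\; 2n,
\]
forcing $n = 2$ and $\ext^1(F_1, F_1) = \ext^1(F_2, F_2) = 2$. Since $\sigma_2$ is Serre-invariant, repeating the Serre-duality argument above for each $F_i$ gives $\ext^2(F_i, F_i) = \ext^3(F_i, F_i) = 0$, hence $\chi([F_i], [F_i]) = \hom(F_i, F_i) - 2 \geq -1$. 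Writing $[F_i] = m_i[\II_\ell] + n_i[S(\II_\ell)]$, the Euler form reads $\chi([F_i], [F_i]) = -(m_i^2 + m_i n_i + n_i^2)$, so the constraint $m_i^2 + m_i n_i + n_i^2 \leq 1$ together with $[F_1] + [F_2] = v_0$ pins down $\{[F_1], [F_2]\} = \{[\II_\ell], [S(\II_\ell)]\}$, and each $F_i$ is then $\sigma_2$-stable with $\hom(F_i, F_i) = 1$.

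The main obstacle is excluding this remaining numerical possibility, namely a non-split triangle $F_1 \to E \to F_2$ with $F_1$ and $F_2$ $\sigma_2$-stable of classes $[\II_\ell]$ and $[S(\II_\ell)]$ (in that order of decreasing $\sigma_2$-phase). The plan is to exploit Serre-invariance once more: applying $S$ produces a triangle $S(F_1) \to S(E) \to S(F_2)$ whose middle term is $\sigma_1$-stable, and by Lemma \ref{lem:chern-characters} one has $[S(F_1)] = [S(\II_\ell)]$ and $[S(F_2)] = [S(\II_\ell)] - [\II_\ell]$. A careful comparison of the $\sigma_2$-phases of $F_1, F_2, S(F_1), S(F_2)$ via the strict inequalities of Lemma \ref{lem:PY-1}(i), coupled with another application of the weak Mukai lemma to $S(E)$, should force a contradiction with the ordering $\phi_2(F_1) > \phi_2(F_2)$. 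The case where $E$ is strictly $\sigma_2$-semistable but not $\sigma_2$-stable is handled in parallel, replacing the HN-filtration by a Jordan--H\"older filtration for $\sigma_2$ and reusing the class enumeration, so that no Jordan--H\"older factors of the same $\sigma_2$-phase can be assembled into $E$ without violating simplicity.
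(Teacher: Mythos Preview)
Your argument is fine through the identification $\{[F_1],[F_2]\}=\{[\II_\ell],[S(\II_\ell)]\}$ with each $F_i$ $\sigma_2$-stable; this part parallels the paper closely. The gap is in the final step. Your plan---apply $S$ to the triangle and compare $\sigma_2$-phases, then invoke the weak Mukai lemma for $S(E)$---does not produce a contradiction. Since $\sigma_2$ is $S$-invariant, applying $S$ preserves the ordering $\phi_2(S(F_1))>\phi_2(S(F_2))$, so no new constraint on $\sigma_2$-phases appears. And weak Mukai applied to $S(E)$ yields only $\ext^1(S(F_1),S(F_1))+\ext^1(S(F_2),S(F_2))\le \ext^1(S(E),S(E))$, i.e.\ $2+2\le 4$, which is vacuous. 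Nothing in your outline links the $\sigma_2$-behavior of $F_1,F_2$ back to $\sigma_1$.

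The paper closes this gap using an ingredient you omit: \cite[Proposition~4.6]{PY20:Fano3index2}, which says that for \emph{any} $S$-invariant stability condition the stable objects of class $[\II_\ell]$ (resp.\ $[S(\II_\ell)]$) are exactly the shifts $\II_\ell[2k]$ (resp.\ $S(\II_{\ell'})[2k']$). Hence $F_1,F_2$ are identified as concrete objects that are also $\sigma_1$-stable. Then $\sigma_1$-stability of $E$ forces $\phi_1(F_1)<\phi_1(F_2)$, while the HN property for $\sigma_2$ gives $\phi_2(F_1)>\phi_2(F_2)$; feeding both into the inequality $\phi_\sigma(\II_\ell)<\phi_\sigma(S(\II_\ell))<\phi_\sigma(\II_\ell)+2$ yields incompatible integer constraints on the shifts $k,k'$. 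Your handling of the strictly semistable case has the same defect: simplicity of $E$ does not rule out a strictly semistable structure (simple strictly semistable objects exist), whereas the paper observes that $\II_\ell[2k]$ and $S(\II_{\ell'})[2k']$ can never share a phase, again using the explicit identification.
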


\begin{proof}
By \cite[Proposition 4.6]{PY20:Fano3index2}, $\II_{\ell}$ and $S(\II_{\ell})$ are $\sigma$-stable with respect to any $S$-invariant stability condition. Thus, Lemma \ref{lem:PY-1} implies that 
\begin{equation}
\label{phase order}
\phi_{\sigma}(\II_{\ell})  < \phi_{\sigma}(S(\II_{\ell})) < \phi_{\sigma}(\II_{\ell}) +2. 
\end{equation}
Take a $\sigma_1$-stable object $E \in \Ku(X)$ of class $[\II_{\ell}]+[S(\II_{\ell})]$. Since $\sigma_1$ is $S$-invariant, Lemma \ref{lem:PY-1} gives 
\[
    \phi_{\sigma_1}(E)  < \phi_{\sigma_1}(S(E)) < \phi_{\sigma_1}(E) + 2. 
\]
Thus, for $i < 0$ or $i \geq 2$, we get  
\[
    \hom(E, E[i]) = \hom(E[i], S(E)) = 0. 
\]
Since $E$ is $\sigma_1$-stable, we get $\hom(E, E) = 1$ which gives 
\[
    \hom(E, E[1]) = -\chi(E, E) +1 = 4. 
\]
Suppose now for a contradiction that $E$ is $\sigma_2$-unstable. There is a distinguished triangle of destabilising objects $F_1 \to E \to F_2 \to F_1[1]$ with respect to $\sigma_2$. We may assume $F_1$ is $\sigma_2$-semistable. Thus, Lemma \ref{lem:PY-1} implies that 
\begin{equation}
\label{c.1}
    \hom(F_1,F_1[1]) \geq 2.
\end{equation}
Since the phase of $F_1$ is bigger than the phase of $\sigma_2$-semistable factors of $F_2$, we have 
\begin{equation}
    \hom(F_1, F_2) = 0. 
\end{equation}
Thus, Weak Mukai Lemma \ref{lem:PY-2} implies
\[
    \hom(F_1, F_1[1]) + \hom(F_2, F_2[1]) \leq \hom(E, E[1]) = 4.
\]
By \eqref{c.1}, we get $\hom(F_2, F_2[1]) \leq 2$. If $\hom(F_2, F_2[1]) = 0$ or $1$, then all its $\sigma_2$-semistable factors would satisfy the same property by Weak Mukai Lemma \ref{lem:PY-2} which is not possible by Lemma \ref{lem:PY-1}. Therefore,
\[
    \hom(F_1, F_1[2]) = \hom(F_2, F_2[1]) = 2
\]
and \cite[Lemma 5.12]{PY20:Fano3index2} implies that $F_1$ and $F_2$ are $\sigma_2$-stable. This gives $\chi(F_i, F_i) = -1$ for $i=1, 2$, so $[F_i]$ is either $\pm [\II_{\ell}]$, or $\pm [S(\II_{\ell})]$, or $\pm([S(\II_{\ell})] - [\II_{\ell}])$. Since there are only 2 stable factors and the object $E$ is of class $[\II_{\ell}] + [S(\II_{\ell})]$, the destabilising objects must be of class $[\II_{\ell}]$ and $[S(\II_{\ell})]$. Thus, \cite[Proposition 4.6]{PY20:Fano3index2} implies that the destabilising objects are $\II_{\ell}[2k]$ and $S(\II_{\ell'})[2k']$ for two lines $\ell, \ell'$ and integers $k, k' \in \mathbb{Z}$. 
%We know $E$ is a non-trivial extension of $F_1$ and $F_2$, thus $\hom(F_2, F_1[1]) \neq 0$. This is possible by Lemma \ref{Lem: cohomology} only if $k=k'$.      

Let $F_1 = \II_{\ell}[2k]$ and $F_2 = S(\II_{\ell'})[2k']$. % we may replace $F_1$ and $F_2$ in the following argument. 
Since $E$ is $\sigma_1$-stable, we have $\phi_{\sigma_1}(F_1) < \phi_{\sigma_1}(F_2)$, thus \eqref{phase order} gives $k \leq k'$. But $F_1$ and $F_2$ are the destabilising objects with respect to $\sigma_2$, hence $\phi_{\sigma_2}(F_1) > \phi_{\sigma_2}(F_2)$ and \eqref{phase order} gives $k'+1 \leq k$ which is not possible. By a similar argument, we reach a contradiction if $F_1 = S(\II_{\ell'})[2k']$ and $F_2 = \II_{\ell}[2k]$. Finally, note that $E$ cannot be strictly $\sigma_2$-semistable because the phases of $\II_{\ell}[2k]$ and $S(\II_{\ell})[2k']$ cannot be equal by \eqref{phase order}. 
\end{proof}

\begin{proof}[Proof of Theorem \ref{thm:categorical-Torelli}]
As a cubic threefold has free Picard group of rank one, the first implication is obvious. As for the second implication, assume there is an exact equivalence $\Phi \colon \Ku(X_1) \rightarrow \Ku(X_2)$. Lemma \ref{lem:clases-3} implies that, up to composing with a power of the Serre functor of $\Ku(X_1)$ and shift functor, we may assume $[\Phi_*(K_P)] = [K_{P'}]$ for points $P, P'$ in $X_1$ and $X_2$, respectively. Take an $S$-invariant stability condition $\sigma$ on $\Ku(X_1)$. Theorem \ref{thm:equalmodspace} and \ref{prop:unique} imply that 
\begin{equation}\label{eq:iso}
M_{X_1}(v) \cong M_{\sigma}\big(\Ku(X_1) ,  [K_P]\big) \cong M_{\phi \cdot \sigma}\big(\Ku(X_2), [K_{P'}]\big).
\end{equation}
Since the Serre functor commutes with auto-equivalences, $\phi \cdot \sigma$ is an $S$-invariant stability condition on $\Ku(X_2)$. Thus, Theorem \ref{thm:equalmodspace} gives
\[
    M_{\phi \cdot \sigma}\big( \Ku(X_2), [K_{P'}]\big) \cong M_{X_2}(v). 
\]
Combining this with \eqref{eq:iso} gives $M_{X_1}(v) \cong M_{X_2}(v)$. By Lemma \ref{lem:formal_neighborhood}, we know $X_1$ and $X_2$
are the unique exceptional divisors of $M_{X_1}(v)$ and $M_{X_2}(v)$ which get contracted by any map to a complex abelian variety. Thus, $X_1 \cong X_2$.  
\end{proof}

\end{document}